\documentclass[11pt,reqno]{amsart}
\usepackage[utf8]{inputenc}
\usepackage[T1]{fontenc}
\usepackage{graphicx} 

\usepackage{csquotes,nicefrac}
\usepackage{thmtools}

\usepackage{todonotes}
\usepackage[normalem]{ulem}

\usepackage{xcolor,mathrsfs,amsmath,mathtools,amssymb,hyperref}
\def\E{\mathbb E}

\usepackage[capitalise,nameinlink]{cleveref} 
\crefname{enumi}{item}{items}
\crefname{equation}{}{}
\crefname{subsection}{Subsection}{Subsections}

\newcommand{\R}{\mathbb{R}}

\usepackage[backend=biber, style=alphabetic,maxnames=4,
  minnames=4, maxcitenames=4,
  mincitenames=4]{biblatex}
\usepackage{amsthm}
\usepackage{mathabx}
\usepackage{enumitem}
\usepackage{bbm}
\usepackage[english]{babel}
\usepackage{geometry}
 \allowdisplaybreaks
 \usepackage{appendix}
\usepackage{hyperref}
\hypersetup{
colorlinks   = true,
citecolor    = blue,
linkcolor=blue
}

\allowdisplaybreaks

 \usepackage[most]{tcolorbox}
\theoremstyle{definition}
\newtheorem{theorem}{Theorem}[section]
\AfterEndEnvironment{theorem}{\noindent\ignorespaces}
\theoremstyle{definition}
\newtheorem{definition}[theorem]{Definition}
\AfterEndEnvironment{definition}{\noindent\ignorespaces}

\theoremstyle{definition}
\newtheorem{example}[theorem]{Example}
\AfterEndEnvironment{Example}{\noindent\ignorespaces}

\def\l@subsection{\@tocline{2}{0pt}{2.5pc}{5pc}{}}

\theoremstyle{definition}
\newtheorem{proposition}[theorem]{Proposition}
\AfterEndEnvironment{proposition}{\noindent\ignorespaces}

\theoremstyle{definition}
\newtheorem{lemma}[theorem]{Lemma}
\AfterEndEnvironment{lemma}{\noindent\ignorespaces}

\theoremstyle{definition}
\newtheorem{corollary}[theorem]{Corollary}
\AfterEndEnvironment{corollary}{\noindent\ignorespaces}

\newtheorem{remark}[theorem]{Remark}
\AfterEndEnvironment{remark}{\noindent\ignorespaces}

\newtheorem{assumption}
[theorem]{Assumption}
\AfterEndEnvironment{assumption}{\noindent\ignorespaces}



\usepackage{accents}

\DeclareMathOperator*{\Lip}{\operatorname{Lip}}
\DeclareMathOperator*{\loc}{\operatorname{loc}}

\usepackage[english]{babel}
\usepackage{geometry}
 \def\bd{\boldsymbol{\delta}}
\title{Controlled fields, Rough stochastic calculus,\\ and It\^o--Wentzell--Alekseev--Gr\"obner identities}

\author{Jannis R. Dause}
\address{Institut für Mathematik, 
Technische Universität Berlin, Berlin, Germany}
\email{dause@math.tu-berlin.de}

\author{Peter K. Friz}
\address{Institut für Mathematik, 
Technische Universität Berlin and Weierstraß Institut, Berlin, Germany}
\email{friz@math.tu-berlin.de}

\author{Arnulf Jentzen}
\address{School of Data Science \& School of Artificial Intelligence, The Chinese University of Hong Kong, Shenzhen (CUHK-Shenzhen), Shenzhen, China and Institute for Analysis \& Numerics, University of M\"{u}nster, M\"{u}nster, Germany}
\email{ajentzen@cuhk.edu.cn, ajentzen@uni-muenster.de}

\author{Jian Song}
\address{Research Center for Mathematics and Interdisciplinary Sciences, Shandong University, Qingdao 266237, China and Frontiers Science Center for Nonlinear Expectations, Ministry of Education, Shandong University, Qingdao 266237, China}
\email{txjsong@sdu.edu.cn}

\date{\today}

\subjclass[2020]{%
Primary: 60H05 Stochastic integrals; 
Secondary: 60L20 Rough analysis (rough paths); 
60H10 Stochastic ordinary differential equations; 
60H07 Stochastic calculus of variations and Malliavin calculus; 
60J60 Diffusion processes; 
65C30 Stochastic differential and integral equations (numerical methods); 
65C20 Models, numerical methods in stochastic processes%
}
\keywords{Itô--Wentzell formula, rough stochastic calculus, controlled fields, forward-backward stochastic analysis, Itô--Alekseev--Gröbner formula}

\begin{document}
    
\begin{abstract}
We develop a calculus of space--time controlled fields for rough stochastic
systems. 
This approach provides a unified composition rule for evaluating random fields
along rough semimartingales and yields a rough stochastic It{\^o}--Wentzell
formula under natural and verifiable regularity assumptions.

Our motivation comes from works of Hudde et al.\ (2024) and, independently,
Del~Moral \& Singh (2022) where the authors established, respectively,
It{\^o}--Alekseev--Gr{\"o}bner, backward It{\^o}--Wentzell, and diffusion
interpolation formulas. 
\end{abstract}
\maketitle
\tableofcontents

\section{Introduction}

In his work~{\cite{wentzell65}} from 1965, A.\ D.\ Wentzell
introduced a fundamental extension of the \emph{It{\^o} formula}, today known as the
{{\em It{\^o}--Wentzell formula}}. At its core, it provides a composition
rule for evaluating a (sufficiently regular) random field along a random
trajectory. This composition principle has become a standard tool across
stochastic analysis and its applications; as a small selection we point to 
stochastic partial differential equation (SPDE) theory~{\cite{kunita_1997,rozovsky_lototsky_2018}}, fluid dynamics \cite{flandoli_random_2011}, filtering~{\cite{kunita_1997,moral_2021}}, mathematical
finance~{\cite{durrleman2010implied,bayer_qiu_yao22}}, stochastic optimal
control~{\cite{Peng92}}, and mean-field games and control with common
noise~{\cite{cardliaguet_souganidids22,TouziTalbi2025}}. 
Many authors extended Wentzell's original formula in various directions, including
anticipative 
and backward formulations~{\cite{Kunita81,OconePardoux1989,nualart_2006,DELMORAL2022197}}, 
analytic refinements ~{\cite{Krylov2011ito-wentzell}}, 
measure flow variants~{\cite{,GUO2023350,dosReisPlatonov2023}}, and last not least rough paths settings~{\cite{keller_zhang_2016, castrequini_2025}}. 

\medskip

In recent years, we have witnessed the emergence of {{\em rough
stochastic analysis}}, arguably initiated by the first intrinsic well-posedness theory for {{\em rough
stochastic differential equations}} (RSDEs) in the work~{\cite{friz_2021}}, as well as the unifying notion of {{\em rough semimartingales}} (RSM) due to~{\cite{fzk23}}, providing a unifying framework that
simultaneously generalizes Lyons' rough path theory~{\cite{Lyons1998}} and
classical It{\^o} calculus. Technically, for RSDEs, a key role is played by stochastic
sewing arguments~{\cite{Le2020}} (in mixed moment settings, cf.\ $L^{p, q}$
in~{\cite{friz_2021}}) together with the harmonic-analysis rooted 
multilevel Burkholder--Davis--Gundy (BDG)-type estimates for martingale transforms that underlies the RSM approach of~{\cite{fzk23}}. These tools have enabled a rapid expansion of applications, 
including stochastic filtering and robust/pathwise 
control~{\cite{flz24,friz_randomisation_2025-1}}, 
asset pricing and finance~{\cite{bbfp25}}, McKean--Vlasov equations 
with common noise~{\cite{friz_mckean-vlasov_2025,bugini_rough_2025,Bugini2025Nonlinear}},
extensions to jumps~{\cite{allan_rough_2025}}, and 
further links to nonlinear partial differential equations (PDEs)~{\cite{Bugini2025Nonlinear}}.

Given the pervasive role of It{\^o}--Wentzell in classical stochastic analysis
and the emergence of rough stochastic calculus, the need for a {{\em rough
stochastic It{\^o}--Wentzell formula}} has become apparent. However, the
appropriate notion of composition in this regime must (i) accommodate
{{\em rough temporal regularity}}, (ii) retain sufficient {{\em spatial
structure}} to permit evaluation along stochastic flows, and (iii) interact
well with {{\em forward-backward}} and {{\em anticipative}} constructions
which naturally arise in stochastic numerics and more generally the study of diffusion limits.
The present work develops a framework that addresses these requirements in a
unified manner. 
\addtocontents{toc}{\protect\setcounter{tocdepth}{1}}
\subsection{Space-time rough calculus via controlled fields (\cref{section:Introduction to controlled fields})} Motivated by controlled rough paths~{\cite{GUBINELLI2004}} and the
  jet-based view of rough It{\^o} formulas (see, e.g.,~{\cite{friz_2020}}) for {\em strongly controlled} rough paths (think ``second order''  controlled plus drift), as well as inspiration from Hairer's regularity structures \cite{Hairer2014}, we
  introduce in  \cref{section:Introduction to controlled fields} a
  notion of {{\em controlled fields}}: space-time objects which encode
  simultaneously (Lipschitz-type) spatial regularity and (rough) temporal
  regularity in a compact ``jet'' form. In our main theorem of this section, 
  \cref{cor:composition_rule}, we establish stability under composition and 
  a flexible chain rule for these controlled fields. As a
  consequence of this theorem, we obtain rough analytic tools for RDE flows and {\em rough partial differential equations} (RPDEs), and we
  recover a \emph{rough It{\^o}--Wentzell (rIW) formula} as an immediate corollary
  under natural, verifiable regularity assumptions; compare, for instance,
  with the rIW perspectives in~{\cite{keller_zhang_2016,castrequini_2025}}. Notably, the rIW identity emerges from algebraic composition properties without assuming uniqueness of the Gubinelli derivatives or a priori rough integral formulations.
  
\subsection{Moment-free rough stochastic calculus and It{\^o}--Wentzell formula (\cref{section:On Rough Stochastic Calculus})} We review elements of~{\cite{fzk23}}, notably the notion of rough
  semimartingales, adapted to the H\"older framework which is more appropriate for the purposes of this work, and then introduce a second order controlled variant of rough semimartingales,
  dubbed {\em strongly controlled rough semimartingales}, which can be viewed as ``semimartingales variants'' of {\em rough It\^{o} processes}, introduced in \cite{friz_2021}.
 A key insight is that such {\em strongly controlled rough semimartingales} are in non-trivial correspondence to adapted a.s.\ strongly controlled rough paths, provided that the (deterministic) reference rough path is replaced by 
 a semi-stochastic joint lift with suitable martingales. This comes with the significant advantage that ``rough stochastic identities'' can be obtained from ``rough identities'', best appreciated by 
 comparing the level of difficulty in deriving a rough It\^{o} formula, 
 as seen in~{\cite{friz_2020}, versus a rough stochastic It\^{o} formula, 
 as done in \cite{friz_2021}, in rather involved moment spaces of stochastic processes.
 As proof in case, we use this approach to elegantly derive a {{\em rough stochastic It{\^o}--Wentzell formula}} (\cref{theorem:total_RSIW_formula}), essentially as consequence of the rough It{\^o}--Wentzell formula of \cref{section:Introduction to controlled fields}, plus some Kunita-type considerations to allow for an extra $x$-dependent local martingale part $\int \beta(x) dW$.
  Specifically, this theorem provides a composition rule for strongly controlled rough semimartingales, that is, processes of the form 
  \begin{equation*}
      Y_{t}=Y_{0}+\int_{0}^{t} \dot{Y}_{s} ds + M_{t} + \int_{0}^{t} (\partial_{X} Y, \partial_{X}^{2} Y)_{s} d\mathbf{X}_{s} \, ,
  \end{equation*}
  with suitably regular (in space) rough stochastic fields of the form
  \begin{equation*}
      H_{t}(x)=\int_{0}^{t} \dot{F}_{s}(x) ds+ \int_{0}^{t} (F'_{s}(x), F''_{s}(x)) d\mathbf{X}_{s} + \int_{0}^{t} \beta_{s}(x) dW_{s},  
  \end{equation*}
  where $M$ is a martingale and $d\mathbf{X}$ denotes {\em rough stochastic integration} (as exposed in \cref{section:On Rough Stochastic Calculus}). Notably, no controlledness is assumed on any of the martingales. Then \cref{theorem:total_RSIW_formula} asserts the composition rule
  \begin{eqnarray*}
    H_{t}(Y_{t})&=& \int_{0}^{t} \beta_s (Y_s) dW_{s}+ \int_{0}^{t} (DH_{s}(Y_{s})  \dot{Y}_{s} + \dot{F}_{s}(Y_{s}) ) ds + \int_0^t DH_{s} (Y_{s}) 
    dM_{s}  \nonumber \\
    &&+\int_{0}^{t} ( F'_{s}(Y_{s}) + D H_{s}(Y_{s}) \partial_{X} Y_{s} )  d
    \mathbf{X}_{s} +\frac{1}{2} \int_{0}^{t} D^{2} H_{s}(Y_{s}) d\langle M \rangle_{s} \nonumber\\
    &&+\int_{0}^{t}  D F'_{s}(Y_{s})  \partial_{X}Y_{s} ds + \frac{1}{2} \int_{0}^{t}   D^2
    H_{s}(Y_{s}) (\partial_{X}Y_{s}, \partial_{X} Y_{s} )   d [\mathbf{X}]_{s} \nonumber\\
    &&+ \left \langle \int_{0}^{\cdot} D \beta_s (Y_s) dW_{s}, M \right \rangle_{0, t} \nonumber, 
\end{eqnarray*}which is
  tailored to the mixed rough/stochastic regime relevant for
  RSDEs~{\cite{friz_2021}} and rough semimartingale calculus~{\cite{fzk23}}. As an application we give a simple and intrinsic proof of the wellposedness-theory of RSDEs using flow-transforms generalizing results of \cite{CDFO13}.
  
\subsection{Forward-backward stochastic analysis and
  stochastic numerics (\cref{section:Applications to (Rough) Stochastic Analysis})} We discuss
  applications in the context of the {{\em It{\^o}--Alekseev--Gr{\"o}bner}}
  (IAG) formula of~{\cite{HHJM24}}, closely related to the 
  forward-backward stochastic analysis of \cite{DELMORAL2022197}. 
  Specifically, \cite{HHJM24} established a composition rule in Skorokhod integral form 
   \begin{equation}
   \label{eq:intro_IAG}
   \begin{aligned}
   F_{t}(Y_{t})-F_s(Y_{s})= & \int_s^t D F_r
    (Y_r) (b_r - \mu (Y_r)) dr
    + \int_s^t DF_r (Y_r)
    (\beta_r - \sigma (Y_r)) \diamond dW_{r}\\
    & + \frac{1}{2} \int_s^t D^{2}F_{r} (Y_r)\left( ( \beta_{r}, \beta_{r}) -(\sigma(Y_{r}), \sigma(Y_{r})) \right) dr, 
  \end{aligned}  
  \end{equation}
  where $Y$ denotes a generic Itô process $dY_{t} = b_{t} dt + \beta_{t} dW_{t}$ (remarkably, with no Malliavin smoothness assumptions on $\beta$) and 
  $F_{t}(x)\coloneqq f(X_{T}^{t,x})$, for suitable test fields $f$ and (well-posed) SDE solutions flow induced by
      \begin{equation*}
      dX_{s}^{t,x}=\mu(X^{t,x}_{s})ds + \sigma(X^{t,x}_{s}) dW_{s}; \quad X^{t,x}_{t}=x.
      \end{equation*}
  Taking a well-known RDE perspective on this SDE, which can be recovered anytime from Brownian randomization, but keep $Y$ as generic Itô process, we provide
  a first rough path view \eqref{eq:intro_IAG} in \cref{cor:RSAG_formula}, a full understanding of the resulting terms on the right-hand side, after randomization is then
  achieved in \cref{theorem:IAG}, embracing the fact that consistent randomization requires independence (used to much benefit in recent works like ~{\cite{flz24,friz_randomisation_2025-1,bugini_rough_2025}}), which is here only available in a very localized sense, making the analysis very subtle, but still possible.
  This leads us to a (much) simplified proof of the IAG formula \eqref{eq:intro_IAG}, which 
  only requires $\beta, \beta^{\perp}, b \in L^{1+}([0,T]\times \Omega; \operatorname{Leb}\times \mathbb{P})$, reducing integrability assumptions on the It{\^o}
  characteristics of the comparison process $Y$ in the IAG setting, thereby answering positively a conjecture made in~{\cite[Remark~3.2]{HHJM24}}.

As also noted in \cite{DELMORAL2022197}, the IAG formula sits somewhat ``in between'' forward-backward Itô-Wentzell- and stochastic interpolation formulas, discussed in \cite{DELMORAL2022197} and also \cref{sec:csn},
with many pointer to the literature, emphasizing the general use of such methods, notably to stochastic numerics and limit theorems where forward-backward error expansions can be most useful. 
  \par 
In \cref{subsection:stochastic_interpolation_formula} we briefly discuss how rough paths techniques relate to anticipating Stratonovich integration, making the link to \cite{Coutin2007}. In particular, 
we see that the Skorokhod stochastic interpolation formula that appears in \cite{DELMORAL2022197} can also be derived from this perspective. 
\subsection{Comments and comparisons (\cref{sec:comments})} We conclude the main body of this article with a detailed literature review highlighting further the relation to the present work.
\subsection{Supplementary materials (\cref{sec:besov-spaces} and \cref{appendix:section:On RDE-flows for non-autonomous vector fields})}
In \cref{sec:besov-spaces}, we present a generalization of the rough path Kolmogorov criterion \cite[Theorem 3.1]{friz_2020}, which  accommodates  higher order estimates and more intricate algebraic structures, and is applied throughout \cref{section:On Rough Stochastic Calculus}. In \cref{appendix:section:On RDE-flows for non-autonomous vector fields}, we establish results on RDEs along non-autonomous vector fields. In particular, motivated by the notion of stochastic controlled vector fields introduced in \cite{friz_2021} and the controlled fields of \cref{section:Introduction to controlled fields}, we introduce a natural class of controlled fields that serve as coefficient fields for RDEs.  We also provide a quantitative analysis of the regularity of the associated solution flows. Notably, our proofs follow a more functional-analytic approach, in contrast to the limiting arguments employed in \cite[Chapter 11]{Friz_Victoir_2010}.

\medskip 

\noindent 
\textbf{Acknowledgments:}
JRD acknowledges current support by the DFG - Project-ID \\
410208580 - IRTG2544 (“Stochastic Analysis in Interaction”) and former support by the Berlin Mathematical School through a 6-months PhD fellowship. PKF acknowledges funding by the Deutsche Forschungsgemeinschaft (DFG, German Research Foundation) – CRC/TRR 388 ``Rough Analysis, Stochastic Dynamics and Related Fields'' – Project ID 516748464. AJ has been partially supported by the National Science Foundation of China (NSFC) under grant number W2531010. AJ also gratefully acknowledges the Cluster of Excellence EXC 2044/2-390685587, Mathematics Münster: Dynamics-Geometry-Structure funded by the Deutsche Forschungsgemeinschaft (DFG, German Research Foundation). JS is partially supported by NSFC (No.\ 12471142) and the Fundamental Research Funds for the Central Universities.

\section{Notation}
\subsection{Linear Algebra:}
In the following we assume familiarity with direct sums and tensor products of vector spaces see \cite{bayer_introduction_2026}. 
In the following let $V, W, U$ be some finite-dim.\ real Banach spaces. We denote their direct sum by $V\oplus W$ and tensor product by $V \otimes W$. We denote by $\mathcal{L}(V; W)$ the space of linear maps $A: V \to W$ and by $\operatorname{Bil}(V \times W; U)$ the space of bilinear maps $A: V \times W\to U$.  For any $v\otimes w\in V\otimes W$ denote by $(v \otimes w)^{\top}\coloneqq w \otimes v$ and for any $A\in \mathcal{L}(V \otimes W; U)$ we denote by $A^{\top}$ the unique linear map $A^{\top}\in \mathcal{L}(W\otimes V; U)$ such that $A(v \otimes w)=A^{\top}(w \otimes v)$ for any $v \in V, w \in W$. For $v_{1}\otimes v_{2} \in V\otimes V$  denote $\operatorname{Sym}(v_{1}\otimes v_{2})\coloneqq \frac{1}{2}\left(v_{1}\otimes v_{2}+ (v_{1} \otimes v_{2})^{\top}\right)$. We denote by $\mathcal{S}(V \otimes V; W)\subset \mathcal{L}(V \otimes V; W)$ the subset of symmetric linear maps.\newline
Given vector spaces $V_{1}, V_{2}$ and $A_{i,j}\in \mathcal{L}(V_{i}\otimes V_{j}; W)$ for $1\leq i,j\leq 2$ we define the block-matrix operator $A \in \mathcal{L}((V_{1} \oplus V_{2})^{\otimes 2}; W)$ by 
\begin{equation*}
    A((v_{1}\oplus v_{2})^{\otimes 2})\coloneqq \left(\begin{array}{cc}
    A_{1, 1} & A_{1,2}\\
    A_{2, 1} & A_{2,2}
    \end{array}\right): ((v_{1}\oplus v_{2})^{\otimes 2})\coloneqq \sum_{1\leq i, j\leq 2} A_{i,j}(v_{i} \otimes v_{j}),
\end{equation*}
and for $A_{i}\in \mathcal{L}(V_{i}; W)$ the block-vector operator $ A\in \mathcal{L}(V_{1}\oplus V_{2}; W)$ by
\begin{equation*}
    A(v_{1} \oplus v_{2}) \coloneqq \left(\begin{array}{c}
    A_{1}\\
    A_{2}
    \end{array}
    \right)\cdot
    (v_{1} \oplus v_{2})\coloneqq A_{1} v{1} + A_{2} v_{2}.
\end{equation*}
One immediately checks that these notations are generalizations of the Fréchet- and scalar-product from classical linear algebra. It will however often be helpful to perform calculations in the more compact tensor notation even when working in $\mathbb{R}^{d}$. To this end recall that for $x \in \mathbb{R}^{d_{x}}, y \in \mathbb{R}^{d_{y}}$ the tensor-product is defined by $x \otimes y\coloneqq x y^{\top}$. 
\par 
\subsection{Probability Theory:} We consider a fixed time-horizon $T>0$ and filtered probability space $(\Omega, \mathfrak{F}, (\mathfrak{F}_{t})_{t \in [0,T]}, \mathbb{P})$ satisfying the ``usual assumptions''. 
By a localizing sequence $(\tau_{k})_{k 
\in \mathbb{N}}$ we mean a sequence of stopping times w.r.t.\ $(\mathfrak{F}_{t})$ such that $\tau_{k}< \infty $, $ \sup_{k \in \mathbb{N}} \tau_{k}=\infty $, 
and $ \tau_k \leq \tau_{ k + 1 } $ a.s.\ for any $k \in \mathbb{N}$. 
For any stochastic process $Y$ and stopping time $\tau$ we denote by 
$Y^{\tau}_{\cdot}\coloneqq Y_{\cdot \wedge \tau}$ the stopped process. The It\^{o}-integral of a suitable integrand $\beta$ w.r.t. a suitable integrator $M$ is sometimes denoted by $\beta \bullet M$.
\par 
\subsection{Malliavin Calculus:} 
Let $(W_t)_{t\in [0,T]}$ be a Brownian Motion w.r.t. $(\Omega, \mathfrak{F}, (\mathfrak{F}_{t})_{t \in [0,T]}, \mathbb{P})$. 
We use $\mathbf DF$ to denote the Malliavin derivative of a random variable $F\in \sigma\{W_s, s\in[0,T]\}$. Denote by $H:=L^2(0,T)$ the Hilbert space associated with the Brownian motion on $[0,T]$. Then, $\mathbf DF=\{\mathbf D_s F, s\in[0,T]\}\in H$ a.s.  For a fixed $p\ge1$, let $\mathcal D^{1,p}$ be the space of all random variables $F$ satisfying 
\[ \|F\|^p_{1,p}:= \mathbb E \left[|F|^p+ \|\mathbf DF\|^p_{H}\right]<\infty.\]
Similarly,  we use $\mathcal D^{1,p}(H)$ to denote the set of all $H$-valued random elements $u$ satisfying 
\[  \mathbb E \left[\|u\|_H^p+ \|\mathbf Du\|^p_{H^{\otimes 2}}\right]<\infty.\]
For $1<p,q<\infty$ satisfying $\frac1p+\frac1q=1$, we denote by $\mathrm{Dom}_p(\bd)$ the set of all elements $u\in L^p(\Omega;H)$ for which  there exists a unique $\bd(u)\in L^p(\Omega)$  such that 
\[ \E[F\bd(u)]=\E\langle \mathbf D F, u\rangle_H, \text{ for all } F\in \mathcal D^{1,q}.\]
We call $\bd$ the divergence operator and $\bd(u)$ a  Skorokhod integral.  Noting that $\mathcal D^{1,q}$ is dense in $L^q(\Omega)$, the divergence operator $\bd$ is closed, i.e., if $u_n\to u$ in $L^p(\Omega;H)$ and $\bd(u_n) \to G$ in $L^p(\Omega)$ as $n\to \infty$, then we have $u\in \mathrm{Dom}_p(\bd)$ and $\bd(u)=G$.   For $u\in \mathrm{Dom}_p(\bd)$, we also denote $\bd(u)$ by $\int_0^T u_s  \diamond dW_s$.  
\par
\subsection{Path spaces:}
Consider a fixed time-horizon $T>0$. Let
$
\Delta_T \coloneqq \{(s,t): 0 \le s < t \le T \}.
$
Let $\gamma \in (0, \infty)$. We say that $X \in \mathcal{C}^{\gamma}([0,T];V)$ if it
is $\lfloor \gamma \rfloor$ times continuously differentiable with $\lfloor
\gamma \rfloor$-th derivative H\"older continuous of exponent $\{ \gamma \} =
\gamma - \lfloor \gamma \rfloor \in (0, 1]$. (In particular, $X \in
\mathcal{C}^1$ means Lipschitz rather than continuously differentiable.) For $\gamma \in (0,1]$ we denote by $\Vert X \Vert_{\gamma; [0,T]}$ the $\gamma$-Hölder semi-norm on $[0,T]$. For $A: \Delta_{T}\to V$ and $\gamma \in (0, \infty)$ we say that $A\in \mathcal{C}^{\gamma}_{2}([0,T]; V)$ if $\sup_{(s,t) \in \Delta_{T}}\frac{|A_{s,t}|}{|t-s|^{\gamma}}< \infty$.\\
For a path $X: [0,T]\to V$ we denote by $\delta X_{s,t}\coloneqq X_{t}-X_{s}$ and for $A:\Delta_{T}\to V$, $\delta A_{s,u,t}\coloneqq A_{s,t}-A_{s,u}-A_{u,t}$.
\

\subsection{Rough Paths:}
We refer to \cite{friz_2020} for the (standard) notation of a H\"older rough path and its bracket, 
$$\mathbf{X}=(X, \mathbb{X})\in \mathscr{C}^{\alpha}([0,T]; V), \qquad 
[\mathbf{X}] := (\delta X) \otimes (\delta X) - 2 \operatorname{Sym}(\mathbb{X}).$$
Imposing $[\mathbf{X}]\equiv 0$ yields $\mathscr{C}_g^{\alpha}$, the space of weakly geometric rough paths, and by $\mathscr{C}^{0, \alpha}_{g}$ we denote the space of geometric rough paths. Also set
\[ \mathscr{C}^{\alpha ; \beta} \coloneqq \{ \mathbf{X} \in \mathscr{C}^{\alpha} :
   [\mathbf{X}] \in \mathcal{C}^{\beta} \}. \]
When $\beta \ge 1$, the bracket is (at least) Lipschitz continuous, and we write $\dot{[\mathbf{X}]}$  for its derivative. We denote by $\mathscr{C}^{0,\alpha,1}$ the (Polish) space of rough paths with continuously differentiable bracket.
The space of   
$X$-controlled (resp.\ strongly $\mathbf{X}$-controlled) rough paths is denoted by $\mathscr{D}^{2\alpha}_{X}([0,T]; W)$ (resp.\ $ \mathscr{D}^{3\alpha}_{\mathbf{X}}([0,T]; W)$), 
details left to \cref{section:Introduction to controlled fields}.



\section{Space-time controlled fields}
\addtocontents{toc}{\protect\setcounter{tocdepth}{2}}
\label{section:Introduction to controlled fields}
In the following, let $V, W, U$ denote finite-dim.\footnote{Extensions to infinite-dim.\ are possible but not of relevance in this work.}\ Banach spaces. Throughout this section fix $\alpha\in ( 0, 1]$. 
Consider a two-parameter in space and time process 
$A \colon \Delta_{T}\times V\times V \to W$. 
We define for $\mathfrak{K}\subset V$ and fixed $T>0$: 
\begin{equation}
\label{def:brackets}
\begin{aligned}
|A|_{\infty;T, \mathfrak{K}}\coloneqq \sup_{x, y \in \mathfrak{K}} \sup_{(s, t) \in \Delta_{T}} |A_{s,t}(x,y)|;& \qquad
|A|_{\infty, k;T,\mathfrak{K}}\coloneqq \sup_{x, y \in \mathfrak{K}; x \neq y} \sup_{(s, t) \in \Delta_{T}} \frac{ |A_{s,t}(x,y)|}{|x-y|^{k}} \\
|A|_{k, \infty; T,  \mathfrak{K}}\coloneqq \sup_{x, y \in \mathfrak{K}} \sup_{(s, t) \in \Delta_{T}} \frac{ |A_{s,t}(x,y)|}{| t-s|^{\alpha k}};& \qquad
    |A|_{k; T,  \mathfrak{K}}\coloneqq  \sup_{x, y \in \mathfrak{K}; x \neq y} \sup_{(s, t) \in \Delta_{T}} \frac{ |A_{s,t}(x,y)|}{| t-s; x-y|_{\mathfrak{s}}^{k}},
\end{aligned}
\end{equation}
where we consider the anisotropic scaling 
\begin{equation*}
| t; x |_{\mathfrak{s}} \coloneqq  |t|^{\alpha} \vee | x|. 
\end{equation*}
When $\mathfrak{K}=V$, we write $|A|_{\infty}=|A|_{\infty; V}$ and analogously for the other objects in \eqref{def:brackets}. We say that for two two-parameter processes with spatial arguments $A, B \colon \Delta_{T}\times V \times V\to W$ it holds
\begin{equation}\label{e:k=}
\begin{aligned}
A_{s,t}(x,y)&\stackrel{k}{=}B_{s,t}(x,y) \; \text{on} \; [0,T]\times \mathfrak{K} :\Leftrightarrow |A-B|_{ k; T, \mathfrak{K}}<\infty\\
A_{s,t}(x,y)&\stackrel{k}{=}B_{s,t}(x,y) :\Leftrightarrow A_{s,t}(x,y)\stackrel{k}{=}B_{s,t}(x,y) \; \text{on} \; [0,T]\times V.
\end{aligned}
\end{equation} 
We recall the classical notion of Lipschitz functions; see, e.g., \cite[Definition~1.21]{LyonsCaruanaLevy2007}.
\begin{definition}
\label{def:Stein_Lip_def}
Let $k\geq 0$ be an integer and let $\gamma\in (k,k+1]$.
Let $\mathfrak{K}\subset W$ be closed and $f\colon \mathfrak{K}\to U$ be a function. For each integer
$j=1,\dots,k$, let
\[
\partial^{j} f \colon \mathfrak{K} \longrightarrow \mathcal{L}\bigl(W^{\otimes j};U\bigr)
\]
take values in the space of \emph{symmetric} $j$-linear mappings from $W$ to $U$.
We write \\
$(f,\partial^{1}f,\dots,\partial^{k}f)\in \Lip^{\gamma}(\mathfrak{K}; U)$, if $f$ is bounded on $\mathfrak{K}$ and there exists a constant
$M>0$ such that, for each $j=1,\dots,k$,
\begin{equation}
\label{def:Stein_Lip1}
\sup_{x\in \mathfrak{K}} |\partial^{j} f(x)| \le M,
\end{equation}
and there exist maps
\[
R_j\colon W\times W \longrightarrow \mathcal{L}\bigl(W^{\otimes j}; U\bigr)
\]
such that, for all $x_0,x_1\in \mathfrak{K},v\in W^{\otimes j}$ and $j=0, \dots, k$ (setting $\partial^{0}f\equiv f$):
\begin{align}
\partial^{j} f(x_1)(v)
&=
\sum_{l=0}^{k-j}\frac{1}{l!}\,
\partial^{j+l} f(x_0)\bigl(v\otimes (x_1-x_0)^{\otimes l}\bigr)
\;+\;
R_j(x_0,x_1)(v), \label{eq:lipgamma-taylor}
\\
|R_j(x_0,x_1)|
&\le
M |x_{1}-x_{0}|^{\gamma-j}\label{eq:lipgamma-rem}
\end{align}
In our notation, this is equivalent to saying that on $\mathfrak{K}$ it holds
$$
\partial^{j} f(x_1)(\cdot) \
\stackrel{\gamma -j }{=} \ \ 
\sum_{l=0}^{k-j}\frac{1}{l!}\,
\partial^{j+l} f(x_0)\bigl((\cdot)\otimes (x_1-x_0)^{\otimes l}\bigr).
$$
We usually say that $f$ is $\Lip^{\gamma}(\mathfrak{K}; U)$ without mentioning explicitly
$\partial^{1}f,\dots,\partial^{k}f$. The smallest constant $M$ for which
\eqref{def:Stein_Lip1}--\eqref{eq:lipgamma-rem} holds is denoted by $[f]_{\Lip^{\gamma}; \mathfrak{K}}$ and $|f|_{\Lip^{\gamma}; \mathfrak{K}}\coloneqq \sup_{v \in \mathfrak{K}} |f(v)|+[f]_{\Lip^{\gamma}; \mathfrak{K}}$.
\end{definition}
\begin{example} 
\label{ex:multivariate_calculus}
It will be instructive to consider a multivariate calculus example, with $f = f (x, y)$, defined on $V
=\mathbb{R}^{d_{x}}\times \mathbb{R}^{d_{y}}$, of $\Lip^3$-regularity, and values in $W =\mathbb{R}$.
Then the Jacobian $\partial f (x, y) \in \mathcal{L} (V; \mathbb{R})$ which we represent as $(d_{x} +
d_{y})$-dimensional row vector,
\[ \partial f (x, y) = \left( \begin{array}{c|c}
      \partial_x f & \partial_{y} f
   \end{array} \right) \]
omitting the base point $(x, y)$ on the right-hand side. Similarly, the Hessian
$\partial^2 f (x, y) \in \mathcal{L} (V; \mathcal{L} (V;  \mathbb{R})) \cong \mathcal{L} (V \otimes V;
\mathbb{R})$ has (symmetric) block-matrix representation
\[ \partial^2 f = \left( \begin{array}{c|c}
     \partial^2_x f & \partial_x \partial_y f\\
     \hline
     \partial_y \partial_x f & \partial^2_y f
   \end{array} \right) = \left( \begin{array}{c|c}
     \partial^2_x f & (\partial_y \partial_x f)^{\top}\\
     \hline
     \partial_y \partial_x f & \partial^2_y f
   \end{array} \right) . \]
Note $\partial_y \partial_x f(x,y) \in \mathcal{L} (\mathbb{R}^{d_y} \otimes \mathbb{R}^{d_x}; 
\mathbb{R}) \cong \mathrm{Bil}(\mathbb{R}^{d_y} \times \mathbb{R}^{d_x};  
\mathbb{R})$. Then \begin{eqnarray*}
  f (x_1, y_{1}) & \overset{3}{=} & 
  f (x_{0}, y_{0}) + \partial_{x} f (x_{0}, y_{0}) \Delta x+
  \partial_y f (x_{0}, y_{0}) \Delta y \nonumber \\
  &  & + \frac12\partial_{x}^2 f(x_{0}, y_{0})
  (\Delta x)^{\otimes 2} + \partial_y \partial_x f(x_{0}, y_{0}) (\Delta y, \Delta x)  + \frac{1}{2}
  \partial_y^2 f(x_{0}, y_{0}) (\Delta y)^{\otimes 2}; \nonumber \\
  \partial_x f (x_{1}, y_{1}) & \overset{2}{=} & \partial_x f(x_{0}, y_{0}) +
  \partial_x^2 f(x_{0}, y_{0}) \Delta x + \partial_y \partial_x f
  (x_{0}, y_{0}) \Delta y; \nonumber \\
  \partial_y f (x_1, y_{1}) & \overset{2}{=} & \partial_y f(x_{0}, y_{0}) +
  \partial_x \partial_y f(x_{0}, y_{0})  \Delta x + \partial_y^2 f
  (t_0, x_0) \Delta y \nonumber \\
  & = & \partial_y f(x_{0}, y_{0}) +
  \left(\partial_{y} \partial_x f(x_{0}, y_{0}) \right)^{\top}  \Delta x + \partial_y^2 f
  (t_0, x_0) \Delta y, \nonumber
\end{eqnarray*} 
where we substituted $\Delta x= x_{t_{1}}-x_{t_{0}}, \Delta y=y_{t_{1}}- y_{t_{0}}$ for brevity.
\end{example}
\

We now discuss controlled rough paths. The following definition is classical (see \cite{GUBINELLI2004} or \cite[Def 4.6]{friz_2020}). 
\begin{definition}
    Let $X \in \mathcal{C}^{\alpha}([0,T]; V)$. Then we call a tuple $\mathcal{Y}=(Y, Y')$ of paths 
    \begin{equation*}
    \begin{aligned}
        \mathcal{Y}: [0,T]&\to W \times \mathcal{L}(V; W)\\
        \quad t &\mapsto (Y_{t}, Y'_{t}),
        \end{aligned}
    \end{equation*}
    an $X$-\textit{controlled rough path} if, on $[0,T]$,
    \begin{equation*}
    \begin{aligned}
        Y_{t_{1}}\stackrel{2}{=} Y_{t_{0}} + Y'_{t_{0}} \delta X_{t_{0}, t_{1}}; \quad Y'_{t_{1}}\stackrel{1}{=}Y'_{t_{0}}.
    \end{aligned}
    \end{equation*}
    Equivalently
    \begin{equation*}
        \left |R^{Y, Y'}\right |_{2}\coloneqq \sup_{(t_{0},t_{1}) \in \Delta_{T}} \frac{|\delta Y_{t_{0}, t_{1}} - Y'_{t_{0}} \delta X_{t_{0}, t_{1}}|}{|t_{1}-t_{0}|^{2\alpha}}< \infty; \quad [Y']_{1}\coloneqq \sup_{(t_{0}, t_{1})\in \Delta_{T}}\frac{|\delta Y'_{t_{0}, t_{1}}|}{|t_{1}-t_{0}|^{\alpha}}<\infty.
    \end{equation*}
    We denote the space of $W$-valued, $X$-controlled rough paths by $\mathscr{D}^{2\alpha}_{X}([0,T]; W)$ and further define the seminorm
    \begin{equation}\label{e:semi-norm}
        \left [Y, Y'\right]_{X; 2}\coloneqq \left | R^{Y, Y'} \right|_{2}+ [ Y' ]_{1}.
    \end{equation}
\end{definition}

Higher-order controlled paths are also well-known (see \cite{Trees} or \cite[Secticon 4.5]{friz_2020}), and  the relevant variant for our purposes is given by
the following notion. 
\begin{definition}
\label{def:HOcrp}
    Let $\mathbf{X}=(X, \mathbb{X})\in \mathscr{C}^{\alpha;1+\alpha}([0,T]; V)$. We call a $4$-tuple $\mathcal{Y}=(Y, Y', Y'', \dot{Y})$ of paths
   \begin{equation*}
   \begin{aligned}
       \mathcal{Y} : [0,T] &\to W \times
  \mathcal{L}  (V ; W) \times \mathcal{L} (V^{\otimes 2} ; W) \times W \\
  t &\mapsto  (Y_{t}, Y'_{t}, Y''_{t}, \dot{Y}_{t}),
  \end{aligned}
   \end{equation*}
  a \textit{strongly} $\mathbf{X}$-\textit{controlled rough path} if, on $[0,T]$,
  \[ \begin{array}{lll}
       Y_{t_{1}} & \stackrel{3 }{=} & Y_{t_{0}} + Y_{t_{0}}' \delta X_{t_0, t_1} + Y''_{t_{0}} 
       \mathbb{X}_{t_0, t_1} + \dot{Y}_{t_{0}} (t_1 - t_0),\\
       Y_{t_{1}}' & \stackrel{2 }{=} & Y'_{t_{0}} + Y''_{t_{0}} \delta X_{t_0, t_1},\\
       ( Y_{t_{1}}'', \dot{Y}_{t_{1}} )& \stackrel{1}{=} & ( Y_{t_{0}}'', \dot{Y}_{t_{0}} ).
     \end{array} \] 
     Or equivalently $\dot{Y}\in \mathcal{C}^{\alpha}([0,T]; W)$, $(Y', Y'')\in \mathscr{D}^{2\alpha}_{X}([0,T]; \mathcal L(V;W))$ and 
     \begin{eqnarray*}
     \left | R^{Y, Y', Y'', \dot{Y}} \right |_{\mathbf{X}; 3}&\coloneqq& \sup_{(t_{0}, t_{1}) \in \Delta_{T}} \frac{|\delta Y_{t_{0}, t_{1}}- Y_{t_{0}}' \delta X_{t_0, t_1} - Y''_{t_{0}} 
       \mathbb{X}_{t_0, t_1} - \dot{Y}_{t_{0}} (t_1 - t_0)|}{|t_{1}-t_{0}|^{3\alpha}}<\infty.
     \end{eqnarray*}
     We denote the space of such $W$-valued, strongly $\mathbf{X}$-controlled rough paths by $ \mathscr{D}^{3\alpha}_{\mathbf{X}}([0,T]; W)$. Further we define the seminorm
\begin{equation*}
    \left [ Y, Y', Y'', \dot{Y}\right]_{\mathbf{X}; 3}\coloneqq\left |R^{Y, Y', Y'', \dot{Y}} \right|_{\mathbf{X}; 3}+[Y', Y'']_{X; 2}+ [  \dot{Y}]_{1}. 
\end{equation*}
\end{definition}

\begin{remark}
\label{rem:rightpoint_expansion}
    Note that in the above we assumed that $t_{0}<t_{1}$, which corresponds to the case of left-point integration in Riemann-type integrals. However, we will encounter many examples with right-point (backward) expansions of the form
\begin{equation}
\label{rem:eq_rightpoint_expansion}
\left.
\begin{array}{lll}
       Y_{t_{0}} & \stackrel{3 }{=} & Y_{t_{1}} + Y_{t_{1}}' \delta X_{t_0, t_1} + Y''_{t_{1}} 
       \mathbb{X}_{t_0, t_1} + \dot{Y}_{t_{1}} (t_1 - t_0),\\
       Y_{t_{0}}' & \stackrel{2 }{=} & Y'_{t_{1}} + Y''_{t_{1}} \delta X_{t_0, t_1},\\
       ( Y_{t_{0}}'', \dot{Y}_{t_{0}} )& \stackrel{1}{=} & ( Y_{t_{1}}'', \dot{Y}_{t_{1}} ).
     \end{array} 
     \right\}
\end{equation}
for $t_{0}< t_{1}$. Under the additional assumption that
$(Y', -(Y'')^{\top})\in \mathscr{D}^{2\alpha}_{\mathbf{X}}$  by using Chen's relation and  noting $\mathbb{X}_{t_{1}, t_{0}}=\mathbb{X}_{t_{0}, t_{1}}^{\top}$  for (weakly) geometric $\mathbf{X}$, we can rewrite \cref{rem:eq_rightpoint_expansion} as 
\begin{equation*}
\begin{array}{lll}
       Y_{t_{1}} &\stackrel{3 }{=} &Y_{t_{0}} - Y_{t_{0}}' \delta X_{t_{0}, t_{1}} + (Y''_{t_{0}})^{\top} 
       \mathbb{X}_{t_{0}, t_{1}} - \dot{Y}_{t_{0}} (t_1 - t_0),\\
       Y_{t_{1}}' & \stackrel{2 }{=} & Y'_{t_{0}} - (Y'')^{\top}_{t_{0}} \delta X_{t_0, t_1},\\
       ( Y_{t_{1}}'', \dot{Y}_{t_{1}} )& \stackrel{1}{=} & ( Y_{t_{0}}'', \dot{Y}_{t_{0}} )
     \end{array} 
\end{equation*}
and so 
\begin{equation*}
(Y,-Y', (Y'')^{\top}, -\dot Y) \in \mathscr{D}^{3 \alpha}_{\mathbf{X}}, 
\end{equation*}
which is analogous to Remarks~5.13 and~12.4 in \cite{friz_2020}. 
\end{remark}

\

We now define an analogous notion for fields with space-time dependence.
\begin{definition}
  \label{def:C3_jet} Let $\mathfrak{K}\subset W$ be closed. We call the $7$-tuple $\mathcal{F}=(F, F', \partial F, F'',  \partial{F}', \partial^{2} F, \dot{F})$ of functions
  \begin{equation}  \label{def:dim_space_time_controlled}
  \begin{aligned}
      \mathcal{F}: [0,T] \times \mathfrak{K} &\to U \times \mathcal{L} (V ; U)
     \times \mathcal{L} (W ; U) \times \mathcal{L} (V^{\otimes 2} ; U) \times \mathcal{L} ( W\otimes V ; U) \times \mathcal{S} (W^{\otimes 2} ; U) \times U\\
     (t,x)  &\mapsto (F_{t}(x) , F'_{t}(x), \partial F_{t}(x), F''_{t}(x), \partial F'_{t}(x), \partial^2
     F_{t}(x), \dot{F}_{t}(x))  
  \end{aligned}
  \end{equation}
  a {\em strongly} $\mathbf{X}$-{\em controlled} 
  $\Lip^3$ 
  {\em field} (in the following referred to as a \textit{controlled field}), on $[0,T]\times \mathfrak{K}$, if it holds on $[0,T]\times \mathfrak{K}$,
  \begin{equation}
  \left. \label{def:C3_jets_cascade}
  \begin{array}{rll}
    F_{t_{1}}(x_{1}) & \stackrel{3}{=} & F_{t_{0}}(x_{0}) + F'_{t_{0}}(x_{0}) \delta X_{t_0, t_1} + \partial F_{t_{0}}(x_{0}) (x_1 -
    x_0)+ F''_{t_{0}}(x_{0})  \mathbb{X}_{t_0, t_1}
    \\ 
    && + \partial F'_{t_{0}}(x_{0}) \left( x_1 - x_0, \delta X_{t_{0}, t_{1}}\right) + \frac{1}{2} \partial^2 F_{t_{0}}(x_{0})(x_{1}-x_{0})^{\otimes 2} \\
    &&+
    \dot{F}_{t_{0}}(x_{0}) (t_1 - t_0),\\
    && \\
    F'_{t_{1}}(x_{1})& \stackrel{2}{=} & F'_{t_{0}}(x_{0})+ F''_{t_{0}}(x_{0})\delta X_{t_0, t_1} + \partial F'_{t_{0}}(x_{0}) (x_{1}-x_{0}), \\
    \partial F_{t_{1}}(x_{1})& \stackrel{2}{=} & \partial F_{t_{0}}(x_{0})+ (\partial F'_{t_{0}}(x_{0}))^{\top} \delta X_{t_0, t_1}
    + \partial^2 F_{t_{0}}( x_{0}) (x_1 - x_0),\\
    && \\
     G_{t_{1}}(x_{1}) &\stackrel{1}{=} &  G_{t_{0}}(x_{0}), \; \; \; \;  G \in \{ F'', \partial F', \partial^{2} F, \dot{F} \}.
  \end{array}
  \right  \}
  \end{equation}
   We denote the space of such $\mathbf{X}$-controlled fields  by $\mathscr{D}^{3 \alpha}_{\mathbf{X}}\Lip^{3}_{x}(\mathfrak{K}; U)$. If $\mathcal{F}\in \mathscr{D}^{3\alpha}\Lip^{3}_{x}(\mathfrak{K}; U)$ for any compact subset $\mathfrak{K}\subset W$, we say $\mathcal F\in \mathscr{D}^{3 \alpha}_{\mathbf{X}}\Lip^{3}_{x, \loc}(W; U)$\footnote{This of course implies that $\mathcal{F}$ needs to be well-defined on any compact subset of $W$ e.g. as the restriction of a field defined on $W$.}.
\end{definition}
\begin{remark} We insist that the ``sewing condition'' $\alpha > 1/3$ plays no role in the definition of 
strongly controlled paths or controlled fields. That said, if $\alpha > 1/3$, then $Y$ of \cref{def:HOcrp} 
(resp.\ $ F $ in \cref{def:C3_jet}) 
can be written as a rough integral, 
\[ Y_{t_1} - Y_{t_0} = \int_{t_0}^{t_1} (Y_s', Y_s'') d \mathbf{X}_s +
   \int_{t_0}^{t_1} \dot{Y_s} d s \]
and, respectively, as
\[ F_{t_1} (x) - F_{t_0} (x) = \int_{t_0}^{t_1} (F_s', F_s'') (x) d
   \mathbf{X}_s + \int_{t_0}^{t_1} \dot{F_s} (x) d s. \]
In this sense, (rough) It\^o formulas are essentially composition rules for expansions, rather than integral identities; the latter are merely consequences when suitable sewing conditions are met. 
\end{remark}
\begin{remark}
\label{rem:partial vs. Frechet}
By \cref{def:Stein_Lip_def}, we see immediately that for $\mathcal{F}\in \mathscr{D}^{3\alpha}_{\mathbf{X}}\Lip^{3}_{x, \loc}(W; U)$, it follows $(F_t, \partial F_t, \partial^{2} F_t) \in  \Lip^{3}(\mathfrak{K}; U)$ for any compact $\mathfrak{K}\subset W$ and each fixed $t\in[0,T]$. In particular, following Remark~1.23 in \cite{LyonsCaruanaLevy2007}, $\partial F$ and  $\partial^{2} F$ agree respectively with the Fréchet derivatives $DF$ and $D^{2}F$ on the interior of $\mathfrak{K}$ (which however may be empty)\footnote{Throughout this work $DF$ and $D^{2}F$ will always denote elements of $\mathcal{L}(W; U)$ resp. $\mathcal{L}(W^{\otimes 2}; U)$ instead of their (equivalent) representation in terms of vectors (resp. matrices). }. If $\mathcal{F}\in \mathscr{D}^{3\alpha}_{\mathbf{X}}\Lip^{3}_{x}$ then this agreement holds true on all of $W$. 
\end{remark}
\begin{remark}
\label{rem:commute}
Note that in \cref{def:C3_jet}, we explicitly set the Gubinelli derivative of $ \partial F $ to be $(\partial F)'\coloneqq (\partial F')^{\top}$ in \cref{def:C3_jets_cascade}, where we insist that $\partial F'= \partial (F')$ (which agrees with $D (F')$ on open sets). This choice is rooted in calculus, similar to the familiar chain rule for Gubinelli derivatives ($f(Y)'=Df (Y) Y'$, for, e.g., $f\in \Lip^2$ say); cf.\ \cref{ex:multivariate_calculus_ccontd} below. We note that the relation $(\partial F)' = (\partial F')^{\top}$  also appears in \cite{castrequini_2025}, and in fact much earlier \cite{keller_zhang_2016}, there however stated as a result, rather than a choice, in a ``truly rough'' setting \cite[Section~6.2]{friz_2020} when the Gubinelli derivative can be shown to be unique. (This assumption has been crucial in H\"ormander theory via rough paths, \cite[Section~11.3]{friz_2020} and references therein, but is not suitable to study stability and approximation results.)

\end{remark}

\begin{example} \label{ex:multivariate_calculus_ccontd}
Again consider $f\in \Lip^{3}(\mathbb{R}^{d_{x}}\oplus \mathbb{R}^{d_{y}}; \mathbb{R})$ as in \cref{ex:multivariate_calculus} and 
a controlled rough path 
$(Y, Y')\in \mathscr{D}^{2\alpha}_{X}([0,T]; \mathbb{R}^{d_{y}})$ w.r.t.\ $\mathbf{X}\in \mathscr{C}^{\alpha}([0,T]; \mathbb{R}^{d_{X}})$. Define $F_{t}(x)\coloneqq f(x, Y_{t})$. We know (\cite[Chapter 7]{friz_2020}) that for any $x\in \mathbb{R}^{d_{x}}$ we have $(F(x), F'(x))\in \mathscr{D}^{2\alpha}_{X}([0,T]; \mathbb{R})$ with $F'_{t}(x)\coloneqq \partial_{y} f(x, Y_{t}) Y'_{t}\in \mathcal{L}(\mathbb{R}^{d_{X}}; \mathbb{R})$. Then, 
$\partial_{x} F'_{t}(x)=\partial_{x} \partial_{y} f(x, Y_{t}) Y'_{t} \in \mathcal{L}(\mathbb{R}^{d_{x}}; \mathcal{L}(\mathbb{R}^{d_{X}}; \mathbb{R}))\cong\mathcal{L}(\mathbb{R}^{d_{x}} \otimes \mathbb{R}^{d_{X}} ; \mathbb{R})$ or explicitly
\begin{equation*}
    \partial_{x} F'_{t}(x)(w \otimes v)= \partial_{x} \partial_{y} f(x, Y_{t}) (w\otimes  (Y'_{t} v)), ~ \text{ for } w \otimes v \in \mathbb{R}^{d_{x}} \otimes \mathbb{R}^{d_{X}}.
\end{equation*} 
On the other hand, we also know  $(\partial_{x}F(x), (\partial_{x}F(x))')\in \mathscr{D}^{2\alpha}_{X}([0,T]; \mathcal{L}(\mathbb{R}^{d_{x}}; \mathbb{R}))$ for any $x \in \mathbb{R}^{d_{x}}$,  where $(\partial_{x}F)'_{t}(x)=\partial_{y} \partial_{x} f(x,Y_{t}) Y'_{t} \in \mathcal{L}(\mathbb{R}^{d_{X}}; \mathcal{L}(\mathbb{R}^{d_{x}}; \mathbb{R}))\cong\mathcal{L}(\mathbb{R}^{d_{X}}\otimes \mathbb{R}^{d_{x}}; \mathbb{R})$ or explicitly
\begin{equation*}
    (\partial_{x} F)'_{t}(x) (v \otimes w)= \partial_{y} \partial_{x} f(x, Y_{t}) ( (Y'_{t}v) \otimes w), ~ \text{ for } v \otimes \omega \in \mathbb{R}^{d_{X}}\otimes \mathbb{R}^{d_{x}}.
\end{equation*}
Since we know that $(\partial_{x} \partial_{y} f)^{\top}=\partial_{y} \partial_{x} f$ it follows 
\begin{equation*}
    \partial_{x} F'_{t}(x) (w \otimes v) = (\partial_{x} F)'_{t}(x) ( v \otimes w),
\end{equation*}
which means $( \partial_{x} F'_{t})^{\top}=(\partial_{x} F)'$, being the relation which we explicitly impose in \cref{def:C3_jet}.
\end{example}

\

Before giving examples of \cref{def:C3_jet}, we provide a simple criterion for verifying whether a given $7$-tuple of fields is $\mathbf{X}$-controlled.
\begin{lemma}
\label{lemma:jet_criterion}
     Let $\mathfrak{K}\subset W$ closed and $\mathcal{F}=(F, F', \partial F, F'', \partial F', \partial^{2} F, \dot{F})$ be a 7-tuple of functions on $[0,T]\times \mathfrak{K}$ of suitable dimension as in \eqref{def:dim_space_time_controlled}. 
    Let us define 
    \begin{equation*}
    \begin{aligned}
        \left [\mathcal{F} \right]_{x; \mathfrak{K}} &\coloneqq  \sup_{t\in [0,T]} \left(\left [F_{t}, \partial F_{t}, \partial^{2} F_{t} \right]_{\Lip^{3}; \mathfrak{K}}+ \left [F'_{t} , \partial F'_{t} \right ]_{\Lip^{2}; \mathfrak{K}}
        +[F''_{t
        }]_{\Lip^{1}; \mathfrak{K}}+ [\dot{F}_{t}]_{\Lip^{1};\mathfrak{K}}\right) ;\\
        \left [\mathcal{F} \right]_{t; \mathfrak{K}} &\coloneqq \sup_{x \in \mathfrak{K}} \left(\left [F(x), F'(x), F''(x), \dot{F}(x) \right]_{\mathbf{X}; 3}+ \left [\partial F(x), (\partial F')^{\top}(x) \right]_{X; 2}+[ \partial^{2} F(x)]_{1}\right).
    \end{aligned}
    \end{equation*}
    Then $\mathcal{F}\in \mathscr{D}^{3 \alpha}_{\mathbf{X}} \Lip^3_{x}(\mathfrak{K}; U)$ iff $[\mathcal{F}]_{\mathfrak{K}}\coloneqq [\mathcal{F}]_{x; \mathfrak{K}} + [\mathcal{F}]_{t; \mathfrak{K}} < \infty$. Analogously, $\mathcal{F}\in \mathscr{D}^{3 \alpha}_{\mathbf{X}} \Lip^3_{x, \loc}$ iff 
    $[\mathcal{F}]_{\mathfrak{K}}< \infty$ for any $\mathfrak{K}\subset W$ compact. In particular it holds that $[\mathcal{F}]_{\mathfrak{K}}$  bounds all expansions in  \eqref{def:C3_jets_cascade} on $[0,T]\times \mathfrak{K}$.
\end{lemma}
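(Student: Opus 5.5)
The natural route is to decompose each two-parameter (space-time) remainder in \eqref{def:C3_jets_cascade} into a purely spatial part at fixed time and a purely temporal part at fixed point, and to use the anisotropic scaling $|t_1-t_0;x_1-x_0|_{\mathfrak{s}}^k\ge \max(|t_1-t_0|^{\alpha k},|x_1-x_0|^k)$. Since $\alpha\le1$, one also has $|t_1-t_0|\lesssim |t_1-t_0|^{\alpha}$ on $[0,T]$, and the same holds for all positive powers, with constants depending on $T$.

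For the direction "$\Rightarrow$", fix first $t_0=t_1=t$. Then $\delta X=0$ and $\mathbb{X}_{t,t}=0$, so the cascade reduces exactly to the Taylor expansions of \cref{def:Stein_Lip_def}. These hold for $(F_t,\partial F_t,\partial^2F_t)$ at order $3$, for $(F'_t,\partial F'_t)$ at order $2$, and for $F''_t,\dot F_t$ at order $1$, which bounds the remainders. The bounds on the derivatives themselves (the sup of $\partial^j$) would need a separate argument. I would either read boundedness of the coefficient fields as part of the definition, or deduce it on compact $\mathfrak{K}$ from the order-$1$ Hölder statement, which controls the oscillation of $G\in\{F'',\partial F',\partial^2F,\dot F\}$. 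Similarly, fixing $x_0=x_1=x$ gives the temporal expansions, i.e.\ finiteness of $[\mathcal F]_{t;\mathfrak K}$.

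For "$\Leftarrow$", write
\[
F_{t_1}(x_1)-F_{t_0}(x_0)=\bigl(F_{t_1}(x_1)-F_{t_0}(x_1)\bigr)+\bigl(F_{t_0}(x_1)-F_{t_0}(x_0)\bigr).
\]
The second bracket is expanded spatially at $(t_0,x_0)$ with error $O(|\Delta x|^3)$. The first bracket is expanded temporally at the point $x_1$, giving $F'_{t_0}(x_1)\delta X+F''_{t_0}(x_1)\mathbb X+\dot F_{t_0}(x_1)(t_1-t_0)+O(|\Delta t|^{3\alpha})$. Next, move the base point $x_1\mapsto x_0$ in the coefficients. Use $F'_{t_0}(x_1)=F'_{t_0}(x_0)+\partial F'_{t_0}(x_0)(\Delta x)+O(|\Delta x|^2)$, multiplied by $|\delta X|\lesssim|\Delta t|^\alpha$. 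Use $F''_{t_0}(x_1)-F''_{t_0}(x_0)=O(|\Delta x|)$, multiplied by $|\mathbb X|\lesssim |\Delta t|^{2\alpha}$. Use $\dot F_{t_0}(x_1)-\dot F_{t_0}(x_0)=O(|\Delta x|)$, multiplied by $|\Delta t|\lesssim|\Delta t|^{\alpha}\cdot|\Delta t|^{\alpha}T^{1-2\alpha}$ if $\alpha\le1/2$, or directly $\lesssim|\Delta t|^{2\alpha}$ otherwise.

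Each error is then a mixed monomial $|\Delta t|^{\alpha i}|\Delta x|^{j}$ with $i+j\ge3$, hence bounded by $|\Delta t;\Delta x|_{\mathfrak s}^3$ after possibly lowering powers of $|\Delta t|$ using boundedness of $T$. One point needs care: if $i+j>3$, the excess power is absorbed on compact $\mathfrak{K}$ via $|\Delta x|\le\operatorname{diam}\mathfrak K$, or, without compactness, by using the trivial bound when $|\Delta t;\Delta x|_{\mathfrak s}\ge1$. The same splitting handles the $F'$ and $\partial F$ lines. For $\partial F$, the temporal Gubinelli derivative is $(\partial F')^\top$, and its spatial increment is controlled because $\partial F'_{t_0}$ is $\Lip^1$ in $x$, the transpose being an isometry. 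The order-$1$ lines are immediate. The resulting constant is a multiple of $[\mathcal F]_{\mathfrak K}$, which gives the final claim.

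The main bookkeeping obstacle is the order-$3$ line. There one must check that every cross term produced by re-basing is either one of the explicit terms, here $\partial F'_{t_0}(x_0)(\Delta x,\delta X)$, or genuinely of total homogeneity $\ge3$. One must also handle uniformly in $T$ the time drift $\dot F(t_1-t_0)$, which has homogeneity $1/\alpha\ge1$ rather than an exact multiple of $\alpha$. The local version follows by applying the result on each compact set.
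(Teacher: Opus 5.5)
\textbf{There is a genuine gap in the ``$\Leftarrow$'' direction for $\alpha\in(1/2,1]$.} For $\alpha\le 1/2$ your argument is correct but takes a different route from the paper.

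The gap sits in the order-$3$ line, where you re-base the drift coefficient. You expand in time first at the point $x_1$, so you pick up $\dot F_{t_0}(x_1)(t_1-t_0)$ and pay the error $(\dot F_{t_0}(x_1)-\dot F_{t_0}(x_0))(t_1-t_0)=O(|\Delta x|\,|\Delta t|)$.

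For $\alpha\le 1/2$ your bound $|\Delta t|\le T^{1-2\alpha}|\Delta t|^{2\alpha}$ handles this term. The lemma, however, is stated in a section with $\alpha\in(0,1]$. For $\alpha>1/2$ your claim that $|\Delta t|\lesssim|\Delta t|^{2\alpha}$ holds ``directly'' is backwards: $2\alpha>1$ forces $|\Delta t|^{2\alpha}\ll|\Delta t|$ as $\Delta t\to0$.

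No other estimate rescues this term by itself. Take $|\Delta x|=|\Delta t|^{\alpha}=\varepsilon$: the term has size $\varepsilon^{1+1/\alpha}$, while $|\Delta t;\Delta x|_{\mathfrak s}^3=\varepsilon^3$, and $1+1/\alpha<3$. In your bookkeeping this is the monomial with $i=1/\alpha$, $j=1$, so the condition $i+j\ge3$ fails exactly here. As written, your ``$\Leftarrow$'' direction therefore only covers $\alpha\le1/2$. There it is valid, and that range includes all later uses in the paper.

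\textbf{The fix is to expand in the opposite order, which is what the paper does.} First expand in space at the later time $t_1$ around $x_0$, using $[F_{t_1},\partial F_{t_1},\partial^2F_{t_1}]_{\Lip^3;\mathfrak K}$. Then expand in time at the fixed point $x_0$, using $[F(x_0),F'(x_0),F''(x_0),\dot F(x_0)]_{\mathbf X;3}$.

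In this order $\dot F$ only ever appears at $(t_0,x_0)$, and no coefficient of the time expansion has to be moved in $x$. The only cross terms are
\[
\bigl(\partial F_{t_1}(x_0)-\partial F_{t_0}(x_0)-(\partial F'_{t_0}(x_0))^{\top}\delta X_{t_0,t_1}\bigr)(x_1-x_0),
\qquad
\tfrac12\bigl(\partial^2F_{t_1}(x_0)-\partial^2F_{t_0}(x_0)\bigr)(x_1-x_0)^{\otimes 2}.
\]
They are bounded by $[\partial F(x_0),(\partial F')^{\top}(x_0)]_{X;2}\,|\Delta t|^{2\alpha}|\Delta x|$ and $\tfrac12[\partial^2F(x_0)]_1\,|\Delta t|^{\alpha}|\Delta x|^2$. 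Both have exact degree $3$ for every $\alpha\in(0,1]$. The $F'$ line is handled the same way, with cross term $(\partial F'_{t_1}(x_0)-\partial F'_{t_0}(x_0))(x_1-x_0)$.

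So the paper uses the temporal regularity of the spatial derivatives $\partial F,\partial^2F$, where you use the spatial regularity of the temporal coefficients $F',F'',\dot F$. A side benefit of the paper's order is that the constant is literally $[\mathcal F]_{\mathfrak K}$. It does not depend on $\|X\|_\alpha$, $\|\mathbb X\|_{2\alpha}$ or $T$, which your re-basing against $\delta X$, $\mathbb X$ and $t_1-t_0$ introduces.

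Your caveat in ``$\Rightarrow$'' about the $\sup_x|\partial^jF_t(x)|$ part of the $\Lip^3$ bracket is well taken. The cascade alone does not supply it on unbounded $\mathfrak K$: for example, $F_t(x)=\tfrac12|x|^2$ on $W=\mathbb R^d$ satisfies the cascade exactly, yet $\partial F_t$ is unbounded. The paper's ``immediately'' passes over this point. On compact $\mathfrak K$ your oscillation argument does provide the bound.
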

\begin{proof}
We only consider $\mathfrak{K}=W$ for simplicity, as the argument is mostly identical. 
For $\mathcal{F}\in \mathscr{D}^{3\alpha}\Lip^{3}_{x}$ note that we can immediately obtain $[\mathcal{F}]_{x; W}< \infty$ and $[\mathcal{F}]_{t; W}< \infty$ by setting $t_{1}=t_{0}$ 
resp.\ $x_{1}=x_{0}$ in \cref{def:C3_jets_cascade} and noting that the estimates by definition hold uniformly in both variables. Now for the other direction note that $[\mathcal{F}]$ immediately bounds the first-order estimates in \eqref{def:C3_jets_cascade} as e.g.\ for $F''$ it holds 
\begin{equation*}
\begin{aligned}
    \big| F''_{t_{1}}(x_{1})-F''_{t_{0}}(x_{0})\big| &\leq \sup_{t \in [0,T]} \big| F''_{t}(x_{1})-F''_{t}(x_{0}) \big| + \sup_{x \in W} \big| F''_{t_{1}}(x)-F''_{t_{0}}(x) \big|\\
    &\leq \left([\mathcal{F}]_{x; W} + [\mathcal{F}]_{t; W} \right)| t-s; x_{1}-x_{0}|_{\mathfrak s},
\end{aligned}
    \end{equation*}
and analogously for $\partial F' $, $ \partial^{2} F$ etc. \\The 2nd-order estimates in \cref{def:C3_jets_cascade} follow similarly by noting that 
\begin{equation*}
\begin{aligned}
    & \left. \right. &&\big |F'_{t_{1}}(x_{1})-  F'_{t_{0}}(x_{0})- F''_{t_{0}}(x_{0}) \delta X_{t_0, t_1} - \partial F'_{t_{0}}(x_{0}) (x_1 -
    x_0)\big| \nonumber\\
    &\leq&& \sup_{x \in W} \big| F'_{t_{1}}(x)- F'_{t_{0}}(x)- F''_{t_{0}}(x)  \delta X_{t_0, t_1}\big| + |(\partial F'_{t_{1}}(x_{0})-\partial F_{t_{0}}(x_{0}))(x_{1}-x_{0}) \big| \nonumber\\
    &\left. \right.&&+\sup_{t \in [0,T]} \big| F'_{t}(x_{1})-F'_{t}(x_{0})- \partial F'_{t}(x_{0}) (x_{1}-x_{0}) \big| \nonumber\\
    &\leq && \sup_{x\in W} \left(\left [ F'(x), F''(x)\right]_{X; 2}|t_{1}-t_{0}|^{2}+ [\partial F'(x)]_{1} |x_{1}-x_{0}||t-s|^{\alpha}\right) \nonumber\\
    &\left.\right.&&+\sup_{t\in [0,T]} [F'_{t}, \partial F'_{t}]_{\Lip^{2}; W} |x_{1}-x_{0}|^{2} \nonumber\\
    &\leq && \left( [\mathcal{F} ]_{x; W}+ [\mathcal{F}]_{t; W} \right) | t-s; x_{1}-x_{0} |_{\mathfrak{s}}^{2}, \nonumber
\end{aligned}
\end{equation*}
and analogously for $\partial F$. 
We are left to consider the 3rd-order expansion in \cref{def:C3_jets_cascade}. First note that by \cref{rem:partial vs. Frechet} we see that $[\mathcal{F}]_{x; W}< \infty$ implies
\begin{equation*}
   \sup_{t \in [0,T]} \Big| F_{t}(x)- F_{t}(y)- \partial F_{t}(y)(x-y)-\frac{1}{2} \partial^{2} F_{t}(y)(x-y)^{\otimes 2} \Big| \leq \sup_{t \in [0,T]} \left [ F_{t} , \partial F_{t}, \partial^{2} F_{t} \right]_{\Lip^{3}; W} |x-y|^3. 
\end{equation*}
Now we get
\begin{equation*}
\begin{array}{lll}
    & &\Big |F_{t_{1}}(x_{1})-F_{t_{0}}(x_{0}) - F'_{t_{0}}(x_{0}) \delta X_{t_0, t_1} - \partial F_{t_{0}}(x_{0}) (x_1 -
    x_0)- F''_{t_{0}}(x_{0})  \mathbb{X}_{t_0, t_1} \\
    &&- \partial F'_{t_{0}}(x_{0}) (x_1 - x_0) \delta
    X_{t_0, t_1} - \frac{1}{2} \partial^2 F_{t_{0}}(x_{0})(x_1 - x_0)^{\otimes 2} 
    -
    \dot{F}_{t_{0}}(x_{0}) (t_1 - t_0)\Big| \\
    && \\
    & \leq & \sup_{t
    \in [0,T]} [F_{t}, \partial F_{t} \partial^{2} F_{t}]_{\Lip^{3}; W} |x_1-x_0|^3 \\
    &&+ \Big|F_{t_{1}}(x_{0})+ \partial F_{t_{1}}(x_{0})(x_{1}-x_{0}) + 
    \frac{1}{2} \partial^{2} F_{t_{1}}(x_{0})(x_{1}-x_{0})^{\otimes 2}\\
    
    &&-F_{t_{0}}(x_{0}) - F'_{t_{0}}(x_{0}) \delta X_{t_0 t_1} - \partial F_{t_{0}}(x_{0}) (x_1 -
    x_0)- F''_{t_{0}}(x_{0})  \mathbb{X}_{t_0, t_1}\\
    && - \partial F'_{t_{0}}(x_{0}) (x_1 - x_0) \delta
    X_{t_0, t_1} - \frac{1}{2} \partial^2 F_{t_{0}}(x_{0})(x_1 - x_0)^{\otimes 2} 
    -
    \dot{F}_{t_{0}}(x_{0}) (t_1 - t_0)\Big|\\
    && \\
    & \leq& \sup_{t
    \in [0,T]} [F_{t}, \partial F_{t} \partial^{2} F_{t}]_{\Lip^{3}; W} |x_1-x_0|^3 \\
    &&+ \sup_{x \in W}\left [F(x), F'(x), F''(x), \dot{F}(x) \right]_{\mathbf{X}; 3} |t_{1}-t_{0}|^{3\alpha}\\
    && + \big|\partial F_{t_{1}}(x_{0})(x_{1}-x_{0}) + 
    \frac{1}{2} \partial^{2} F_{t_{1}}(x_{0})(x_{1}-x_{0})^{\otimes 2}- \partial F_{t_{0}}(x_{0}) (x_1 -
    x_0)\\
    &&- \partial F'_{t_{0}}(x_{0}) (x_1 - x_0) \delta
    X_{t_0, t_1}-  \frac{1}{2} \partial^2 F_{t_{0}}(x_{0})(x_1 - x_0)^{\otimes 2} \big|\\
    && \\
    & \leq& \sup_{t
    \in [0,T]} [F_{t}, \partial F_{t}, \partial^{2} F_{t}]_{\Lip^{3}; W} |x_1-x_0|^3 \\
    &&+ \sup_{x \in W}\left [F(x), F'(x), F''(x), \dot{F}(x) \right]_{\mathbf{X}; 3} |t_{1}-t_{0}|^{3\alpha}\\
    &&+ \sup_{x \in W}\left [ \partial F(x), (\partial F')^{\top}(x)\right]_{X; 2} |x_{1} -x_{0}| |t_{1}-t_{0}|^{2 \alpha}\\
    &&+ \frac{1}{2}\sup_{x\in W}[\partial^{2}F(x)]_{1}
    |x_{1}-x_{0}|^{2} |t_{1}-t_{0}|^{\alpha}\\
    && \\
    & \leq & [\mathcal{F}]_{W} |t_{1}-t_{0}; x_{1}-x_{0} |_\mathfrak{s}^{3},
\end{array}
\end{equation*}
yielding the claim.
\end{proof}
\cref{cor:composition_rule} below gives a composition rule for controlled fields. Let us emphasize again, that it is all about consistent expansions, the sewing condition $\alpha > 1/3$ plays no role here.
\begin{theorem}
\label{cor:composition_rule}
Let $\mathbf{X}=(X,\mathbb{X})\in \mathscr{C}^{\alpha; 1+\alpha}([0,T]; V)$ and 
$ U_{i} $, $ i=1, 2, 3 $, arbitrary (finite-dim.) Banach spaces. 
For $\mathcal{F}_{i}=(F_{i}, F'_{i}, \partial F_{i}, F''_{i}, \partial F'_{i}, \partial^{2} F_{i}, \dot{F}_{i}) \in \mathscr{D}^{3 \alpha}_{\mathbf{X}}\Lip^{3}_{x, \loc}(U_{i},U_{i+1})$ we define the composition
\begin{equation*}
\begin{aligned}
\mathcal{F}_{2} \circ \mathcal{F}_{1}\coloneqq &\left(F_{2} \circ F_{1}, (F_{2} \circ F_{1})', \partial (F_{2} \circ F_{1}),(F_{2} \circ F_{1})'' , \partial (F_{2} \circ F_{1})', \partial^{2}(F_{2} \circ F_{1}), (F_{2} \circ F_{1})^{\bullet} \right)
\end{aligned}
\end{equation*}
 with components
 \begin{equation*}
\begin{array}{rll}
(F_2\circ F_1)(t,x)&\coloneqq &F_2(t, F_1(t,x)),\\
(F_{2} \circ F_{1} )'&\coloneqq& F'_{2} \circ F_{1}+\left( \partial F_{2} \circ F_{1} \right) F'_{1},\\
    \partial(F_{2} \circ F_{1}) &\coloneqq & \left(\partial F_{2} \circ F_{1} \right) \partial F_{1},\\
    (F_{2} \circ F_{1})''&\coloneqq& F''_{2} \circ F_{1} + \left(\partial F'_{2} \circ F_{1} \right) F'_{1}+ \left( (\partial F_{2}' \circ F_{1} ) F'_{1}\right)^{\top}+ \left( \partial^{2}F_{2}\circ F_{1} \right) (F'_{1}, F'_{1})\\
    &&+ \left( \partial F_{2} \circ F_{1} \right) F''_{1},\\
\partial (F_{2} \circ F_{1})'&\coloneqq& \left( \partial F'_{2} \circ F_{1}\right) \partial F_{1} + \left( \partial^{2} F_{2} \circ F_{1} \right) (\partial F_{1}, F'_{1})+\left( \partial F_{2} \circ F_{1} \right) \partial F'_{1},  \\
\partial^{2} (F_{2} \circ F_{1} )&\coloneqq& \left(\partial^{2} F_{2} \circ F_{1} \right) (\partial F_{1}, \partial F_{1})+ \left(\partial  F_{2} \circ F_{1} \right) \partial ^{2}F_{1},\\     
(F_{2} \circ F_{1})^{\bullet} &\coloneqq& \dot{F}_{2} \circ F_{1} + \left(\partial{F}_{2} \circ F_{1}\right) \dot{F}_{1} + \Big(\frac{1}{2} (\partial ^{2} F_{2} \circ F_{1}) (F'_{1}, F'_{1}) + (\partial F_{2}'\circ F_{1}) F'_{1} \Big)\dot{[\mathbf{X}]}_{s}.
        \end{array}
    \end{equation*}
Then $\mathcal{F}_{2} \circ  \mathcal{F}_{1} \in \mathscr{D}^{3\alpha}_{\mathbf{X}}\Lip^{3}_{x, \loc}(U_{1}; U_{3})$. Additionally, if $\mathcal{F}_{i} \in \mathscr{D}^{3 \alpha}_{\mathbf{X}}\Lip^{3}_{x}(U_{i},U_{i+1})$ for $i=1,2$, then $\mathcal{F}_{2} \circ \mathcal{F}_{1} \in \mathscr{D}^{3 \alpha}_{\mathbf{X}} \Lip^{3}_{x}(U_{1}; U_{3})$. In particular, the spaces  $
    \mathscr{D}^{3 \alpha}_\mathbf{X} \Lip^{3}_{x, \loc} (U,U)$ and  $\mathscr{D}^{3 \alpha}_\mathbf{X} \Lip^{3}_{x} (U,U)$ form an associative algebra under the composition product.
\end{theorem}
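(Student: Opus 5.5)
The plan is to verify the cascade \eqref{def:C3_jets_cascade} for $\mathcal{F}_2\circ\mathcal{F}_1$ directly. All work is local in space, so fix a compact $\mathfrak{K}\subset U_1$. Since $F_1$ satisfies the zeroth-order expansion in \eqref{def:C3_jets_cascade}, it is bounded on $[0,T]\times\mathfrak{K}$. Hence $F_1([0,T]\times\mathfrak{K})$ lies in some compact $\mathfrak{K}'\subset U_2$, and we use $[\mathcal{F}_2]_{\mathfrak{K}'}<\infty$ from \cref{lemma:jet_criterion}. In the global case $\mathfrak{K}=U_1$, $\mathfrak{K}'=U_2$, the same bookkeeping gives uniform constants.

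Write $y_i=F_{1,t_i}(x_i)$ and $\Delta y=y_1-y_0$. First, the first-order expansion of $F_1$, which is the top level of its cascade, gives $|\Delta y|\lesssim|t_1-t_0;x_1-x_0|_{\mathfrak{s}}$. More precisely,
\[
\Delta y\stackrel{3}{=}F'_1\delta X+\partial F_1\Delta x+F''_1\mathbb{X}+\partial F'_1(\Delta x,\delta X)+\tfrac12\partial^2F_1(\Delta x)^{\otimes2}+\dot F_1\Delta t,
\]
with all coefficients evaluated at $(t_0,x_0)$.

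\textbf{Step 1 (top level).} Expand $F_{2,t_1}(y_1)$ around $(t_0,y_0)$ using the cascade for $\mathcal{F}_2$ on $\mathfrak{K}'$. The remainder is $O(|t_1-t_0;\Delta y|_{\mathfrak{s}}^3)=O(|t_1-t_0;\Delta x|_{\mathfrak{s}}^3)$. Then substitute the expansion of $\Delta y$ into each term: linear terms get the full third-order expansion, quadratic terms get the first-order part $F'_1\delta X+\partial F_1\Delta x$, and cubic-order terms get nothing, discarding everything of homogeneity $\ge3$. The key algebraic points are the following.
\begin{itemize}
\item In $\tfrac12\partial^2F_2(F'_1\delta X)^{\otimes2}$, use $\delta X^{\otimes2}=2\operatorname{Sym}\mathbb{X}+[\mathbf{X}]$ together with symmetry of $\partial^2F_2$. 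This gives $\partial^2F_2(F'_1,F'_1)\mathbb{X}+\tfrac12\partial^2F_2(F'_1,F'_1)[\mathbf{X}]_{t_0,t_1}$.
\item Similarly, $\partial F'_2(F'_1\delta X,\delta X)$ yields $\bigl((\partial F_2'F'_1)+(\partial F'_2F'_1)^\top\bigr)\mathbb{X}$ plus a bracket term.
\item Since $\mathbf{X}\in\mathscr{C}^{\alpha;1+\alpha}$, we have $[\mathbf{X}]_{t_0,t_1}\stackrel{3}{=}\dot{[\mathbf{X}]}_{t_0}(t_1-t_0)$, because $(1+\alpha)\ge3\alpha$ when $\alpha\le1/2$; if $\alpha>1/2$, then $3\alpha$-level terms in $\delta X^{\otimes 2}$ are $O(|t|^{2\alpha})$, which is harmless after checking exponents. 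This produces the bracket contribution in $(F_2\circ F_1)^\bullet$.
\item The mixed terms $\partial^2F_2(F'_1\delta X,\partial F_1\Delta x)$ and $\partial F'_2(\partial F_1\Delta x,\delta X)$ combine with $\partial F_2\,\partial F'_1$ to give exactly $\partial(F_2\circ F_1)'$.
\end{itemize}
Matching coefficients reproduces the seven stated components.

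\textbf{Step 2 (lower levels).} Here I would use \cref{lemma:jet_criterion} rather than redoing the expansions. The second-order expansions of $(F_2\circ F_1)'$ and $\partial(F_2\circ F_1)$ follow by the same substitution: products and compositions of fields satisfying $\stackrel{2}{=}$ and $\stackrel{1}{=}$ expansions again satisfy them, by a product rule for expansions. This needs the transpose convention $(\partial F)'=(\partial F')^\top$ of \cref{rem:commute} to check that the Gubinelli derivative of $\partial(F_2\circ F_1)$ is $(\partial(F_2\circ F_1)')^\top$. First-order statements for $(F_2\circ F_1)''$, $\partial(F_2\circ F_1)'$, $\partial^2(F_2\circ F_1)$ and $(F_2\circ F_1)^\bullet$ hold because these are polynomial in components that are $\stackrel{1}{=}$-regular and locally bounded. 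Symmetry of $\partial^2(F_2\circ F_1)$ is clear.

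\textbf{Step 3 (algebra).} Associativity I would deduce from uniqueness of coefficients: both $(\mathcal{F}_3\circ\mathcal{F}_2)\circ\mathcal{F}_1$ and $\mathcal{F}_3\circ(\mathcal{F}_2\circ\mathcal{F}_1)$ are defined by algebraic formulas that are the chain rule for jets. So I would verify their equality by direct substitution, or more slickly by noting the formulas coincide with the chain rule for smooth $\mathbf{X}$, where associativity is classical, and the formulas are polynomial identities.

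The main obstacle is the bookkeeping in Step 1, specifically tracking transposes in the $\mathbb{X}$ coefficient and the bracket correction, and checking that all discarded cross terms (for example $\partial F_2\,F''_1$-type against $\Delta x$, or $\partial^2F_2(F''_1\mathbb{X},\cdot)$) have homogeneity at least $3$ in $|\cdot|_{\mathfrak{s}}$.
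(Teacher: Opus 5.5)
Your argument is correct for $\alpha\le 1/2$ (see the next paragraph), but it is organized differently from the paper's proof.

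\textbf{Comparison with the paper.} The paper never expands $F_1$ jointly in $(t,x)$. It uses \cref{lemma:jet_criterion} to reduce the claim to two estimates:
\begin{enumerate}
\item $[\mathcal{F}_2\circ\mathcal{F}_1]_{x;U_1}<\infty$, which is just the classical $\Lip^3$ chain rule at frozen $t$;
\item $[\mathcal{F}_2\circ\mathcal{F}_1]_{t;U_1}<\infty$, which it checks at frozen $x$ by expanding $g_t(f_t)$ with $g\in\{F_2,\partial F_2\}$ and $t\mapsto f_t=F_1(t,x)$ treated as a strongly controlled path in time.
\end{enumerate}
The lemma then supplies all the mixed space--time terms. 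You instead push the full joint expansion of $\Delta y$ through the cascade of $\mathcal{F}_2$ at $(t_0,y_0)$ and read off all seven coefficients at once. The paper's route is shorter: the spatial chain rule and the mixed contributions to $\partial(F_2\circ F_1)'$ and $\partial^2(F_2\circ F_1)$ come from classical calculus plus the generic estimate in the lemma. Your route is more self-contained and shows directly that every component, including the mixed ones, is forced by coefficient matching, at the price of tracking more cross terms.

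For associativity, stick with direct substitution; it does close, using the symmetry of $\partial^2F_3$ and of $\dot{[\mathbf{X}]}$. The shortcut via smooth $\mathbf{X}$ does not work as stated. For smooth paths the non-leading components are not determined by $F$, and canonical lifts have zero bracket, so classical associativity only identifies the first component.

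\textbf{The case $\alpha>1/2$.} Your remark here is not right. In that range $[\mathbf{X}]\equiv0$ anyway, since its increments are $O(|t-s|^{2\alpha})$ with $2\alpha>1$. The real obstruction is the step where you feed only $F_1'\delta X+\partial F_1\Delta x$ into the quadratic terms. The discarded piece $\dot F_1(t_1-t_0)$ has homogeneity $1/\alpha$. Hence the cross terms $\partial^2F_2(F_1'\delta X,\dot F_1)(t_1-t_0)$, $\partial^2F_2(\partial F_1\Delta x,\dot F_1)(t_1-t_0)$ and $\partial F_2'(\dot F_1(t_1-t_0),\delta X)$ have homogeneity $1+1/\alpha$, which is $\ge3$ only when $\alpha\le 1/2$.

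For $\alpha>1/2$ the statement is in fact false. Take $X\equiv0$, $F_1=x+t$, $F_2=y^2$: the residual of the top-level expansion is $2\Delta x\,\Delta t+(\Delta t)^2$, which is not $O(|t_1-t_0;\Delta x|_{\mathfrak{s}}^3)$. The paper's proof relies on $\alpha\le 1/2$ in the same implicit way: it drops $\dot f$ from $(\delta f_{s,t})^{\otimes 2}$ and replaces $[\mathbf{X}]_{s,t}$ by $\dot{[\mathbf{X}]}_s(t-s)$. So this is a restriction on the statement rather than a defect of your method; just do not claim the case $\alpha>1/2$.
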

\begin{proof}
Again we only check the case $\mathfrak{K}=U_{1}$. 
By \cref{lemma:jet_criterion} we know that we only need to check that 
    \begin{equation*}
        [\mathcal{F}
    _{2} \circ \mathcal{F}_{1} ]_{x; U_{1}}+ [\mathcal{F}
    _{2} \circ \mathcal{F}_{1} ]_{t;U_{1}} < \infty.
    \end{equation*}
    Starting off note that 
    \begin{equation*}
        \left[\mathcal{F}_{2} \circ \mathcal{F}_{1} \right]_{x; U_{1}}< \infty
    \end{equation*}
    by the chain-rule from classical calculus. Regarding time-regularity note that for any bounded functions $f: [0,T] \times U_{1} \to U_{2}$ and $g: [0,T]\times U_{2} \to U_{3}$ it holds
    \begin{equation*}
    \begin{aligned}
        \sup_{x \in U_{1}}|g_{t}(f_{t}(x))- g_{s}(f_{s}(x))| &\leq \sup_{x \in U_{1}}|(g_{t}-g_{s})(f_{t}(x))| +| g_{s}(f_{t}(x))- g_{s}(f_{s}(x))|\\
        &\leq \sup_{y \in U_{2}} |g_{t}(y) -g_{s}(y)|+ [g_{s}]_{\Lip^{1}; U_{2}} \sup_{x \in U_{1}} |f_{t}(x)-f_{s}(x)|, 
    \end{aligned}
    \end{equation*}
    which yields immediately that 
    \begin{equation*}
        \sup_{x \in U_{1}} \left([ \partial^{2}( F_{2} \circ F_{1})(x)]_{1} + [(F_{2} \circ F_{1})''(x)]_{1}+ [\partial  (F_{2} \circ F_{1})'(x)]_{1}+ [ (F_{2} \circ F_{1} )^{\bullet}(x)]_{1}\right)< \infty. 
    \end{equation*}
    Further by substituting $g= \partial F_{2}, f=F_{1}$ we see
    \begin{equation*}
    \begin{array}{rll}
    g_{t}(f_{t}) &\stackrel{2}{=}& g_{s}(f_{t}) + g'_{s}(f_{t}) \delta X_{s,t}\\
    &\stackrel{2}{=}& g_{s}(f_{s}) + \partial g_{s}(f_{s}) \delta f_{s,t}+ g'_{s}(f_{s}) \delta X_{s,t}\\
    &\stackrel{2}{=}& g_{s}(f_{s}) + (\partial g_{s}(f_{s}) f'_{s} + g'_{s}(f_{s})) \delta X_{s,t} 
    \end{array}
    \end{equation*} 
    uniformly over $x\in U_{1}$, 
    which implies by additionally using the Leibniz-rule for Gubinelli derivatives: 
    \begin{equation*}
    \sup_{x \in U_{1}} \left [ \partial (F_{2} \circ F_{1})(x), \left( \partial (F_{2} \circ F_{1})'\right)^{\top}(x)\right ]_{X; 2}< \infty
    \end{equation*}
    At last the 3rd order expansion follows by substituting $g=F_{2}, f=F_{1}$ for simplicity and calculating
    \begin{equation*}
    \label{cor:composition_estimate}
    \begin{array}{rll}
        g_{t}(f_{t})&\stackrel{3}{=}& g_{s} (f_{t}) + g'_{s}(f_{t}) \delta X_{s,t} + g''_{s}(f_{t}) \mathbb{X}_{s,t}+\dot{g}_{s}(f_{t}) (t-s)\\
        && \\
        &\stackrel{3}{=}& g_{s}(f_{s}) + \partial g_{s}(f_{s}) \delta f_{s,t} + \frac{1}{2} \partial^{2} g_{s}(f_{s}) (\delta f_{s,t})^{\otimes 2}+ g'_{s}(f_{s}) \delta X_{s,t}\\
        &&+\partial g'_{s}(f_{s}) \delta f_{s,t} \delta X_{s,t} +g''_{s}(f_{s}) \mathbb{X}_{s,t}+ \dot{g}_{s}(f_{s}) (t-s)\\
        && \\
        & \stackrel{3}{=}&g_{s}(f_{s}) + \partial g_{s}(f_{s}) f'_{s} \delta X_{s,t}+ \partial g_{s}(f_{s}) f''_{s}\mathbb{X}_{s,t} + \frac{1}{2} \partial^{2} g_{s}(f_{s}) (f'_{s}, f'_{s}) (\delta X_{s,t})^{\otimes 2}\\
        &&+ g'_{s}(f_{s}) \delta X_{s,t}+\partial g'_{s}(f_{s}) f'_{s} (\delta X_{s,t})^{\otimes 2}+ g''_{s}(f_{s}) \mathbb{X}_{s,t}+ \dot{g}_{s}(f_{s}) (t-s)+ \partial g_{s}(f_{s}) \dot{f}_{s} (t-s)\\
        && \\ 
        &\stackrel{3}{=}&g_{s}(f_{s}) + \partial g_{s}(f_{s}) f'_{s} \delta X_{s,t}+ \partial g_{s}(f_{s}) f''_{s}\mathbb{X}_{s,t} + \partial^{2} g_{s}(f_{s}) (f'_{s}, f'_{s})\mathbb{X}_{s,t}\\
        &&+ g'_{s}(f_{s}) \delta X_{s,t}+\partial g'_{s}(f_{s}) f'_{s} \mathbb{X}_{s,t}+(\partial g'_{s}(f_{s}) f'_{s})^{\top}\mathbb{X}_{s,t}+ g''_{s}(f_{s}) \mathbb{X}_{s,t}\\&&+ \dot{g}_{s}(f_{s}) (t-s)
        + \partial g_{s}(f_{s}) \dot{f}_{s} (t-s)+(\frac{1}{2} \partial^{2} g_{s}(f_{s}) (f'_{s}, f'_{s}) + \partial g'_{s}(f_{s}) f'_{s} )\dot{[\mathbf{X}]}_{s} (t-s),
    \end{array}
    \end{equation*}
    and thus $[\mathcal{F}_{2} \circ \mathcal{F}_{1}]_{t; U_{1}} < \infty$, yielding the claim by \cref{lemma:jet_criterion}. 
\end{proof}
Since every strongly controlled rough path is trivially in $\mathscr{D}^{3 \alpha}_{
\mathbf{X}} \Lip^{3}_{x}$, just with no $x$-dependence, \cref{cor:composition_rule} immediately implies a composition rule for evaluating controlled fields at strongly controlled rough paths. Upon considering \cref{ex:RDE_flows}, this will turn out to be the rough Itô-Wentzell (rIW) formula shown in \cite{keller_zhang_2016} and \cite{castrequini_2025}.
\begin{corollary}
\label{cor:RIW_formula} Let $\mathbf{X}=(X,\mathbb{X})\in \mathscr{C}^{\alpha; 1+\alpha}([0,T]; V)$. Let $\mathcal{Y}=(Y, Y', Y'', \dot{Y})\in \mathscr{D}^{3\alpha}_{\mathbf{X}}([0,T];W)$  and $\mathcal{F}=(F, F', F'', \partial F, \partial{F}', \partial^{2} F, \dot{F})
\in \mathscr{D}^{3\alpha}_{\mathbf{X}}\Lip^{3}_{x, \loc}(W; U)$. We then define:
  \[ \begin{array}{lll}
       Z_t & := & F_{t}(Y_t) \\
       Z'_t & := & F'_t(Y_{t}) + \partial F_t(Y_{t}) {Y'_t} \\
       Z''_t & := &  \partial F_t(Y_{t})   {Y_t''}  + F''_{t}(Y_{t})  +
       \partial F'_t(Y_{t}) Y_t'  + (\partial F'_t(Y_{t}) Y_t')^T + \partial^2 F_t(Y_{t}) (Y_t',
       Y_t') \\
       \dot{Z}_t & := & \partial F_t(Y_{t}) \dot{Y_t} + \dot{F_t}(Y_{t}) + \left( \partial
       F_t'(Y_{t})  Y_t' + \tfrac{1}{2} \partial^2 F_t(Y_{t}) (Y_t', Y_t^{'})  \right)
       \dot{[\mathbf{X}]}_t
     \end{array} \]
  Then $\mathcal{Z}= (Z, Z', Z'', \dot{Z})\in \mathscr{D}^{3 \alpha}_{\mathbf{X}}([0,T]; U)$. Here the operator $\partial F'_{t}(Y_{t})Y'_{t} \in \mathcal{L}(W \otimes V; U)$ means  $(\partial F'_{t}(Y_{t})Y'_{t})(w\otimes v)\coloneqq \partial F'_{t}(Y_{t})(w\otimes (Y'_{t}v))$ for any $w\otimes v \in W \otimes V$. 
     \end{corollary}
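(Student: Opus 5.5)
The plan is to obtain the corollary directly from \cref{cor:composition_rule}, by viewing the path $\mathcal{Y}$ as a controlled field with no spatial dependence. Set $U_1 := \mathbb{R}$ (any finite-dimensional space would do), $U_2 := W$, $U_3 := U$. Define $\mathcal{F}_1 := (Y, Y', 0, Y'', 0, 0, \dot Y)$, that is, $F_1(t,x) := Y_t$, $F'_1(t,x) := Y'_t$, $F''_1 := Y''_t$, $\dot F_1 := \dot Y_t$, with all spatial derivatives $\partial F_1$, $\partial F'_1$, $\partial^2 F_1$ equal to zero. Set $\mathcal{F}_2 := \mathcal{F}$.

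First I check that $\mathcal{F}_1 \in \mathscr{D}^{3\alpha}_{\mathbf{X}}\Lip^3_{x,\loc}(U_1;W)$, using \cref{lemma:jet_criterion}.
\begin{itemize}
\item[(i)] The spatial seminorm $[\mathcal{F}_1]_{x;\mathfrak{K}}$ vanishes. Every component is constant in $x$ with vanishing spatial derivatives, so all Taylor remainders in \cref{def:Stein_Lip_def} are zero.
\item[(ii)] The temporal seminorm $[\mathcal{F}_1]_{t;\mathfrak{K}}$ reduces to $[Y,Y',Y'',\dot Y]_{\mathbf{X};3}$, which is finite by \cref{def:HOcrp}. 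The terms $[\partial F_1,(\partial F_1')^\top]_{X;2}$ and $[\partial^2 F_1]_1$ are trivially zero.
\item[(iii)] The boundedness requirement in $\Lip^\gamma$ only concerns compact $\mathfrak{K}$. Continuity of $Y, Y', Y'', \dot Y$ on $[0,T]$ gives uniform bounds there.
\end{itemize}
Applying \cref{cor:composition_rule} then gives $\mathcal{F}\circ\mathcal{F}_1 \in \mathscr{D}^{3\alpha}_{\mathbf{X}}\Lip^3_{x,\loc}(U_1;U)$. Since $\mathcal{F}_1$ does not depend on $x$, the composite does not either.

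The second step is to read off the components. Substituting $\partial F_1 = \partial F_1' = \partial^2 F_1 = 0$ into the formulas of \cref{cor:composition_rule} gives the following.
\begin{itemize}
\item[(a)] $(F\circ F_1)' = F'_t(Y_t) + \partial F_t(Y_t)Y'_t = Z'_t$.
\item[(b)] $(F\circ F_1)'' = F''_t(Y_t) + \partial F'_t(Y_t)Y'_t + (\partial F'_t(Y_t)Y'_t)^\top + \partial^2F_t(Y_t)(Y'_t,Y'_t) + \partial F_t(Y_t)Y''_t = Z''_t$.
\item[(c)] $(F\circ F_1)^\bullet = \dot Z_t$.
\item[(d)] The spatial components $\partial(F\circ F_1)$, $\partial(F\circ F_1)'$ and $\partial^2(F\circ F_1)$ all vanish.
\end{itemize}
I also need to make sure the meaning of $\partial F'_t(Y_t)Y'_t$ is consistent with how the composition rule uses it. In \cref{cor:composition_rule}, the term $(\partial F_2'\circ F_1)F_1'$ is exactly the insertion of $F'_1 v$ into the $W$-slot. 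This matches the convention stated in the corollary.

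The final step evaluates the first-line expansion of \eqref{def:C3_jets_cascade} at a fixed $x_0 = x_1$, using \cref{lemma:jet_criterion}. This gives
\[
Z_{t_1} \stackrel{3}{=} Z_{t_0} + Z'_{t_0}\delta X_{t_0,t_1} + Z''_{t_0}\mathbb{X}_{t_0,t_1} + \dot Z_{t_0}(t_1-t_0).
\]
The remaining lines give $Z'_{t_1} \stackrel{2}{=} Z'_{t_0} + Z''_{t_0}\delta X_{t_0,t_1}$ and the first-order bounds on $(Z'',\dot Z)$. Together these say exactly that $\mathcal{Z} \in \mathscr{D}^{3\alpha}_{\mathbf{X}}([0,T];U)$.

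The only point needing care is localization. $\mathcal{F}$ is merely $\Lip^3_{x,\loc}$, so all the relevant constants must be taken on a compact set $\mathfrak{K}\subset W$ containing the range of $Y$. Compactness of $Y([0,T])$ follows from continuity of $Y$, and I expect this to suffice.
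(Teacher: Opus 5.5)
Your proposal is correct and matches the paper's proof: the paper also regards $\mathcal{Y}$ as an $x$-independent controlled field and applies \cref{cor:composition_rule}. Your checks via \cref{lemma:jet_criterion}, the component read-off, and the evaluation at $x_0=x_1$ spell out what the paper leaves implicit. Your tuple $(Y, Y', 0, Y'', 0, 0, \dot Y)$ is the correct one for the ordering $(F, F', \partial F, F'', \partial F', \partial^2 F, \dot F)$ of \cref{def:C3_jet}, whereas the tuple displayed in the paper's proof has the $F'$ and $\partial F$ slots swapped.
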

     \begin{proof}
      The claim follows as a direct application of \cref{cor:composition_rule} by noting that
     \begin{equation*}
         \mathcal{Y} \coloneqq (Y, 0, Y', Y'', 0, 0,\dot{Y})\in \mathscr{D}^{3 \alpha}_{\mathbf{X}} \operatorname{Lip}^{3}_{x}.
     \end{equation*}
     \end{proof}
\begin{remark}
The conditions $\dot{[\mathbf{X}]}, \dot{Y}\in \mathcal C^\alpha$ imposed in \cref{cor:composition_rule} and \cref{cor:RIW_formula} are only to ensure that $(F_{2} \circ F_{1})^{\bullet}$ is $\alpha$-H\"older continuous in time, as required in \cref{def:C3_jet}. Nevertheless, the composition rules stated in those corollaries remain valid when $\dot{Y}\in L^{1}([0,T]; W)$,  $\mathbf{X} \in \mathscr{C}^{\alpha}([0,T]; V)$ and the mapping $[0,T] \ni t \mapsto [\mathbf{X}]_t$ is continuous and of finite variation, with $\dot{Y}dt$ replaced by $dV_{t}$ for $V= \int_{0}^{\cdot} \dot{Y}_{s} ds$ and $\dot{[\mathbf{X}]}dt$ replaced by $d[\mathbf{X}]_t$. In the context of \cref{cor:RIW_formula}, one can then show that in case  $\alpha > \frac{1}{3}$ it holds 
\begin{equation}
\begin{split}
\label{rem:eq:weak_RIW}
    \delta Z_{s,t}=& \int_{s}^{t} (Z', Z'')_{r} d\mathbf{X}_{r} + \int_{s}^{t}  \dot{F_r}(Y_{r}) dr+ \int_{s}^{t} \partial F_r(Y_{r}) dV_{r} \\
    &+ \int_{s}^{t}  \partial
       F_r'(Y_{r})  Y_r' + \tfrac{1}{2} \partial^2 F_r(Y_{r}) (Y_r', Y_r^{'}) d[\mathbf{X}]_{r}.
\end{split}
\end{equation}
Here the latter two integrals are taken in Riemann-Stieltjes sense. Note that, however, now we can not obtain a local description (in terms of local cascades of type \eqref{def:C3_jets_cascade}) of $Z$. In the proof of \cref{theorem:RSIW_Theorem_1} we will encounter a situation where this global version will play a role. 
\end{remark}
\begin{remark}
\label{remark:p_variation}
    Let us also remark that a version of \eqref{rem:eq:weak_RIW} remains valid under considering  a strongly $\mathbf{X}$-controlled $ p $-variation c\`{a}dl\`{a}g rough path $(Y, Y', Y'', \dot{Y})$ for $p \in [2, 3)$ as introduced in \cite{FRIZ20186226}. The $3$rd order estimate in \cref{cor:RIW_formula} (see proof of \cref{cor:composition_rule}) can then be seen to be dominated by a control $w(s, t-)^{ 3 / p }$, when suitably correcting for jumps. 
\end{remark}
\

\begin{example}
\label{ex:RDE_flows}
Let $\mathbf{Z}=(Z, \mathbb{Z})\in \mathscr{C}^{\alpha}_{g}([0,T]; \mathbb{R}^{m})$ with $\alpha \in (\nicefrac{1}{3}, \nicefrac{1}{2}]$ and consider $(X^{r,x; \mathbf{Z}}_{s})_{s\in [r, T]}$ the solution to the RDE given by 
    \begin{equation}
    \label{eq:RDE_sol}
        dX^{r,x; \mathbf{Z}}_{s}= \mu(X^{r,x; \mathbf{Z}})ds + \sigma(X^{r,x; \mathbf{Z}}_{s})d\mathbf{Z}_{s}; \; \; \; X^{r,x; \mathbf{Z}}_{r}=x,
    \end{equation}
    with $\mu \in \Lip^{3}(\mathbb{R}^{d}; \mathbb{R}^{d})$ and $\sigma \in \Lip^{5}(\mathbb{R}^{d}; 
    \mathbb{R}^{d\times m})$, where $\Gamma(\cdot)\coloneqq D(\cdot) \sigma$. Then by classical estimates for RDEs (see \cite[Chapters 10--12]{Friz_Victoir_2010}) and using \cref{lemma:jet_criterion}, we see that   the forward flow $[r, T] \ni t \mapsto \phi(r, t; x)\coloneqq X^{r,x; \mathbf{Z}}_{t}$ induces a strongly $\mathbf{Z}$-controlled field as 
    \begin{equation}
    \label{ex:forward_flow_jet}
    \Phi\coloneqq (\phi, \sigma \circ \phi , D \phi, (\Gamma \sigma) \circ \phi , D(\sigma \circ \phi), D^{2} \phi, \mu \circ \phi) \in \mathscr{D}^{3 \alpha}_{\mathbf{Z}} \operatorname{Lip}^{3}_{x}(\mathbb{R}^d; \mathbb{R}^{d}). 
    \end{equation}
    Analogously the backward RDE flow $\overleftarrow{\phi}(t;x)\coloneqq \phi(t, T; x)$ satisfies 
    \begin{eqnarray*}
        \overleftarrow{\phi}(s;x)= \overleftarrow{\phi}(t; \phi(s, t; x))&\stackrel{3}{=}& \overleftarrow{\phi}(t;x) + D \overleftarrow{\phi}(t;x) ( \delta \phi (s, \cdot; x)_{s,t} )\nonumber \\
        &&+\frac{1}{2} D^{2} \overleftarrow{\phi}(t; x)(\delta \phi (s, \cdot; x)_{s,t} )^{\otimes 2}\nonumber \\
        &\stackrel{3}{=} &\overleftarrow{\phi}(t;x) + D \overleftarrow{\phi}(t;x)  \sigma(x) \delta Z_{s,t} + D\overleftarrow{\phi}(t;x)(\Gamma \sigma(x)) \mathbb{Z}_{s,t}\nonumber  \\
        && + D^{2} \overleftarrow{\phi}(t; x)(\sigma(x), \sigma(x)) \mathbb{Z}_{s,t}+D \overleftarrow{\phi}(t;x)  \mu(x) (t-s)\nonumber \\
        &=& \overleftarrow{\phi}(t;x) + \Gamma \overleftarrow{\phi}(t;x) \delta Z_{s,t} + \Gamma^{2} \overleftarrow{\phi}(t; x) \mathbb{Z}_{s,t}+D \overleftarrow{\phi}(t;x) \mu(x) (t-s)\nonumber.
    \end{eqnarray*}
    Now by \cref{rem:rightpoint_expansion} and upon noting that $(\Gamma \overleftarrow{\phi}(\cdot, x), -(\Gamma^{2} \overleftarrow{\phi})^{\top}(\cdot, x))\in \mathscr{D}^{2 \alpha}_{\mathbf{Z}}$, we verify that $(\overleftarrow{\phi}(\cdot, x), -\Gamma \overleftarrow{\phi}(\cdot, x), (\Gamma^{2} \overleftarrow{\phi})^{\top}(\cdot, x))\in \mathscr{D}^{3\alpha}_{\mathbf{Z}}$ uniformly in $x \in \mathbb{R}^{d}$. Now the remaining conditions on the spatial and temporal residuals from \cref{lemma:jet_criterion} are checked analogously and so 
    \begin{equation*}
        \left(\overleftarrow{\phi},  - \Gamma \overleftarrow{\phi}, D\overleftarrow{\phi}, (\Gamma^{2} 
        \overleftarrow{\phi})^{\top}, -D(\Gamma \overleftarrow{\phi}), D^{2} \overleftarrow{\phi},  -(D \overleftarrow{\phi})  \mu\right)\in \mathscr{D}^{3 \alpha}_{\mathbf{Z}} \operatorname{Lip}^{3}_{x}(\mathbb{R}^{d}; \mathbb{R}^{d}).
    \end{equation*}
\end{example}
\begin{remark}
\label{rem:no_explosion}

Note that, in order for $\mathcal{F}\in \mathscr{D}^{3\alpha}_{\mathbf{Z}}\Lip^{3}_x$ to hold, the conditions imposed on $\sigma$ and $\mu$ in \cref{ex:RDE_flows} can be relaxed, provided that no explosion occurs. Sufficient conditions ensuring non-explosion can be found in \cite{Riedel2017}; see also \cite{XM25}.
\end{remark}
\

Having established the regularity of the forward and backward RDE-flows, the composition rule in \cref{cor:composition_rule} allows us to easily derive results in the theory of RPDEs, one of the most immediate ones being for rough transport equations.  
\begin{example}
\label{example:RDE_flow_transport}
    In \cite[Section 12.1.1]{friz_2020},  the authors consider rough transport equations of the form 
\begin{equation}\label{ex:rough_transport}
    \begin{cases}
       -d u_{t}(x)&= \displaystyle Du_t(x) \mu(x) dt +   \Gamma u_{t}(x)d\mathbf Z_t\; \; \; \; \text{on} \; \; [0,T]\times\mathbb{R}^{d},\\
        u_{T}(x)&= \displaystyle g(x) \; \; \; \; \text{on} \; \; \mathbb{R}^{d}.
    \end{cases}
    \end{equation}    
    Making the same assumption on $\alpha, Z, \mu$ and $\sigma$ as in \cref{ex:RDE_flows}, they identify for $g \in \Lip^{3}(\mathbb{R}^{d}; \mathbb{R})$,  $u_{t}(x)\coloneqq(g \circ \overleftarrow{\phi})_{t}(x)$ as the unique solution out of a suitably regular class of solutions. By \cref{ex:RDE_flows},  this motivates the following definition of ``regular" solutions to \eqref{ex:rough_transport}.
    \begin{definition}
\label{ex:regular_solutions_transport}
  We say a jet $\mathcal{U}=(u, u', \partial u, u'', \partial u', \partial^{2} u, \dot{u})\in \mathscr{D}^{3 \alpha}_{\mathbf{X}}\Lip^{3}_{x}(\mathbb{R}^{d}; \mathbb{R})$ is a \textit{regular solution} to \eqref{ex:rough_transport}, if $u_T(x) = g(x)$ and it admits the expansion  
    \begin{equation}\label{def:space_time_reg_solution}
        \mathcal{U}=(u, -\Gamma u,  Du, (\Gamma^{2} u)^{\top}, -D(\Gamma u), D^{2} u, - (Du) \mu ).
     \end{equation}
    \end{definition}

 One immediately verifies that this is a special case of the regular solutions considered in \cite{friz_2020} by taking constant dependence in the space-variables in \cref{def:C3_jet}. The following theorem shows however that the  unique regular solution considered in \cite{friz_2020} is always the unique regular solution in the sense of \cref{ex:regular_solutions_transport}.

 \begin{theorem}    \label{theorem:rough_transport_existence_uniqueness}
 Suppose $g \in \Lip^{3}(\mathbb{R}^{d}; \mathbb{R})$ and $\mu \in \Lip^{3}(\mathbb{R}^{d}; \mathbb{R}^{d})$, $\sigma \in \Lip^{5}(\mathbb{R}^{d}; \mathbb{R}^{d\times m})$ and $X^{t, x,\mathbf{Z}}$ denoting the solution to \eqref{eq:RDE_sol}. Define $u_{t}(x)\coloneqq g(X^{t,x, \mathbf{Z}}_{T})$ and accordingly $\mathcal{U}$ as in \eqref{def:space_time_reg_solution}. Then, $\mathcal{U}$ is the unique regular solution of \eqref{ex:rough_transport}. 
\end{theorem}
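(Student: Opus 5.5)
Existence will come from combining \cref{ex:RDE_flows} with \cref{cor:composition_rule}. Uniqueness will come from composing an arbitrary regular solution with the forward flow and showing that the resulting field is constant in time.

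\emph{Existence.} By \cref{ex:RDE_flows}, the backward flow $\overleftarrow{\phi}$ with jet $(\overleftarrow{\phi}, -\Gamma\overleftarrow{\phi}, D\overleftarrow{\phi}, (\Gamma^2\overleftarrow{\phi})^\top, -D(\Gamma\overleftarrow{\phi}), D^2\overleftarrow{\phi}, -(D\overleftarrow{\phi})\mu)$ lies in $\mathscr{D}^{3\alpha}_{\mathbf{Z}}\Lip^3_x$. Regard $g$ as a time-independent controlled field $(g,0,Dg,0,0,D^2g,0)$, which lies in $\mathscr{D}^{3\alpha}_{\mathbf Z}\Lip^3_x$ since $g\in\Lip^3$. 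By \cref{cor:composition_rule}, $u=g\circ\overleftarrow{\phi}$ is then a controlled field, and we read off its components:
\begin{itemize}
\item $u'=Dg(\overleftarrow{\phi})(-\Gamma\overleftarrow{\phi})=-\Gamma u$;
\item $\partial u=Du$;
\item $\partial u'=-Dg(\overleftarrow{\phi})D(\Gamma\overleftarrow{\phi})-D^2g(\overleftarrow{\phi})(D\overleftarrow{\phi},\Gamma\overleftarrow{\phi})=-D(\Gamma u)$.
\end{itemize}
Since $\mathbf Z$ is geometric, $\dot{[\mathbf Z]}=0$, so the drift is $\dot u=-(Du)\mu$. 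Finally, $u''=D^2g(\Gamma\overleftarrow{\phi},\Gamma\overleftarrow{\phi})+Dg\,(\Gamma^2\overleftarrow\phi)^\top$, and we must check that this equals $(\Gamma^2 u)^\top$, i.e.\ that $\Gamma_i\Gamma_j(g\circ\overleftarrow{\phi})=D^2g(\Gamma_i\overleftarrow\phi,\Gamma_j\overleftarrow\phi)+Dg\,\Gamma_i\Gamma_j\overleftarrow\phi$. This is the ordinary chain rule, and the transposes line up. Together with $u_T=g$, this gives existence.

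\emph{Uniqueness.} Let $\mathcal U$ be any regular solution and put $w_t(x)\coloneqq u_t(\phi(0,t;x))$, i.e.\ compose $\mathcal U$ with the forward flow field $\Phi$ of \eqref{ex:forward_flow_jet} (starting time $0$). By \cref{cor:composition_rule}, $w$ is a controlled field, and we compute its Gubinelli derivative and drift.
\begin{itemize}
\item The first Gubinelli derivative is $u'(\phi)+Du(\phi)\sigma(\phi)=-\Gamma u(\phi)+\Gamma u(\phi)=0$.
\item The drift is $\dot u(\phi)+Du(\phi)\mu(\phi)+(\dots)\dot{[\mathbf Z]}=0$, since $\dot{[\mathbf Z]}=0$.
\item The second derivative is $w''=(\Gamma^2u)^\top-D(\Gamma u)\sigma-(D(\Gamma u)\sigma)^\top+D^2u(\sigma,\sigma)+Du\,\Gamma\sigma$, all evaluated at $\phi$. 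Expanding $\Gamma_j\Gamma_i u=D^2u(\sigma_i,\sigma_j)+Du\,D\sigma_j\sigma_i$ and $D(\Gamma_i u)\sigma_j=D^2u(\sigma_i,\sigma_j)+Du\,D\sigma_i\sigma_j$, these terms cancel and $w''=0$.
\end{itemize}
This index bookkeeping, in particular matching the transpose conventions of \cref{cor:composition_rule} against $(\Gamma^2u)^\top$, is the step I expect to be most error-prone.

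With $w'=w''=\dot w=0$, the third-order cascade in \cref{def:C3_jets_cascade} with $x_1=x_0$ gives $|w_t(x)-w_s(x)|\lesssim|t-s|^{3\alpha}$ uniformly in $x$. Since $3\alpha>1$, summing over a partition shows $w_t(x)=w_T(x)$ for all $t$. Hence $u_t(\phi(0,t;x))=g(\phi(0,T;x))$. Now let $y\in\mathbb{R}^d$ and $x=\phi(0,t)^{-1}(y)$, which exists because the RDE flow is a homeomorphism. Then $u_t(y)=g(\phi(t,T;y))$ by the flow property, which is the solution constructed above, proving uniqueness.

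For the uniqueness step we need $\Phi$ to be globally in $\mathscr{D}^{3\alpha}_{\mathbf Z}\Lip^3_x$, not just locally, and this is provided by \cref{ex:RDE_flows}. The composition of $\mathcal U\in\mathscr{D}^{3\alpha}_{\mathbf Z}\Lip^3_x$ with $\Phi$ is then covered by \cref{cor:composition_rule}.
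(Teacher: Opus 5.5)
Your proposal is correct and follows the paper's proof. Existence comes from composing $g$ with the backward-flow jet of \cref{ex:RDE_flows} via \cref{cor:composition_rule}. Uniqueness comes from composing an arbitrary regular solution with the forward-flow jet $\Phi$, checking that $(u\circ\phi)'$, $(u\circ\phi)''$ and $(u\circ\phi)^{\bullet}$ vanish, and using $3\alpha>1$ to conclude that $u\circ\phi$ is constant in time.

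The one difference is that the paper starts the forward flow at the time $s$ of interest. Then $u_s(x)=u_s(\phi(s,s;x))=u_T(\phi(s,T;x))=g(X^{s,x;\mathbf{Z}}_T)$ follows directly, without your appeal to the homeomorphism property of $\phi(0,t;\cdot)$.

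A small bookkeeping point: $\Gamma_j\Gamma_i u=D(\Gamma_i u)\sigma_j$ carries the term $Du\,D\sigma_i\,\sigma_j$, not $Du\,D\sigma_j\,\sigma_i$. With consistent conventions the cancellation $w''=0$ still holds.
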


\begin{proof}
 \cref{ex:RDE_flows} and \cref{cor:composition_rule} yield that $\mathcal{U}\in \mathscr{D}^{3 \alpha}_{\mathbf{Z}}\Lip^{3}_{x}$, and hence $\mathcal U$ solves \eqref{ex:rough_transport}. To see the uniqueness, consider any space-time regular solution  $\mathcal{U}$ of the form \eqref{def:space_time_reg_solution}, whose first component is given by a function $u$. Furthermore, for fixed $s\in [t,T]$ the forward RDE flow $[s, T]\times \mathbb{R}^{d} \ni (t, x) \mapsto \phi(s, t; x)$ in \cref{ex:RDE_flows} gives rise to a jet $\Phi \in \mathscr{D}^{3 \alpha}_{\mathbf{Z}} \Lip^{3}_{x}$ given by \eqref{ex:forward_flow_jet}. Now by \cref{cor:composition_rule}, we see  that 
\begin{eqnarray*}
 (u \circ \phi)'&=& - \Gamma u \circ \phi + (D u \circ \phi) \phi' \equiv 0; \nonumber\\
(u \circ \phi)'' &=& (\Gamma^{2}
u)^{\top} \circ \phi -(D \Gamma u \circ \phi)\sigma \circ \phi- ((D \Gamma u \circ \phi) \sigma \circ \phi)^{\top} + (D^{2} u \circ \phi)( \sigma \circ \phi , \sigma \circ \phi) \nonumber \\
 && + (Du \circ \phi) \Gamma \sigma \circ \phi \equiv 0;\nonumber\\
(u\circ \phi)^{\bullet} &=&- (Du \mu) \circ \phi + (Du \circ \phi) \mu \circ \phi\equiv0. \nonumber
\end{eqnarray*}

Since $\alpha \in (\nicefrac{1}{3}, \nicefrac{1}{2}]$, $u \circ \phi$ is thus constant in time. Therefore,
\begin{equation*}
u_{s}(x)=u\left (s, \phi(s,s; x)\right)= u\left(T, \phi(s, T; x)\right)=g\left( X^{s,x; \mathbf{Z}}_{T}\right).
\end{equation*}
  Since by definition all other components of $\mathcal{U}$ are defined in terms of $u$, the solution is unique.
\end{proof}
\end{example}
Further we can directly derive a rough version of the Alekseev-Gröbner (AG) formula \cite{alekseev_1961,grobner_1960}.
\begin{lemma}
\label{lemma:rAG}
    Let $\mathbf{Z}\in \mathscr{C}^{\alpha}_{g}([0, T]; \mathbb{R}^{d_{Z}})$ with $\alpha \in (\nicefrac{1}{3}, \nicefrac{1}{2}]$ and $(Y, Y', Y'', \dot{Y})\in \mathscr{D}^{3\alpha}_{\mathbf{Z}}([0, T]; \mathbb{R}^{d_{X}})$. Further let $\mu\in \Lip^{1}(\mathbb{R}^{d_{X}}; \mathbb{R}^{d_{X}}), \sigma \in \Lip^{5}(\mathbb{R}^{d_{X}}; \mathbb{R}^{d_{X} \times d_{Z}})$ and $X^{t,x; \mathbf{Z}}$ denote the solution to the RDE \eqref{eq:RDE_sol}. Then for any $f \in C^{3}(\mathbb{R}^{d_{X}}; \mathbb{R}^{d_{X}})$ it holds with $F_{t}^{\mathbf{Z}}(x)\coloneqq f(X_{T}^{t, x; \mathbf{Z}})$, that
    \begin{eqnarray*}
  F^{\mathbf{Z}}_t (Y_t) - F^{\mathbf{Z}}_s (Y_s) & = & \int_s^t D
  F^{\mathbf{Z}}_r (Y_r)  (\dot{Y_r} - \mu (Y_r))  \hspace{0.17em} dr 
  + \int_s^t D F^{\mathbf{Z}}_r (Y_r)  \left( \hspace{0.17em} Y'_r -
  \sigma \left( Y_r \right) \right) \circ d \mathbf{Z}_r, 
\end{eqnarray*}
where $\circ$ indicates the integration is with respect to a geometric rough path. 
\end{lemma}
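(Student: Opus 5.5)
The plan is to apply the rough Itô--Wentzell formula, \cref{cor:RIW_formula}, to the backward field $F^{\mathbf{Z}} = f\circ \overleftarrow{\phi}$ evaluated along $Y$. The integral form then follows from sewing, since $\alpha>1/3$.

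First I build the jet of $F^{\mathbf{Z}}$. By \cref{ex:RDE_flows}, the backward flow $\overleftarrow{\phi}(t;x)=X^{t,x;\mathbf{Z}}_T$ gives a controlled field $(\overleftarrow{\phi}, -\Gamma\overleftarrow{\phi}, D\overleftarrow{\phi}, (\Gamma^2\overleftarrow{\phi})^\top, -D(\Gamma\overleftarrow{\phi}), D^2\overleftarrow{\phi}, -(D\overleftarrow{\phi})\mu)$. Composing with the time-independent field $(f,0,Df,0,0,D^2f,0)$ through \cref{cor:composition_rule} gives
\[
\mathcal{F}=\big(F, -\Gamma F, DF, (\Gamma^2F)^\top, -D(\Gamma F), D^2F, -DF\,\mu\big).
\]
Here $\Gamma F = DF\,\sigma$, and only local regularity is needed since $f\in C^3$. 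The hypotheses of \cref{ex:RDE_flows} ask for $\mu\in\Lip^3$, whereas the lemma only gives $\mu\in\Lip^1$. I would first prove the lemma for smooth $\mu$ and then pass to the limit using continuity of RDE flows and of their first two derivatives in $\mu$. I expect this approximation, or alternatively a direct check of the needed regularity via \cref{appendix:section:On RDE-flows for non-autonomous vector fields}, to be the main technical obstacle. The alternative should work because only the backward-flow derivatives $D\overleftarrow{\phi}, D^2\overleftarrow{\phi}$ enter, and these need $\sigma$ smooth but only mild regularity of $\mu$.

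Next I apply \cref{cor:RIW_formula} with $Z_t=F_t(Y_t)$. Since $\mathbf{Z}$ is geometric, $[\mathbf{Z}]=0$ and the bracket term in $\dot Z$ vanishes. This gives:
\[
Z'_t = DF_t(Y_t)\big(Y'_t-\sigma(Y_t)\big),\qquad \dot Z_t = DF_t(Y_t)\big(\dot Y_t-\mu(Y_t)\big),
\]
\[
Z''_t = DF_tY''_t + (\Gamma^2F_t)^\top - D(\Gamma F_t)Y' - (D(\Gamma F_t)Y')^\top + D^2F_t(Y',Y').
\]
Because $\alpha>1/3$, the remark after \cref{def:C3_jet} yields $\delta Z_{s,t}=\int_s^t (Z',Z'')_r\,d\mathbf{Z}_r+\int_s^t \dot Z_r\,dr$. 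The drift term is exactly the first integral in the statement.

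It remains to identify the rough integral with $\int DF_r(Y_r)(Y'_r-\sigma(Y_r))\circ d\mathbf{Z}_r$, understood as the rough integral of the controlled path $G_r:=DF_r(Y_r)(Y'_r-\sigma(Y_r))$ with its own Gubinelli derivative $G'$. For a geometric rough path, only $\operatorname{Sym}$ of the second-order coefficient matters modulo terms of order $2\alpha$ in the sense of Riemann sums. So I compute $G'$ via the product and chain rules (\cref{cor:composition_rule} again) and check that $\operatorname{Sym} G' = \operatorname{Sym} Z''$. Concretely, $G' = (DF\,Y')' - (DF\,\sigma(Y))'$, which expands into $D^2F(Y',Y') + DF\,Y'' - D(\Gamma F)Y'$ plus terms coming from $\sigma$. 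The $\sigma$ terms should combine with $(\Gamma^2F)^\top$ and the transposed $D(\Gamma F)Y'$ term to leave only antisymmetric discrepancies. These discrepancies are killed by $\operatorname{Sym}\mathbb{Z}_{s,t}=\tfrac12\delta Z^{\otimes2}_{s,t}$ together with the geometric structure, so the two sewing germs differ by $o(|t-s|)$ and the integrals coincide. This bookkeeping of transposes is routine but delicate.
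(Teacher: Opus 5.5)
Your main line is the paper's proof. The paper says only that the lemma is a direct consequence of \cref{cor:RIW_formula} applied to the field $f\circ\overleftarrow{\phi}$ of \cref{ex:RDE_flows}. Your jet $\mathcal F$, your $Z'$, your $\dot Z$ (using $[\mathbf Z]=0$) and the sewing step are all correct. However, the two places you single out as the real work are both mishandled.

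First, the regularity of $\mu$ cannot be handled by approximation. $D\overleftarrow{\phi}$ and $D^2\overleftarrow{\phi}$ solve variational equations that contain $D\mu$ and $D^2\mu$, so your alternative claim that they need only mild regularity of $\mu$ is false. For merely Lipschitz $\mu$ the flow need not even be differentiable. Take $d_X=1$, $\sigma\equiv 0$, $f=\mathrm{id}$, $\mu(x)=\min(|x|,1)$. Then $F_t(x)=xe^{T-t}$ for small $x\ge 0$ and $F_t(x)=xe^{-(T-t)}$ for $x\in(-1,0)$, so $DF_t(0)$ does not exist. The paper makes no such attempt; it simply works under the hypotheses of \cref{ex:RDE_flows}, i.e.\ $\mu\in\Lip^3$. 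That is what is needed for $F^{\mathbf Z}$ to be a controlled $\Lip^3$ field, so the $\Lip^1$ in the statement cannot be taken literally.

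Second, your identification of the $\circ\, d\mathbf Z$-integral rests on a false principle. Let $A$ be the difference of two Gubinelli derivatives of the same path. Then $A_u\operatorname{Sym}(\mathbb Z_{u,v})=\tfrac12 A_u(\delta Z_{u,v}\otimes \delta Z_{u,v})=O(|v-u|^{3\alpha})$. So the part of a discrepancy seen by $\operatorname{Sym}\mathbb Z$ is the harmless one. The antisymmetric part of $A$ pairs with the L\'evy area and survives: for the pure-area path $Z\equiv 0$, $\mathbb Z_{s,t}=(t-s)\mathcal A$ with $\mathcal A$ antisymmetric, it contributes $\int A_u\mathcal A\,du$. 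Checking $\operatorname{Sym}G'=\operatorname{Sym}Z''$ would therefore prove nothing.

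Fortunately no discrepancy exists: $G=Z'$ and $G'=Z''$ exactly. Your expansion of $(DF(Y)Y')'$ mis-assigns a transpose. The term produced by the time derivative of $DF$ is $-(D(\Gamma F)(Y)Y')^{\top}$, by the convention $(\partial F)'=(\partial F')^{\top}$ of \cref{rem:commute}. The untransposed $-D(\Gamma F)(Y)Y'$ arises, together with $(\Gamma^2F)^{\top}(Y)$, from the Gubinelli derivative of $-\Gamma F(Y)=F'(Y)$. This is precisely the assertion $(Z',Z'')\in\mathscr D^{2\alpha}_{Z}$ contained in \cref{cor:RIW_formula}. Hence the integral in the lemma, with its canonical Gubinelli derivative, is just $\int (Z',Z'')\,d\mathbf Z$, and no symmetrization argument is needed. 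That is why the paper's proof is one line.
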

\begin{proof}
     This is a direct consequence of \cref{cor:RIW_formula} and \cref{ex:RDE_flows}. 
\end{proof}
\section{Rough stochastic calculus}
\label{section:On Rough Stochastic Calculus}
\subsection{Elements of rough semimartingales }
\label{subsection:Rough_Semimartingales}
For the reader's convenience, we briefly adapt the theory of rough
semimartingales introduced in \cite{fzk23} to our Hölder setting.

We assume an underlying
filtered probability space $(\Omega, \mathfrak{F}, (\mathfrak{F}_t)_{t \in [0,T]},
\mathbb{P})$ satisfying the ``usual assumptions'' and finite-dimensional Banach spaces $V, W, U$\footnote{The following proofs rely crucially on the Burkholder-Davis-Gundy inequality, which does not hold on general infinite-dim. Banach spaces. On Hilbert-spaces however it holds.}. Let $\alpha \in (\frac{1}{3}, \frac{1}{2})$ be fixed.  We say that a $V$-valued process $A: \Delta_{T} \times \Omega \to V$ is adapted,  if $A_{s,t} \in \mathfrak{F}_{t}$ for any $(s,t) \in \Delta_{T}$. For two processes $A, B: \Delta_{T} \times \Omega \to V$ and $\beta \in (0, \infty)$,  we say $A_{s,t}\stackrel{\beta}{=} B_{s,t}$ a.s. if
\begin{equation*}
    \sup_{(s,t) \in \Delta_{T}} \frac{|A_{s,t}- B_{s,t}|}{|t-s|^{\beta}}< \infty, \quad \mathbb{P}\text{-almost surely}.
\end{equation*}
Let us emphasize that here ``$\stackrel{\beta}{=}$'' is not related to the notation ``$\stackrel{k}{=}$'' in \cref{section:Introduction to controlled fields}. For a process $Y: [0,T]\times \Omega \to V$,  we write $Y_{t} \stackrel{\beta}{=} Y_{s} + B_{s,t}$, if $\delta Y_{s,t} \stackrel{\beta}{=}B_{s,t}$ almost surely. For a controlled rough path $(Y, Y') \in \mathscr{D}^{2 \alpha}_{X}([0,T]; W)$ and any $t \in [0,T]$, we define $$X^{t}_{\cdot}:=X_{\cdot \wedge t}, \quad  (Y_{\cdot}, Y_{\cdot}')^{t}:=(Y_{\cdot \wedge t}, Y'_{\cdot \wedge t}).$$
Then, $(Y, Y')^{t} \in \mathscr{D}^{2\alpha}_{X^{t}}([0,T]; W)$.

\begin{definition}
  We denote by $\mathcal{M}^{c,\loc}(W)$ the space of $W$-valued, almost surely continuous local martingales. Furthermore, we write $\mathcal{M}^{c, \loc,1}(W) \subset \mathcal{M}^{c, \loc}(W)$ for the subclass consisting of those martingales $M$ whose quadratic variation (bracket) process satisfies $\langle M \rangle \in \Lip^{1}([0,T]; W^{\otimes 2})$ almost surely. We denote its weak derivative by $\dot{\langle M \rangle}$ which belongs to $L^{\infty}([0,T]; W^{\otimes 2})$ almost surely.
\end{definition}

We will focus on the class $\mathcal{M}^{c, \loc,1}(W)$ of martingales, as we can easily obtain Hölder-regularity of sample paths by standard localization arguments, which we recall here for the convenience of the reader. 
\begin{lemma}
\label{lemma:path_regularity_Martingale}
    Let $M \in \mathcal{M}^{c, \loc, 1}(W)$. Then $M \in \mathcal{C}^{\frac{1}{2}-}([0,T]; W)$ almost surely. 
\end{lemma}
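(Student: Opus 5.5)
The plan is to reduce to martingales with uniformly bounded bracket derivative by localization, and then apply the Kolmogorov continuity criterion combined with the Burkholder--Davis--Gundy inequality.

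\emph{Localization.} For $k\in\mathbb{N}$ we define the stopping times
\[
\tau_k\coloneqq \inf\bigl\{t\in[0,T]: |M_t|\geq k \text{ or } \|\dot{\langle M\rangle}\|_{L^\infty([0,t])}\geq k\bigr\}\wedge T.
\]
Since $\langle M\rangle$ is a.s.\ Lipschitz, $\|\dot{\langle M\rangle}\|_{L^\infty([0,t])}$ is finite, nondecreasing and adapted in $t$. It agrees with the Lipschitz constant of $\langle M\rangle$ on $[0,t]$, which is adapted, so each $\tau_k$ is a stopping time. Moreover, almost surely $\tau_k=T$ for all large $k$, because $M$ is continuous, hence bounded on $[0,T]$, and $\langle M\rangle$ is Lipschitz.

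The stopped process $M^{\tau_k}$ is a bounded continuous martingale. Its bracket is $\langle M\rangle^{\tau_k}$, which satisfies $|\delta\langle M\rangle^{\tau_k}_{s,t}|\leq k|t-s|$ deterministically. (Here we use that the trace, i.e.\ the norm, of the bracket increment is controlled by the Lipschitz constant in finite dimensions.)

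\emph{Moment bound and Kolmogorov.} For $p\geq 2$, BDG gives
\[
\mathbb{E}\bigl|M^{\tau_k}_t-M^{\tau_k}_s\bigr|^p\leq C_p\,\mathbb{E}\bigl|\operatorname{tr}\delta\langle M\rangle^{\tau_k}_{s,t}\bigr|^{p/2}\leq C_p k^{p/2}|t-s|^{p/2}.
\]
Here BDG is applied componentwise after fixing a basis of $W$, which is harmless in finite dimensions. The Kolmogorov continuity criterion then provides a modification that is $\gamma$-H\"older for every $\gamma<\frac12-\frac1p$. Since $M^{\tau_k}$ is already continuous, it is indistinguishable from this modification. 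Letting $p\to\infty$ along a countable sequence, we get $M^{\tau_k}\in\mathcal{C}^{\gamma}$ for all $\gamma<\frac12$, almost surely.

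\emph{Conclusion.} On the event $\{\tau_k=T\}$ we have $M=M^{\tau_k}$ on $[0,T]$. Since $\bigcup_k\{\tau_k=T\}$ has full measure, taking the countable union gives $M\in\mathcal{C}^{\frac12-}([0,T];W)$ almost surely.

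The only delicate point is the adaptedness of $\tau_k$, i.e.\ the measurability of the running Lipschitz constant of $\langle M\rangle$. If needed, one can avoid it by replacing the $L^\infty$ condition with
\[
\sup_{s<u\leq t,\ s,u\in\mathbb{Q}}\frac{|\delta\langle M\rangle_{s,u}|}{u-s}\geq k,
\]
which is a supremum of countably many adapted quantities and is continuous-in-time monotone, hence yields a genuine stopping time.
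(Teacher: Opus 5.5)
Your proof is correct and follows the same route as the paper. You localize so that the stopped bracket has controlled increments, apply BDG to get $L^q$ bounds on increments for every $q$, invoke Kolmogorov, and remove the localization on the full-measure event $\bigcup_k\{\tau_k=T\}$. The differences are cosmetic: the paper stops when the $\beta$-H\"older seminorm of $\langle M\rangle$ (with $\beta<1$) reaches $k$ and then lets $\beta\uparrow 1$, whereas you stop directly on the running Lipschitz constant. Your extra stopping condition on $|M_t|$ is unnecessary, since BDG applies to the increment local martingale $M^{\tau_k}_{\cdot\vee s}-M^{\tau_k}_s$ without any boundedness; it also would not make $M^{\tau_k}$ bounded when $|M_0|\ge k$.
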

\begin{proof}
    For any $\beta \in (0, 1)$ we define 
    \begin{equation*}
    \tau_k \coloneqq \inf \{ t\geq 0 | \Vert\langle M \rangle \Vert_{\beta; [0, t]}\geq k \}.
    \end{equation*}
By Theorem~5.31 in \cite{Friz_Victoir_2010} this is a sequence of stopping times and naturally even a localizing sequence. Now by the Burkholder–Davis–Gundy inequality, it holds for any $q \in [1, \infty)$, 
\begin{equation*}
    \left \Vert \delta M_{s,t}^{\tau_{k}} \right \Vert_{L^{q}}\lesssim \sqrt{ \left \Vert \delta \langle M \rangle_{s,t}^{\tau_{k}} \right \Vert_{L^{\frac{q}{2}}}}\leq \sqrt{k |t-s|^{\beta}}.
\end{equation*}
Thus, applying the Kolmogorov continuity criterion implies $M^{\tau_{k}}\in \mathcal{C}^{\beta-}$ a.s. for any $k \in \mathbb{N}$. Since $M=M^{\tau_{k}}$ up to indistinguishability on the event $\{T \leq \tau_{k} \}$ and $\beta \in (0,1)$ is arbitrary, the claim follows. 
\end{proof}
Analogously to  Definition~1.5  in \cite{fzk23},  which is formulated in the $p$-variation setting, we introduce the notion of rough semimartingales in the $\alpha$-Hölder scale.
\begin{definition}
  For $X \in \mathcal{C}^{\alpha}([0,T]; V)$, we call a $3$-tuple $(Y, \partial_{X}Y; M)$  an $X$-controlled, $W$-valued, $\alpha$-Hölder \textit{rough semimartingale}, if $M
  \in \mathcal{M}^{c,\loc, 1}(W)$ and 
  \begin{equation*}
  (Y, \partial_{X} Y) :  [0, T] \times \Omega  \rightarrow W \times \mathcal{L}(V; W)
  \end{equation*}
  is a continuous adapted process such that $(Y - M, \partial_{X} Y) \in \mathscr{D}^{2 \alpha}_X([0,T]; W)$ almost surely. 
\end{definition}

We now recall and extend the integration theory of rough semimartingales established in \cite{fzk23}. 
\begin{lemma}
    \label{lemma:ibp}
    Let $X\in \mathcal{C}^{\alpha}([0,T]; V)$ and $M \in \mathcal{M}^{c, \loc, 1}(W)$. Then the Itô integrals
    \begin{equation*}
     \mathbb{M}_{s,t}\coloneqq \int_{s}^{t} \delta M_{s,r} \otimes dM_{r} ~\text{ and  } ~ \Pi(X; M)_{s,t}\coloneqq \int_{s}^{t} \delta X_{s,r} \otimes dM_{r}
    \end{equation*}
    are well-defined Itô integrals. We define the following integrals via integration by parts (referred to as the IBP-integrals):
    \begin{equation*}
    \begin{aligned}
        \Pi(M; X)_{s,t} &\coloneqq \int_{s}^{t} \delta M_{s,r} \otimes dX_{r} :=  (\delta M)_{s,t} \otimes (\delta X)_{s,t} - \Pi(X; M)^{\top}; \\
        \int_{s}^{t} M_{r} \otimes dX_{r} &\coloneqq \Pi(M; X)_{s,t}+ M_{s} \otimes \delta X_{s,t}.
    \end{aligned}
    \end{equation*}
    Then, the following statements hold:
\begin{enumerate}
    \item \label{lemma:ibp_claim_1} The iterated integrals satisfy the Chen-type relations: for any $s,u,t \in [0,T]$ with $s< u < t$, it holds a.s. \begin{equation*}
    \begin{aligned}
    \delta \mathbb{M}_{s,u,t}&=\delta M_{s,u} \otimes \delta M_{u,t}; \\
    \delta \Pi(X; M)_{s,u,t}&= \delta X_{s,u} \otimes \delta M_{u,t}; \\
    \delta \Pi(M; X)_{s,u,t}&= \delta M_{s,u} \otimes \delta X_{u,t}.  
    \end{aligned}
    \end{equation*}
    \item \label{lemma:ibp_claim_2}$\mathbb{M}\in \mathcal{C}^{1-}_{2}([0,T]; W^{\otimes 2})$ almost surely.
    \item \label{lemma:ibp_claim_3}  $\Pi(X; M)\in \mathcal{C}_{2}^{(\frac{1}{2}+\alpha)-}([0,T]; V \otimes W)$ and $
    \Pi(M; X) \in \mathcal{C}_2^{(\frac{1}{2}+\alpha)-}([0,T]; W\otimes V)$ almost surely.
    \item \label{lemma:ibp_claim_4} For any sequence $(\mathcal{P}^{n})_{n\in \mathbb{N}}$  of partitions of $[s,t]$ with vanishing mesh-size, it holds 
    \begin{equation*}
        \sum_{[u,v]\in \mathcal{P}^{n}} \delta M_{s,u} \otimes \delta X_{u,v} \stackrel{\mathbb{P}}{\longrightarrow} \int_{s}^{t} \delta M_{s,r} \otimes dX_{r}, \quad \text{ as } n\to \infty. 
    \end{equation*}
\end{enumerate}
\end{lemma}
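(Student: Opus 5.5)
The plan is to handle the four items in order of dependence: first well-definedness and Chen relations (algebra), then Hölder regularity via a localized Kolmogorov-type criterion, and finally the Riemann-sum convergence.

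\emph{Well-definedness and item (1).} For fixed $s$, the integrands $r\mapsto \delta M_{s,r}$ and $r\mapsto \delta X_{s,r}$ on $[s,t]$ are continuous and adapted; the latter is deterministic. Hence $\mathbb{M}_{s,t}$ and $\Pi(X;M)_{s,t}$ are well-defined Itô integrals against the continuous local martingale $M$. The Chen relations for $\mathbb{M}$ and $\Pi(X;M)$ follow by splitting $\int_s^t=\int_s^u+\int_u^t$ and writing $\delta M_{s,r}=\delta M_{s,u}+\delta M_{u,r}$ for $r\ge u$, and similarly for $X$. This uses only that $\delta M_{s,u}$ is $\mathfrak F_u$-measurable, so it can be pulled out of $\int_u^t\cdot\,dM$. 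For $\Pi(M;X)$, I will apply $\delta$ to the defining identity: $\delta(\delta M\otimes\delta X)_{s,u,t}=\delta M_{s,u}\otimes\delta X_{u,t}+\delta M_{u,t}\otimes \delta X_{s,u}$, and subtract the transpose of the Chen relation for $\Pi(X;M)$.

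\emph{Items (2) and (3).} I will localize with the stopping times $\tau_k$ of the proof of \cref{lemma:path_regularity_Martingale}, chosen so that $\langle M\rangle$ is $k$-Lipschitz up to $\tau_k$. By BDG, $\|\mathbb{M}^{\tau_k}_{s,t}\|_{L^q}\lesssim \|(\int_s^t|\delta M_{s,r}|^2 d\langle M\rangle_r)^{1/2}\|_{L^q}\lesssim_k |t-s|$, and likewise $\|\Pi(X;M)^{\tau_k}_{s,t}\|_{L^q}\lesssim_k \|X\|_\alpha |t-s|^{\alpha+1/2}$, for every $q<\infty$. Since these two-parameter processes obey the Chen relations with $\mathcal C^{1/2-}$ resp.\ $\mathcal C^\alpha$ increments, the two-parameter Kolmogorov criterion (the rough path Kolmogorov criterion, \cite[Thm 3.1]{friz_2020}, in a mixed version for $\Pi$ as generalized in \cref{sec:besov-spaces}) gives $\mathbb{M}\in\mathcal C_2^{1-}$ and $\Pi(X;M)\in \mathcal C_2^{(1/2+\alpha)-}$ on $\{\tau_k\ge T\}$. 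Letting $k\to\infty$ removes the localization. The bound for $\Pi(M;X)$ is then immediate from the IBP definition, since $|\delta M_{s,t}\otimes\delta X_{s,t}|\lesssim|t-s|^{(1/2+\alpha)-}$. The main technical point is this Kolmogorov step for two-parameter objects; I would first try the standard "moment bound plus Chen relation implies Hölder" argument (Garsia–Rodemich–Rumsey style), carried out on the stopped processes.

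\emph{Item (4).} I will write $\sum \delta M_{s,u}\otimes\delta X_{u,v}$ using Chen (1) telescoped over the partition, giving $\Pi(M;X)_{s,t}-\sum_{[u,v]}\Pi(M;X)_{u,v}$. By item (3), with exponent $\gamma=(1/2+\alpha)-\varepsilon>1$ (possible since $\alpha>1/3$... indeed $\alpha>1/2$ is not needed: $1/2+\alpha>5/6$). Since $\gamma$ may fail to exceed $1$, I cannot conclude pathwise and instead argue in probability. Under localization, $\sum\Pi(X;M)_{u,v}$ is a sum of martingale increments with $L^2$ norm $\lesssim(\sum|v-u|^{1+2\alpha})^{1/2}\to0$. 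Also $\sum\delta M_{u,v}\otimes\delta X_{u,v}$ has $L^2$ norm at most $(\sum |v-u|\,|v-u|^{2\alpha})^{1/2}\to 0$ by orthogonality of martingale increments. Hence $\sum\Pi(M;X)_{u,v}\to0$ in $L^2$ on $\{\tau_k\ge T\}$, and convergence in probability follows by letting $k\to\infty$.
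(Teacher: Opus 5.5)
Your proposal is correct and follows the paper's proof: localization with bracket-based stopping times, BDG moment bounds, and the (mixed) rough-path Kolmogorov criterion give items (2)--(3). For item (4), your Chen-telescoping with explicit $L^2$ bounds on $\sum \Pi(X;M)_{u,v}$ and $\sum \delta M_{u,v}\otimes \delta X_{u,v}$ is just an equivalent form of the paper's three-sum identity; it makes explicit the two convergences the paper quotes. Delete the aside about $\gamma>1$: here $\alpha<\tfrac12$, so $\tfrac12+\alpha<1$, which is exactly why the argument in probability is needed.
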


  \begin{proof}
Claim \eqref{lemma:ibp_claim_1} follows by simple calculations.  Now we prove Claim \eqref{lemma:ibp_claim_2}.  Let $\beta \in (0, 1)$ be fixed and $(\tau_{k})_{k \in \mathbb{N}}$ be the localizing sequence as in the proof of \cref{lemma:path_regularity_Martingale}. Then,  by the Burkholder–Davis–Gundy inequality, it holds for any $q \in [2, \infty)$,
  \begin{equation*}
  \begin{aligned}
      \left \Vert \mathbb{M}^{\tau_{k}}_{s,t}  \right\Vert_{L^{\frac{q}{2}}}= \left \Vert \int_{s}^{t} \delta M_{s,r}^{\tau_{k}} \otimes dM^{\tau_{k}}_{r} \right \Vert_{L^{\frac{q}{2}}} &\lesssim \left \Vert \sqrt{\int_{s}^{t} |\delta M_{s,r}^{\tau_{k}}|^{2} d\langle M \rangle^{\tau_{k}}_{r}}\right \Vert_{L^{\frac{q}{2}}}\\
      &\leq \sqrt{k} |t-s|^{\frac{\beta}{2}} \left \Vert \sup_{r \in [s,t]} |\delta M^{\tau_{k}}_{s,r}| \right \Vert_{L^{\frac{q}{2}}} \leq k |t-s|^{ \beta}.
  \end{aligned}
  \end{equation*}
Applying the Kolmogorov criterion for random rough paths (Theorem 3.1 in \cite{friz_2020}) gives $\mathbb{M}^{\tau_{k}}\in \mathcal{C}^{\beta-}_{2}([0,T]; V^{\otimes 2})$. Claim \eqref{lemma:ibp_claim_2} then follows  by noting that  $\mathbb{M}=\mathbb{M}^{\tau_{k}}$  on  $\{T \leq \tau_{k}\}$ and  that $(\tau_{k})_{k \in \mathbb{N}}$ is  a localizing sequence.

For Claim \eqref{lemma:ibp_claim_3},  applying again the Burkholder–Davis–Gundy inequality, we get  for any $q \in [2, \infty)$, 
  \begin{equation*}
      \Vert \Pi(X; M)^{\tau_{k}}_{s,t} \Vert_{L^{\frac{q}{2}}} \lesssim |t-s|^{\frac{\beta}{2}+\alpha}.
  \end{equation*}
  Applying the generalized rough path Kolmogorov criterion (\cref{appendix:theorem:Kolmogorov}) to $\Pi(X; M)^{\tau_{k}}$ (setting $A^{2}\equiv 0, A^{1}=\Pi(X; M), A^{1,1}=X, A^{1,2}=M$ in the context of the theorem) yields $\Pi(X;M)^{\tau_{k}}\in \mathcal{C}^{(\frac{\beta}{2}+\alpha)-}([0,T]; V \otimes W)$ a.s. and so $\Pi(X;M) \in \mathcal{C}^{(\frac{1}{2}+\alpha)-}([0,T]; V \otimes W)$ almost surely.  Then, it follows directly by the definition of $\Pi(M; X)$ that 
    $\Pi(M; X) \in \mathcal{C}_2^{(\frac{1}{2}+\alpha)-}([0,T]; W\otimes V)$. This completes the proof of  Claim \eqref{lemma:ibp_claim_3} .  
    
    Finally, we prove Claim \eqref{lemma:ibp_claim_4}.  Note that for any $(s,t)\in \Delta_{T}$ and any partition $\mathcal{P}=(t_{i})_{i=0}^{N}$ of $[s,t]$ it holds
  \begin{equation*}
      \delta X_{s,t} \otimes \delta M_{s,t}= \sum_{i=0}^{N-1} \delta X_{t_{i}, t_{i+1}} \otimes \delta M_{s,t_{i}}+ \sum_{i=0}^{N-1} \delta X_{s, t_{i}} \otimes \delta M_{t_{i}, t_{i+1}} + \sum_{i=0}^{N} \delta X_{t_{i}, t_{i+1}} \otimes \delta M_{t_{i}, t_{i+1}}.
  \end{equation*}
As $|\mathcal{P}|\downarrow 0$ , the 2nd sum on the right-hand side   converges to the Itô integral $\Pi(X; M)$, while the 3rd sum converges to $0$ in probability noting that  $X$ is deterministic (see \cite{fzk23}).  This together with the definition of $\Pi(M;X)$ implies Claim \eqref{lemma:ibp_claim_4}.   \end{proof}
    
  In \cite{fzk23}, the authors show that rough semimartingales are in one-to-one correspondence with a.s. controlled rough paths. For the reader’s convenience, we briefly reprove this fact in our setting.
\begin{corollary}
  \label{cor:CRPtoRSM} Consider a continuous, adapted process
  \begin{equation*}
      (Y, \partial_{X} Y , \partial_{M}Y): [0,T] \times \Omega \to W \times \mathcal{L}(V; W) \times \mathcal{L}(U; W)
  \end{equation*} and $M \in \mathcal{M}^{c, \loc, 1}(U)$ such that a.s.
    \begin{equation*}
        Y_{t} \stackrel{2\alpha}{=} Y_{s} + (\partial_{X} Y)_{s} \delta X_{s,t}+ (\partial_{M} Y)_{s} \delta M_{s,t}; \; 
        \; \; \; (\partial_{X} Y, \partial_{M} Y)_{t} \stackrel{\alpha}{=} (\partial_{X} Y, \partial_{M} Y)_{s}
    \end{equation*}
    Then a.s.
    \begin{equation*}
  \delta (Y - N)_{s,t} \stackrel{2 \alpha}{=} (\partial_X Y)_{s} \delta X_{s,t}, 
  \end{equation*}
  where
  \begin{equation*}
     N_{t} \coloneqq \int_0^{t} (\partial_M Y)_r d M_r \in \mathcal{M}^{c,\loc, 1}(W).
     \end{equation*}
  In words, an adapted $ (X, M)$-controlled $\alpha$-Hölder rough path, with $M \in \mathcal{M}^{c, \loc, 1}$, induces an $X$-controlled, $\alpha$-Hölder, rough 
  semimartingale $(Y, \partial_X Y; N)$. Conversely any $X$-controlled, $\alpha$-Hölder rough semimartingale $(Y, \partial_{X} Y; M)$ induces an a.s. $(X, M)$-controlled, $\alpha$-Hölder, rough path $(Y, Y') \in \mathscr{D}^{2 \alpha}_{(X, M)}$ with $Y'\coloneqq (\partial_{X} Y, \operatorname{Id}_{W})^{\top}$.
\end{corollary}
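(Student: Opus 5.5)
The plan is to treat the two directions separately. In both, the only nontrivial input is a pathwise estimate on the remainder of a stochastic integral. I will get this estimate by localization plus BDG and Kolmogorov, exactly as in the proofs of \cref{lemma:path_regularity_Martingale} and \cref{lemma:ibp}.

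\emph{Forward direction.} Given $(Y,\partial_X Y,\partial_M Y)$ and $M$, set $N=\int_0^\cdot (\partial_M Y)_r\,dM_r$. By continuity and adaptedness, $\partial_M Y$ is locally bounded, so $N$ is a continuous local martingale with
\[
\langle N\rangle_t=\int_0^t (\partial_M Y)_r^{\otimes 2}\, d\langle M\rangle_r .
\]
This is Lipschitz in $t$ a.s., since $\dot{\langle M\rangle}\in L^\infty$ and $\partial_M Y$ is bounded on $[0,T]$. Hence $N\in\mathcal M^{c,\loc,1}(W)$. It then suffices to show that
\[
R_{s,t}\coloneqq \delta N_{s,t}-(\partial_M Y)_s\,\delta M_{s,t}=\int_s^t \delta(\partial_M Y)_{s,r}\,dM_r
\]
satisfies $R\stackrel{2\alpha}{=}0$ a.s. Indeed, the assumed expansion of $Y$ then gives
\[
\delta(Y-N)_{s,t}=\delta Y_{s,t}-(\partial_M Y)_s\delta M_{s,t}-R_{s,t}\stackrel{2\alpha}{=}(\partial_X Y)_s\delta X_{s,t}.
\]
The continuity of $\partial_X Y$ needed for membership in $\mathscr D^{2\alpha}_X$ is part of the hypothesis.

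\emph{Main obstacle.} The hardest step is the bound on $R$, because $\partial_M Y$ is only a.s.\ $\alpha$-Hölder, with a random and non-adapted Hölder constant. I will localize with the stopping times
\[
\tau_k=\inf\bigl\{t:\ \|\partial_M Y\|_{\alpha;[0,t]}+\|\langle M\rangle\|_{\Lip;[0,t]}+\sup_{r\le t}|\partial_M Y_r|\ge k\bigr\}.
\]
These are stopping times by the same argument as in \cref{lemma:path_regularity_Martingale}, and they form a localizing sequence because the quantities are a.s.\ finite. On the stopped processes, BDG gives
\[
\|R^{\tau_k}_{s,t}\|_{L^q}\lesssim k^{2}|t-s|^{\alpha+1/2}.
\]
Moreover, $R$ satisfies the Chen-type relation
\[
\delta R_{s,u,t}=\delta(\partial_M Y)_{s,u}\,\delta M_{u,t},
\]
which is controlled by $\alpha+(\tfrac12-)$ Hölder regularity pathwise. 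So the generalized Kolmogorov criterion \cref{appendix:theorem:Kolmogorov}, applied as in \cref{lemma:ibp}\eqref{lemma:ibp_claim_3} with $A^{1,1}=\partial_M Y$ and $A^{1,2}=M$, yields
\[
R^{\tau_k}\in\mathcal C_2^{(\alpha+1/2)-}\subset\mathcal C_2^{2\alpha}\quad\text{a.s.},
\]
using $\alpha<\tfrac12$. Removing the localization on $\{\tau_k\ge T\}$ finishes this direction.

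\emph{Converse.} This direction is purely pathwise. If $(Y-M,\partial_X Y)\in\mathscr D^{2\alpha}_X$ a.s., then
\[
\delta Y_{s,t}=\delta(Y-M)_{s,t}+\delta M_{s,t}\stackrel{2\alpha}{=}(\partial_X Y)_s\delta X_{s,t}+\operatorname{Id}_W\,\delta M_{s,t},
\]
which is exactly the $2\alpha$-expansion with respect to the path $(X,M)$ and derivative $Y'=(\partial_X Y,\operatorname{Id}_W)^\top$. Since $\operatorname{Id}_W$ is constant and $\partial_X Y\in\mathcal C^\alpha$, we get $Y'\in\mathcal C^\alpha$. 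Finally, $M\in\mathcal C^{\alpha}$ a.s.\ by \cref{lemma:path_regularity_Martingale}, so $(X,M)$ is an $\alpha$-Hölder path and $(Y,Y')\in\mathscr D^{2\alpha}_{(X,M)}$ a.s.
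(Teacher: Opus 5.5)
Your proof is correct and follows the paper's route. You localize so that $\partial_M Y$ and $\langle M\rangle$ are uniformly controlled, bound $R_{s,t}=\int_s^t\delta(\partial_M Y)_{s,r}\,dM_r$ by BDG, and pass to a pathwise $\mathcal C_2^{2\alpha}$ bound through the generalized Kolmogorov criterion (in the mixed-exponent form, as in \cref{lemma:ibp}) with $A^{1,1}=\partial_M Y$, $A^{1,2}=M$; the converse is immediate.

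The only difference is that you localize with the exact $\alpha$-H\"older and Lipschitz seminorms, whereas the paper uses $\beta<\alpha$ and $\gamma<1$. Your $\tau_k$ are still stopping times because the seminorm processes are adapted, nondecreasing and left-continuous, and the filtration is right-continuous. That is a different justification from the continuity argument behind \cref{lemma:path_regularity_Martingale}, which is what you cite.
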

\begin{proof}
The first claim follows immediately,  once we
verify  that  it holds almost surely
\begin{equation}
  \label{eq:CRPtoRSMestimate} \delta N_{s,t} \stackrel{2 \alpha}{=}(\partial_M Y)_{s} \delta M_{s,t}.
\end{equation}
 Since a.s. $Y\in \mathcal{C}^{\alpha}([0,T]; W)$, \eqref{eq:CRPtoRSMestimate} follows by an analogous localization argument to \cref{lemma:ibp} claim \eqref{lemma:ibp_claim_3} by considering for any $\beta \in (0, \alpha)$ and $\gamma \in (0, 1)$
\begin{equation*}
    \tau_{k}\coloneqq \inf \left \{ t\geq 0\big| \left(\Vert \partial_{M}Y \Vert_{\beta; [0,t]}\vee \Vert \langle M \rangle \Vert_{\gamma; [0,t]}\right) \geq k \right \}. 
\end{equation*}
Again by Theorem 5.31 in \cite{Friz_Victoir_2010}, this is a sequence of stopping times and naturally a localizing sequence. The converse claim follows trivially.
\end{proof}
\begin{definition}[Rough stochastic integral] \label{def:RSI}
\label{rough_stoch_integral} Let $\mathbf{X}=(X, \mathbb{X})\in \mathscr{C}^{\alpha}([0,T]; V)$ be a (deterministic) rough path and  $(Y,\partial_{X}Y; M)$ be an $X$-controlled, $\mathcal{L}(V; W)$-valued
  rough semimartingale. Then we define the
  \textit{rough stochastic integral}
  \begin{equation*}
   \int_{0}^{t} (Y, \partial_X Y)_{r} d \mathbf{X}_{r} : = \int_{0}^{t} (Y - M,
     \partial_X Y)_{r} d \mathbf{X}_{r} + \int_{0}^{t} M_{r} d X_{r}, 
    \end{equation*}
  where the integrals on the right-hand side exist respectively
  as a (classical)  rough integral and an IBP-integral in the sense of \cref{lemma:ibp}. 
\end{definition} 
We note that
\[ \int_{0}^{t} (Y, \partial_X Y)_{r} d \mathbf{X}_{r} \sim \sum_{[u, v] \in \pi} (Y_u \delta X_{u, v} + (\partial_X Y)_u
   \mathbb{X}_{u, v}) \]
in the sense of convergence in probability, as $| \pi |\downarrow 0$.

As demonstrated in the proofs above, the localization technique will play a central role in what follows. Combined with (higher-order) Kolmogorov continuity criteria (see \cref{appendix:theorem:Kolmogorov}), it enables us to derive a.s. pathwise properties from  $L^{p}$-bounds.  The following  result ensures the compatibility of these localization procedures with  rough stochastic integrals, analogously to \cite{fzk23}.
\begin{lemma}
  \label{lemma:Stopped_RSI}Let $(Y, \partial_{X} Y; M)$ be an $X$-controlled, $\mathcal{L}(V; W)$-valued rough
  semimartingale and $\tau$ be a stopping time.
  Then it holds for any $t\in [0,T]$, that a.s.
  \begin{eqnarray*}
    \left( \int_{0}^{t} (Y, \partial_{X} Y)_{r}  d \mathbf{X}_{r}
    \right)^{\tau} & = & \left( \int_{0}^{t} (Y-M, \partial_{X} Y)_{r}  d
    \mathbf{X}_{r} \right)^{\tau} + \delta(M X)^{\tau}_{0,t} + \left(  \int_{0}^{t} X_{r} dM_{r} \right)^{\tau} \nonumber \\
    & = & \int_{0}^{t} (Y-M, \partial_{X} Y)^{\tau}_{r} d
    \mathbf{X}^{\tau}_{r} + \delta (MX)_{0,t}^{\tau} +\int_{0}^{t} X^{\tau}_{r} dM_{r}^{\tau}.  \nonumber
  \end{eqnarray*}
  In particular, we have the following consistency
  \begin{equation*}
      \left( \int_{0}^{t} (Y, \partial_{X} Y)_{r} d\mathbf{X}_{r} \right)\mathbbm{1}_{t \in [0, \tau]} = \left(\int_{0}^{t} (Y-M, \partial_{X} Y)^{\tau}_{r} d
    \mathbf{X}^{\tau}_{r} + \delta (MX)_{0,t}^{\tau} + \int_{0}^{t} X_{r}^{\tau} dM^{\tau}_{r} \right) \mathbbm{1}_{t \in [0, \tau]}
  \end{equation*}
  almost surely.
  \begin{proof}It follows immediately by the construction of the
   integral $\Pi(M;X)=\int M dX$ through the integration by parts identity in  \cref{lemma:ibp}  that
  \begin{equation*}
      \left( \int_0^t M_{r} dX_{r} \right)^{\tau} = \delta
     (\ensuremath{MX})^{\tau}_{0, t} + \left( \int_0^t X_{r} 
     dM_{r} \right)^{\tau} = \delta
     (MX)^{\tau}_{0, t} + \int_0^t X^{\tau}_{r}
     dM_{r}^\tau , 
     \end{equation*}
  where the second equality is due to a standard property of Itô integrals. 
  The identity
  \begin{equation*}
  \left( \int_{0}^{t} (Y-M, \partial_X Y)_{r}  d \mathbf{X}_{r}
     \right)^{\tau} = \int_{0}^{t} (Y-M, \partial_X Y)_{r}^{\tau} d
     \mathbf{X}^{\tau}_{r}
    \end{equation*}
  follows by the application of the sewing lemma (see, e.g.,  \cite[Lemma 4.2]{friz_2020}), noting $(Y-M, \partial_{X}Y)^{\tau} \in \mathscr{D}^{2 \alpha}_{X^{\tau}}$ almost surely. The consistency then follows trivially.  
  \end{proof}
\end{lemma}
We close off this expository section by mentioning that rough semimartingales are a priori distinct from the notion of stochastic controlled rough paths introduced in \cite{friz_2021}.

\subsection{Strongly controlled rough semimartingales and
paths}\label{Rough_It\^{o}_proc}
As expected by the classical rough Itô formula, see e.g. \cite{friz_2020} Section~7.5 or our rough Itô-Wentzell formula \cref{cor:RIW_formula}, a rough stochastic Itô(-Wentzell) formula  needs to involve
some notion of ``2nd order controlledness'' w.r.t. a suitable reference rough path as well as suitable probabilistic components. In \cref{cor:CRPtoRSM}, we have already seen the connection between rough semimartingales and a.s. controlled rough paths. In this section, we aim to  extend this idea to ``2nd order rough semimartingales'' and a.s. strongly controlled rough paths. To define the latter w.r.t. $(X, M)$ we need to define a ``joint lift'' of $(X, M)$. 
\begin{lemma}
\label{lemma:joint_lift}
    Let $\mathbf{X}=(X, \mathbb{X})\in \mathscr{C}^{\alpha}([0,T]; \mathbb{R}^{d_{X}})$ and $M \in \mathcal{M}^{c, \loc, 1}(\mathbb{R}^{d_{M}})$. Then we define     \begin{equation*}
        (\mathbf{X}; M)\coloneqq \left((X;M), (\mathbb{X}; \mathbb{M})\right)\coloneqq \left( \left(\begin{array}{c}
        X\\
        M
        \end{array}
        \right), 
        \left(\begin{array}{cc}
        \mathbb{X} & \Pi(X; M)\\
        \Pi(M; X) & \mathbb{M}
        \end{array} \right)\right),
    \end{equation*}
    where $\mathbb{M}, \Pi(X; M)$ and  $\Pi(M; X)$ 
    are the iterated integrals
    given in \cref{lemma:ibp}. 
    Then $(\mathbf{X}; M) \in \mathscr{C}^{\alpha}([0,T]; \mathbb{R}^{d_{X}}\oplus \mathbb{R}^{d_{M}})$ 
    almost surely. Further we have a.s.
    \begin{equation*}
        [(\mathbf{X}; M)]=\left(\begin{array}{cc}
        [\mathbf{X}] & 0\\
        0 & \langle M \rangle
        \end{array} \right).
    \end{equation*}
If $\mathbf{X}\in \mathscr{C}^{\alpha,1}([0,T]; \mathbb{R}^{d{X}})$, then $(\mathbf{X}; M)\in \mathscr{C}^{\alpha, 1}([0,T]; \mathbb{R}^{d_{X}}\oplus \mathbb{R}^{d_{M}})$.
\end{lemma}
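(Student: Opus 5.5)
The plan is to verify the two defining properties of an $\alpha$-Hölder rough path: Chen's relation and the Hölder bounds $|\delta (X;M)_{s,t}| \lesssim |t-s|^{\alpha}$ and $|(\mathbb{X};\mathbb{M})_{s,t}| \lesssim |t-s|^{2\alpha}$. Both hold block by block, so we may treat the four blocks separately.

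\emph{Chen's relation.} Chen's relation for the full second level amounts to checking $\delta A_{s,u,t} = \delta Z_{s,u}\otimes \delta Z_{u,t}$ with $Z=(X;M)$. In block form this reduces to four identities:
\begin{itemize}
\item the one for $\mathbb{X}$, which holds by assumption;
\item the three for $\mathbb{M}$, $\Pi(X;M)$ and $\Pi(M;X)$, which are exactly \cref{lemma:ibp} \eqref{lemma:ibp_claim_1}.
\end{itemize}
These hold a.s.\ for each fixed triple $s<u<t$. To get them simultaneously for all triples, we pass to a continuous version: $\mathbb{M}$ and $\Pi(X;M)$ are continuous in $(s,t)$ (take continuous versions of the Itô integrals $r\mapsto\int_0^r$ and expand via $\int_s^t \delta M_{s,r}\otimes dM_r = \int_s^t M_r\otimes dM_r - M_s\otimes \delta M_{s,t}$). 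The identity then holds a.s.\ on a countable dense set and, by continuity, everywhere.

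\emph{Hölder regularity.} The first level is immediate: $X\in\mathcal{C}^\alpha$, and $M\in\mathcal{C}^{\frac12-}\subset\mathcal{C}^\alpha$ a.s.\ by \cref{lemma:path_regularity_Martingale}, since $\alpha<\frac12$. For the second level:
\begin{itemize}
\item $\mathbb{X}\in\mathcal{C}^{2\alpha}_2$ by assumption;
\item $\mathbb{M}\in\mathcal{C}^{1-}_2\subset\mathcal{C}^{2\alpha}_2$ a.s.\ by \cref{lemma:ibp} \eqref{lemma:ibp_claim_2};
\item $\Pi(X;M),\Pi(M;X)\in\mathcal{C}^{(\frac12+\alpha)-}_2\subset\mathcal{C}^{2\alpha}_2$ a.s.\ by \cref{lemma:ibp} \eqref{lemma:ibp_claim_3}, because $\frac12+\alpha>2\alpha$.
\end{itemize}
Intersecting finitely many full-measure events gives $(\mathbf{X};M)\in\mathscr{C}^{\alpha}$ a.s.

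\emph{The bracket.} We compute $[(\mathbf{X};M)]_{s,t}=\delta Z_{s,t}\otimes\delta Z_{s,t}-2\operatorname{Sym}(\cdot)$ block by block; here the bracket is understood as a path, i.e.\ via its increments, which are additive by Chen.
\begin{itemize}
\item The $XX$ block is $[\mathbf{X}]$ by definition.
\item The off-diagonal block is $\delta X\otimes\delta M-\Pi(X;M)-\Pi(M;X)^{\top}$. This vanishes identically by the definition of $\Pi(M;X)$ through integration by parts.
\item The $MM$ block is $\delta M_{s,t}\otimes\delta M_{s,t}-2\operatorname{Sym}\mathbb{M}_{s,t}$. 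By Itô's product rule, $\delta M\otimes\delta M=\int_s^t\delta M_{s,r}\otimes dM_r+\int_s^t dM_r\otimes\delta M_{s,r}+\delta\langle M\rangle_{s,t}$, so this block equals $\delta\langle M\rangle_{s,t}$.
\end{itemize}
The Itô product-rule identity holds a.s.\ for fixed $(s,t)$; we upgrade it to all $(s,t)$ using continuity of both sides, as in the Chen step.

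\emph{The final statement.} If $\mathbf{X}\in\mathscr{C}^{\alpha,1}$, then $[\mathbf{X}]$ is $C^1$ and $\langle M\rangle$ is Lipschitz for $M\in\mathcal{M}^{c,\loc,1}$. Hence the bracket of $(\mathbf{X};M)$ has bounded derivative. If $\mathscr{C}^{\alpha,1}$ is meant as a continuously differentiable bracket, this requires $\dot{\langle M\rangle}$ to be continuous, which the class $\mathcal{M}^{c,\loc,1}$ does not give. This is the main obstacle: I would read the claim at the Lipschitz level, namely $[(\mathbf{X};M)]\in\mathcal{C}^1$, i.e.\ the class $\mathscr{C}^{\alpha;1}$ in the paper's $\mathcal{C}^\gamma$ convention, which is exactly what the Lipschitz property of $\langle M\rangle$ delivers. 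The remaining subtlety is only the a.s.\ uniformity over all times, handled by the continuity argument above.
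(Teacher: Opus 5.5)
Your proposal is correct and follows essentially the same route as the paper: Hölder regularity and Chen's relation block by block from \cref{lemma:ibp}, the off-diagonal bracket vanishing by the integration-by-parts definition of $\Pi(M;X)$, and the $MM$ block equal to $\delta\langle M\rangle$ by Itô's product rule. Your continuity upgrade to all $(s,t)$ at once is a detail the paper leaves implicit, and your caveat on the last claim is justified: $\mathcal{M}^{c,\loc,1}$ only gives a Lipschitz $\langle M\rangle$, so the bracket is Lipschitz (the $\mathscr{C}^{\alpha;1}$ reading), which is all the paper later needs, via the remark allowing finite-variation brackets in \eqref{rem:eq:weak_RIW}.
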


\begin{proof}
It follows by \cref{lemma:ibp}  that  $(\mathbb{X}; \mathbb{M})\in \mathcal{C}^{2\alpha}_{2}([0,T];(\mathbb{R}^{d_{X}} \oplus \mathbb{R}^{d_{Y}})^{\otimes 2}) $ almost surely. The Chen's relation is a  direct consequence of  the construction, noting that  all of the blocks of $(\mathbb{X}; \mathbb{M})$ satisfy Chen-type relations \eqref{lemma:ibp_claim_1} in  \cref{lemma:ibp} .

    Regarding the bracket process, note that we have
    \begin{equation*}
    \operatorname{Sym}(\mathbb{X}; \mathbb{M}) = \frac{1}{2}
     \left(\begin{array}{cc}
       2\ensuremath{\operatorname{Sym}} (\mathbb{X}) &  \Pi (X ;
       M) + \Pi(M; X)^{\top}\\
       \Pi (M ; X) + \Pi (X ; M)^{\top} & 2\ensuremath{\operatorname{Sym}} (\mathbb{M})
     \end{array}\right) 
     \end{equation*}
  and
  \begin{equation*}
  (\delta (X; M)_{s, t})^{\otimes 2} = \left(\begin{array}{cc}
       (\delta X_{s, t})^{\otimes 2} & \delta X_{s, t} \otimes \delta M_{s,
       t}\\
       \delta M_{s, t} \otimes \delta X_{s,t} & (\delta M_{s, t})^{\otimes 2}
     \end{array}\right). 
  \end{equation*}   
  Now note that for any $k, l \in \{ 1, \ldots, d_M \} $ we have a.s.
  \begin{equation*}
      \delta M^k_{s, t} \delta M_{s, t}^l = \int_s^t M^{k }_{s, r}
    dM^l_r + \int_s^t M^{l }_{s, r}
    dM^k_r + \langle M^k, M^l\rangle_{s, t}, 
  \end{equation*}   
  and for the off-diagonal terms,  we have for any $k \in \{ 1, \ldots, d_X \},
  l \in \{ 1, \ldots, d_M \}$ that,  by the definition of $\Pi (M ; X)$ ,
  \[ \delta X_{s, t}^k \delta M_{s, t}^l = \int_s^t X^k_{s, r}
    dM_r^l + \int_s^t M^l_{s, r}
  dX_r^k = \Pi (X^k ; M^l)_{s, t} + \Pi (M^l ;
     X^k)_{s, t},\]
which yields the claim.
\end{proof}

\begin{remark}
The notation introduced in \cref{lemma:joint_lift} can be easily extended to any finite number of local martingales. To this end,  let $\mathbf{X}, M$ be as above and $N \in \mathcal{M}^{c, \loc, 1}(\mathbb{R}^{d_{N}})$. Then we define (noting that $\langle M, N \rangle \in \Lip^{1}$ a.s. due to polarization ) 
\begin{equation*}
    (\mathbf{X}; M; N)\coloneqq (\mathbf{X}; L); \quad L\coloneqq \left(\begin{array}{c}
    M \\
    N 
    \end{array}\right)
    \in \mathcal{M}^{c, \loc, 1}(\mathbb{R}^{d_{M}} \oplus \mathbb{R}^{d_{N}}).
\end{equation*}
Explicitly, we define it to be 
\begin{equation*}
    (\mathbf{X}; M; N)=\left(\left(\begin{array}{c}
    X\\
    M \\
    N
    \end{array}\right), \left(\begin{array}{ccc}
      \mathbb{X} & \Pi (X; M) & \Pi (X; N)\\
      \Pi (M; X) & \mathbb{M} & \Pi(M; N) \\
      \Pi (N ; X) & \Pi (N ; M) & \mathbb{N}
    \end{array}\right)\right), 
\end{equation*}
where all terms involving $X$ are given by the corresponding IBP-integrals in \cref{lemma:ibp} and the terms involving only $M, N$ are the corresponding Itô integrals. Of course $(\mathbf{X}; M; N)\in \mathscr{C}^{\alpha}([0,T]; \mathbb{R}^{d_{X}} \oplus \mathbb{R}^{d_{M}} \oplus \mathbb{R}^{d_{N}})$.
\end{remark}
\begin{proposition}
  \label{prop:SCRP_to_RIP} Let $\mathbf{X} \in \mathscr{C}^{\alpha}([0,T]; \mathbb{R}^{d_{X}})$ and $M \in \mathcal{M}^{c, \loc, 1}(\mathbb{R}^{d_{Y}})$. Consider $Y, Y', Y''$ continuous, adapted processes, where $Y$ is $\mathbb{R}^{d_{Y}}$-valued and 
  \begin{equation*}
  \begin{aligned}
      Y'_{t}&\eqqcolon \left(\left( 
    \begin{array}{c}
    \partial_1 Y_{t}\\
    \partial_2 Y_{t}
    \end{array} \right)\cdot \right) \in \mathcal{L}(\mathbb{R}^{d_{X}} \oplus \mathbb{R}^{d_{Y}}; \mathbb{R}^{d_{Y}}) ;\\
    Y''_{t}&\eqqcolon \left(\left( \begin{array}{ll}
    \partial_{1,1} Y_{t} & \partial_{1,2} Y_{t}\\
    \partial_{2,1} Y_{t} & \partial_{2,2} Y_{t}
  \end{array} \right) : \right) \in \mathcal{L}((\mathbb{R}^{d_{X}} \oplus \mathbb{R}^{d_{Y}})^{\otimes 2}; \mathbb{R}^{d_{Y}}).
  \end{aligned}
  \end{equation*}
  Suppose $(Y, Y', Y'', 0) \in \mathscr{D}^{3 \alpha}_{(\mathbf{X}; M)}$ almost surely. Then for $i = 1, 2$, $(\partial_i Y,
\partial_{1, i} Y; N_{i})$ is an $X$-controlled, $\alpha$-Hölder, rough semimartingale with $N_i
:= \int_0 (\partial_{2, i} Y) d M \in \mathcal{M}^{c, \loc, 1}$. If further $3 \alpha>1$, then almost surely
\begin{equation}
  \label{prop:eq:SCRPtoRIP} Y = N + \int (\partial_{X} Y, \partial_{X}^{2}Y) d \mathbf{X}, \quad \text{ where } N  :=
  \int_0 \partial_{M}Y d M \in \mathcal{M}^{c,\loc,1},  
\end{equation}
where $\partial_{X}Y\coloneqq\partial_{1}Y, \partial^{2}_{X}Y \coloneqq\partial_{1,1} Y$ and $\partial_{M}Y\coloneqq \partial_{2}Y$. 
Here, the $d \mathbf{X}$-integral in \cref{prop:eq:SCRPtoRIP} is a rough stochastic integral in the sense of \cref{def:RSI}.
\end{proposition}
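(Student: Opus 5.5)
The plan is to read off from $(Y,Y',Y'',0)\in\mathscr{D}^{3\alpha}_{(\mathbf{X};M)}$ the two lower-order expansions. First, the Gubinelli-derivative condition $Y'_t\stackrel{2\alpha}{=}Y'_s+Y''_s\,\delta(X;M)_{s,t}$, written blockwise, gives for $i=1,2$
\[
\partial_i Y_t \stackrel{2\alpha}{=} \partial_i Y_s + \partial_{1,i}Y_s\,\delta X_{s,t} + \partial_{2,i}Y_s\,\delta M_{s,t},
\]
with $\partial_{1,i}Y,\partial_{2,i}Y\in\mathcal{C}^\alpha$ a.s. (I will fix the index convention so that $\partial_{j,i}Y$ is the coefficient of $\delta(X;M)^{(j)}$ in the expansion of $\partial_iY$; this only requires checking that the block/transpose bookkeeping of the notation section is consistent.) This is exactly the hypothesis of \cref{cor:CRPtoRSM}, with $(\partial_X,\partial_M)=(\partial_{1,i},\partial_{2,i})$. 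Hence $\delta(\partial_iY-N_i)_{s,t}\stackrel{2\alpha}{=}\partial_{1,i}Y_s\,\delta X_{s,t}$ with $N_i=\int_0\partial_{2,i}Y\,dM\in\mathcal{M}^{c,\loc,1}$. Adaptedness and continuity are inherited, so $(\partial_iY,\partial_{1,i}Y;N_i)$ is an $X$-controlled rough semimartingale. This gives the first claim.

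For \eqref{prop:eq:SCRPtoRIP} with $3\alpha>1$, I will first use the third-order expansion
\[
\delta Y_{s,t}\stackrel{3\alpha}{=}Y'_s\delta(X;M)_{s,t}+Y''_s(\mathbb{X};\mathbb{M})_{s,t}.
\]
Since the increments have $3\alpha>1$ remainder, $Y_t-Y_0$ is the limit of Riemann sums $\sum_{[u,v]\in\pi}\bigl(Y'_u\delta(X;M)_{u,v}+Y''_u(\mathbb{X};\mathbb{M})_{u,v}\bigr)$, pathwise and a.s. I then split these sums into blocks. The $M$-terms $\partial_2Y_u\delta M_{u,v}+\partial_{2,2}Y_u\mathbb{M}_{u,v}$ give the Itô integral $\int\partial_2Y\,dM=N$: the $\mathbb{M}$ sum tends to $0$ in probability after localization, by a BDG/discrete martingale estimate exactly as in \cref{lemma:ibp}. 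The $X$-terms $\partial_1Y_u\delta X_{u,v}+\partial_{1,1}Y_u\mathbb{X}_{u,v}$, together with the mixed terms, should combine into the rough stochastic integral $\int(\partial_1Y,\partial_{1,1}Y)d\mathbf{X}$ of \cref{def:RSI}. For this I will use the first part with $i=1$ and write $\partial_1Y=(\partial_1Y-N_1)+N_1$. The first piece is a.s. $X$-controlled with derivative $\partial_{1,1}Y$, so its sums converge to the classical rough integral. For the $N_1$ piece, $N_{1,u}\delta X_{u,v}$ summed gives the IBP integral by \cref{lemma:ibp}\eqref{lemma:ibp_claim_4}.

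What is left over are the mixed terms $\partial_{2,1}Y_u\Pi(M;X)_{u,v}$ and $\partial_{1,2}Y_u\Pi(X;M)_{u,v}$. They have to be matched against the discrepancy between $N_1$ and its first-order approximation $N_{1,u}+\partial_{2,1}Y_u\delta M_{u,r}$, i.e. against $\sum\int_u^v(N_{1,r}-N_{1,u})dX_r$-type terms. I expect this matching to be the main obstacle: showing that $\sum_\pi\bigl(\partial_{2,1}Y_u\Pi(M;X)_{u,v}-\int_u^v\delta N_{1,u,r}\,dX_r\bigr)\to0$ and $\sum_\pi\partial_{1,2}Y_u\Pi(X;M)_{u,v}\to0$ in probability. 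The first should follow from $\delta N_1\stackrel{2\alpha}{=}\partial_{2,1}Y\,\delta M$, which was already established in the proof of \cref{cor:CRPtoRSM}, combined with the IBP definition. The second is a sum of martingale-type increments $\int_u^v\delta X_{u,r}dM_r$ with adapted weights. After localizing by stopping times as in \cref{lemma:ibp}, BDG gives an $L^2$ bound $\sum|v-u|^{1+2\alpha}\to0$ for it.

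Finally, I will upgrade convergence in probability along the partitions to a.s. identity of the limits. Both sides are continuous processes, so the identity holds for each $t$ a.s., hence up to indistinguishability. This also uses \cref{lemma:Stopped_RSI} to undo the localization.
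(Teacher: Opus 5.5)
Your proposal is correct, but it reaches \eqref{prop:eq:SCRPtoRIP} by a different route from the paper; the first claim, via \cref{cor:CRPtoRSM}, is handled the same way. The paper never passes through Riemann sums. After localizing, it proves two pathwise $3\alpha$-germ identities: $\Pi(R^{N_1};X)\stackrel{3\alpha}{=}0$ for $R^{N_1}_{s,t}=\delta(N_1)_{s,t}-\partial_{2,1}Y_s\,\delta M_{s,t}$, and $\delta N_{s,t}-\partial_2Y_s\,\delta M_{s,t}\stackrel{3\alpha}{=}\partial_{1,2}Y_s\,\Pi(X;M)_{s,t}+\partial_{2,2}Y_s\,\mathbb{M}_{s,t}$. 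Both come from Chen-type relations combined with the higher-order Kolmogorov criterion (\cref{appendix:theorem:Kolmogorov}). Inserting these into the third-order expansion of $Y$ gives $\delta Y_{s,t}\stackrel{3\alpha}{=}\int_s^t(\partial_1Y,\partial_{1,1}Y)_r\,d\mathbf{X}_r+\delta N_{s,t}$ almost surely, and $3\alpha>1$ finishes the argument.

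So in the paper the mixed germs are absorbed into the local expansions of $N$ and of $\int N_1\,dX$, whereas in your argument they vanish after summation. Your route needs only three ingredients: second-moment bounds for discrete martingale transforms (adapted weights times increments such as $\Pi(X;M)_{u,v}$ or $\mathbb{M}_{u,v}$, with second moments $O(|v-u|^{1+2\alpha})$), \cref{lemma:ibp}\eqref{lemma:ibp_claim_4}, and pathwise sewing. No Kolmogorov-type continuity argument is required, so it is more elementary. The paper's route buys more: almost-sure, uniform third-order local expansions showing that $N+\int(\partial_XY,\partial_X^2Y)\,d\mathbf{X}$ has the same germ as $Y$. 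This fits the ``identities as consistent expansions'' philosophy of \cref{section:Introduction to controlled fields}.

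Your matching step can be simplified. Since $\Pi(M;X)_{u,v}=\delta M_{u,v}\otimes\delta X_{u,v}-\Pi(X;M)^{\top}_{u,v}$ and $X$ is deterministic, $\sum_\pi\partial_{2,1}Y_u\,\Pi(M;X)_{u,v}$ is itself a discrete martingale transform. After localization its $L^2$-norm is $\lesssim|\pi|^{\alpha}$, so it tends to $0$ on its own. Once \cref{lemma:ibp}\eqref{lemma:ibp_claim_4} handles $\sum_\pi N_{1,u}\,\delta X_{u,v}$, there is nothing left to match, and the estimate $\delta N_1\stackrel{2\alpha}{=}\partial_{2,1}Y\,\delta M$ is not needed. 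Your version through that estimate and the IBP identity also works.
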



\begin{proof}  
Note first that the operator $Y'' (X; M) \in \mathcal{L}((\mathbb{R}^{d_{X}}\oplus \mathbb{R}^{ d_{Y}}); \mathbb{R}^{d_{Y}})$ is given by
\begin{equation*}
\begin{aligned}
    (Y''(X; M))(z)= Y''\left( (X; M) \otimes z\right)=Y''\left(\begin{array}{cc}
    X \otimes z_{1} & X \otimes z_{2} \\
    M \otimes z_{1} & M \otimes z_{2} \end{array}\right)
\end{aligned}    
\end{equation*} for any $z=z_{1} \oplus z_{2}\in \mathbb{R}^{d_X}\oplus \mathbb{R}^{d_{Y}}$. Therefore $(\partial_i Y, \partial_{1, i} Y)$ being an 
$X$-controlled rough semimartingales with local martingale part $N_{i}\in \mathcal{M}^{c, \loc, 1}$ is a
direct consequence of \cref{cor:CRPtoRSM} and $(Y', Y'')\in \mathscr{D}^{2 \alpha}_{(X; M)}$ almost surely. Thus the only claim left to check
is the identity \eqref{prop:eq:SCRPtoRIP}.

Fix $k>0$ and assume for now that for any $\beta \in (0, \alpha), \gamma \in (0, 1)$ it holds almost surely
\begin{equation}
\label{prop:eq:assumption} 
\Vert \partial_{2} Y \Vert_{[0,T]; \beta} \vee \Vert \partial_{2,1} Y \Vert_{[0,T]; \beta} \vee 
    \Vert \langle M \rangle \Vert_{[0,T]; \gamma} \vee \Vert \langle N_{1
    } \rangle\Vert_{[0,T]; \gamma} \leq k. 
\end{equation} Denote for any $(s, t) \in \Delta_T$  the
remainder
\begin{equation*}
R^{N_1}_{s, t} := \delta \left( {N_1}  \right)_{s, t} -\partial_{2, 1}
   Y_s \delta M_{s, t} = \int_{s}^{t} \partial_{2,1} Y_{r} dM_{r}- \partial_{2,1} Y_{s} \delta M_{s,t}= \Pi(\partial_{2,1} Y; M)_{s,t}. 
\end{equation*}
Note that by an analogous argument to \cref{lemma:ibp} \eqref{lemma:ibp_claim_3} we see that 
$ R^{ N_1 } \in \mathcal{C}^{\frac{1}{2}+\alpha-}_2 $ almost surely. Further, for any fixed $s\in [0,T]$,  it holds  that $[s, T]\ni t\mapsto R^{N_{1}}_{s,t}$ is a continuous local martingale w.r.t. the filtration $(\mathfrak{F}_{t})_{t\in [s, T]}$ and so the iterated integral
\begin{equation*}
    \Pi(R^{N_{1}}; X)_{s,t}\coloneqq \int_{s}^{t} R^{N_{1}}_{s,r} dX_{r}
\end{equation*}
is well defined as an IBP-Integral in \cref{lemma:ibp}. Further analogously to \cref{Iterated_Martingale_Inequality} we verify the algebraic identity for any
$s,u,t \in [0,T]$ with $s< u< t$
\begin{equation*}
\delta \Pi (R^{N_1} ; X)_{s, u, t} = -
   \delta \left( N_1\right)_{s, u}   \Pi (M ; X)_{u, t} + R^{N_1}_{s,
   u} \delta X_{u, t} 
\end{equation*}
and so we see that \cref{appendix:theorem:Kolmogorov} is applicable. Further note that by uniqueness of limits in probability it holds for any $(s,t)\in \Delta_{T}$ a.s.:
\begin{equation}
\label{eq:It\^{o}_multiplication}
   \int_s^t \partial_{2,1} Y_s (\delta
  M)_{s, r} \otimes d X_{r} =\partial_{2,1} Y_s \int_s^t (\delta
  M)_{s, r} \otimes d X_{r}  
\end{equation}
and since both expressions are continuous in $(s,t)$ this holds up to indistinguishability. Thus it holds a.s for any $(s, t) \in \Delta_T$, by \cref{appendix:theorem:Kolmogorov},  
\begin{equation}\label{e:Pi-R-N}
   \int_s^t N_1 (r) dX_{r} - \left(
  N_1 (s)  \delta X_{s, t} + \partial_{2,1} Y_s \int_s^t (\delta
  M)_{s, r} \otimes d X_{r} \right)=  \Pi (R^{N_1} ; X)_{s, t} 
  \stackrel{3 \alpha}{=} 0.
\end{equation}
Now note that again by \cref{appendix:theorem:Kolmogorov} it holds a.s. 
\begin{equation}\label{e:delta-N}
\begin{aligned}
  \delta N_{s, t} - (\partial_2 Y)_s \delta M_{s, t} & \stackrel{3\alpha}{=} 
  \int_s^t (\partial_{1, 2} Y_s \delta X_{s, r} + \partial_{2, 2} Y_s \delta
  M_{s, r}) dM_r \\
  & =  \partial_{1, 2} Y_s \Pi ( X ; M)_{s, t} + \partial_{2, 2} Y_s
  \mathbb{M}_{s, t}, 
\end{aligned}
\end{equation}
where again we used an analogous argument to \eqref{eq:It\^{o}_multiplication} to deduce the second equality. Recall that by \cref{rough_stoch_integral} we have
\begin{equation}\label{e:--dX}
\begin{aligned}
   \int_{s}^{t} (\partial_1 Y , \partial_{1, 1} Y)_{r} d
  \mathbf{X}_{r}  & \stackrel{3 \alpha}{=} (\partial_1 Y -
  N_1)_{s} \delta X_{s,t} + \partial_{1, 1} Y_{s} \mathbb{X}_{s,t} + \int_{s}^{t} (N_{1})_{r} dX_{r}\\
  & =  \partial_1 Y_{s} \delta X_{s,t} + \partial_{1, 1} Y_{s} \mathbb{X}_{s,t} + \Pi (N_1 ; X)_{s,t}. 
  \end{aligned}
\end{equation}
Recalling that  $(Y, Y', Y'', 0) \in \mathscr{D}^{3 \alpha}_{(\mathbf{X}; M)}$,  we get 
\begin{eqnarray*}
    \delta Y_{s,t} &\stackrel{3\alpha}{=}& Y'_{s} \delta (X; M)_{s,t} + Y''_{s} (\mathbb{X}; \mathbb{M})_{s,t} \nonumber \\
    &=&\partial_1 Y_{s} \delta X_{s,t}+ \partial_{2} Y_{s} \delta M_{s,t} + \partial_{1, 1} Y_{s} \mathbb{X}_{s,t}+ \partial_{1,2} Y_{s} \Pi(X; M)_{s,t} \nonumber \\
    &&+ \partial_{2,1} Y_{s} \Pi(M; X)_{s,t}+ \partial_{2,2} Y_{s} \mathbb{M}_{s,t} \nonumber \\
    & \stackrel{3\alpha}{=}& \int_{s}^{t}(\partial_{1} Y, \partial_{1,1} Y)_{r} d\mathbf{X}_{r}+  \delta N_{s,t}-\Pi(N_1;X)_{s,t}+(\partial_{2,1} Y)_{s} \Pi(M; X)_{s,t} \nonumber \\
    &  \stackrel{3\alpha}{=}& \int_{s}^{t}(\partial_{1} Y, \partial_{1,1} Y)_{r} d\mathbf{X}_{r}+ \delta N_{s,t}\nonumber ,
\end{eqnarray*}
where the third step follows from \eqref{e:delta-N} and \eqref{e:--dX}, and the last step follows from the fact $\Pi(N_1;X)_{s,t}-(\partial_{2,1} Y)_{s} \Pi(M; X)_{s,t}=\Pi(R^{N_1} ; X)_{s, t}$ and the estimate \eqref{e:Pi-R-N}. This yields the desired result. Now to get rid of the assumption \eqref{prop:eq:assumption} in the general case, one performs an analogous localization argument as in the proof of \cref{lemma:ibp} using \cref{lemma:Stopped_RSI} and Theorem~5.31 in \cite{Friz_Victoir_2010}.
\end{proof}

 \cref{prop:SCRP_to_RIP} suggests an analogous relationship to \cref{cor:CRPtoRSM} in the setting of strongly $(\mathbf{X}; M)$-controlled rough paths and processes of the form \eqref{prop:eq:SCRPtoRIP}. As these processes will play a central role in what follows, we now introduce specific terminology for them.

\begin{definition}
  \label{def:Rough_Ito_process}
  
  Consider a $6$-tuple $\mathcal{Y}=(Y, \partial_{X} Y, \partial^{2}_{X}Y, \dot{Y}; M , N)$ of continuous, adapted processes of suitable dimensions:
\begin{equation*}
\mathcal{Y}: [0, T] \times \Omega \longrightarrow \mathbb{R}^{d_{Y}} \times
     \mathcal{L} (\mathbb{R}^{d_{X}}; \mathbb{R}^{d_{Y}}) \times \mathcal{L} (\mathbb{R}^{d_{X}} \otimes \mathbb{R}^{d_{X}}; \mathbb{R}^{d_{Y}}) \times \mathbb{R}^{d_{Y}} \times \mathbb{R}^{d_{Y}} \times \mathcal{L} (\mathbb{R}^{d_{X}}; \mathbb{R}^{d_{Y}}), 
\end{equation*}
  where $(\partial_{X} Y, \partial^{2}_{X} Y; N)$ is an $X$-controlled, $\alpha$-Hölder rough semimartingale. Further assume $M \in \mathcal{M }^{c, \loc,1}(\mathbb{R}^{d_{Y}})$, $\dot{Y}\in L^{\infty}([0,T]; \mathbb{R}^{d_{Y}})$ and 
  \begin{equation*}
  Y_{t}=Y_{0} + \int_{0}^{t} \dot{Y}_{s} ds + M_{t} + \int_{0}^{t} (\partial_{X} Y, \partial^{2}_{X} Y)_s d \mathbf{X}_s
  \end{equation*}
  for any $t \in [0,T]$, almost surely. 
  Then  the process $\mathcal{Y}$ is called a \textit{strongly} $\mathbf{X}$-\textit{controlled} \textit{rough semimartingale} (short: $\mathbf{X}$-scRSM). 
  \end{definition}
\begin{remark}\label{remark:Lipschitz_characteristics}  Setting $\partial_{X}Y, \partial_{X}^{2}Y\equiv 0$ in \cref{def:Rough_Ito_process} yields the class of \textit{continuous semimartingales with Lipschitz characteristics}, which is a natural class of processes to consider when working in the Hölder scale. 
\end{remark}

We have thus seen that every strongly $(\mathbf{X}; M)$-controlled rough path induces an $\mathbf X$-scRSM. It turns out that, in analogy with \cref{cor:CRPtoRSM}, the converse implication holds as well.

\begin{proposition}
  \label{prop:RIP_to_SCRP}Let $(Y, \partial_{X} Y, \partial^{2}_{X} Y, \dot{Y}; M , N)$ be an $\mathbf{X}$-scRSM. 
  Then there are continuous adapted processes $Y', Y''$ of the form
  \begin{equation}
  \label{prop:eq:RIP_to_SCRP}
  \begin{aligned}
      Y'_{t}&\coloneqq \left( \left( \begin{array}{c}
      \partial_{X} Y_{t}\\
      \operatorname{Id}\\
      0
      \end{array}\right) \cdot \right)\in \mathcal{L}(\mathbb{R}^{d_{X}}\oplus \mathbb{R}^{d_{Y}} \oplus \mathbb{R}^{d_{Y}}; \mathbb{R}^{d_{Y}}) \\
      Y''_{t} &\coloneqq \left(\left(\begin{array}{ccc}
          \partial_{X}^{2} Y_{t} & 0 &0\\
          0&0&0 \\
          \operatorname{Id} &0&0
      \end{array}\right):\right)\in \mathcal{L}((\mathbb{R}^{d_{X}}\oplus \mathbb{R}^{d_{Y}} \oplus \mathbb{R}^{d_{Y}})^{\otimes 2}; \mathbb{R}^{d_{Y}})
  \end{aligned}    
  \end{equation}
  such that $(Y - V, Y', Y'', 0)\in \mathscr{D}^{3 \alpha}_{(\mathbf{X};M; N)}([0,T]; \mathbb{R}^{d_{Y}})$ a.s., where $V=\int_{0}^{\cdot} \dot{Y}_{s} ds$.
\end{proposition}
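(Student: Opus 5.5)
The plan is to verify the three expansions of \cref{def:HOcrp} for $(Y-V,Y',Y'',0)$ with respect to the joint lift $\mathbf{Z}:=(\mathbf{X};M;N)$ directly, pathwise on a full-measure event. By \cref{lemma:joint_lift} and the subsequent remark, $\mathbf{Z}$ is almost surely an $\alpha$-H\"older rough path on $\mathbb{R}^{d_X}\oplus\mathbb{R}^{d_Y}\oplus\mathbb{R}^{d_Y}$. Its second level has the block $\Pi(N;X)$ in position $(3,1)$. With $Y''$ as in \eqref{prop:eq:RIP_to_SCRP}, this gives, a.s. for all $(s,t)\in\Delta_T$,
\[
Y'_s\,\delta Z_{s,t}+Y''_s\,\mathbb{Z}_{s,t}=\partial_XY_s\,\delta X_{s,t}+\delta M_{s,t}+\partial_X^2Y_s\,\mathbb{X}_{s,t}+\Pi(N;X)_{s,t}.
\]
So the whole task is to show that $\delta(Y-V)_{s,t}$ equals this expression up to a $3\alpha$-remainder, and that $(Y',Y'')$ is $\mathbf{Z}$-controlled.

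For the third-order expansion I write $\delta(Y-V)_{s,t}=\delta M_{s,t}+\int_s^t(\partial_XY,\partial^2_XY)_r\,d\mathbf{X}_r$ and unfold \cref{def:RSI}. The first piece is the classical rough integral $\int(\partial_XY-N,\partial_X^2Y)\,d\mathbf{X}$. Since $(\partial_XY-N,\partial_X^2Y)\in\mathscr{D}^{2\alpha}_X$ a.s. and $3\alpha>1$, the sewing lemma gives
\[
\int_s^t(\partial_XY-N,\partial_X^2Y)\,d\mathbf{X}\stackrel{3\alpha}{=}(\partial_XY-N)_s\,\delta X_{s,t}+\partial^2_XY_s\,\mathbb{X}_{s,t}.
\]
The second piece is the IBP-integral, for which \cref{lemma:ibp} gives exactly $\int_s^tN_r\,dX_r=N_s\,\delta X_{s,t}+\Pi(N;X)_{s,t}$. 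Adding the two, the $N_s\,\delta X_{s,t}$ terms cancel, and we recover precisely the displayed right-hand side up to $O(|t-s|^{3\alpha})$, with a pathwise (random) constant. The drift component of the jet is $0$ because $V$ has been subtracted.

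For the second-order expansion, $(Y',Y'')\in\mathscr{D}^{2\alpha}_{Z}$, I check it column by column. The columns of $Y'$ coming from $M$ and $N$ are constant, namely $\operatorname{Id}$ and $0$, and the matching rows of $Y''$ vanish, so nothing needs to be shown there. For the $X$-column I need
\[
\partial_XY_t-\partial_XY_s\stackrel{2\alpha}{=}\partial^2_XY_s\,\delta X_{s,t}+\delta N_{s,t},
\]
read as maps on $\mathbb{R}^{d_X}$. This is just the statement that $(\partial_XY-N,\partial_X^2Y)\in\mathscr{D}^{2\alpha}_X$, i.e.\ that $(\partial_XY,\partial_X^2Y;N)$ is a rough semimartingale. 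Finally, $Y''$ is $\alpha$-H\"older because $\partial_X^2Y$ is a Gubinelli derivative and hence in $\mathcal{C}^\alpha$ a.s., while its other blocks are constant.

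I expect the main obstacle to be bookkeeping rather than analysis. The tensor and transpose conventions must place the identity in the block $(3,1)$, so that it multiplies $\Pi(N;X)$ (i.e.\ $\int\delta N\otimes dX$) and not $\Pi(X;N)$. This placement has to be consistent both with the convention $Y''(\delta Z\otimes\cdot)$ used for Gubinelli derivatives and with the IBP definition of $\Pi(N;X)$. A second point of care is that the random constants in all $\stackrel{\beta}{=}$ relations must hold simultaneously on one full-measure event. For this I fix the event on which $\mathbf{Z}$ is a rough path, the rough integral and IBP-integral identities hold for all $(s,t)$ (using continuity in $(s,t)$ to upgrade from a.s.\ for each fixed pair to indistinguishability, as in \eqref{eq:It\^{o}_multiplication}), and the rough semimartingale controlledness holds. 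No localization should be needed, since every estimate used is pathwise.
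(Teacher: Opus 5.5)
Your proposal is correct and is essentially the paper's proof: you unfold \cref{def:RSI}, apply sewing to the classical part, and use the IBP identity $\int_s^t N_r\,dX_r=N_s\,\delta X_{s,t}+\Pi(N;X)_{s,t}$ to get the third-order expansion, and the remaining checks reduce to $(\partial_XY-N,\partial_X^2Y)\in\mathscr{D}^{2\alpha}_X$, which the paper dismisses as trivial. One small slip: the $M$- and $N$-columns of $Y'$ are trivially controlled because the second and third block \emph{columns} of $Y''$ vanish, not the rows (the third block row contains the $\operatorname{Id}$), and your $X$-column computation already uses this correctly.
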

\begin{proof}
By the construction of the rough stochastic integral
in \cref{rough_stoch_integral} we see immediately that almost-surely
\begin{equation*}
\begin{aligned}
\int_{s}^{t} (\partial_{X} Y, \partial_{X}^{2} Y)_r d \mathbf{X}_r 
   &\stackrel{3 \alpha}{=} (\partial_{X}Y - N)_{s} \delta X_{s,t} + (\partial_{X}^{2} Y)_{s} \mathbb{X}_{s,t} + \int_{s}^{t}
   N_{r} dX_{r} \\
   &= (\partial_{X}Y)_{s} \delta X_{s,t} + (\partial_{X}^{2} Y)_{s} \mathbb{X}_{s,t} + \Pi(N; X)_{s,t}
\end{aligned}
\end{equation*}
The rest follows trivially. 
\end{proof}
\begin{remark}
  Note that \cref{prop:SCRP_to_RIP} and \cref{prop:RIP_to_SCRP} are consistent with each other. If we start with an $\mathbf{X}$-scRSM $(Y, \partial_{X} Y, \partial^{2}_{X} Y, \dot{Y}; M , N)$, then by applying \cref{prop:RIP_to_SCRP} we get $(Y- V, Y', Y'')\in \mathscr{D}^{3 \alpha}_{(\mathbf{X}; M; N)}$ with $(Y', Y'')$ as in \eqref{prop:eq:RIP_to_SCRP}. On the other hand,  by defining $(Y,Y',Y'',   \dot Y_s )\in \mathcal D^{3\alpha}_{(\mathbf X;M;N)}$ with
  \[Y':=\left(\begin{array}{c}
       \partial_1 Y\\
       \hline 
       \partial_2 Y
     \end{array}\right) \coloneqq \left(\begin{array}{c}
       \partial_{X}Y\\
       \hline
       \operatorname{Id}\\
       0
     \end{array}\right),\quad  Y'':= \left(\begin{array}{c|c}
       \partial_{1, 1} Y & \partial_{1, 2} Y\\
       \hline 
       \partial_{2, 1} Y & \partial_{2, 2} Y
     \end{array}\right) \coloneqq \left(\begin{array}{c|cc}
       \partial_{X}^{2} Y & 0 & 0\\
       \hline
       0 & 0 & 0\\
       \operatorname{Id} & 0 & 0
     \end{array}\right), \]and applying \cref{prop:SCRP_to_RIP} w.r.t. the rough path $(\mathbf{X};L)$ with $L\coloneqq (M, N)^{\top}$, we obtain again the original scRSM $(Y, \partial_{X} Y, \partial^{2}_{X} Y, \dot{Y}; M , N)$. 
\end{remark}
\subsection{The rough stochastic Itô-Wentzell formula}
\label{subsection:The Rough Stochastic It\^{o} Wentzell Formula}
In this section, we establish a composition rule for evaluating suitably parameter-dependent scRSM at other scRSMs.  
In \cref{subsubsection:RSIW_1} we obtain the first part of this \textit{rough stochastic It{\^o}-Wentzell }(rsIW) formula by
utilizing the joint lift techniques for rough semimartingales developed in
\cref{Rough_It\^{o}_proc} in combination with the rough calculus of \cref{section:Introduction to controlled fields}.  In \cref{subsubsection:RSIW_2}, we obtain the second part of the rsIW-formula, by following a more classically  probabilistic approach. 
\subsubsection{The rough stochastic Itô--Wentzell formula with controlled fields}
\label{subsubsection:RSIW_1}
Let us first give the following extension of \cref{def:C3_jet}. 
\begin{definition}
  \label{def:random_C3_jet}
  We say a random field 
  \begin{equation*}
      G: [0,T]\times \Omega \times W \to U
  \end{equation*} 
  is $(\mathfrak{F}_{t})$-adapted, if for any $x\in W$, the process $(G_{t}(x))_{t\in [0,T]}$ is $(\mathfrak{F}_{t})$-adapted. We call the $7$-tuple $\mathcal{F}=(F, F', \partial F, F'',  \partial{F}', \partial^{2} F, \dot{F})$ of adapted random fields 
  \begin{equation*}  \begin{aligned}
      \mathcal{F}: [0,T] \times \Omega \times W &\to U \times \mathcal{L} (V ; U)
     \times \mathcal{L} (W ; U) \times \mathcal{L} (V^{\otimes 2} ; U) \times \mathcal{L} (W \otimes V ; U) \times
     \mathcal{S} (W^{\otimes 2} ; U) \times U\\
     (t,\omega, x)  &\mapsto (F_{t}(\omega,x) , F'_{t}(\omega, x), \partial F_{t}(\omega, x), F''_{t}(\omega,x), \partial F'_{t}(\omega, x), \partial^2
     F_{t}(\omega, x), \dot{F}_{t}(\omega, x))  
  \end{aligned}
  \end{equation*}
  an \textit{adapted} $\mathbf{X}$-\textit{controlled field}, if $\mathcal{F}\in \mathscr{D}^{3\alpha}_{\mathbf{X}}\Lip^{3}_{x, \loc}(W; U)$ almost surely. We denote the space of such random fields by $L^0_{\operatorname{ad}}\left (\Omega;  
\mathscr{D}^{3\alpha}_{\mathbf{X}}{\Lip}^{3}_{x, \loc} (W; U) \right)$. Analogously, one defines $L^0_{\operatorname{ad}}\left 
(\Omega;\mathscr{D}^{3\alpha}_{\mathbf{X}}{\Lip}^{3}_{x} (W; U)\right)$. 
\end{definition}

\begin{theorem}\label{theorem:RSIW_Theorem_1}Let $\mathbf{X} \in \mathscr{C}^{\alpha, 1}([0,T]; \mathbb{R}^{d_{X}})$and  $\mathcal{Y}=(Y, \partial_{X} Y, \partial_{X}^{2} Y, \dot{Y}; M,N)$ be an
  $\mathbf{X}$-scRSM.
  Let $\mathcal{F} = (F , F', \partial F, F'', \partial
  F', \partial^2 F, \dot{F })\in L^0_{\operatorname{ad}}\left (\Omega;  
\mathscr{D}^{3\alpha}_{\mathbf{X}}{\Lip}^{3}_{x, \loc}(\mathbb{R}^{d_{Y}}; \mathbb{R}^{d_{F}}) \right)$. Then we define the $6$-tuple
  \begin{equation*}
  \mathcal{F} \circ \mathcal{Y}\coloneqq ( Z, \partial_{X} Z, \partial^{2}_{X} Z, \dot{Z}; \tilde{M}, \tilde{N})
  \end{equation*}
  componentwisely by
  \begin{align*}
    Z_t & :=  F_{t}(Y_t); \nonumber  \\
    \partial_{X} Z_t & :=  F'_t(Y_{t}) + \partial F_t(Y_{t}) {\partial_{X} Y_t}; \nonumber  \\
    \partial_{X}^{2} Z_t & :=  \left( \partial F_{t}(Y_{t})  \right) {\partial^{2}_{X} Y_{t}}  + F''_t(Y_{t})  +
    \partial F'_t(Y_{t}) \partial_{X} Y_{t}  + (\partial F'_t(Y_{t}) \partial_{X} Y_{t})^T + \partial^2 F_t(Y_{t}) (\partial_{X} Y_{t},
    \partial_{X} Y_{t}); \nonumber \\
    \dot{Z}_{t}& := \partial F_t(Y_{t}) \dot{Y}_{t} + \dot{F}_{t}(Y_{t})+ \left({\partial F_t'} (Y_{t}) \partial_{X} Y_{t} +
    \tfrac{1}{2} \partial^2 F_t (\partial_{X} Y_{t}, 
    \partial_{X} Y_{t}) \right)\dot{[\mathbf{X}]}_{t} + \frac{1}{2} \partial^2 F_t(Y_{t}) \dot{\langle M\rangle}_{t};\nonumber\\
    \tilde{M}_t & :=  \int_0^t \partial F_s(Y_{s})
    dM_{s}; \nonumber \\
    \tilde{N}_{t} & :=  \int_0^t \partial F_{s}(Y_{s}) dN_{s} +
    \int_0^t \partial F'_{s}(Y_{s}) dM_{s} + \int_{0}^{t} \partial^2 F_s(Y_{s})
    (\operatorname{Id}, \partial_{X} Y_{s}) dM_{s}.\nonumber
    \end{align*}
  Then, $\mathcal{F} \circ 
  \mathcal{Y}$ is an $\mathbf{X}$-scRSM. In particular it holds 
  \begin{eqnarray*}
    Z_t & = & \int_0^t \Big(F'_{s}(Y_{s}) + \partial F_{s}(Y_{s}) {\partial_{X}Y_s}, \partial_{X}^{2} Z_{s}\Big)  d
    \mathbf{X}_{s} + \int_0^t \partial F_{s}(Y_{s})
    dM_{s} + \int_0^t \partial F_{s}(Y_{s}) \dot{Y}_{s} + \dot{F}_{s}(Y_{s}) ds \nonumber\\
    &  & + \int_0^t \left( \partial F'_{s}(Y_{s})  \partial_{X}Y_{s} + \frac{1}{2} \partial^2
    F_{s}(Y_{s}) (\partial_{X}Y_{s}, \partial_{X}Y_{s} )  \right) d [\mathbf{X}]_{s} +
    \frac{1}{2} \int_0^t \partial^2 F_{s}(Y_{s}) d \langle M\rangle_{s}.  \nonumber
  \end{eqnarray*}
\end{theorem}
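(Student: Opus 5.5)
The plan is to reduce the statement to the deterministic rough It\^o--Wentzell formula, \cref{cor:RIW_formula}, applied $\omega$-wise with respect to the joint lift $(\mathbf{X};M;N)$. This lift is an a.s.\ rough path by \cref{lemma:joint_lift}, with bracket $\operatorname{diag}([\mathbf{X}],\langle M\rangle,\langle N\rangle)$, and it is Lipschitz in the bracket since $\mathbf{X}\in\mathscr{C}^{\alpha,1}$ and $M,N\in\mathcal{M}^{c,\loc,1}$. Afterwards we read off the scRSM structure via \cref{prop:SCRP_to_RIP}.

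\emph{Step 1: lift $\mathcal{Y}$.} By \cref{prop:RIP_to_SCRP}, $(Y-V,Y',Y'',0)\in\mathscr{D}^{3\alpha}_{(\mathbf{X};M;N)}$ a.s., with $Y',Y''$ as in \eqref{prop:eq:RIP_to_SCRP}. Since $\dot Y$ is only $L^\infty$, we do not have a local drift cascade. Instead we work with $Y-V$ and treat $V$ as a Lipschitz drift, in the global form of the remark following \cref{cor:RIW_formula}; this is exactly the situation flagged there.

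\emph{Step 2: lift $\mathcal{F}$.} We regard $\mathcal{F}$ as a field controlled by $(\mathbf{X};M;N)$ by padding with zeros. The Gubinelli derivative is $(F',0,0)$ and the second derivative is $F''$ in the $(1,1)$-block with zeros elsewhere; likewise $\partial F'$ is padded with zeros. The bracket-derivative terms of the padded field coincide with those of $\mathcal{F}$, so the padded field lies in $\mathscr{D}^{3\alpha}_{(\mathbf{X};M;N)}\Lip^3_{x,\loc}$ a.s. The expansions are the same; only the driver is enlarged.

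\emph{Step 3: compose.} We apply the composition formulas of \cref{cor:composition_rule}/\cref{cor:RIW_formula} pathwise, with $\dot Y\,dt$ replaced by $dV$. This gives the integral identity \eqref{rem:eq:weak_RIW} for $Z=F(Y)$ driven by $(\mathbf{X};M;N)$, with the following ingredients:
\begin{itemize}
\item $Z'=(F'(Y)+\partial F(Y)\partial_XY,\;\partial F(Y),\;0)$.
\item $Z''$, computed blockwise. The $(1,1)$-block is $\partial_X^2Z$ as claimed. The $(2,1)$-block collects $\partial F'(Y)+\partial^2F(Y)(\operatorname{Id},\partial_XY)$. The $(3,1)$-block is $\partial F(Y)$, coming from $\partial F(Y)\,Y''$.
\item The bracket correction $\tfrac12\partial^2F(Y)(Y',Y')$ paired against $\operatorname{diag}([\mathbf{X}],\langle M\rangle,\langle N\rangle)$. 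Since $\partial_2Y=\operatorname{Id}$ and $\partial_3Y=0$, this yields the $d[\mathbf{X}]$-term plus $\tfrac12\partial^2F(Y)\,d\langle M\rangle$. The $\partial F'(Y)Y'$ term only sees the $[\mathbf{X}]$ block.
\end{itemize}

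\emph{Step 4: convert back.} Following the proof of \cref{prop:SCRP_to_RIP}, the rough integral against $(\mathbf{X};M;N)$ with integrand $(Z',Z'')$ equals a rough stochastic integral $\int(\partial_XZ,\partial_X^2Z)\,d\mathbf{X}$ plus $\int\partial F(Y)\,dM=\tilde M$. The $X$-controlled martingale part of $\partial_XZ$ is read off from the $(2,1)$ and $(3,1)$ blocks: it is $\int(\partial F'(Y)+\partial^2F(Y)(\operatorname{Id},\partial_XY))\,dM+\int\partial F(Y)\,dN=\tilde N$, which is the claimed formula. The hypothesis $3\alpha>1$ holds because $\alpha>1/3$.

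\emph{Main obstacle.} The main obstacle is justifying Step 4 with a merely $L^\infty$ drift and a non-H\"older $\dot Z$. The proof of \cref{prop:SCRP_to_RIP} assumed a zero drift component. I would handle this by applying that argument to $Z-\int\dot Z\,ds$, using that the Riemann--Stieltjes terms are Lipschitz and hence of order $2\alpha$, so they are negligible at the $3\alpha$ level against increments times $\delta X$. It remains to check $\partial_XZ=\partial_XZ^{\mathrm{mart}}+(\partial_XZ-\tilde N)$ with $(\partial_XZ-\tilde N,\partial_X^2Z)\in\mathscr{D}^{2\alpha}_X$. This follows from \cref{cor:CRPtoRSM} applied to $(Z',Z'')$, because the composed $(Z',Z'')$ is $(X;M;N)$-controlled a.s. Finally, we localize with stopping times as in \cref{lemma:ibp}, together with \cref{lemma:Stopped_RSI}, to make $\langle M\rangle,\langle N\rangle,\partial F(Y)$ bounded. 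The identifications of the It\^o integrals then hold uniformly in time by continuity.
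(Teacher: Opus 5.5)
Your proposal is correct and follows the paper's proof essentially step by step: you lift $\mathcal{Y}$ via \cref{prop:RIP_to_SCRP}, pad $\mathcal{F}$ with zero blocks to obtain a $(\mathbf{X};M;N)$-controlled field, and compose via \cref{cor:RIW_formula} in the global form \eqref{rem:eq:weak_RIW}. You then read off $\tilde M,\tilde N$ through \cref{prop:SCRP_to_RIP}, using exactly the block computations of $Z'$ and $Z''$ that the paper writes out. One cosmetic point: the bracket of $(\mathbf{X};M;N)$ also has off-diagonal $\langle M,N\rangle$ blocks, so it is not $\operatorname{diag}([\mathbf{X}],\langle M\rangle,\langle N\rangle)$; this is harmless because the third row and column of both $\partial^2F(Y)(Y',Y')$ and the padded $\partial F'\,Y'$ vanish.
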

\begin{proof}
In the following, $\Omega^{i}$ for any $i \in \mathbb{N}$ will denote a set $\Omega^{i}\in \mathfrak{F}$ such that $\mathbb{P}(\Omega^{i})=1$. Let $\Omega^{1}$ be such that for any $\omega \in \Omega^{1}$ it holds $\mathcal{F}(\omega)\in \mathscr{D}^{3\alpha}_{\mathbf{X}}\Lip^{3}_{x, \loc}$, and we consider $(\mathbf{X}; M; N)(\omega)\in \mathscr{C}^{\alpha, 1}([0,T]; \mathbb{R}^{d_{X}}\oplus \mathbb{R}^{d_{Y}}\oplus (\mathbb{R}^{d_{Y}}\otimes \mathbb{R}^{d_{X}}))$ for each $\omega \in \Omega^{2}$. Now, for any $\omega \in \Omega^{3}\coloneqq \Omega^{1}\cap \Omega^{2}$, it holds 
\begin{equation*}
    \tilde{\mathcal{F}}=(F, \tilde{F}',  \partial F, \tilde{F}'', \partial \tilde{F}', \partial^{2} F , \dot{F})(\omega) \in \mathscr{D}^{3 \alpha}_{(\mathbf{X}; M; N)(\omega)}\operatorname{Lip}^{3}_{x, \loc}
\end{equation*}
with
\begin{equation*}
       \tilde{F}' \coloneqq \left(\left(\begin{array}{c}
         F'\\
         0\\
         0
       \end{array}\right)\cdot \right); \quad
       \partial \tilde{F}' \coloneqq \left(\left(\begin{array}{c}
         \partial F'\\
         0\\
         0
       \end{array}\right)\cdot\right); \quad 
        \tilde{F}'' \coloneqq \left(\left(\begin{array}{ccc}
         F'' & 0 & 0\\
         0 & 0 & 0\\
         0 & 0 & 0
       \end{array}\right): \right). 
     \end{equation*}

  Further apply \cref{prop:RIP_to_SCRP} to $\mathcal{Y}$ to see that there is a set $\Omega^{4}$ such that for any $\omega \in \Omega^{4}$ it holds $(Y-V, Y', Y'', 0)(\omega) \in \mathscr{D}^{3 \alpha}_{(\mathbf{X}; M; N)(\omega)}$ with $(Y' , Y'')$ given by \eqref{prop:eq:RIP_to_SCRP} and $V= \int_{0}^{\cdot} \dot{Y_{r}} dr$.

  Now we compute some of the terms appearing in \cref{cor:RIW_formula}. For any $y=(y_{1}\oplus y_{2} \oplus y_{3}), z= (z_{1} \oplus z_{2} \oplus z_{3})\in \mathbb{R}^{d_{X}}\oplus \mathbb{R}^{d_{Y}}\oplus (\mathbb{R}^{d_{Y}} \otimes \mathbb{R}^{d_{X}})$ and $x \in \mathbb{R}^{d_{Y}}$, it holds by \eqref{prop:eq:RIP_to_SCRP},
  \begin{equation*}
  \begin{aligned}
      \partial \tilde{F}_t'(x) Y_t'(y \otimes z) &= \partial \tilde{F}_t'(x)(Y_t'(y)\otimes z)=\partial \tilde{F}_t'(x) ((\partial_{X} Y_t (y_{1})+ y_{2} )\otimes z)\\
      &=\partial F_t'(x) ((\partial_{X} Y_t(y_{1})+ y_{2} )\otimes z_{1}),
  \end{aligned}    
  \end{equation*}
  and so we rewrite it in block-notation as
  \begin{equation*}
      \partial \tilde{F}_t'(x)Y_t'= \left( \left(\begin{array}{ccc} 
      \partial F'_t(x) \partial_{X} Y_t & 0 & 0 \\
      \partial F_t'(x) & 0 & 0 \\
      0& 0 &0 
      \end{array} \right): \right).
  \end{equation*}
  Similarly,  we also get 
  \begin{equation*}
  \begin{aligned}
      \partial^{2} F_{t}(x) (Y'_{t}, Y'_{t})(y\otimes z)&=\partial^{2} F_{t}(x)(Y'_{t}(y) \otimes Y'_{t}(z))\\
      &=\partial^{2} F_{t}(x)((\partial_{X} Y_{t} (y_{1})+ y_{2})\otimes (\partial_{X} Y_{t} (z_{1})+ z_{2} )),
      \end{aligned}
      \end{equation*}
      which in block-notation is
    \begin{equation*}
    \partial^{2} F_{t}(x) (Y'_{t}, Y'_{t})=\left(\left(\begin{array}{ccc} 
      \partial^{2} F_{t}(x)(\partial_{X} Y_{t}, \partial_{X} Y_{t}) & \partial^{2} F_{t}(x)(\partial_{X} Y_{t}, \operatorname{Id}_{d_{Y}}) & 0 \\
      \partial^{2} F_{t}(x)(\operatorname{Id}_{d_{Y}}, \partial_{X} Y_{t}) & \partial^{2} F_{t}(x)(\operatorname{Id}_{d_{Y}}, \operatorname{Id}_{d_{Y}}) & 0 \\
      0& 0 &0 
      \end{array} \right): \right).
  \end{equation*}
  So in total, by applying \cref{cor:RIW_formula} specifically version \eqref{rem:eq:weak_RIW}, we see for any $\omega \in \Omega^{5}\coloneqq \Omega^{3} \cap \Omega^{4}$ that $(Z- \tilde{V}, Z', Z'', 0)(\omega) \in \mathscr{D}^{3 \alpha}_{(\mathbf{X}; M; N)(\omega)}$, where 
  \begin{align*}
    Z_t & =  F_{t}(Y_t),\nonumber\\
    \tilde{V}_{t} & =  \int_{0}^{t} \dot{F}_{r}(Y_{r}) + \partial F_r(Y_{r}) \dot{Y}_{r} dr + \int_{0}^{t}  \partial F'_{r}(Y_{r}) \partial_{X} Y_{r} +\frac{1}{2} \partial^{2} F_{r}(Y_{r}) (\partial_{X} Y_{r}, \partial_{X} Y_{r}) d[\mathbf{X}]_{r} \nonumber \\
    &\quad + \frac{1}{2}\int_{0}^{t} \partial^{2} F_{r}(Y_{r}) d\langle M \rangle_{r},  \nonumber\\
    Z'_{t} &= \left(\left( \begin{array}{c}
    F'(Y_{t})+ \partial F_{t}(Y_{t}) \partial_{X} Y_{t} \\
    \partial F_{t}(Y_{t}) \\
    0
    \end{array}\right) \cdot \right), \nonumber \\
    Z''_t & =  \left(\left(\begin{array}{ccc}
      \partial F_t (Y_{t}) \partial_{X}^{2} Y_{t} + F''_{t} (Y_{t})+ \partial F'_{t}(Y_{t}) \partial_{X} Y_{t}+ (\partial F'_{t}(Y_{t}) \partial_{X} Y_{t})^{\top} & (\partial F'_{t}(Y_{t}))^{\top}& 0 \\
      \partial F'_{t}(Y_{t})& 0 & 0 \\
      \partial F_{t}(Y_{t})& 0 & 0
      \end{array}\right): \right)\nonumber\\
      &\quad + \left(\left(\begin{array}{ccc}
       \partial^{2} F_{t}(Y_{t}) (\partial_{X} Y_{t}, \partial_{X} Y_{t}) & (\partial^{2} F_{t}(Y_{t}) (\operatorname{Id}_{d_{Y}}, \partial_{X} Y_{t}))^{\top} & 0\\
        \partial^{2} F_{t}(Y_{t}) (\operatorname{Id}_{d_{Y}}, \partial_{X} Y_{t}) & \partial^{2} F_{t}(Y_{t}) (\operatorname{Id}_{d_{Y}}, \operatorname{Id}_{d_{Y}}) & 0 \\
      0 & 0& 0
      \end{array}
      \right): \right),\nonumber
  \end{align*}
  in block-operator notation. Now applying \cref{prop:SCRP_to_RIP} yields that
  $(Z, Z', Z'', \dot{Z}; \tilde{M}, \tilde{N})$ as defined in the claim is an
  $\mathbf{X}$-scRSM. 
\end{proof}
  
  \begin{remark}
Clearly, \cref{theorem:RSIW_Theorem_1} also implies a rough stochastic Itô (short:rsI) formula for functions $F\in C^{3}$. Notably,  this rsI formula comes, in contrast to related results in \cite{friz_2021} and \cite{bugini2024}, without any growth assumptions on $F$. 

 \end{remark}

  \begin{example}
      We consider an example coming from  {\em rough stochastic differential equations} (RSDEs). Let $\mathbf{Z}\in 
      \mathscr{C}_{g}^{\alpha}([0,T]; \mathbb{R}^{d})$ and consider suitable 
      $$(f,f'):[0,T]\times  \mathbb{R}^{d_{X}}\to \mathcal{L}(\mathbb{R}^{d_{Z}}; \mathbb{R}^{d_{X}})\times \mathcal{L}((\mathbb{R}^{d_{Z}})^{\otimes 2}; \mathbb{R}^{d_{X}})$$ 
      such that there is a unique solution $X^{s, x; \mathbf{Z}}$ of the RDE
  \begin{equation*}
      \begin{aligned}
  X^{s, x; \mathbf{Z}}_{t} = x +\int_{s}^{t} \Big(f_{r}(X^{s, x; \mathbf{Z}}_{r}), Df_{r}(X^{s, x; \mathbf{Z}}_{r})f_r(X^{s, x; \mathbf{Z}}_{r})+ f'_{r}(X^{s, x; \mathbf{Z}}_{r})\Big)d\mathbf{Z}_{r}, 
    \end{aligned}
    \end{equation*}
which satisfies the following assumptions:
      \begin{assumption}
      \label{assumption:RSDE_flow}
      Set $\Gamma(\cdot)\coloneqq D(\cdot) f$ and $\Gamma'(\cdot)\coloneqq D(\cdot) f'$. We assume for $\phi_{t}(x)\coloneqq X^{0,x; \mathbf{Z}}_{t}$, that $\mathbb{R}^{d_{X}}\ni x \mapsto \phi_{t}(x)\in \mathbb{R}^{d_{X}}$ is a homeomorphism with 
      \begin{equation*}
      \begin{aligned}
        \Phi&\coloneqq \left(\phi , f \circ \phi, D\phi, (\Gamma f+ f') \circ \phi, D(f\circ \phi), D^2\phi , 0\right) \in \mathscr{D}^{3\alpha}_{\mathbf{Z}}\operatorname{Lip}^{3}_{x}(\mathbb{R}^{d_{X}}; \mathbb{R}^{d_{X}}), \\ 
              \Phi^{-1}&\coloneqq \left(\phi^{-1},  - \Gamma \phi^{-1}, D\phi^{-1}, (\Gamma^{2} 
        \phi^{-1}+ \Gamma' \phi^{-1})^{\top}, -D(\Gamma \phi^{-1}), D^{2} \phi^{-1}, 0\right)\in \mathscr{D}^{3 \alpha}_{\mathbf{Z}} \operatorname{Lip}^{3}_{x}(\mathbb{R}^{d_{X}}; \mathbb{R}^{d_{X}}).
    \end{aligned}
    \end{equation*}
      \end{assumption}
    In \cref{appendix:colllection_controlled_fields}, we provide conditions on $(f, f')$ to satisfy \cref{assumption:RSDE_flow}. 
    \begin{proposition}
    \label{prop:flow_transform}
        Suppose \cref{assumption:RSDE_flow} holds. Let $S$ be a continuous semimartingale of the form 
        \begin{equation*}
            S_{t} = S_{0} + \int_{0}^{t} \dot{S}_{s} ds + M_{t},
        \end{equation*}
        where $\dot{S}\in L^{\infty}([0,T]; \mathbb{R}^{d_{Y}})$ a.s.\ and $M \in \mathcal{M}^{c, \loc, 1}(\mathbb{R}^{d_{Y}})$.
        Then the following claims are equivalent:
    \begin{enumerate}
        \item \label{ex:RSDE:item_1}$\mathcal{Y}=(Y, f(Y), \Gamma f(Y) + f'(Y), \dot{S}; M , Df(Y) \bullet M) $ is  a $\mathbf{Z}$-scRSM. In particular, $Y$ solves the RSDE
        \begin{equation*}
        Y_{t}= Y_{0} + \int_{0}^{t} \left(f_{s}(Y_{s}), (\Gamma_{s} f_{s}(Y_{s})+ f'_{s}(Y_{s})\right) d\mathbf{Z}_{s}+ S_{t}.
        \end{equation*}
        \item \label{ex:RSDE:item_2} Define $\tilde{Y}_{t} \coloneqq \phi^{-1}_{t}(Y_{t})$. Then $\tilde{Y}$ solves the SDE.
    \begin{equation}
    \label{ex:RSDE:SDE}
        d\tilde{Y}_{t} =   D \phi^{-1}_{t}(\phi_{t}( \tilde{Y}_{t})) dS_{t} + \frac{1}{2} D^{2} \phi^{-1}_{t}( \phi_{t}(
        \tilde{Y}_{t})) {\langle \dot M  \rangle}_{t} dt; \quad \tilde{Y}_{0}= Y_{0}.
    \end{equation}
    \end{enumerate}
    \end{proposition}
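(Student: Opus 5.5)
The natural route is to apply \cref{theorem:RSIW_Theorem_1} twice: once with the controlled field $\Phi^{-1}$ for the direction (1)$\Rightarrow$(2), and once with $\Phi$ for the converse. Both $\Phi$ and $\Phi^{-1}$ are deterministic elements of $\mathscr{D}^{3\alpha}_{\mathbf{Z}}\Lip^3_x$ by \cref{assumption:RSDE_flow}, hence trivially adapted controlled fields in the sense of \cref{def:random_C3_jet}. The point of the flow transform is that the $d\mathbf{Z}$-characteristics of the composed scRSM vanish identically, exactly as in the uniqueness part of \cref{theorem:rough_transport_existence_uniqueness}.

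\emph{Direction (1)$\Rightarrow$(2).} Let $\mathcal{Y}$ be the scRSM from (1), so that $\partial_X Y=f(Y)$, $\partial_X^2Y=\Gamma f(Y)+f'(Y)$, $\dot Y=\dot S$, and martingale part $M$. Then $N=Df(Y)\bullet M$ is precisely the martingale part of the rough semimartingale $(f(Y),\partial^2_XY)$, by the classical chain rule applied to $f_t(Y_t)$. Apply \cref{theorem:RSIW_Theorem_1} with $\mathcal{F}=\Phi^{-1}$ to obtain $Z=\phi^{-1}(Y)=\tilde Y$.

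The components of $Z$ are computed as follows.
\begin{itemize}
\item $\partial_XZ=-\Gamma\phi^{-1}(Y)+D\phi^{-1}(Y)f(Y)=0$, since $\Gamma\phi^{-1}=D\phi^{-1}f$.
\item $\partial_X^2Z$ equals $(\Gamma^2\phi^{-1}+\Gamma'\phi^{-1})^\top(Y)-D(\Gamma\phi^{-1})(Y)f(Y)-(\cdots)^\top+D^2\phi^{-1}(Y)(f,f)+D\phi^{-1}(Y)(\Gamma f+f')(Y)$. Expanding $D(D\phi^{-1}f)f=D^2\phi^{-1}(f,f)+D\phi^{-1}\Gamma f$, this collapses to zero. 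The transposes work out precisely because of the convention $(\partial F)'=(\partial F')^\top$ and the symmetry of $D^2\phi^{-1}$. This bookkeeping is the main technical step, and I would check it in coordinates.
\item $\dot Z=D\phi^{-1}(Y)\dot S+\tfrac12D^2\phi^{-1}(Y)\dot{\langle M\rangle}$. The $\dot F$-term is $0$, and the $[\mathbf Z]$-term vanishes because $\mathbf Z$ is geometric.
\item $\tilde M=\int D\phi^{-1}(Y)\,dM$.
\end{itemize}

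With these, the integral identity in \cref{theorem:RSIW_Theorem_1} reads $\tilde Y_t=Y_0+\int D\phi^{-1}(Y)\,dS+\tfrac12\int D^2\phi^{-1}(Y)\,d\langle M\rangle$. Here I use that the rough stochastic integral of the zero rough semimartingale vanishes, since its $\tilde N$ is forced to vanish; I would check this directly. Substituting $Y=\phi(\tilde Y)$ gives \eqref{ex:RSDE:SDE}, where $\phi_0=\mathrm{id}$ yields $\tilde Y_0=Y_0$.

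\emph{Direction (2)$\Rightarrow$(1).} Here $\tilde Y$ is a continuous semimartingale with $\dot{\tilde Y}=D\phi^{-1}(Y)\dot S+\tfrac12 D^2\phi^{-1}(Y)\dot{\langle M\rangle}$ and martingale part $\tilde M=\int D\phi^{-1}(Y)\,dM$. One must verify that $\dot{\tilde Y}\in L^\infty$ and $\tilde M\in\mathcal M^{c,\loc,1}$. This holds because the coefficients are bounded (globally Lipschitz jets), and where boundedness fails one localizes as in \cref{lemma:path_regularity_Martingale}. By \cref{remark:Lipschitz_characteristics}, $\tilde Y$ is therefore an scRSM with zero rough components.

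Apply \cref{theorem:RSIW_Theorem_1} with $\mathcal F=\Phi$ to get $Y=\phi(\tilde Y)$. Its rough components are $\partial_XY=f(\phi(\tilde Y))=f(Y)$ and $\partial_X^2Y=(\Gamma f+f')(Y)$, while the $\tilde N$-part is $\int D(f\circ\phi)(\tilde Y)\,d\tilde M$. Using $D(f\circ\phi)=Df(\phi)D\phi$ and $D\phi(\tilde Y)D\phi^{-1}(Y)=\mathrm{Id}$, this is $Df(Y)\bullet M$.

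The drift and martingale parts also need checking. The martingale part is $\int D\phi(\tilde Y)\,d\tilde M=M$. For the drift, Itô's formula for $\phi\circ\phi^{-1}=\mathrm{id}$ gives
\[
D\phi\,D^2\phi^{-1}(\cdot,\cdot)+D^2\phi(D\phi^{-1}\cdot,D\phi^{-1}\cdot)=0,
\]
so the second-order terms cancel and the drift equals $\dot S$. Hence $\mathcal Y$ is an scRSM, which is (1).

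I expect the main obstacle to be the algebraic cancellation in $\partial_X^2Z$ with the correct transposes, together with the localization needed to bring the characteristics into $L^\infty$ and $\mathcal M^{c,\loc,1}$.
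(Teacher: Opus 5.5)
Your route is exactly the paper's: (1)$\Rightarrow$(2) computes $\Phi^{-1}\circ\mathcal{Y}$ and (2)$\Rightarrow$(1) computes $\Phi\circ\tilde{\mathcal{Y}}$, both with Theorem~\ref{theorem:RSIW_Theorem_1}. Your computations of $\partial_X Z=0$, $\tilde N=0$ and the drift $\dot Z$ are correct, and so is the converse direction.

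The one step that fails as written is the claimed collapse of $\partial_X^2 Z$. Insert the second Gubinelli derivative $(\Gamma^2\phi^{-1}+\Gamma'\phi^{-1})^\top$ literally as displayed in the flow assumption, and use $\partial F'(Y)\partial_XY+(\partial F'(Y)\partial_XY)^\top=-\bigl(\Gamma^2\phi^{-1}+(\Gamma^2\phi^{-1})^\top\bigr)(Y)$ and $\Gamma^2\phi^{-1}=D^2\phi^{-1}(f,f)+D\phi^{-1}\,\Gamma f$. Then all $\Gamma^2$-terms cancel. The two $f'$-contributions, however, add instead of cancelling: one comes from $F''$, the other from $\partial F(Y)\partial_X^2Y=D\phi^{-1}(Y)(\Gamma f+f')(Y)$. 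What remains is
\[
\partial_X^2 Z_t=\Gamma'\phi^{-1}_t(Y_t)+\bigl(\Gamma'\phi^{-1}_t(Y_t)\bigr)^\top ,
\]
and no transpose convention removes it. The displayed jet is simply wrong in the $\Gamma'$-term. For $f_t(x)=g_t$ independent of $x$, one has $\phi^{-1}_t(y)=y-\int_0^t(g,g')\,d\mathbf{Z}$, whose second Gubinelli derivative is $-g'$, whereas the formula gives $(g')^\top$.

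You can repair this in either of two ways.

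\emph{Derive the jet of $\phi^{-1}$ instead of reading it off.} The composition rule (Theorem~\ref{cor:composition_rule}) applied to $\phi^{-1}_t\circ\phi_t=\mathrm{id}$ forces $(\phi^{-1})'=-\Gamma\phi^{-1}$ and $(\phi^{-1})''=(\Gamma^2\phi^{-1})^\top-\Gamma'\phi^{-1}$. With this jet your cancellation goes through verbatim.

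\emph{Keep the assumption exactly as stated.} Then $\Phi^{-1}\circ\Phi$ is a controlled field with first component $x$, vanishing first Gubinelli derivative, and second Gubinelli derivative $S\circ\phi$, where $S:=\Gamma'\phi^{-1}+(\Gamma'\phi^{-1})^\top$. Its third-order expansion at $x_1=x_0$, together with surjectivity of $\phi_s$, gives $\sup_y|S_s(y)\mathbb{Z}_{s,t}|\lesssim|t-s|^{3\alpha}$. Since also $\tilde N=0$, the rough stochastic integral $\int(0,S(Y))\,d\mathbf{Z}=\lim\sum S_u(Y_u)\mathbb{Z}_{u,v}$ vanishes, and the SDE for $\tilde Y$ follows as you claim.

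Either way, your assertion that $\partial_X^2 Z$ vanishes identically needs one of these two inputs.
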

    \begin{proof}
       The claim \eqref{ex:RSDE:item_1} $\Rightarrow$ \eqref{ex:RSDE:item_2} follows by using \cref{theorem:RSIW_Theorem_1} to calculate $\Phi^{-1}\circ \mathcal{Y}$. The converse claim  \eqref{ex:RSDE:item_2} $\Rightarrow$ \eqref{ex:RSDE:item_1} follows by calculating $\Phi \circ \tilde{\mathcal{Y}}$ using \cref{theorem:RSIW_Theorem_1}, where $\tilde{\mathcal{Y}}$ is given by \eqref{ex:RSDE:SDE}.
    \end{proof}
    We are particularly interested in the case 
    \begin{equation*}
        dS_{t}= b_{t}(Y_{t}) dt + \sigma_{t}(Y_{t}) dW_{t}, 
    \end{equation*}
    with $W$ being a Brownian motion and $b, \sigma$ being (progressively measurable) coefficients of suitable dimensions.
    Then by \cref{prop:flow_transform}, the existence and uniqueness of solutions $Y$ to 
    \begin{equation*}
        Y_{t}= Y_{0} + \int_{0}^{t} b_{s}(Y_{s}) ds + \int_{0}^{t} \sigma_{s}(Y_{s}) dW_{s}+\int_{0}^{t} (f, \Gamma f+ f')_{s}(Y_{s}) d\mathbf{Z}_{s}
    \end{equation*}
    (in the sense of \cref{def:Rough_Ito_process}) is equivalent to the existence and uniqueness of solution to the SDE
    \begin{equation*}
        d\tilde{Y}_{t}= \tilde{b}_{t}(\tilde{Y}_{t}) dt + \tilde{\sigma}_{t}(\tilde{Y}_{t})dW_{t}; \quad \tilde{Y}_{0}=Y_{0},
    \end{equation*}
    with coefficients
    \begin{equation*}
        \begin{aligned}
            \tilde{b}_{t}(y)&\coloneqq D \phi^{-1}_{t}\left(\phi_{t}(y) \right) b_{t}(y) + \frac{1}{2} D^{2} \phi^{-1}_{t}\left(\phi_{t}(y)\right)( \sigma_{t}(y), \sigma_{t}(y)),\\
            \tilde{\sigma}_{t}(y)&\coloneqq D \phi^{-1}_{t}(\phi_{t}(\tilde{Y}_{t})) \sigma_{t}(y).
        \end{aligned}
    \end{equation*}
Assuming that $b_{t}(\omega,\cdot)$ and $\sigma_{t}(\omega,\cdot)$ are Lipschitz continuous for a.e.\ $(t,\omega)\in[0,T]\times\Omega$, the well-posedness of the above SDE can be readily established by exploiting the fact that $\Lip^1$ is an algebra, together with the spatial regularity of $\phi$ and $\phi^{-1}$ as specified in \cref{assumption:RSDE_flow}.

In \cite{CDFO13}, the authors constructed solutions to RSDEs with autonomous coefficients by combining the above flow-transform approach with a limiting procedure for RDEs driven by geometric rough paths. In contrast, our argument is intrinsic, in the sense that it does not rely on any limiting procedure. This feature also enables us to treat non-autonomous vector fields in the above RSDE in a straightforward manner, provided they are controlled with respect to the fixed reference path $\mathbf{Z}$; see \cref{appendix:section:On RDE-flows for non-autonomous vector fields} for details.
\end{example}

    \subsubsection{The rough stochastic Itô-Wentzell formula for martingale functionals }
    \label{subsubsection:RSIW_2}

We now additionally establish a composition rule for parameter-dependent scRSMs evaluated at scRSMs. Recall that the proof of the rsIW formula    \cref{theorem:RSIW_Theorem_1}   for controlled fields relied on applying the deterministic space–time controlled calculus from    \cref{section:Introduction to controlled fields}  to a suitably lifted stochastic process. In contrast, the proof of the corresponding composition rule for martingale functionals is fundamentally different. It is based on the (discrete-time) Burkholder–Davis–Gundy inequality, as we must carefully track the correlation between the martingale functional and the scRSM at which it is evaluated.

  \begin{lemma}
  [Exercise 3.1.5 in \cite{kunita_1997}]
  \label{lemma:regularity_martingale_funct}
      Let $(W_{t})_{t\in [0,T]}$ be a $d_{W}$-dimensional Brownian Motion. For $k \in \mathbb{N}_{0}$ and $\delta>0$,  consider  a continuous $(\mathfrak{F}_{t})$-adapted process
      \begin{equation*}
          \beta : [0,T]\times \Omega \to \operatorname{Lip}^{k+\delta}(\mathbb{R}^{d}; \mathbb{R}^{d_{G} \times d_{W}}),
      \end{equation*}
      satisfying
      \begin{equation*}
          \mathbb{E} \left [  \int_{0}^{T} \Vert \beta_{r} \Vert_{\operatorname{Lip}^{k+\delta}}^{2} dr \right]< \infty.
      \end{equation*}
Then, the random field $$G_{t}(x)\coloneqq \int_{0}^{t} \beta_{r}(x) dW_{r}$$
has a $\operatorname{Lip}^{k+\epsilon}(\mathbb{R}^{d}; \mathbb{R}^{d_{G} \times d_{W}})$-valued modification for any $\epsilon \in (0, \delta)$, which is still denoted by $G_t(x)$. Moreover,  for any $l\in \mathbb{N}^{d}_{0}$ with $|l|\leq k$ it holds almost surely
      \begin{equation*}
          D^{l} G_{t}(x)=\int_{0}^{t} D^{l} \beta_{s}(x) dW_{s}.
      \end{equation*}
  \end{lemma}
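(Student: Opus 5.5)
The approach is a Kolmogorov continuity argument in the parameter $x$, applied to $G$ and to its candidate derivatives $\int_0^t D^l\beta_r(x)\,dW_r$. Moment bounds come from the Burkholder--Davis--Gundy inequality. One standing point: the hypothesis only gives an $L^2$ bound, while a Kolmogorov argument in $d$ spatial dimensions needs moments of order $p>d/(\delta-\epsilon)$. So I first localize. I set
\[
\tau_n:=\inf\Big\{t\ge 0:\ \int_0^t\Vert\beta_r\Vert^2_{\Lip^{k+\delta}}dr\ge n\Big\}\wedge T .
\]
This is a stopping time, and $\tau_n=T$ eventually almost surely because the $L^2$ bound forces the integral to be a.s.\ finite. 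I then work with $\beta\mathbbm 1_{[0,\tau_n]}$. For it, BDG gives for every $p\ge 2$ the bound $\E\sup_t|\int_0^{t\wedge\tau_n}\cdots|^p\lesssim n^{p/2}\times(\text{spatial increment factor})^p$. A modification built for each $n$ agrees with $G$ on $\{\tau_n=T\}$, so these modifications patch together consistently.

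The key step is to apply BDG to the Taylor remainders rather than to plain increments. For $j\le k$ and $x_0,x_1$ I define the candidate remainder of order $j$,
\[
\mathcal R_j(x_0,x_1)_t:=\int_0^t\Big(D^j\beta_r(x_1)-\sum_{l=0}^{k-j}\tfrac1{l!}D^{j+l}\beta_r(x_0)\big(\cdot\otimes(x_1-x_0)^{\otimes l}\big)\Big)dW_r .
\]
By \cref{def:Stein_Lip_def} the integrand is bounded by $\Vert\beta_r\Vert_{\Lip^{k+\delta}}|x_1-x_0|^{k+\delta-j}$. BDG, after localization, then gives
\[
\E\sup_t|\mathcal R_j(x_0,x_1)_t|^p\lesssim n^{p/2}|x_1-x_0|^{(k+\delta-j)p},
\]
and similarly $\E\sup_t|\int_0^t D^j\beta_r(x)dW_r|^p\lesssim n^{p/2}$ uniformly in $x$.

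These $L^p$ estimates have exactly the shape of the hypotheses of a Kolmogorov/Garsia--Rodemich--Rumsey type criterion for $\Lip^\gamma$ jets. That criterion is the Whitney-jet version of Kolmogorov, in the spirit of \cref{appendix:theorem:Kolmogorov} but with time replaced by the multiparameter $x\in\R^d$. The jet $(G,\int D\beta\,dW,\dots,\int D^k\beta\,dW)$ satisfies the algebraic consistency relations of Taylor jets exactly at every pair of points, since these are linear identities that pass through the stochastic integral. The criterion therefore yields a modification that is locally $\Lip^{k+\epsilon}$ for every $\epsilon<\delta-d/p$. Letting $p\to\infty$ covers every $\epsilon\in(0,\delta)$.

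This same construction gives the derivative identity for free. The modified jet components are continuous modifications of $\int_0^tD^l\beta\,dW$, and by \cref{rem:partial vs. Frechet} the $\Lip$ jet components coincide with the Fréchet derivatives. Hence $D^lG_t(x)=\int_0^tD^l\beta_s(x)dW_s$ almost surely.

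I expect the main obstacle to be making the jet-Kolmogorov step rigorous. Specifically, the remainder estimates hold on dyadic points, and I must upgrade them to uniform $\Lip^{k+\epsilon}$ bounds while keeping the chaining compatible with the jet relations. I would try the standard chaining proof first: on dyadic grids, express the remainder between arbitrary points as a telescoping sum of nearest-neighbour remainders, re-expanded via the exact jet identities, and control it by a Borel--Cantelli summation over levels. A second, milder point is global boundedness on all of $\R^d$ rather than on compacts. Plain chaining gives only local bounds, so if the norm is meant globally I would use a weighted, covering-by-unit-cubes version, where the union bound needs $p$ large together with the uniform moment estimates.
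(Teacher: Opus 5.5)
The paper gives no proof of its own here; it cites Kunita's exercise. The local part of your argument is sound and is the standard proof behind that exercise:
\begin{itemize}
\item stop at $\tau_n$ to obtain all moments;
\item apply BDG to the Taylor remainders, uniformly in $t$;
\item use Kolmogorov with $\epsilon<\delta-d/p$ and let $p\to\infty$;
\item patch the modifications over $\{\tau_n=T\}$.
\end{itemize}
The jet-chaining you flag as the main obstacle can be bypassed. Apply the ordinary $C([0,T])$-valued Kolmogorov theorem separately to each field $H^l(x):=\int_0^\cdot D^l\beta_r(x)\,dW_r$, $|l|\le k$. Then identify $\partial_i\tilde H^l=\tilde H^{l+e_i}$ from the a.s.\ identity $H^l(x+he_i)-H^l(x)=h\int_0^1H^{l+e_i}(x+\theta he_i)\,d\theta$ (stochastic Fubini). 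This passes from rational $(x,h)$ to all $(x,h)$ by continuity, and Taylor's theorem then gives $\Lip^{k+\epsilon}$ on compacts.

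The step that fails is your upgrade to a global $\Lip^{k+\epsilon}(\R^d)$ bound. Under \cref{def:Stein_Lip_def} this requires $G_t$ to be bounded on all of $\R^d$. Your moment bounds are uniform in $x$, so they only give $\mathbb P(\sup_{Q}|G_t|>M)\le CM^{-p}$ with $C$ independent of the unit cube $Q$. Summing over infinitely many cubes yields nothing. A weighted version yields at most $\sup_x|G_t(x)|(1+|x|)^{-\eta}<\infty$ for $\eta>d/p$.

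This cannot be repaired, because the global claim is false under the stated hypotheses. Take $d=d_G=d_W=1$, $\phi\in C_c^\infty((-\frac14,\frac14))$ with $\phi(0)=1$, $e_n(r):=\sqrt{2/T}\sin(n\pi r/T)$, and
\[
\beta_r(x):=\sum_{n\ge 2}(\log n)^{-1/4}\,e_n(r)\,\phi(x-n).
\]
Since the bumps are separated, $\|\beta_r-\beta_s\|_{\Lip^{k+\delta}}\lesssim\sup_{n}(\log n)^{-1/4}|e_n(r)-e_n(s)|\to0$ as $s\to r$. So $\beta$ is deterministic, continuous and bounded in $\Lip^{k+\delta}$, and satisfies all hypotheses. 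However, $G_T(n)=(\log n)^{-1/4}g_n$ with $g_n:=\int_0^Te_n\,dW$ i.i.d.\ standard Gaussian. Since $\sum_n\mathbb P(|g_n|>M(\log n)^{1/4})=\infty$ for every $M$, Borel--Cantelli gives $\sup_n|G_T(n)|=\infty$ a.s. Any modification coincides with $G_T$ on $\mathbb Z$ a.s., so none is bounded.

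The lemma therefore has to be read as $\Lip^{k+\epsilon}(\mathfrak K)$ for every compact $\mathfrak K\subset\R^d$; Kunita's $C^{k,\epsilon}$ is a Fr\'echet space with local seminorms. That local statement is exactly what your argument proves. It also suffices where the lemma is used in \cref{theorem:RSIW_II}, since $G$ and its derivatives are only evaluated along $Y$, whose range is a.s.\ compact. You should drop the global step rather than try to prove it.
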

  \begin{theorem}
  \label{theorem:RSIW_II}
  Let $\beta$ and $G$ be as in \cref{lemma:regularity_martingale_funct} with $k=3$  and $\delta>0$. Further,  let $\mathcal{Y}=(Y, \partial_{X}Y, \partial_{X}^{2}Y, \dot{Y}; 
  N, M)$ be an $\mathbf{X}$-scRSM.
  Then $G \circ Y$ satisfies
  \begin{eqnarray*}
    \label{thm:eq:RSIW_Martingale}
    G_t (Y_t) - G_s (Y_s) & = &  \int_s^t \beta_r (Y_r) dW_{r}+ \int_{s}^{t} DG_{r} (Y_{r}) \dot{Y}_{r} dr  +\int_{s}^{t} DG_{r}(Y_{r}) dM_{r} \nonumber \\
    &&+ \int_s^t (D G_r (Y_r) \partial_{X}Y_{r}, D^2 G_r (Y_r) (\partial_{X}Y_{r}, \partial_{X}Y_{r}) + D
    G_r (Y_r) \partial_{X}^{2}Y_{r}) d\mathbf{X}_{r} \nonumber \\
    &&+ \left \langle \int_{0}^{\cdot} D \beta_r (Y_r) dW_{r}, M \right \rangle_{s, t} + \frac{1}{2} \int_{s}^{t} D^2 G_{r}( Y_r) d \langle M \rangle_{r} \nonumber \\
    &  & + \frac{1}{2} \int_s^t D^2 G_{r}(Y_r) \left(\partial_{X}Y_{r}, \partial_{X}Y_{r}\right)
    d [\mathbf{X}]_r, \nonumber
    \end{eqnarray*}
  where the $d\mathbf{X}$-integral  is the limit in probability of corresponding corrected Riemann sums, i.e.,  it holds for $Z : = D G  (Y ) \partial_{X} Y $, $Z' : = D^2 G
  (Y ) (\partial_{X}Y , \partial_{X}Y) + D G  (Y ) \partial_{X}^{2} Y $, any $(s, t) \in
  \Delta_T$ and any sequence of deterministic partitions $(\mathcal{P}^{n})_{n \in \mathbb{N}}$ of $[s, t]$ of vanishing mesh-size:
  \begin{equation}
  \label{generalized_RSI}
     \sum_{[u, v] \in \mathcal{P}^{n}} Z_u \delta X_{u, v} + Z_u' \mathbb{X}_{u,
     v} \stackrel{\mathbb{P}}{\longrightarrow} \int_s^t (Z, Z')_r d \mathbf{X}_{r}
  \end{equation}
  as $n \to \infty$. 
     \end{theorem}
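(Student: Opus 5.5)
I would prove \cref{theorem:RSIW_II} by a Kunita-type discretisation argument in the spirit of the classical It\^o--Wentzell formula. Everything is carried out after localisation. Using a localizing sequence $(\tau_k)$ built as in \cref{lemma:path_regularity_Martingale} and \cref{cor:CRPtoRSM}, I would assume that on $[0,\tau_k]$ the following quantities are all bounded by $k$:
\begin{itemize}
\item $\Vert\langle M\rangle\Vert_{\gamma}$ and $\Vert\langle N\rangle\Vert_{\gamma}$;
\item the H\"older norms of $\partial_X Y$ and $\partial_X^2 Y$;
\item $\sup_{r}|\dot Y_r|$;
\item the $\Lip^{3+\epsilon}$-norms of $G_r$ on a large ball containing $Y$, with $\int_0^T\Vert\beta_r\Vert^2_{\Lip^{3+\delta}}dr\le k$.
\end{itemize}
By \cref{lemma:Stopped_RSI} this is consistent with the rough stochastic integrals, so it suffices to prove the identity for the stopped objects.

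Fix a partition $\mathcal P^n$ of $[s,t]$ and telescope:
\begin{equation*}
G_t(Y_t)-G_s(Y_s)=\sum_{[u,v]\in\mathcal P^n}\big(G_v(Y_v)-G_u(Y_v)\big)+\big(G_u(Y_v)-G_u(Y_u)\big).
\end{equation*}

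\emph{First sum.} Each term equals $\int_u^v\beta_r(Y_v)dW_r$. I Taylor-expand $\beta_r(Y_v)$ around $Y_u$, which produces $\int_u^v\beta_r(Y_u)dW_r+\int_u^v D\beta_r(Y_u)dW_r\,\delta Y_{u,v}$ plus a remainder. For the leading pieces:
\begin{itemize}
\item The first piece sums to $\int_s^t\beta_r(Y_r)dW_r$, by the usual Riemann approximation of It\^o integrals (the integrand is adapted at $u$).
\item In the second piece I substitute $\delta Y_{u,v}=\delta M_{u,v}+\partial_XY_u\delta X_{u,v}+O(|v-u|^{2\alpha})$. The $\delta M$ part gives sums $\sum \delta(\int D\beta(Y_u)dW)_{u,v}\,\delta M_{u,v}$, which converge in probability to the covariation $\langle\int D\beta_r(Y_r)dW_r,M\rangle_{s,t}$.
\item The $\partial_XY\,\delta X$ part is a sum of martingale increments times $|v-u|^{\alpha}$-small deterministic factors. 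By the discrete BDG inequality it is $O(|\mathcal P^n|^{\alpha})$ in $L^2$, and likewise for the other higher-order products.
\end{itemize}
This is where the correlation between $G$ and $Y$ has to be tracked carefully. The Taylor remainder is of order $|\delta Y_{u,v}|^{2}$ times $\Lip^{2}$-norms of a stochastic integral over $[u,v]$, so it is $O(|v-u|^{2\alpha+1/2-})$ after BDG and sums to zero.

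\emph{Second sum.} I expand $G_u$ to third order around $Y_u$, with remainder $\lesssim|\delta Y_{u,v}|^{3}$, which sums to zero since $3\alpha>1$. I then insert the second-order expansion of $\delta Y$ from \cref{prop:RIP_to_SCRP}, namely the strong $(\mathbf X;M;N)$-controlledness of $Y-V$. This gives
\begin{equation*}
DG_u(Y_u)\big(\dot Y_u(v-u)+\delta M_{u,v}+\partial_XY_u\delta X_{u,v}+\partial_X^2Y_u\mathbb X_{u,v}+\Pi(N;X)_{u,v}\big)+\tfrac12D^2G_u(Y_u)(\delta Y_{u,v})^{\otimes2}.
\end{equation*}
The pieces are handled as follows.
\begin{itemize}
\item The $M$ terms converge to $\int DG_r(Y_r)dM_r$, since $G_u(Y_u)$ is $\mathfrak F_u$-measurable; this is where adaptedness of $G$ matters.
\item The $\dot Y$ term converges to the $dr$ integral.
\item I split $(\delta Y)^{\otimes 2}$ into $(\delta M)^{\otimes2}$, which converges to $\int D^2G\,d\langle M\rangle$, and $(\partial_XY\delta X)^{\otimes2}=2\operatorname{Sym}(\ldots\mathbb X)+(\ldots)[\mathbf X]$. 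Cross terms $\delta M\otimes\delta X$ are replaced by $\Pi(M;X)+\Pi(X;M)$.
\item The $\mathbb X$ terms recombine with $DG\,\partial_X^2Y$ into exactly the corrected Riemann sums \eqref{generalized_RSI}, and the $[\mathbf X]$ term converges to $\tfrac12\int D^2G(\partial_XY,\partial_XY)d[\mathbf X]$.
\end{itemize}

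\emph{Main obstacle.} The hard part is that the remaining mixed terms do not vanish individually. These are $DG_u(Y_u)\Pi(N;X)_{u,v}$ together with the $\Pi(X;M),\Pi(M;X)$ cross terms against $D^2G_u(Y_u)\partial_XY_u$. They are only of order $|v-u|^{1/2+\alpha-}$, and they are not of the form of \cref{def:RSI} because $DG(Y)\partial_XY$ is not a rough semimartingale controlled in the usual sense: $G$ carries its own $dW$ part. My plan is therefore to define the $d\mathbf X$ integral as the limit in \eqref{generalized_RSI} and to show that these cross terms are exactly what the corrected sums require to converge. Concretely, I would show that the sum of $\Pi$-type terms converges in probability by a discrete BDG argument. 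The terms $A_{u,v}$ satisfy $\E[A_{u,v}\mid\mathfrak F_u]\approx0$ up to $O(|v-u|^{1+})$, with conditional $L^2$-size $O(|v-u|^{1/2+\alpha})$. This is a stochastic-sewing-type bound (cf.\ \cite{Le2020}), and with it the sums of Riemann increments form a Cauchy sequence in probability as $|\mathcal P^n|\to0$. Since the left-hand side is independent of the partition and all other sums converge, \eqref{generalized_RSI} converges and the identity follows.
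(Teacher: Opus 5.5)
Your proposal is essentially the paper's proof and is sound once one point is corrected. Both arguments are Kunita-type. They share the telescoping over a partition and the Taylor expansions, It\^o and Riemann--Stieltjes convergence for the adapted sums, Kunita's covariation result for $\sum_{[u,v]}\bigl(\int_u^v D\beta_r(Y_u)\,dW_r\bigr)\delta M_{u,v}$, and discrete BDG for martingale-difference sums. In both, the $d\mathbf X$-integral is defined as the limit of the corrected sums $\sum Z_u\delta X_{u,v}+Z'_u\mathbb X_{u,v}$. The only structural difference is the order of telescoping. You first move in time at the frozen point $Y_v$ and then in space at time $u$, so the covariation comes from expanding the field $x\mapsto\int_u^v\beta_r(x)\,dW_r$ around $Y_u$. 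Expand the field, not the integrand, since $\int_u^v\beta_r(Y_v)\,dW_r$ is anticipating. The paper instead moves in space at time $t_{i+1}$ first and then splits $DG_{t_{i+1}}(Y_{t_i})=DG_{t_i}(Y_{t_i})+\delta(DG_\cdot(Y_{t_i}))_{t_i,t_{i+1}}$. The resulting sums agree up to negligible remainders.

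Your \emph{main obstacle} paragraph, however, misdiagnoses the mixed terms. These are the sums of $DG_u(Y_u)\Pi(N;X)_{u,v}$ and of $D^2G_u(Y_u)(\partial_XY_u\delta X_{u,v},\delta M_{u,v})$ and its transpose. They are not something the corrected sums need; they simply vanish. After localisation each summand has zero $\mathfrak F_u$-conditional mean. To see this, use $\Pi(N;X)_{u,v}=\delta N_{u,v}\otimes\delta X_{u,v}-\Pi(X;N)^{\top}_{u,v}$, where $\Pi(X;N)$ is an It\^o integral of a deterministic integrand and $\delta X$ is deterministic. Each summand also has $L^2$-norm $O(|v-u|^{1/2+\alpha})$. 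By orthogonality of martingale increments the sums are therefore $O\big((\sum|v-u|^{1+2\alpha})^{1/2}\big)\to0$ in $L^2$. This is what the paper means by ``by the discrete BDG inequality''. It is also the mechanism that lets corrected Riemann sums without $\Pi(N;X)$ approximate the rough stochastic integral, as noted after its definition.

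You must state explicitly that this limit is zero. A contribution that merely ``converges'' to an unidentified limit would leave an extra term that the theorem does not contain.

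With this fixed, your closing deduction is correct and slightly more economical than the paper's. The left-hand side is partition-independent and every other sum converges, so the corrected sums converge along every partition sequence to a partition-independent limit. That is exactly \eqref{generalized_RSI}. The paper additionally runs a Cauchy argument along dyadic partitions via the three-point increments $\delta\tilde A_{s,u,t}$, which corresponds to your ``stochastic-sewing-type'' bound. That argument gives a direct construction of the integral but is not needed for the identity.
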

     \begin{proof} We follow a similar argument as in the proof of Theorem 3.3.1 in \cite{kunita_1997}.     Without loss of generality, we take $s=0, t=1$ and assume $M, N$ being $L^{2}$-martingales (otherwise we proceed with a suitable stopping procedure as in the proof of \cref{prop:SCRP_to_RIP}, noting the consistency of  RSI-integrals established  in \cref{lemma:Stopped_RSI}). Denote $V=\int_{0}^{\cdot} \dot{Y}_{t} dt$, and  let $\mathcal{P}^{N}=(t_{i}^N)_{i=0}^{2^{N}}$  be the dyadic partition  of $[0,1]$.  We decompose
     \begin{equation*}
     \begin{aligned}
         G_{1}(Y_{1})- G_{0}(Y_{0})&= \sum_{i=0}^{2^N-1} G_{t_{i+1}^{N}}(Y_{t_{i+1}^{N}})- G_{t_{i+1}^{N}}(Y_{t_{i}^{N}})+ \sum_{i=0}^{2^N-1} G_{t_{i+1}^{N}}(Y_{t_{i}^{N}})- G_{t_{i}^{N}}(Y_{t_{i}^{N}})\\
         &\eqqcolon I^{N}+ II^{N}. 
     \end{aligned}
     \end{equation*}
     By Section 3.2 in \cite{kunita_1997} , we see that $II^{N}$ converges in probability to $\int_{0}^{1} \beta_{r}(Y_{r}) dW_{r}$. 
     
     Now we deal with  $I^{N}$.   By \cref{lemma:regularity_martingale_funct}, we have
     \begin{eqnarray*}
         I^{N}&\coloneqq&  \sum_{i=0}^{2^{N}-1} DG_{t_{i+1}^{N}}(Y_{t_{i}^{N}}) \delta Y_{t_{i}^{N}, t_{i+1}^{N}} +\frac{1}{2} D^{2} G_{t_{i+1}^{N}}(Y_{t_{i}^{N}}) (\delta Y_{t_{i}^{N}, t_{i+1}^{N}})^{\otimes 2} +\operatorname{Rem.} \nonumber \\
         &=&\left(\sum_{i=0}^{2^{N}-1} DG_{t_{i}^{N}}(Y_{t_{i}^{N}}) \delta ( Y-M- V)_{t_{i}^{N}, t_{i+1}^{N}} +\frac{1}{2} D^{2} G_{t_{i}^{N}}(Y_{t_{i}^{N}}) (\delta Y_{t_{i}^{N}, t_{i+1}^{N}})^{\otimes 2}\right) \nonumber \\
         && + \left(\sum_{i=0}^{2^{N}-1} \delta (DG_{\cdot}(Y_{t_{i}^{N}}))_{t_{i}^{N}, t_{i+1}^{N} }\delta Y_{t_{i}^{N}, t_{i+1}^{N}} \right)+ \left(\sum_{i=0}^{2^{N}-1} DG_{t_{i}^{N}}(Y_{t_{i}^{N}}) \delta M_{t_{i}^{N}, t_{i+1}^{N}} \right)\nonumber\\
         &&+ \left(\sum_{i=0}^{2^{N}-1} DG_{t_{i}^{N}}(Y_{t_{i}^{N}}) \delta V_{t_{i}^{N}, t_{i+1}^{N}} \right)+\operatorname{Rem.} \nonumber \\
         &\eqqcolon& III^{N}+ IV^{N} + V^{N} + VI^{N} + \operatorname{Rem.}, \nonumber
     \end{eqnarray*}
     where $|\operatorname{Rem.}|=\mathcal{O}(2^{N(1-3\alpha)})$ almost surely.   As $N\to\infty$, clearly  $V^{N}$ converges in probability to the Itô integral $\int_{0}^{1} DG_{r}(Y_{r}) dM_{r}$, and $VI^{N}$ converges a.s. to the Riemann-Stieltjes integral $\int_{0}^{1} DG_{r}(Y_{r}) dV_{r}$. 
 
 For the  terms $III^N$ and $IV^n$, we  further decompose them.  First, note that a.s.
     \begin{equation*}
     \begin{aligned}
         \delta(Y-M-V)_{s,t}= \int_{s}^{t} (\partial_{X} Y, \partial^{2}_{X} Y)_{r} d\mathbf{X}_{r} &\stackrel{3\alpha}{=} (\partial_{X} Y-N)_{s} \delta X_{s,t} + \partial_{X}^{2} Y_{s} \mathbb{X}_{s,t} + \int_{s}^{t} N_{r} dX_{r}\\
         &= \partial_{X} Y_{s} \delta X_{s,t} + \partial^{2}_{X} Y_{s} \mathbb{X}_{s,t} + \Pi(N;X)_{s,t}\\
         &= \partial_{X} Y_{s} \delta X_{s,t} + \partial^{2}_{X} Y_{s} \mathbb{X}_{s,t} + \delta N_{s,t} \delta X_{s,t} - \Pi(X; N)_{s,t} \\&\stackrel{2\alpha}{=}\partial_{X} Y_{s} \delta X_{s,t}.
     \end{aligned}    
     \end{equation*}
     Then  $IV^{N}$ is  further decomposed as
     \begin{equation*}
     \begin{aligned}
     IV^{N}&=\left(\sum_{i=0}^{2^{N}-1} \delta (DG_{\cdot}(Y_{t_{i}^{N}}))_{t_{i}^{N}, t_{i+1}^{N} } \partial_{X} Y_{t_{i}^{N}} \delta X_{t_{i}^{N}, t_{i+1}^{N}} \right)+\left(\sum_{i=0}^{2^{N}-1} \delta (DG_{\cdot}(Y_{t_{i}^{N}}))_{t_{i}^{N}, t_{i+1}^{N} }\delta M_{t_{i}^{N}, t_{i+1}^{N}} \right)\\
     &\quad +\left(\sum_{i=0}^{2^{N}-1} \delta (DG_{\cdot}(Y_{t_{i}^{N}}))_{t_{i}^{N}, t_{i+1}^{N} }\delta V_{t_{i}^{N}, t_{i+1}^{N}} \right)+\operatorname{Rem.}, 
     \end{aligned}
     \end{equation*}
     where again $|\operatorname{Rem.}|=\mathcal{O}(2^{N(1-3\alpha)})$ almost surely. The first sum vanishes in probability by the discrete Burkholder–Davis–Gundy inequality, and the third sum vanishes a.s. by standard arguments for Riemann-Stieltjes integrals. For the second sum, by \cite[Corollary 2.2.19]{kunita_1997}  we get, as $N\to\infty$,
     \begin{equation*}
      \sum_{i=0}^{2^{N}-1} \delta (DG_{\cdot}(Y_{t_{i}^{N}}))_{t_{i}^{N}, t_{i+1}^{N} }\delta M_{t_{i}^{N}, t_{i+1}^{N}} \stackrel{\mathbb{P}}{\longrightarrow} \left \langle \int_{0}^{\cdot} D\beta_{r}(Y_{r}) dW_{r} , M \right \rangle_{0, 1}.
     \end{equation*}

     Now, it remains to  consider $III^{N}$. Denote
     \begin{equation*}
         A_{s,t} \coloneqq DG_{s}(Y_{s}) \delta(Y-M-V)_{s,t}+  \frac{1}{2}D^{2} G_{s}(Y_{s})( \delta Y_{s,t})^{\otimes 2}
     \end{equation*}
     and
     \begin{equation*}
     \tilde{A}_{s,t}\coloneqq DG_{s}(Y_{s}) (\partial_{X} Y_{s} \delta X_{s,t}+ \partial_{X}^{2} Y_{s} \mathbb{X}_{s,t})+  D^{2} G_{s}(Y_{s})( \partial_{X} Y_{s}, \partial_{X} Y_{s}) \mathbb{X}_{s,t}.
     \end{equation*}
     Then, by the discrete Burkholder–Davis–Gundy inequality, we get
     \begin{equation*}
         \sum_{i=0}^{2^{N}-1}\Big|A_{t_{i}^{N}, t_{i+1}^{N}}- \tilde{A}_{t_{i}^{N}, t_{i+1}^{N}}- \frac{1}{2}D^2 G_{t_{i}^{N}}(Y_{t_{i}^{N}}) 
         \left((\partial_{X}Y_{t_{i}^{N}}, \partial_{X}Y_{t_{i}^{N}}) \delta [\mathbf{X}]_{t_{i}^{N}, t_{i+1}^{N}}+ (\delta M_{t_{i}^{N}, t_{i+1}^{N}})^{\otimes 2}\right) \Big| \stackrel{\mathbb{P}}{\longrightarrow}
         0,
     \end{equation*}
     as $N \to \infty$.
Moreover,  for any $s,u,t \in [0,T]$ with $s< u <t$,  it holds a.s.

     \begin{eqnarray*}
     \delta \tilde{A}_{s,u,t}&=&- \delta(DG_{\cdot} (Y_{\cdot})\partial_{X} Y_{\cdot})_{s,u} \delta X_{u,t} - \delta (DG_{\cdot}(Y_{\cdot}) \partial_{X}^{2} Y_{\cdot})_{s,u} \mathbb{X}_{u,t} \nonumber \\
     &&+ D G_{s}(Y_{s}) \partial_{X}^{2} Y_{s} \delta X_{s,u} \otimes \delta X_{u,t} - \delta (D^{2} G_{\cdot} (Y_{\cdot})(\partial_{X} Y_{\cdot}, \partial_{X} Y_{\cdot}))_{s,u} \mathbb{X}_{u,t} \nonumber \\
     &&+ D^{2} G_{s} (Y_{s})(\partial_{X} Y_{s}, \partial_{X} Y_{s})\delta X_{s,u} \otimes \delta X_{u,t} \nonumber \\
     &\stackrel{3\alpha}{=}&- \delta(DG_{\cdot} (Y_{\cdot})\partial_{X} Y_{\cdot})_{s,u} \delta X_{u,t} \nonumber + D G_{s}(Y_{s}) \partial_{X}^{2} Y_{s} \delta X_{s,u} \otimes \delta X_{u,t} \nonumber \\
     &&+ D^{2} G_{s} (Y_{s})(\partial_{X} Y_{s}, \partial_{X} Y_{s})\delta X_{s,u} \otimes \delta X_{u,t} \nonumber 
     \end{eqnarray*}
     Noting that a.s.
     \begin{eqnarray*}
        - \delta(DG_{\cdot} (Y_{\cdot})\partial_{X} Y_{\cdot})_{s,u} &=& (DG_{s}(Y_{s})- DG_{u}(Y_{s}))\partial_{X}Y_{s}+ (DG_{u}(Y_{s})-DG_{u}(Y_{u}))\partial_{X}Y_{s}\\
        &&- (DG_{u}(Y_{u}) \delta (\partial_{X} Y)_{s,u}\\
        &\stackrel{2\alpha}{=}&
        (DG_{s}(Y_{s})- DG_{u}(Y_{s}))\partial_{X}Y_{s}+D^{2} G_{s}(Y_{s})(\partial_{X} Y_{s}, \partial_{X} Y_{s}) \delta X_{u, s}\\
        &&+D^{2} G_{s}(Y_{s})(\operatorname{Id}, \partial_{X} Y_{s}) \delta M_{u, s}+ DG_{u}(Y_{u}) \delta N_{u,s}\\
        &&+ (DG_{u}(Y_{u}) \partial_{X}^{2} Y_{s} \delta X_{u,s}
     \end{eqnarray*}   
    So in total we get a.s.
     \begin{eqnarray*}
     \delta \tilde{A}_{s,u,t}
     &\stackrel{3\alpha}{=}&
     -D^{2} G_{s}(Y_{s}) (\operatorname{Id}, \partial_{X} Y_{s}) \delta M_{s,u} \otimes \delta X_{u,t} - (DG_{u}(Y_{s})- DG_{s}(Y_{s}))\partial_{X} Y_{s} \delta X_{u,t}\nonumber \\
     &&- DG_{s}(Y_{s}) \delta N_{s,u} \delta X_{u,t}\nonumber \\
     &\eqqcolon & B_{s,u, t}+ C_{s, u, t}+ D_{s,u,t} \nonumber \end{eqnarray*}
  
By the above equality, we see that a.s.
\begin{equation*}
    \bigg| \sum_{i=0}^{2^{N}-1} \tilde{A}_{t_{i}^{N}, t_{i+1}^{N}}- \sum_{i=0}^{2^{N+1}-1} \tilde{A}_{t_{i}^{N+1}, t_{i+1}^{N+1}} \bigg| \lesssim 2^{N(1-3\alpha)}+ \sum_{E\in \{B, C, D\}}\left|\sum_{i=0}^{2^{N}-1} \delta E_{t_{i}^{N}, t_{2i+1}^{N+1}, t_{i+1}^{N}}\right|.
\end{equation*}
Now again by the discrete Burkholder–Davis–Gundy inequality we see that for $E\in \{B, C, D \}$ 
\begin{equation*}
\sum_{N\geq 0} \left \Vert \sum_{i=0}^{2^{N}-1} \delta E_{t_{i}^{N}, t_{2i+1}^{N+1}, t_{i+1}^{N}} \right \Vert_{L^{2}} \lesssim \sum_{N \geq 0} 2^{-N\alpha}< \infty. 
\end{equation*}
In total this yields $| III^{N'}-III^{N}| \to 0$ in probability as $N, N' \to \infty$. Thus $III^N$ is a Cauchy-sequence in probability and by completeness the limit as in \eqref{generalized_RSI} exists. 
     \end{proof}
\begin{remark}
Note that \cref{theorem:RSIW_II} could be extended to general, suitably regular martingale-fields $(M_{t}(x))_{t\in [0,T], x\in \mathbb{R}^{d}}$ as considered throughout \cite{kunita_1997}. 
\end{remark}
Combining \cref{theorem:RSIW_Theorem_1} and \cref{theorem:RSIW_II} yields the following composition rule for suitable parameter-dependent scRSMs.
\begin{theorem}
    \label{theorem:total_RSIW_formula}
Let $\mathbf{X} \in \mathscr{C}^{\alpha, 1}([0,T]; \mathbb{R}^{d_{X}})$ and  $\mathcal{Y}=(Y, \partial_{X} Y, \partial_{X}^{2} Y, \dot{Y}; M, N)$ be an
  $\mathbf{X}$-scRSM.
  Let $\mathcal{F} = (F , F', \partial F, F'', \partial
  F', \partial^2 F, \dot{F })\in L^0_{\operatorname{ad}}\left (\Omega;  
\mathscr{D}^{3\alpha}_{\mathbf{X}}{\Lip}^{3}_{x}(\mathbb{R}^{d_{Y}}; \mathbb{R}^{d_{H}})\right)$ and $(W_{t})_{t\in [0,T]}$, $ \beta : [0,T]\times \Omega \times \mathbb{R}^{d_{Y}}\to \mathbb{R}^{d_{H} \times d_{W}}$ and $G$ as in \cref{lemma:regularity_martingale_funct} with $k =3$ and $\delta>0$. Then 
  for 
  \begin{equation*}
      H_{t}(x)\coloneqq F_{t}(x)+G_{t}(x)=\int_{0}^{t} \dot{F}_{s}(x) ds+ \int_{0}^{t} (F'_{s}(x), F''_{s}(x)) d\mathbf{X}_{s} + \int_{0}^{t} \beta_{s}(x) dW_{s} 
  \end{equation*}
  it holds a.s.
\begin{eqnarray*}
    H_{t}(Y_{t})&=& \int_{0}^{t} \beta_s (Y_s) dW_{s}+ \int_{0}^{t} DH_{s}(Y_{s})  \dot{Y}_{s} + \dot{F}_{s}(Y_{s}) ds + \int_0^t DH_{s} (Y_{s}) 
    dM_{s}  \nonumber \\
    &&+\int_{0}^{t} (F'_{s}(Y_{s}) + D H_{s}(Y_{s}) \partial_{X} Y_{s}, \partial_{X}^{2} Z_{s})  d
    \mathbf{X}_{s} +\frac{1}{2} \int_{0}^{t} D^{2} H_{s}(Y_{s}) d\langle M \rangle_{s} \nonumber\\
    &&+\int_{0}^{t} \left(  D F'_{s}(Y_{s})  \partial_{X}Y_{s} + \frac{1}{2} D^2
    H_{s}(Y_{s}) (\partial_{X}Y_{s}, \partial_{X} Y_{s} )  \right) d [\mathbf{X}]_{s} \nonumber\\
    &&+ \left \langle \int_{0}^{\cdot} D \beta_s (Y_s) dW_{s}, M \right \rangle_{0, t} \nonumber, 
\end{eqnarray*}
where 
\begin{equation*}
  \partial_{X}^{2} Z_t \coloneqq  D H_{t}(Y_{t})  {\partial^{2}_{X} Y_{t}}  + F''_t(Y_{t})  +
    \partial F'_t(Y_{t}) \partial_{X} Y_{t}  + (\partial F'_t(Y_{t}) \partial_{X} Y_{t})^T + D^{2} H_t(Y_{t}) (\partial_{X} Y_{t},
    \partial_{X} Y_{t}).  
\end{equation*}
\end{theorem}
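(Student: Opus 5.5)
By linearity, $H_t(Y_t)=F_t(Y_t)+G_t(Y_t)$, so I will apply \cref{theorem:RSIW_Theorem_1} to the controlled-field part and \cref{theorem:RSIW_II} to the martingale-field part, then add the two identities. The only real work is to check that the two right-hand sides combine into the stated one, with $DH=\partial F+DG$ and $D^2H=\partial^2F+D^2G$. One caveat: the labels of the two local martingales of $\mathcal{Y}$ are swapped between the theorems. In \cref{theorem:total_RSIW_formula} we have $\mathcal{Y}=(Y,\partial_XY,\partial_X^2Y,\dot Y;M,N)$, with $M$ the genuine martingale part of $Y$ and $N$ the martingale part of the rough semimartingale $\partial_X Y$; this matches \cref{theorem:RSIW_Theorem_1}. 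In \cref{theorem:RSIW_II} the roles are written $(\dots;N,M)$. I will read both consistently, so that $M$ is always the martingale part of $Y$.

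Step 1. Since $\mathcal{F}\in L^0_{\operatorname{ad}}(\Omega;\mathscr{D}^{3\alpha}_{\mathbf X}\Lip^3_x)$, \cref{theorem:RSIW_Theorem_1} gives a.s.
\begin{equation*}
F_t(Y_t)=F_0(Y_0)+\int_0^t\big(F'(Y)+\partial F(Y)\partial_XY,\partial_X^2Z^F\big)_s d\mathbf X_s+\int_0^t\partial F_s(Y_s)dM_s+\int_0^t\big(\partial F_s(Y_s)\dot Y_s+\dot F_s(Y_s)\big)ds+\dots,
\end{equation*}
together with the $d[\mathbf X]$ and $\tfrac12\int\partial^2F\,d\langle M\rangle$ terms, where $\partial_X^2Z^F$ uses $\partial F,\partial^2F$. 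Because $F_0=0$ (by the integral representation of $H$), the boundary term drops. Here the $d\mathbf X$-integral is a genuine rough stochastic integral.

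Step 2. \cref{theorem:RSIW_II} with $k=3$ gives the analogous expansion for $G_t(Y_t)$, with $G_0=0$. It contributes three things. First, the term $\int\beta(Y)dW$. Second, the $dM$-, $ds$- and $d\langle M\rangle$-terms with $DG$, $D^2G$. Third, the cross bracket $\langle\int D\beta(Y)dW,M\rangle$ and the $d[\mathbf X]$-term $\tfrac12 D^2G(\partial_XY,\partial_XY)$. Its rough integral $\int(DG(Y)\partial_XY,\,D^2G(Y)(\partial_XY,\partial_XY)+DG(Y)\partial_X^2Y)d\mathbf X$ exists only as a limit in probability of compensated Riemann sums, as in \eqref{generalized_RSI}.

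Step 3 is where I expect the main obstacle. The two rough integrals must be merged into a single $\int(F'(Y)+DH(Y)\partial_XY,\ \partial_X^2Z)d\mathbf X$, and they are defined in different senses. I will show that the rough stochastic integral from Step 1 is also the limit in probability of its compensated Riemann sums; this is the remark after \cref{def:RSI}. Then, along the same dyadic partitions, the sum of the two compensated Riemann sums equals the compensated Riemann sum of the combined integrand. This holds because the summand $Z_u\delta X_{u,v}+Z'_u\mathbb X_{u,v}$ is linear in $(Z,Z')$. So the combined integral exists as a limit in probability, and it equals the sum of the two. I define the $d\mathbf X$-integral in the statement in this sense.

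Step 4. The remaining terms are collected by linearity. For the $ds$-terms, $\partial F\dot Y+DG\dot Y=DH\dot Y$. The $dM$- and $d\langle M\rangle$-terms combine in the same way. For the $d[\mathbf X]$-terms, $\partial F'(Y)\partial_XY+\tfrac12(\partial^2F+D^2G)(\partial_XY,\partial_XY)$ gives the stated integrand, with $DF'=\partial F'$ by \cref{rem:partial vs. Frechet}. Two smaller points remain. First, the localization to $L^2$-martingales in \cref{theorem:RSIW_II} must be made compatible, which is handled by \cref{lemma:Stopped_RSI}. Second, $DH$ and $D^2H$ must denote Fréchet derivatives of the sum, which is justified by \cref{lemma:regularity_martingale_funct} and \cref{rem:partial vs. Frechet}.
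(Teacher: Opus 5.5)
Your proposal is correct and follows the paper's route: the paper obtains \cref{theorem:total_RSIW_formula} simply by adding the identity of \cref{theorem:RSIW_Theorem_1} (applied to $F$) to that of \cref{theorem:RSIW_II} (applied to $G$), with the terms recombining exactly as in your Step 4. Your Step 3 spells out a point the paper leaves implicit: the merged $d\mathbf{X}$-integral is to be read as the limit in probability of compensated Riemann sums, which works because the rough stochastic integral of \cref{def:RSI} is itself such a limit. Your consistent reading of the swapped $M$/$N$ labels in \cref{theorem:RSIW_II}, and your use of $F_0=G_0=0$ to drop the boundary term, likewise fill in details the paper does not state.
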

\begin{remark}
One interesting example of such a rough stochastic functional considered in \cref{theorem:total_RSIW_formula} is the solution flow $H_{t}(x) \coloneqq X^{0, x}_{t}$ to an RSDE: 
    \begin{equation*}
        dX^{0,x}_{t} = b_{t}(X^{0,x}_{t}) dt + \sigma_{t}(X^{0,x}_{t}) dW_{t} + (f, f')_{t}(X_{t}^{0,x}) d\mathbf{X}_{t}; \quad X^{0,x}_{0}=x,
    \end{equation*}
    as introduced in \cite{friz_2021}. As already apparent from \cref{example:RDE_flow_transport} and classical works on stochastic flows as  \cite{kunita_1997} this would provide a powerful toolbox for the study of (S)PDEs with rough perturbations. While the authors of  \cite{BuginiFrizStannat2024}  studied the regularity of such RSDE flows in an $L^{p}$-sense (see \cref{subsection:IAG_general_rf} for an application), our \cref{theorem:total_RSIW_formula} requires $L^{0}$-regularity. However, an analysis of  $L^{0}$-regularity lies beyond the scope of the present work. 
\end{remark}

\section{Applications to stochastic analysis}
\label{section:Applications to (Rough) Stochastic Analysis}
We fix $\alpha\in (\nicefrac{1}{3}, \nicefrac{1}{2})$ throughout this section.
\subsection{Itô-Alekseev-Gröbner formula in Skorokhod form}

As applications of the rough (stochastic) calculus developed in \cref{section:Introduction to controlled fields,section:On Rough Stochastic Calculus}, we revisit the Itô-Alekseev-Gröbner (IAG) formula of \cite{HHJM24}, sharpening their moment assumptions for the Itô characteristics of the perturbation process ($Y$ below). 
  Let $\mathbf{Z} = (Z, \mathbb{Z}) \in
    \mathscr{C}^{\alpha, 1+\alpha} ([0, T] ; \mathbb{R}^{d_Z})$ be a deterministic rough path and for any $(t, x) \in [0, T] \times
    \mathbb{R}^{d_X}$ denote by $(X_s^{t, x; \mathbf{Z}})_{s
    \in [t, T]}$ the unique solution of the RDE
    \begin{equation*}
     dX^{t,x; \mathbf{Z}}_{s} =
       \mu (X_s^{t, x; \mathbf{Z}}) ds+ \sigma (X^{t, x; \mathbf{Z}}_s) d\mathbf{Z}_{s}; \quad X^{t, x; \mathbf{Z}}_{t}=x
    \end{equation*}
    with $\sigma \in \Lip^{5}(\mathbb{R}^{d_{X}}; \mathbb{R}^{d_{X}\times d_{Z}})$ and $\mu \in \Lip^{3}(\mathbb{R}^{d_{X}}; \mathbb{R}^{d_{X}})$. 
    Note that the randomization $\mathbf{Z}\rightsquigarrow \mathbf{W}^{\text{It\^o}}(\omega)=(W(\omega), \mathbb{W}^{\text{It\^o}}(\omega))$ of this RDE, denoted by $\bar X$, solves the corresponding Itô-SDE. In particular,
    $$
          \overline{F}_t (x)(\omega) := f ( \overline{ X}^{t,x}_T(\omega) )= F_{t}^{\mathbf{Z}}(x)\bigg|_{\mathbf{Z}=\mathbf{W}^{\text{It\^o}}(\omega)} \coloneqq f(X^{t, x; \mathbf{Z}}_{T})\bigg|_{\mathbf{Z}=\mathbf{W}^{\text{It\^o}}(\omega)}
    $$
    is exactly the backward-adapted random field considered in \cite{HHJM24}, see also \cite{DELMORAL2022197}. Here and below a bar indicates randomization: for any (jointly-measurable) random field with rough path dependence $G : [0,T]\times \Omega \times 
\mathscr{C }^{0,\alpha, 1} ([0,T]; \mathbb{R}^{d_{W}}) \to V$ we introduce the short-hand
notation $\bar{G} : [0,T]\times \Omega \rightarrow V$ by $\bar{G}_{t} (\omega) := G_{t}(\omega, \mathbf{W}(\omega))$ for $\mathbb{P}$-a.e. $\omega\in \Omega$, where $\mathbf W(\omega):=\mathbf{W}^{\text{It\^o}}(\omega)$ is the Itô lift of the Brownian motion $W$.
Also, throughout this section, 
    $(Y_t)_{t \in [0, T]}$ denotes a $d_{X}$-dim. It{\^o} process with dynamics
    \begin{equation}
    \label{eq:Ito_process} dY_t = b_t dt+\beta_t dW_t, 
    \end{equation}
    where $W$ is some standard $d_W$-dimensional Brownian Motion and 
    $(b,\beta)$ are progressively measurable, locally bounded processes.\footnote{This is a mild condition that covers in particular continuous adapted  $b, \beta$, without moment assumptions.
   That said, further relaxation to a.s. local integrability in time, in which case our H\"older setting is no more appropriate and one has to to resort to a $p$-variation setting.}
    As before, e.g. \cref{ex:RDE_flows}, write  $\Gamma(\cdot) \equiv  D(\cdot)\sigma$.
  \begin{proposition}
    \label{cor:RSAG_formula}
    For any $f \in C^{3}(\mathbb{R}^{d_{X}}; \mathbb{R})$ we define the rough field $F_t^{\mathbf{Z}} (x) := f (X^{t, x,
    \mathbf{Z}}_T)$. Then it holds almost surely 
    \begin{equation}
    \label{eq:RSIAG}
    \begin{aligned}
         F^{\mathbf{Z}}_t (Y _t) -
         F^{\mathbf{Z}}_s (Y_s) = & \int_s^t D
         F^{\mathbf{Z}}_r (Y_r)  (b_r - \mu (Y_r))
         \hspace{0.17em} dr+ \int_s^t D F^{\mathbf{Z}}_r (Y_r) 
         \hspace{0.17em} \beta_r dW_r \\
         &- \int_s^t (\Gamma F_{r}^{\mathbf{Z}}(Y_{r}), -(\Gamma^{2} F_{r}^{\mathbf{Z}}(Y_{r}))^{\top}) d \mathbf{Z}_r \\
         &+ \frac{1}{2} \int_s^t \hspace{0.17em} D^2
         F^{\mathbf{Z}}_r (Y_r)\left( (\beta_{r}, \beta_{r})-(\sigma(Y_{r}), \sigma(Y_{r}))\dot{[\mathbf Z]}_r)\right) \hspace{0.17em} dr.
         \end{aligned}
       \end{equation} 
    for any $(s, t) \in \Delta_T$. Moreover, all terms in the above are jointly measurable in $(\omega, \mathbf{Z})$, hence admit a (measurable) 
    randomization $\mathbf{Z}\rightsquigarrow \mathbf{W}^{\text{It\^o}}(\omega)$.
     \end{proposition}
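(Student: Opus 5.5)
The plan is to read \eqref{eq:RSIAG} as a special case of the rough stochastic It\^o--Wentzell formula \cref{theorem:RSIW_Theorem_1}, applied pathwise in $\mathbf{Z}$. No martingale field is involved, so \cref{theorem:RSIW_II} is not needed. The three inputs are as follows.

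\textbf{Step 1 (the field).} We build the field $\mathcal{F}^{\mathbf Z}$. By \cref{ex:RDE_flows}, the backward flow $\overleftarrow{\phi}(t;x)=X^{t,x;\mathbf Z}_T$ induces a $\mathbf{Z}$-controlled field. Composing with $f$ via \cref{cor:composition_rule} (with $f$ regarded as a time-independent field) gives, exactly as in \cref{theorem:rough_transport_existence_uniqueness},
\[
\mathcal{F}^{\mathbf Z}=\bigl(F^{\mathbf Z},-\Gamma F^{\mathbf Z},DF^{\mathbf Z},(\Gamma^2F^{\mathbf Z})^{\top},-D(\Gamma F^{\mathbf Z}),D^2F^{\mathbf Z},-(DF^{\mathbf Z})\mu\bigr)\in\mathscr{D}^{3\alpha}_{\mathbf Z}\Lip^3_{x,\loc}.
\]
Since $f$ is only $C^3$, we only get the local version. \cref{theorem:RSIW_Theorem_1} only needs $\Lip^3_{x,\loc}$, so this suffices. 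The field is deterministic, hence trivially adapted. One must still check that $\mathbf Z\in\mathscr{C}^{\alpha,1+\alpha}$ provides the bracket regularity the theorem requires.

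\textbf{Step 2 (the It\^o process).} We view $Y$ as an $\mathbf Z$-scRSM with $\partial_X Y=\partial_X^2 Y=0$, $\dot Y=b$, $M=\int\beta\,dW$ and $N=0$. By \cref{remark:Lipschitz_characteristics} this requires $b\in L^\infty$ and $\langle M\rangle$ Lipschitz. Here $b,\beta$ are only locally bounded, so I would first localize with stopping times $\tau_k$ on which $|b|,|\beta|\le k$. I would then prove the identity for $Y^{\tau_k}$, using the stopping consistency of \cref{lemma:Stopped_RSI}, and let $k\to\infty$.

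\textbf{Step 3 (reading off terms).} Plugging into \cref{theorem:RSIW_Theorem_1}, with $F'=-\Gamma F^{\mathbf Z}$ and $\partial_XY=0$, the $d\mathbf{Z}$-integrand becomes $(-\Gamma F^{\mathbf Z}(Y),(\Gamma^2F^{\mathbf Z})^{\top}(Y))$. This is the stated $-\int(\Gamma F,-(\Gamma^2F)^{\top})\,d\mathbf Z$. In the present setting $\tilde N=\int\partial F'(Y)\,dM$, a genuine rough stochastic integral, which \cref{def:RSI} handles.

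The drift collects three contributions: $DF(Y)b$, then $\dot F(Y)=-DF(Y)\mu(Y)$, then $\tfrac12D^2F(Y)\dot{\langle M\rangle}=\tfrac12D^2F(Y)(\beta,\beta)$. The $[\mathbf Z]$-correction requires care. Because $\partial_XY=0$, the theorem's correction term involves only $F'$-type quantities, so as written it vanishes. This does \emph{not} produce the term $-\tfrac12D^2F(\sigma,\sigma)\dot{[\mathbf Z]}$. That term must therefore already be hidden in the correct time-derivative of the backward field when $[\mathbf Z]\neq0$, and this is the step I expect to be the main obstacle.

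Indeed, \cref{ex:RDE_flows} assumed $\mathbf Z$ geometric. For non-geometric $\mathbf Z$, the backward expansion via \cref{rem:rightpoint_expansion} uses $\mathbb Z_{t,s}=-\mathbb Z_{s,t}+\delta Z^{\otimes2}$, which produces an extra contribution $-\tfrac12\Gamma^2F\,\dot{[\mathbf Z]}$. I would recompute it directly from $F_s(x)=F_t(\phi(s,t;x))$ as a third-order expansion. The forward flow's own $\dot{[\mathbf Z]}$ term from \cref{cor:composition_rule} gives $\dot F=-DF\mu-\tfrac12D^2F(\sigma,\sigma)\dot{[\mathbf Z]}$ modulo terms reabsorbed into the symmetric part of $\Gamma^2F$. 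I would then check that this matches exactly the displayed drift.

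\textbf{Step 4 (measurability).} Joint measurability in $(\omega,\mathbf Z)$ follows from continuity of the RDE flow and its derivatives in $\mathbf Z$ on the Polish space $\mathscr C^{0,\alpha,1}$. For the stochastic integrals, I would use their representation as limits in probability of Riemann sums, which are measurable in $(\omega,\mathbf Z)$, and pass to a measurable version of the limit along a subsequence.
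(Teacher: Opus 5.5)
Your route is the paper's: you view $Y$ as a $\mathbf Z$-scRSM with $\partial_XY=\partial_X^2Y=0$, feed the backward field into \cref{theorem:RSIW_Theorem_1}, and get measurability from Stricker--Yor-type selection. Your localization in $b,\beta$ is a harmless extra detail. You also correctly see that every $[\mathbf Z]$-contribution must sit in $\dot F$. The gap is in how you compute $\dot F$, and done correctly that computation does \emph{not} give the displayed drift.

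From $F_s=F_t\circ\phi(s,t;\cdot)$ one gets the right-point expansion
\[
F_s(x)\stackrel{3}{=}F_t(x)+\Gamma F_t(x)\,\delta Z_{s,t}+\Gamma^2F_t(x)\,\mathbb Z_{s,t}+\big(DF_t\,\mu+\tfrac12D^2F_t(\sigma,\sigma)\dot{[\mathbf Z]}_t\big)(x)\,(t-s).
\]
The $\tfrac12D^2F(\sigma,\sigma)$-term comes from $(\delta Z)^{\otimes2}=2\operatorname{Sym}(\mathbb Z_{s,t})+[\mathbf Z]_{s,t}$; it is the only bracket contribution you account for. Converting to a left-point expansion uses $\Gamma F_t\stackrel{2}{=}\Gamma F_s-(\Gamma^2F_s)^{\top}\delta Z_{s,t}$ and produces
\[
\Gamma^2F_s\big(\delta Z_{s,t}\otimes\delta Z_{s,t}-\mathbb Z_{s,t}\big)=(\Gamma^2F_s)^{\top}\mathbb Z_{s,t}+\Gamma^2F_s\,[\mathbf Z]_{s,t}.
\]
The second piece has coefficient $+1$, not $-\tfrac12$. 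It cannot be ``reabsorbed'', because $F''=(\Gamma^2F)^{\top}$ is fixed: it is the Gubinelli derivative of $F'=-\Gamma F$, and it is also the integrand prescribed in \eqref{eq:RSIAG}. Hence
\[
\dot F=-DF\,\mu+\tfrac12D^2F(\sigma,\sigma)\dot{[\mathbf Z]}+DF\,\big((D\sigma)\sigma\big)\dot{[\mathbf Z]},
\]
and not $-DF\,\mu-\tfrac12D^2F(\sigma,\sigma)\dot{[\mathbf Z]}$.

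So your step ``check that this matches the displayed drift'' cannot succeed: \eqref{eq:RSIAG} as displayed is false whenever $[\mathbf Z]\not\equiv0$. A counterexample:
\begin{itemize}
\item Take $d_X=d_Z=1$, $\mu\equiv0$, $\sigma\equiv1$, and $\mathbf Z=(0,\mathbb Z)$ with $\mathbb Z_{s,t}=-\tfrac12(t-s)$, so $[\mathbf Z]_t=t$.
\item Take $Y\equiv y$.
\item Then $X^{t,x;\mathbf Z}_T=x$ and $F_t=f$, so the left-hand side is $0$.
\item The displayed right-hand side is $-\int_s^t(f'(y),-f''(y))\,d\mathbf Z-\tfrac12f''(y)(t-s)=-f''(y)(t-s)$.
\end{itemize}

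The paper's proof has the same defect. It simply asserts the field with drift $-DF\mu-\tfrac12D^2F(\sigma,\sigma)\dot{[\mathbf Z]}$. That is exactly the forward-flow contribution and misses the right-to-left conversion term, since \cref{rem:rightpoint_expansion} uses geometricity.

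With the correct $\dot F$ your argument goes through unchanged. It proves \eqref{eq:RSIAG} with its last line replaced by
\[
\tfrac12\int_s^tD^2F^{\mathbf Z}_r(Y_r)\big((\beta_r,\beta_r)+(\sigma(Y_r),\sigma(Y_r))\dot{[\mathbf Z]}_r\big)\,dr+\int_s^tDF^{\mathbf Z}_r(Y_r)\big((D\sigma)\sigma\big)(Y_r)\,\dot{[\mathbf Z]}_r\,dr,
\]
which reduces to the displayed formula for geometric $\mathbf Z$.

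A minor point in Step 4: for merely locally bounded $\beta$, take the Riemann sums for $\int DF^{\mathbf Z}_r(Y_r)\beta_r\,dW_r$ against the integrator $\beta\bullet W$, as the paper does.
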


    \begin{proof} Recall from \cref{theorem:rough_transport_existence_uniqueness} that (note, that we correct for not assuming $\mathbf{Z}$ to be geometric here)
    \begin{equation*}
        (F^\mathbf{Z}, -\Gamma F^{\mathbf{Z}}, DF^{\mathbf{Z}}, (\Gamma^{2} F^{\mathbf{Z}})^{\top}, -D(\Gamma F^{\mathbf{Z}}), D^{2} F^{\mathbf{Z}}, - (D F^{\mathbf{Z}}) \mu - \frac{1}{2} (D^{2} F^{\mathbf{Z}})(\sigma, \sigma)\dot{[\mathbf Z]})\in \mathscr{D}^{3\alpha}_{\mathbf{Z}} \operatorname{Lip}^{3}_{x}.
    \end{equation*}
    Further we see that $Y$ is a $\mathbf{Z}$-scRSM only consisting
    of the local Martingale part $M = \int_{0} \beta_{r} dW_{r} \in \mathcal{M}^{c, \loc, 1}$ and drift $V=\int_{0} b_{r} dr$. 
    Crucially this also implies that $(\partial_{X} Y, \partial_{X}^{2} Y) \equiv 0$. The first claim then follows by applying \cref{theorem:RSIW_Theorem_1}. The statement on joint measurability 
    follows from measurable selection arguments, essentially  \cite[Prop. 1]{stricker_yor_78} with the remark that all stochastic and rough stochastic integrals in the above are limits in probability of appropriate Riemann-Stieltjes sums (this also holds for the $\int ... \beta_r dW_r $ integral, by considering the local martingale integrator $\beta \bullet W$), and also that RDE solutions depend measurably on $\mathbf{Z}$.
\end{proof}
 
     Upon randomization, the left-hand side of \eqref{eq:RSIAG} is exactly the ``left-hand side'' of the IAG formula obtained by \cite{HHJM24} and that we are interested in. 
     In some pragmatic sense this already yields the IAG formula where all (Lebesgue, stochastic, rough stochastic) integrals on 
     the right-hand side understood in a $(\int ... )|_{\mathbf{Z}\rightsquigarrow \mathbf{W}^{\text{It\^o}}(\omega)}$ substitution sense, 
     similar to the {\em substitution approach} in anticipating anticipating stochastic calculus (e.g. \cite[Section~3.2.4]{nualart_2006}). Unfortunately, 
     this approach is not justifiable here, and indeed would  (if formally applied) not give the correct answer. (To wit, \cite[Theorem 3.2.8]{nualart_2006} would produce correction terms not seen in the IAG formula.) 
     The problem in identifying the randomized integrals above clearly becomes more manageable in case of independent randomization, see e.g. \cite{friz_randomisation_2025-1} and references therein.   
     The difficulty in the present situation is that the randomization is fully correlated. To appreciate what can go wrong in such  situation, consider the bracket between a deterministic path $Z$ and a standard Brownian motion $W$; clearly $[Z,W] \equiv 0$ (which earns $Z$ the property of  being a weak Dirichlet process; this fact is crucial to the above rough stochastic calculus, see \cref{lemma:ibp}). Trivially, any randomization  $([Z,W]) |_{{Z}\rightsquigarrow W(\omega)}$ is still zero, in contrast to the classical fact $[W,W]_t = \mathrm{Id}\times t$.

We now give a direct proof of the IAG formula, which implies that the randomization of the above stochastic and rough stochastic integral {\em together} coincides with a Skorokhod integral. In view of the above
discussion a direct (correlated) randomization is ill-suited to this end, and the proof relies on employing local independence properties, where one can use efficiently known facts on the connection of rough and stochastic analysis. We have
\begin{theorem}
\label{theorem:IAG}
Let  $f \in \mathrm{Lip}^3, \mu \in \mathrm{Lip}^3, \sigma \in \mathrm{Lip}^5$ (with dimensions as in \cref{cor:RSAG_formula}) Assume further
$\beta
\beta^{\perp}, b \in L^{p} ([0, T] \times \Omega, \mathrm{Leb} \otimes
\mathbb{P})$ for some $p>1$.  Then, it holds for $F_t^{\mathbf Z}(x):=f(X_T^{t,x;\mathbf Z})$ and all $(s,t) \in \Delta _T$
  \begin{eqnarray*}
    \overline{F}_{t}(Y_{t})-\overline{F}_s(Y_{s})& = & \int_s^t D \overline{F}_r
    (Y_r) (b_r - \mu (Y_r)) dr
    \nonumber
    + \int_s^t D\overline{F}_r (Y_r)
    (\beta_r - \sigma (Y_r)) \diamond dW_{r}\nonumber\\
    &  & + \frac{1}{2} \int_s^t D^{2}\overline{F_r} (Y_r)\left( ( \beta_{r}, \beta_{r}) -(\sigma(Y_{r}), \sigma(Y_{r})) \right) dr , 
  \end{eqnarray*}
  almost surely. Here ``$\diamond$'' denotes Skorokhod integration w.r.t. $W$.
  \end{theorem}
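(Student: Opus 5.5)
Our plan is to work directly with Riemann sums that use local independence, rather than substituting $\mathbf Z=\mathbf W^{\text{Itô}}(\omega)$ into \eqref{eq:RSIAG}. Fix $(s,t)$, first assume $b,\beta$ bounded, and take a partition $\mathcal P=\{t_i\}$ of $[s,t]$. Telescope
\[
\overline F_t(Y_t)-\overline F_s(Y_s)=\sum_i \big(\overline F_{t_{i+1}}(Y_{t_{i+1}})-\overline F_{t_{i+1}}(Y_{t_i})\big)+\sum_i\big(\overline F_{t_{i+1}}(Y_{t_i})-\overline F_{t_i}(Y_{t_i})\big).
\]
The key structural fact is that $\overline F_{t_{i+1}}(\cdot)=f(\bar X^{t_{i+1},\cdot}_T)$ is measurable with respect to the Brownian increments after $t_{i+1}$. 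It is therefore independent of $\mathfrak F_{t_{i+1}}$, and in particular independent of $(Y_{t_i},Y_{t_{i+1}},W_{t_i,t_{i+1}})$.

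\emph{First sum.} We Taylor expand in space to second order, as in the proof of \cref{theorem:RSIW_II}:
\[
D\overline F_{t_{i+1}}(Y_{t_i})\,\delta Y_{t_i,t_{i+1}}+\tfrac12 D^2\overline F_{t_{i+1}}(Y_{t_i})(\delta Y_{t_i,t_{i+1}})^{\otimes2}.
\]
Here $D\overline F_{t_{i+1}}(Y_{t_i})$ is not adapted, but it is independent of the increment $\int_{t_i}^{t_{i+1}}\beta\,dW$ given $\mathfrak F_{t_i}$. We will use the standard Skorokhod product rule $G\,\boldsymbol\delta(u\mathbbm 1_{[t_i,t_{i+1}]})=\boldsymbol\delta(Gu\mathbbm 1_{[t_i,t_{i+1}]})+\langle \mathbf DG,u\mathbbm 1_{[t_i,t_{i+1}]}\rangle_H$. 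The trace term $\langle \mathbf D_r D\overline F_{t_{i+1}}(Y_{t_i}),\beta_r\rangle$ only sees $\mathbf D_r$ for $r<t_{i+1}$, acting through $Y_{t_i}$, and $\mathbf D_r Y_{t_i}=0$ for $r>t_i$. The Malliavin derivative of $\beta$ is never needed, since $\beta_r$ enters only as the integrand. The quadratic term converges to $\tfrac12\int D^2\overline F(Y)(\beta,\beta)dr$ by the usual quadratic variation argument, using independence to take conditional expectations.

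\emph{Second sum.} This is the genuinely backward part. For fixed $y=Y_{t_i}$ (frozen, $\mathfrak F_{t_i}$-measurable) we use the flow property $\overline F_{t_i}(y)=\overline F_{t_{i+1}}(\bar X^{t_i,y}_{t_{i+1}})$ and expand $\bar X^{t_i,y}_{t_{i+1}}-y=\mu(y)\Delta t+\sigma(y)\Delta W+\dots$. This gives
\[
-D\overline F_{t_{i+1}}(y)\big(\mu(y)\Delta t+\sigma(y)\Delta W\big)-\tfrac12D^2\overline F_{t_{i+1}}(y)(\sigma(y)\Delta W)^{\otimes 2}+\text{Rem.}
\]
Again $D\overline F_{t_{i+1}}(y)$ is independent of $\Delta W$. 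The $\Delta W$ term is therefore exactly a Skorokhod-type sum, $\boldsymbol\delta\big(D\overline F_{t_{i+1}}(Y_{t_i})\sigma(Y_{t_i})\mathbbm 1_{[t_i,t_{i+1}]}\big)$ plus a trace correction involving $\mathbf D Y_{t_i}$ on $[t_i,t_{i+1}]$, which vanishes. Meanwhile $(\Delta W)^{\otimes2}\to \mathrm{Id}\,\Delta t$ yields $-\tfrac12\int D^2\overline F(Y)(\sigma,\sigma)dr$.

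\emph{Passing to the limit.} We collect the drift terms into $\int D\overline F(Y)(b-\mu(Y))dr$. The martingale sums become $\boldsymbol\delta(u^{\mathcal P})$ with $u^{\mathcal P}=\sum_i D\overline F_{t_{i+1}}(Y_{t_i})(\beta_r-\sigma(Y_{t_i}))\mathbbm 1_{(t_i,t_{i+1}]}(r)$, and $u^{\mathcal P}\to D\overline F(Y)(\beta-\sigma(Y))$ in $L^p(\Omega;H)$. All other terms converge in probability, or in $L^p$ by the $\Lip^3$ bounds on $f$ and uniform moment bounds of the derivative flows $D\bar X, D^2\bar X$. Since $\boldsymbol\delta(u^{\mathcal P})$ equals the left-hand side minus convergent terms, it converges, and closedness of $\boldsymbol\delta$ identifies the limit as the Skorokhod integral. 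The gain of integrability, needing only $p>1$, comes from the fact that $\sigma, D\overline F$ and $D^2\overline F$ are bounded with all moments (here $\mu$ has bounded derivatives and $\sigma\in\Lip^5$), so Hölder's inequality only costs $L^{1+}$ on $b,\beta$. We finally remove boundedness of $b,\beta$ by truncation, again using closedness of $\boldsymbol\delta$ on $L^p$.

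The main obstacle is the $L^p$ convergence of $\boldsymbol\delta(u^{\mathcal P})$, or at least tightness sufficient for closedness, together with control of the remainders. The terms are anticipating, so the BDG estimates used earlier are unavailable. We would first compute $\mathbb E|\cdot|^2$ for the sums directly in the bounded case: independence of the future factor $D\overline F_{t_{i+1}}$ from the past makes cross terms factorize into products of conditional expectations, which gives a discrete Itô isometry. We would then extend to $p>1$ by truncation, relying on the a.s. convergence of the left-hand side.
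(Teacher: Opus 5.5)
Your architecture is the paper's: you telescope through the flow property $\overline F_{t_i}(y)=\overline F_{t_{i+1}}(\bar X^{t_i,y}_{t_{i+1}})$, use that $\overline F_{t_{i+1}}$ is independent of $\mathfrak F_{t_{i+1}}$, and finish with closedness of $\boldsymbol\delta$. However, two load-bearing steps fail as you justify them.

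First, the product rule $G\,\boldsymbol\delta(v)=\boldsymbol\delta(Gv)+\langle\mathbf DG,v\rangle_H$ requires $G\in\mathcal D^{1,2}$. For $G=D\overline F_{t_{i+1}}(Y_{t_i})$ this means $Y_{t_i}\in\mathcal D^{1,2}$, which is exactly what the theorem does not assume ($b,\beta$ are arbitrary progressive processes). So ``$\mathbf D_rY_{t_i}=0$ for $r>t_i$'' is not available. What is true, and suffices, is a measurability statement. The factor $G$ is $\mathfrak F_{t_i}\vee\mathfrak F^T_{t_{i+1}}$-measurable, and $(W_r)_{r\in[t_i,t_{i+1}]}$ is a Brownian motion for $\mathfrak G_r:=\mathfrak F_r\vee\mathfrak F^T_{t_{i+1}}$. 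Hence $G\int_{t_i}^{t_{i+1}}\beta_r\,dW_r$ is a $\mathfrak G$-It\^o integral, and such integrals are Skorokhod integrals with no differentiability required (\cref{prop:MC}).

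Second, and decisively, your plan for the main obstacle rests on a ``discrete It\^o isometry'' that is false. For $i<j$, the weight $G_i$ depends on the increments after $t_{i+1}$, in particular on $\Delta W_j$, while $G_j$ depends on $Y_{t_j}$ and hence on $\Delta W_i$. So $\mathbb E[G_i\Delta W_i\,G_j\Delta W_j]$ does not factorize and is generically of order $\Delta t_i\Delta t_j$. These cross terms sum to an $O(1)$ quantity: the trace term $\mathbb E\iint\mathbf D_su_t\,\mathbf D_tu_s\,ds\,dt$ of the Skorokhod isometry, which again involves $\mathbf DY$. The same objection invalidates your ``usual quadratic variation argument using conditional expectations'' for $\sum_iD^2\overline F_{t_{i+1}}(Y_{t_i})(\delta Y_{t_i,t_{i+1}})^{\otimes2}$ and its $\sigma\Delta W$ analogue. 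It also leaves unjustified the term $D\overline F_{t_{i+1}}(y)\,(D\sigma\,\sigma)(y)\,\mathbb W^{\text{It\^o}}_{t_i,t_{i+1}}$ hidden in your ``$\dots$''. That term is pathwise of order $|t_{i+1}-t_i|^{2\alpha}$, not a higher-order remainder, and its sum vanishes only through cancellations you have not established for anticipating weights.

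The missing idea is to avoid Taylor remainders altogether. On $[t_i,t_{i+1}]$ the random field $\overline F_{t_{i+1}}(\cdot)$ is $\mathfrak G_{t_i}$-measurable, and both $r\mapsto Y_r$ and $r\mapsto\bar X^{t_i,Y_{t_i}}_r$ are $\mathfrak G$-It\^o processes. So the exact It\^o formula writes each of $\overline F_{t_{i+1}}(Y_{t_{i+1}})-\overline F_{t_{i+1}}(Y_{t_i})$ and $\overline F_{t_{i+1}}(\bar X^{t_i,Y_{t_i}}_{t_{i+1}})-\overline F_{t_{i+1}}(Y_{t_i})$ as a $\mathfrak G$-It\^o integral plus a Lebesgue integral. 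The paper obtains the same via the rough It\^o formula at fixed $\mathbf Z$ and randomization through \cref{lem:randomization} and \cref{lemma:measurable_select_1}.

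With \cref{prop:MC}, the telescoped identity becomes exact for every partition: $\boldsymbol\delta(u^\pi)=\overline F_T(Y_T)-\overline F_0(Y_0)-L^\pi$. Here $L^\pi$ is a pure Lebesgue integral and, on $(t_i,t_{i+1}]$, $u^\pi_r=D\overline F_{t_{i+1}}(Y_r)\beta_r-D\overline F_{t_{i+1}}(\bar X^{t_i,Y_{t_i}}_r)\sigma(\bar X^{t_i,Y_{t_i}}_r)$. Dominated convergence gives $L^\pi\to L$ and $u^\pi\to u$ in $L^{p'}$. Hence $\boldsymbol\delta(u^\pi)$ converges in $L^{p'}$ for free, and closedness concludes; no isometry and no estimates on anticipating sums are needed.

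If you insist on your Taylor route, the anticipating quadratic and It\^o-area sums can be controlled pathwise. The measures $\sum_i(\delta W_{t_i,t_{i+1}})^{\otimes2}\varepsilon_{t_i}$ and $\sum_i\mathbb W^{\text{It\^o}}_{t_i,t_{i+1}}\varepsilon_{t_i}$ ($\varepsilon$ a Dirac mass) have tight total variation and converge weakly to $\mathrm{Id}\,dt$ and $0$. They therefore integrate continuous random weights correctly, but this is not achieved by the conditional-expectation arguments you propose.
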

  \begin{proof}
  (The assumptions ensure that the composition of $f$ with the RDE flow is in $\mathscr{D}^{3 \alpha}_{\mathbf{W}}\mathrm{Lip}^3$). W.l.o.g. we take $s = 0$ and $t = T$.   Let $\pi=\pi^N = (t_i)_{i= 0}^N $ be a sequence of deterministic partitions of $[0, T]$ with locally vanishing mesh. We decompose and apply the (rough) It\^o formula on small intervals to get
\begin{eqnarray}\label{e:decomp}
  F_T^{\mathbf{Z}} (Y_T) - F_0^{\mathbf{Z}} (Y_0) & = & \sum_{i=0}^{N}
  F^{\mathbf{Z}}_{t_{i + 1}} (Y_{t_{i + 1}}) - F^{\mathbf{Z}}_{t_i}
  (Y_{t_i})\nonumber\\
  & = & \sum_{i=0}^{N} F^{\mathbf{Z}}_{t_{i + 1}} (Y_{t_{i + 1}}) -
  F^{\mathbf{Z}}_{t_{i + 1}} (X_{t_{i + 1}}^{t_i, Y_{t_i} ; \mathbf{Z}})\\
  &=:&  \sum_{i=0}^{N} \Delta_i^{\mathbf Z} - \tilde{\Delta}_i^{\mathbf Z},\nonumber
\end{eqnarray}
where
\begin{eqnarray*}
  \Delta_i^{\mathbf{Z}} & : = & F^{\mathbf{Z}}_{t_{i + 1}} (Y_{t_{i + 1}}) -
  F^{\mathbf{Z}}_{t_{i + 1}} (Y_{t_i})\\
  & = &  \int_{t_i}^{t_{i + 1}} D
  F^{\mathbf{Z}}_{t_{i + 1}} (Y_r)  \hspace{0.17em} \beta_r dW_r + \int_{t_i}^{t_{i+1}}  L^{\beta, b}_r F^{\mathbf{Z}}_{t_{i + 1}} (Y_r) 
   dr\\
   &= :& \Delta_{i, 1}^{\mathbf{Z}} + \Delta_{i, 2}^{\mathbf{Z}}
   \end{eqnarray*}
   and
   \begin{eqnarray*}
  \tilde{\Delta}^{\mathbf{Z}}_i & : = & F^{\mathbf{Z}}_{t_{i + 1}} (\tilde{Y}_{t_{i +
  1}}^{\mathbf{Z}}) - F^{\mathbf{Z}}_{t_{i + 1}} (Y_{t_i}) \quad
  (\text{with }\tilde{Y}_r^{\mathbf{Z}}   := X_r^{t_i, Y_{t_i} ; \mathbf{Z}})\\
  & = & \int_{t_i}^{t_{i + 1}} D F^{\mathbf{Z}}_{t_{i + 1}} (\tilde{Y}_r) 
  \hspace{0.17em} \sigma (\tilde{Y}_r^{\mathbf{Z}}) d \mathbf{Z}_r +
  \int_{t_i}^{t_{i+1}}  L^{\sigma, \mu}_r F^{\mathbf{Z}}_{t_{i + 1}}
  (\tilde{Y}_r^{\mathbf{Z}})dr\\
  &=: & \tilde{\Delta}^{\mathbf{Z}}_{i, 1} + \tilde{\Delta}^{\mathbf{Z}}_{i, 2}.
\end{eqnarray*}
In the above, 
\begin{equation*}
\begin{aligned}
L_r^{\beta, b}\varphi (\cdot) &\coloneqq  D \varphi (\cdot) b_r+\frac{1}{2}D^{2} \varphi (\cdot) (\beta_r, \beta_r); \quad L^{\sigma, \mu}_{r} \varphi (\cdot)\coloneqq D\varphi(\cdot) \mu(\cdot)+ \frac{1}{2} D^{2}\varphi(\cdot) (\sigma(\cdot), \sigma(\cdot)) \dot{[\mathbf{Z}]}_{r}
\end{aligned}
\end{equation*}
for functions $\varphi \in \Lip^{3}$.
We discuss randomization of $\Delta^{\mathbf{Z}}_{i, 1},  \tilde{\Delta}^{\mathbf{Z}}_{i, 1}$
only, the Lebesgue integral terms are left to the readers. First, we deal with $\Delta^{\mathbf{Z}}_{i, 1}$. We denote by 
$\mathfrak{C}^T_t$ the collection of  Borel sets of $\mathscr{C}^{0, \alpha, 1}([t, T]; \mathbb{R}^{d_{W}})$. By
\cref{lem:randomization} below, we  have a $(\mathfrak{C}^T_{t_{i + 1}} \otimes \mathfrak{F} _{t_{i +
1}})$-measurable version of
\[ (\mathbf{Z}, \omega) \mapsto \int_{t_i}^{t_{i + 1}} D
   F^{\mathbf{Z}}_{t_{i + 1}} (Y_r)  \hspace{0.17em} \beta_r dW_r \]
such that the Brownian randomization (on $[t_{i + 1}, T]$) gives
\[ \overline{\int_{t_i}^{t_{i + 1}} D F^{\mathbf{Z}}_{t_{i + 1}} (Y_r) 
   \hspace{0.17em} \beta_r dW_r} = \int_{t_i}^{t_{i + 1}} D\overline{ F_{t_{i
   + 1}}} (Y_r)  \hspace{0.17em} \beta_r dW_r, \]
noting that $\{W_r, r\in[t_i, t_{i+1}]\}$ is an $(\mathfrak{F}_r  \vee \mathfrak{F}_{t_{i +
1}}^T)$-Brownian motion, while the Brownian
randomization of $\mathbf{Z}$ (on $[t_{i + 1}, T]$) is $\mathfrak{F}_{t_{i +
1}}^T$-measurable, and that  $\mathfrak{F}_r  \perp \mathfrak{F}_{t_{i +
1}}^T$ for $r \le t_{i+1}$. By \cref{lemma:measurable_select_1} this gives that a.s. 
\begin{equation*}
    \overline{\Delta_{i}}=   \int_{t_i}^{t_{i + 1}} D
  \overline{F}_{t_{i + 1}} (Y_r)  \hspace{0.17em} \beta_r dW_r + \int_{t_i}^{t_{i+1}}  L^{\beta, b}_r \overline{F}_{t_{i + 1}} (Y_r)dr:=\overline{\Delta_{i,1}}+\overline{\Delta_{i,2}}.
\end{equation*}
Second, we treat $\tilde{\Delta}_{i, 1}$. We have to deal with the randomization of 
\[ \int_{t_i}^{t_{i + 1}} D F^{\mathbf{Z}}_{t_{i + 1}} (\tilde{Y}_r^{\mathbf{Z}}) 
    \sigma (\tilde{Y}_r^{\mathbf{Z}}) d \mathbf{Z}_r =
   \int_{t_i}^{t_{i + 1}} \Gamma F_{t_{i + 1}}^{\mathbf{Z}} \left( X_r^{t_i,
   Y_{t_i} ; \mathbf{Z}} \right) d \mathbf{Z}_r . \]
Different from before, we now have dependence of $\mathbf{Z}$ on $[t_i,
T]$. The idea is a sort of ``tower property'' for (Brownian) randomization. We first perform the It\^o Brownian randomization of $\mathbf{Z}$ only on $[t_i, t_{i + 1}]$, so that
$\Gamma F_{t_{i + 1}}^{\mathbf{Z}} (x)$ stays deterministic. In this
setting, it is standard to obtain
\[ \overline{\left( \int_{t_i}^{t_{i + 1}} \Gamma F_{t_{i + 1}}^{\mathbf{Z}}
   \left( X_r^{t_i, Y_{t_i} ; \mathbf{Z}} \right) d \mathbf{Z}_r 
   \right)}^{\mathbf{Z} ; t_{i + 1}, T} = \int_{t_i}^{t_{i + 1}}\Gamma
   \overline{F _{t_{i + 1}}}^{\mathbf{Z} ; t_{i + 1}, T}
   (\bar{X}_r^{t_i, Y_{t_i}})  \hspace{0.17em} dW_r,\]
where the notation $\overline{G}^{\mathbf Z;t_{i+1},T}$ means that we only randomize $G$ up to time $t_{i + 1}$,
thus maintaining dependence on $\mathbf{Z}$ on $[t_{i + 1}, T]$. The
integrand on the right-hand side is easily seen to be $(\mathfrak{C}^T_{t_{i +
1}} \otimes \mathfrak{F} _r)$-predictable, for $r \in [t_i, t_{i + 1}]$, we
can now conclude as before by applying \cref{lem:randomization} to see that the full randomization satisfies 
   \[ \overline{\left( \int_{t_i}^{t_{i + 1}} \Gamma F_{t_{i + 1}}^{\mathbf{Z}}
   \left( X_r^{t_i, Y_{t_i} ; \mathbf{Z}} \right) d \mathbf{Z}_r  \right)}
   = \int_{t_i}^{t_{i + 1}} \Gamma\overline{F _{t_{i + 1}}} 
   (\bar{X}_r^{t_i, Y_{t_i}})dW_r. 
   \]
Similar to $\overline{\Delta_i}$, we also get by \cref{lemma:measurable_select_1}
\begin{equation*}
    \overline{\tilde\Delta_{i}}=   \int_{t_i}^{t_{i + 1}} D
  \overline{F}_{t_{i + 1}} (\bar{X}_r^{t_i, Y_{t_i}})\sigma(\bar{X}_r^{t_i, Y_{t_i}}) dW_r + \int_{t_i}^{t_{i+1}}  L^{\sigma, \mu}_r \overline{F}_{t_{i + 1}} (\bar{X}_r^{t_i, Y_{t_i}})dr=\overline{\tilde \Delta_{i,1}}+\overline{\tilde \Delta_{i,2}}.
\end{equation*}

\

By \cref{prop:MC}, we have,  letting $r^{\pi}$ denote the smallest $t_{i + 1} \geqslant r$ in the partition
$\pi= \{0=t_{0}< t_1< \dots< t_N=T\}$,

\[ \sum_{i=0}^{N} \overline{\Delta_{i, 1}} = \int_0^T D\overline{F_{r^{\pi}}} (Y_r)
   \beta_r (\omega) \diamond dW_r , \quad  \sum_{i=0}^{N} \overline{\tilde{\Delta}_{i, 1}} = \int_0^T D\overline{F_{r^{\pi}}}
   (\bar{X}_r^{r^{\pi}, Y_{r^{\pi}}}) \sigma \left( \bar{X}_r^{r^{\pi},
   Y_{r^{\pi}}} \right) \diamond dW_r , \]
   and similarly for the Lebesgue integrals.   By the decomposition~\eqref{e:decomp}, 
we have 
\begin{equation}\label{e:S-pi}
 S^{\pi}_{0, T} := \sum_i (\overline{\Delta_{i, 1}} -
   \overline{\tilde{\Delta}_{i, 1}}) = \bar{F}_T (Y_T) - \bar{F}_0  (Y_0) -
   L^{\pi}_{0, T},
   \end{equation}
where 
\[ L^{\pi}_{0, T} := \int_0^T \left\{ L^{\sigma, \mu}_r \overline{
   F_{r^{\pi}}} (\bar{X}_r^{r^{\pi}, Y_{r^{\pi}}}) - L^{\beta, b}_r 
   \overline{F_{r^{\pi}}} (Y_r) \right\} dr. \]
Noting $\mu \in \mathrm{Lip}^3$ and  $\sigma \in \mathrm{Lip}^5$, we get that
\begin{equation*}
   \frac{\partial}{\partial x} \bar X_T^{r,x}\Big|_{x=Y_r}, \frac{\partial}{\partial x} \bar X_T^{r,x}\Big|_{x= \bar X_r^{r^\pi, Y_{r^\pi}} }, \frac{\partial^2}{\partial x^2} \bar X_T^{r,x}\Big|_{x=Y_r},  \frac{\partial^2}{\partial x^2} \bar X_T^{r,x}\Big|_{x=\bar X_r^{r^\pi, Y_{r^\pi}} } \in L^{q} ([0, T] \times \Omega, \mathrm{Leb}
\otimes \mathbb{P})
\end{equation*}
for all $q>1$.  Then, by the assumption $\beta \beta^{\perp}, b \in L^{p} ([0, T] \times \Omega, \mathrm{Leb}
\otimes \mathbb{P})$ and the boundedness of  the functions $f , D f, D^2 f, \sigma, \mu$,   we can apply  the dominated convergence theorem and get that $L^{\pi}_{0, T}
(\omega) \rightarrow L _{0, T} (\omega)$ in $L^{p'} (\Omega, \mathbb{P})$ for any $p'\in(1, p)$, with
obvious limit obtained from \ $F_{r^{\pi}} \to F_r$
and $\bar{X}_r^{r_{\pi}, Y_{r_{\pi}}} \to Y_r$ as $|\pi|\to 0$. 

Thus,  by \eqref{e:S-pi} we get  that 
$S^{\pi}_{0, T} = \bd (u^{\pi})$ converges in $L^{p'} (\Omega,
\mathbb{P})$, where
\[ u^{\pi}_{r} := D \overline{F_{r^{\pi}}} (Y_r) \beta_r -
   D\overline{F_{r^{\pi}}} (\bar{X}_r^{r^{\pi}, Y_{r^{\pi}}}) \sigma \left(
   \bar{X}_r^{r^{\pi}, Y_{r^{\pi}}} \right) . \]
Moreover,  we can also show by the dominated convergence theorem that $u^{\pi}_{r}(\omega) \rightarrow u_{r} (\omega): =  \left(D\overline{F_r} (Y_r) \beta_r \right)(\omega) -
   \left(D\overline{F_r} (Y_r) \sigma \left(Y_r \right)\right)(\omega)$ in $L^{p'} ([0, T]
\times \Omega, \mathrm{Leb} \otimes \mathbb{P})$.  By the closedness of the divergence operator $\bd$, we get $u\in \mathrm{Dom}_{p'}(\bd)$ and 
$S^{\pi}_{0, T} \rightarrow \bd (u )$  in $L^{p'} (\Omega,
\mathbb{P})$.
\end{proof}
We used the following facts on measurable selection from \cite{stricker_yor_78} and \cite{flz24}. Let $U$ denote some Polish space and $\mathfrak{U}$ it's Borel-$\sigma$-algebra. Further let $(\Omega, \mathfrak{F}, (\mathfrak{F}_{t})_{t \in [0,T]}, \mathbb{P})$ be a filtered probability space satisfying the usual assumptions. 
\begin{lemma}
\label{lemma:measurable_select_1}
It holds 
\begin{enumerate}
    \item Let $(X_{n})_{n \in \mathbb{N}}$ be a sequence of $\mathfrak{F}\otimes \mathfrak{U}$-measurable functions on $\Omega \times U$. Suppose that for all $u \in U$ the sequence $X_{n}(\cdot, u)$ converges in $\mathbb{P}$. Then there exists an $\mathfrak{F}\otimes \mathfrak{U}$-measurable function $X$ s.t. for any $u \in U$ $X(\cdot, u)=\lim_{n\to \infty} X_{n}(\cdot, u)$ in $\mathbb{P}$. 
    \item Let $Q$ be a probability measure on $(U, \mathfrak{U})$ and  $X, Y$ both $\mathfrak{F}\otimes \mathfrak{U}$-measurable functions on $\Omega \times U$ such that $X(\cdot, u)=Y(\cdot, u)$ $\mathbb{P}$-almost surely. Then $X=Y$ $\mathbb{P}\otimes Q$-almost surely on $\Omega \times U$.
\end{enumerate}
\end{lemma}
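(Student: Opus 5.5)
Both parts are the measurable selection results of Stricker--Yor \cite{stricker_yor_78}, and I would reprove them along those lines.

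\emph{Part (1).} The aim is to replace convergence in probability by almost sure convergence along a subsequence that is chosen measurably in $u$.

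First, note that convergence in probability is equivalent to being Cauchy in the metric $d(\xi,\eta)=\mathbb{E}[|\xi-\eta|\wedge 1]$. By Fubini, $u\mapsto d(X_n(\cdot,u),X_m(\cdot,u))$ is $\mathfrak{U}$-measurable for all $n,m$.

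Next, define recursively
\[
n_0(u)=1,\qquad n_k(u)\coloneqq\inf\bigl\{n>n_{k-1}(u): \sup_{m,m'\geq n} d(X_m(\cdot,u),X_{m'}(\cdot,u))\leq 4^{-k}\bigr\}.
\]
The supremum is over countably many indices, so it is measurable in $u$, and hence each $n_k$ is a $\mathfrak{U}$-measurable integer-valued map. The infimum is finite for every $u$ by the Cauchy property.

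Then set $\tilde X_k(\omega,u)\coloneqq X_{n_k(u)}(\omega,u)=\sum_n \mathbbm{1}_{n_k(u)=n}X_n(\omega,u)$, which is $\mathfrak{F}\otimes\mathfrak{U}$-measurable. For each fixed $u$, Markov's inequality gives $\mathbb{P}(|\tilde X_{k+1}-\tilde X_k|\wedge 1>2^{-k})\leq 2^{-k}$. Borel--Cantelli then shows that $\tilde X_k(\cdot,u)$ converges $\mathbb{P}$-a.s.

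Finally, define $X\coloneqq\limsup_k \tilde X_k$ on the jointly measurable set where the limit exists, and $0$ elsewhere. This $X$ is jointly measurable. Since a subsequence converges a.s. to it, it equals the limit in probability of $X_n(\cdot,u)$ for every $u$. For vector-valued $X_n$ one applies this componentwise.

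\emph{Part (2).} The set $A\coloneqq\{(\omega,u):X(\omega,u)\neq Y(\omega,u)\}$ lies in $\mathfrak{F}\otimes\mathfrak{U}$. By Tonelli,
\[
(\mathbb{P}\otimes Q)(A)=\int_U \mathbb{P}(A_u)\,Q(du),
\]
where $A_u$ is the $u$-section of $A$. By hypothesis $\mathbb{P}(A_u)=0$ for every $u$, so the integral vanishes.

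\emph{Main obstacle.} The only delicate point is the measurable choice of the subsequence in Part (1). The recursive definition above, which uses countable suprema of measurable functions, resolves it.
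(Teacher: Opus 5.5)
Your proof is correct and is essentially the Stricker--Yor argument that the paper cites; the paper gives no proof of its own. Part (1) uses a measurably chosen fast subsequence $n_k(u)$, Borel--Cantelli, and a $\limsup$ on the jointly measurable convergence set. Part (2) applies Tonelli to the jointly measurable set $\{X\neq Y\}$.
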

\begin{lemma}\label{lem:randomization}
\label{lemma:measurable_selection_2}
  Let $M$ be a local $(\mathfrak{F}_t)$-martingale, $\gamma = \gamma_t (\omega, u)$ an  ($\mathfrak{F}_t\otimes \mathfrak{U}$)-predictable
  process on $[0, T]\times \Omega \times U .$ Consider a $U$-valued random variable  $A$ which is independent of
  $\mathfrak{F}_T$. Then \ $I (u, \omega) : = \int_0^T \gamma_t (u,
  \omega) d M_t$ admits a (jointly) measurable version, denoted by the same
  expression, with randomization
  \[ \bar{I} (\omega) := I (A (\omega), \omega) = \int_0^T \bar{\gamma}_t
     (\omega) d M_t, \quad \bar{\gamma}_t (\omega) := \gamma_t (A
     (\omega) , \omega) \]
  Here the right-hand side is a well-defined stochastic integral since $M$ is
  also a local $(\mathfrak{F}_r \vee \mathfrak{U} : 0 \leqslant r \leqslant
  T)$-martingale, thanks to independence, and $\bar{\gamma}_t$ is plainly
  $(\mathfrak{F}_r \vee \mathfrak{U} : 0 \leqslant r \leqslant T)$-predictable. 
\end{lemma}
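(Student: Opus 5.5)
The plan is a standard measure-theoretic reduction: first treat integrands of product form, then pass to general predictable integrands by a monotone class argument combined with a limit in probability, with \cref{lemma:measurable_select_1} supplying a jointly measurable version and identifying its randomization.

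\emph{Enlarged filtration.} Set $\mathfrak{G}_t := \mathfrak{F}_t \vee \sigma(A)$. Since $A$ is independent of $\mathfrak{F}_T$, for $s<t$ and bounded $\xi\in\mathfrak{F}_s$, $g$ bounded measurable, we have
\[
\mathbb{E}[(M_t-M_s)\xi g(A)]=\mathbb{E}[(M_t-M_s)\xi]\,\mathbb{E}[g(A)]=0
\]
for a true martingale $M$. A $\pi$-$\lambda$ argument then shows that $M$ is a $(\mathfrak{G}_t)$-martingale. For local martingales the same conclusion holds, because the localizing $(\mathfrak{F}_t)$-stopping times are also $(\mathfrak{G}_t)$-stopping times. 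Moreover, $\bar\gamma_t(\omega)=\gamma_t(A(\omega),\omega)$ is $(\mathfrak{G}_t)$-predictable: $(t,\omega)\mapsto (t,\omega,A(\omega))$ is measurable from the $(\mathfrak{G}_t)$-predictable $\sigma$-field into $\mathcal{P}(\mathfrak{F})\otimes\mathfrak{U}$, and that product $\sigma$-field equals the $(\mathfrak{F}_t\otimes\mathfrak{U})$-predictable $\sigma$-field. Hence $\int_0^T\bar\gamma\,dM$ is well defined, at least when $\bar\gamma$ is locally bounded; the general case follows by localization through $\int\bar\gamma^2\,d\langle M\rangle$.

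\emph{Simple integrands.} Consider $\gamma_t(\omega,u)=h(u)\,\eta_t(\omega)$ with $h$ bounded $\mathfrak{U}$-measurable and $\eta$ a bounded elementary $(\mathfrak{F}_t)$-predictable process. Then $I(u,\omega)=h(u)\int_0^T\eta\,dM$ is jointly measurable, and $\bar I = h(A)\int_0^T\eta\,dM=\int_0^T h(A)\eta\,dM$. The last equality holds because $h(A)$ is $\mathfrak{G}_0$-measurable, so it can be pulled inside the $(\mathfrak{G}_t)$-stochastic integral.

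\emph{Monotone class.} Let $\mathcal{H}$ be the class of bounded predictable $\gamma$ for which a jointly measurable version exists and satisfies the randomization identity. This class is a vector space containing the multiplicative class of products above. To close it under bounded monotone limits $\gamma^n\uparrow\gamma$, apply dominated convergence for stochastic integrals, stopping $M$ so that it is an $L^2$-martingale. For each fixed $u$ this gives $I^n(u,\cdot)\to\int\gamma(u)\,dM$ in probability. \cref{lemma:measurable_select_1}(1) then supplies a jointly measurable limit $I$, which is a version of $\int\gamma(u)\,dM$. For the randomized integrals, dominated convergence w.r.t.\ $(\mathfrak{G}_t)$ gives $\int\bar\gamma^n\,dM\to\int\bar\gamma\,dM$ in probability.

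\emph{Main obstacle.} The delicate step is to show $I^n(A(\omega),\omega)\to I(A(\omega),\omega)$ in probability, because pointwise-in-$u$ convergence does not obviously survive substitution of a random $u$. Independence resolves this. With $Q:=\mathrm{Law}(A)$,
\[
\mathbb{P}\bigl(|I^n(A,\cdot)-I(A,\cdot)|>\varepsilon\bigr)=\int_U \mathbb{P}\bigl(|I^n(u,\cdot)-I(u,\cdot)|>\varepsilon\bigr)\,Q(du)\to 0
\]
by Fubini and bounded convergence. The same computation, combined with \cref{lemma:measurable_select_1}(2), shows that the randomization does not depend on which jointly measurable version is chosen. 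It therefore identifies $\bar I=\int_0^T\bar\gamma\,dM$ a.s.

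\emph{General integrands.} Unbounded $\gamma$ are handled by truncation, $\gamma\mathbbm{1}_{|\gamma|\le k}$, using the identical Fubini/limit-in-probability argument. This requires $\int\gamma(u)^2\,d\langle M\rangle<\infty$ a.s.\ for every $u$, which is implicit in the statement's assumption that $I(u,\omega)$ exists. It also yields a.s.\ finiteness of $\int\bar\gamma^2\,d\langle M\rangle$, again via Fubini over $Q$.
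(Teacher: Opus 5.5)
Your proof is correct. Note that the paper does not actually prove this lemma: it quotes it, together with \cref{lemma:measurable_select_1}, from \cite{stricker_yor_78} and \cite{flz24}. The only justification given in the statement is that $M$ stays a local martingale in the enlarged filtration and that $\bar\gamma$ is ``plainly'' predictable.

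Your argument is a self-contained version of the standard proof. You establish the martingale property in $\mathfrak{F}_t\vee\sigma(A)$, check the identity on product integrands $h(u)\eta_t(\omega)$, and close under bounded pointwise limits by a functional monotone class argument. You then transport convergence in probability through the substitution $u=A(\omega)$ via
\[
\mathbb{P}\bigl(|I^n(A,\cdot)-I(A,\cdot)|>\varepsilon\bigr)=\int_U \mathbb{P}\bigl(|I^n(u,\cdot)-I(u,\cdot)|>\varepsilon\bigr)\,Q(du).
\]
This isolates exactly where independence enters. It also proves something the statement only asserts, namely $\int_0^T|\bar\gamma_t|^2\,d\langle M\rangle_t<\infty$ a.s.

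One point must be stated more carefully. The Fubini identity above, and your claim that the randomization does not depend on the chosen version, require the versions to be $\mathfrak{F}_T\otimes\mathfrak{U}$-measurable, because $A$ is only independent of $\mathfrak{F}_T$. \cref{lemma:measurable_select_1}(1) as stated only gives $\mathfrak{F}\otimes\mathfrak{U}$-measurability, and that is genuinely not enough. Suppose $Q$ is diffuse and $A$ is $\mathfrak{F}$-measurable, as in the proof of \cref{theorem:IAG}. Then $I'(u,\omega):=I(u,\omega)+\mathbbm{1}_{\{u=A(\omega)\}}$ is $\mathfrak{F}\otimes\mathfrak{U}$-measurable and is a version of $\int_0^T\gamma_t(u,\cdot)\,dM_t$ for every $u$. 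Yet $I'(A,\cdot)=I(A,\cdot)+1$.

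The fix is immediate: all your approximants are $\mathfrak{F}_T\otimes\mathfrak{U}$-measurable, so apply \cref{lemma:measurable_select_1} on $(\Omega,\mathfrak{F}_T,\mathbb{P})$. Then your constructed version is fine, and so is version-independence restricted to such versions. This is also how the paper uses the lemma, with a $(\mathfrak{C}^T_{t_{i+1}}\otimes\mathfrak{F}_{t_{i+1}})$-measurable version.

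Finally, stopping $M$ to get an $L^2$-martingale is unnecessary. The stochastic dominated convergence theorem already gives the needed convergence in probability, with $[M]$ in place of $\langle M\rangle$ if $M$ is not continuous.
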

We further used the following fact from Malliavin Calculus, also used in \cite{HHJM24}:
\begin{proposition} \label{prop:MC}
  Let $W$ be a standard Brownian motion, on standard Wiener space $C ([0, T],
  \mathbb{R}^d) .$ For $t \in (0, T]$, define a filtration $\mathfrak{F}^{t,
  T}_r$ on $[0, t]$ given by
  \[ \mathfrak{F}^{t, T}_r := \sigma \left( W_v - W_u :  (u, v) \in \Delta_{0,
     r} \cup \Delta_{t, T} \right ) \vee \mathcal{N}, \quad 0 \leqslant r \leqslant
     t, \]
     where $\mathcal{N}$ denotes the collection of nullsets. \\
  Then (i) Fix $t \in (0, T]$. Then $(W_r : 0 \leqslant r \leqslant t)$ is a \
  $\mathfrak{F}^{t, T}_r$-Brownian motion.
  (ii) Fix $(s, t) \in \Delta_T .$ Let $(X_r : s \leqslant r \leqslant t)$ be
  $\mathfrak{F}^{t, T}_r$-predictable $L^2$-process (i.e. $\mathbb{E} \int_s^t
  | X_r |^2 d r < \infty$). Then
\[
\big( \mathbbm{1}_{[s,t]} X_r \big)(t,\omega)
:=
\begin{cases}
X_r(\omega), & r \in [s,t],\\[6pt]
0, & \text{otherwise}.
\end{cases}
\]
  is Skorokhod integrable, on $[0, T]$, and (with well-defined It\^{o} integral on
  right-hand side)
  \[ \int_0^T \mathbbm 1_{[s, t]} (r) X_r \diamond d W_r = \int_s^t X_r d W_r . \]
\end{proposition}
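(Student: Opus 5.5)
We would prove the two parts in order. For (i), fix $t\in(0,T]$. For $0\le r<v\le t$, the increment $W_v-W_r$ is a function of the Brownian path on $[r,v]$. The $\sigma$-algebra $\mathfrak{F}^{t,T}_r$ is generated, up to null sets, by increments over $[0,r]$ and over $[t,T]$. These intervals meet $[r,v]$ at most at endpoints, so by independence of Brownian increments over non-overlapping intervals, $W_v-W_r$ is independent of $\mathfrak{F}^{t,T}_r$ and is $N(0,(v-r)\mathrm{Id})$-distributed. In addition, $(W_r)_{r\le t}$ is continuous and $\mathfrak{F}^{t,T}_r$-adapted, since $W_r=W_r-W_0$ and $[0,r]\subset\Delta_{0,r}$-type increments. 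Together these facts say exactly that $W$ is an $(\mathfrak{F}^{t,T}_r)_{r\le t}$-Brownian motion; adjoining null sets does not affect any of this. As a consequence, the It\^o integral $\int_s^t X_r\,dW_r$ on the right-hand side of (ii) is well defined for every $\mathfrak{F}^{t,T}$-predictable $L^2$ process.

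For (ii), we first treat elementary integrands $X_r=F\,\mathbbm 1_{(a,b]}(r)$ with $s\le a<b\le t$. Here we take $F$ to be a smooth cylindrical functional $F=\varphi(W_{v_1}-W_{u_1},\dots,W_{v_n}-W_{u_n})$ with $\varphi\in C^\infty_b$, where each interval $(u_j,v_j)$ lies in $\Delta_{0,a}\cup\Delta_{t,T}$. Such $F$ is $\mathfrak{F}^{t,T}_a$-measurable and lies in $\mathcal D^{1,q}$ for all $q$, with
\[
\mathbf D_rF=\sum_j \partial_j\varphi(\cdots)\mathbbm 1_{[u_j,v_j]}(r),
\]
so $\mathbf D_rF=0$ for a.e.\ $r\in(a,b]$. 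The standard product rule for the divergence, $\bd(F h)=F\bd(h)-\langle \mathbf DF,h\rangle_H$ with deterministic $h=\mathbbm 1_{(a,b]}$, then gives
\[
\bd(F\mathbbm 1_{(a,b]})=F(W_b-W_a)-\int_a^b\mathbf D_rF\,dr=F(W_b-W_a)=\int_s^t X_r\,dW_r .
\]
By linearity the identity extends to finite sums of such processes.

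Next comes density. We use the fact that finite sums of the processes above are dense, in $L^2([s,t]\times\Omega)$, among $\mathfrak{F}^{t,T}$-predictable processes. This holds because simple predictable processes are dense, and any bounded $\mathfrak{F}^{t,T}_a$-measurable random variable is an $L^2$-limit of smooth cylindrical functionals of the generating increments. Given $X$, we pick such approximants $X^n\to X$ in $L^2$. By the It\^o isometry, valid by part (i), $\bd(X^n)=\int_s^tX^n_r\,dW_r\to\int_s^tX_r\,dW_r$ in $L^2(\Omega)$. The closedness of $\bd$ recalled in the notation section (with $p=2$) then yields $\mathbbm 1_{[s,t]}X\in\mathrm{Dom}_2(\bd)$ together with the claimed identity.

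The main obstacle is the elementary step, namely justifying that $\mathbf D F$ vanishes on $(a,b]$ for a general $\mathfrak{F}^{t,T}_a$-measurable $F$. We sidestep this by working only with cylindrical $F$ and letting the closedness of $\bd$ carry the approximation. Alternatively, one could cite the standard fact (Nualart, Cor.~1.2.1) that $\mathbf DF=0$ on $A$ whenever $F\in\mathcal D^{1,2}$ is measurable with respect to the increments of $W$ off $A$.
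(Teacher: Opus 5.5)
Your proposal is correct and is, in substance, the argument the paper only gestures at: the paper calls (i) elementary and, for (ii), reduces to $s=0$ and defers to the known Malliavin-calculus fact (as in \cite{HHJM24}) that the Skorokhod integral extends the It\^o integral, whose standard proof is exactly your scheme of elementary integrands, product rule, density and closedness of $\bd$. You also correctly handle the one point specific to the enlarged filtration $\mathfrak{F}^{t,T}$, namely that $\mathbf D_r F$ vanishes for a.e.\ $r\in(a,b]$ because the generating increments live on $[0,a]\cup[t,T]$ and $b\le t$; and by treating general $s$ directly (equivalently, extending $X$ by zero) you make the paper's ``taking differences'' step unnecessary.
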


\begin{proof} 
(i) is elementary. For (ii) we note that it suffices to treat the case $s =
 0$, the general case essentially follows by taking differences. As noted in
\cite{HHJM24}, this statement a variant of well-known facts in Malliavin calculus. 
\end{proof}
\begin{remark} \label{subsection:IAG_general_rf}
Looking at the proof of \cref{theorem:IAG} reveals that the precise structure $F^{\mathbf{Z }}_s
(y) = f (X_T^{\mathbf{Z} ; s, y})$ is not crucial. What one needs is a
family $\textrm{} \mathrm{G}^{\mathbf{Z}} = (G, G')  = (G, G') (y ; s, t)$
of $Z$-controlled rough paths on $[s, t]$ such that
\begin{equation}
\label{eq:two_sided_causality}
F_t^{\mathbf{Z}} (y) - F_s^{\mathbf{Z}} (y) = \int_s^t
   {\mathrm{G}_r^{\mathbf{Z}}}  (y ; s, t) d {\mathbf{Z}_r}  + \int_s^t
   L^{\mathbf{Z}}_r (y ; s, t) d r 
\end{equation}
with a sort of {\em 2-sided batch causality} w.r.t. $\mathbf{Z}$
meaning that ${\mathrm{G}_r^{\mathbf{Z}}}  (y ; s, t)$ only depends on
$\mathbf{Z}$ (increments) on $[s, r] \cup [t, T]$. 
In the above case this is ensured
by the RDE flow property
\[ F_s^{\mathbf{Z}} (y) = f (X_T^{s, y;\mathbf{Z}}) = f
   \left(X_T^{t,X_t^{s, y; \mathbf{Z}}; \mathbf{Z}} \right) =
   F_t^{\mathbf{Z}} (X_t^{s, y;\mathbf{Z}}),  \]
together with the (deterministic) rough It\^{o} formula, applied to $F_t^{\mathbf{Z}} (.)$ for fixed $t$, which gives
\begin{equation*}
\begin{aligned}
{\mathrm{G}_r^{\mathbf{Z}}}  (y ; s, t) &= \left( -\Gamma 
   F_t^{\mathbf{Z}} (X_r^{\mathbf{Z} ; s, y}) {, -\Gamma^2} 
   F_t^{\mathbf{Z}} (X_r^{\mathbf{Z} ; s, y}) \right); \\
   L^{\mathbf{Z}}_r (y ; s, t) &= -\left(D F_t^{\mathbf{Z}}b\right)
   (X_r^{\mathbf{Z} ; s, y})-\frac{1}{2}\left(D^{2} F^{\mathbf{Z}}_{t}(\sigma, \sigma)\right)(X_{r}^{s, y; \mathbf{Z}}) .
\end{aligned}
\end{equation*}

There is another class of interesting examples based on rough SDEs, rather than RDEs. That is, consider a family of rough semimartingales $(R^{s, y; \mathbf{Z}},\sigma(R^{s, y; \mathbf{Z}}))$ to be the unique solution of the rough SDE (RSDE)
\[ dR^{s, y; \mathbf{Z}}_{t} = \mu (R^{s, y; \mathbf{Z}}_{t}) dt + \sigma (R^{s, y; \mathbf{Z}}_{t}) d \mathbf{Z}_t + \sigma_0 (R^{s, y; \mathbf{Z}}_{t}) dB_t (\omega); \quad R^{s, y; \mathbf{Z}}_{s}=y, \]
where $B$ denotes a Brownian motion. 
We shall here simply assume that for test functions $f\in \Lip^{3}$, the field 
$$(s,y) \mapsto \left(F_s^{\mathbf{Z}} (y) := \mathbb{E}\left[f (R_T^{s, y; \mathbf{Z}})\right], \dots \right) \in 
\mathscr{D}^{3\alpha}_{\mathbf{Z}}{\Lip}^3_x,
$$
noting that such results are highly non-trivial and were obtained in \cite{friz_2021,BuginiFrizStannat2024}.
We now consider another Brownian motion $W$ independent of $B$ to randomize the parameter dependence on $\mathbf{Z}$. 
As is well-known in \cite{fzk23, friz_randomisation_2025-1}, (independent Brownian) randomization of such an 
RSDE yields a solution $\bar{R} = R^{\mathbf{Z}} |_{\mathbf{Z} = \mathbf{W} (\omega)}$ 
to a ``doubly'' SDE.
By the 
Markov property,
\[ F_s^{\mathbf{Z}} (y) =\mathbb{E}\left[f (R_T^{s, y; \mathbf{Z}})\right] =\mathbb{E}\left[f
   \left(R_T^{t, R_t^{s, y;\mathbf{Z}} ;\mathbf{Z}} \right)\right] =\mathbb{E}\left[F_t^{\mathbf{Z}}
   (R_t^{s, y; \mathbf{Z}}) \right].\]
By \cite{BuginiFrizStannat2024} this defines a $\mathbf{Z}$-controlled field. Apply rough stochastic It\^{o} formula to $F_t^{\mathbf{Z}}
(\cdot)$ to see that, writing $\Gamma (\cdot) = D (\cdot) \sigma, \, \Gamma_0 (\cdot) = D (\cdot) \sigma_{0}$,
\begin{eqnarray*}
  F_t^{\mathbf{Z}} (X_t^{s, y; \mathbf{Z} }) & = & F_t^{\mathbf{Z}} (y) 
  + \int_s^t \Gamma F_t^{\mathbf{Z}} (R_r^{s, y;\mathbf{Z}}) d
  \mathbf{Z}_r + \int_s^t \Gamma_0 F_t^{\mathbf{Z}} (R_r^{s, y;\mathbf{Z}}) d B (\omega)+  \int_s^t \mathcal{L}_r F_t^{\mathbf{Z}} (R_r^{s, y; \mathbf{Z}}) d
  r
\end{eqnarray*}
where $\mathcal{L}_r$ contains contributions from drift $\mu$, and the (rough
resp. stochastic) bracket terms associated to $[\mathbf{Z}]$ and $\langle \sigma_0
\bullet B\rangle$. Taking expectations, using rough Fubini \cite{friz_2020} and suitable
bounds on $\sigma_0$ to ensure martingality of the $d B$-integral, gives
\[ F_t^{\mathbf{Z}} (y)-F_s^{\mathbf{Z}} (y) =   -\int_s^t \mathbb{E}
    \left[\Gamma F_t^{\mathbf{Z}} (R_r^{s, y;\mathbf{Z}}) \right]d \mathbf{Z}_r -
   \int_s^t \mathbb{E} \left[ \mathcal{L}_r F_t^{\mathbf{Z}} (R_r^{s, y; \mathbf{Z}})\right] d r \]
where the integrands are given, on $[s, t]$ and in some more details, by
\[ (G, G')^{\mathbf{Z}} (y ; s, t) =\left(-\mathbb{E} \left[\Gamma F_t^{\mathbf{Z}}
   (R_r^{s, y;\mathbf{Z}})\right], -\mathbb{E} \left[ \Gamma^2 F_t^{\mathbf{Z}}
   (R_r^{s, y;\mathbf{Z}})\right]\right), L^{\mathbf{Z}}_r (y ; s, t) =\mathbb{E}
   (\mathcal{L}_r F_t^{\mathbf{Z}} (R_r^{s, y;\mathbf{Z}})) . \]
Clearly, they enjoy the 2-sided causality condition \eqref{eq:two_sided_causality}.

The resulting IAG formula
is not very different from \cref{theorem:IAG}. Since, as $t \downarrow s$, $G^{\mathbf{Z}} (y)$
approaches $- \Gamma F_s^{\mathbf{Z}} (y)$ so that the term
``$-D\overline{F}_r (Y_r) \sigma (Y_r) \equiv -\Gamma \overline{F_r} (Y_r)$''
in the Skorokhod integral is unchanged. Similarly, $L^{\mathbf{Z}}_r (y) \sim
\mathcal{L}_r F_t^{\mathbf{Z}} (y)$. The resulting IAG formula then reads \begin{eqnarray*}
  \overline{F}_t (Y_t) - \overline{F}_s (Y_s) & = & \int_s^t D\overline{F}_r (Y_r)  (b_r
  - \mu (Y_r)) dr + \int_s^t D\overline{F}_r (Y_r)  (\beta_r - \sigma (Y_r))
  \diamond dW_r \nonumber\\
  &  & + \frac{1}{2}  \int_s^t D^{2}\overline{F_r} (Y_r)  ((\beta_r, \beta_r)
  - (\sigma (Y_r), \sigma (Y_r)) - (\sigma_0 (Y_r), \sigma_0 (Y_r))) dr,  \nonumber
\end{eqnarray*}
where by \cite{friz_randomisation_2025-1} and \cite{fzk23}
\[ \overline{F_s } (y) =  \mathbb{E}\left[f (R_T^{s, y;\mathbf{Z}})\right]
   \bigg|_{\mathbf{Z} \rightsquigarrow \mathbf{W}^{\text{It\^{o}}} (\omega)} =\mathbb{E} \left [f
   (\bar{R}_T^{\textbf{} s, y}) | \sigma (W (t) : s \leqslant t \leqslant T)
   \right ] \]
   can be seen as (Feynman-Kac) solution to a (terminal value ``backward'') SPDE.
   
 \end{remark}   
\subsection{A note on the stochastic interpolation formula}
\label{subsection:stochastic_interpolation_formula}
Let $\mathbf{Z} = (Z, \mathbb{Z}) \in \mathscr{C}^{0,\alpha}_g ([0, T] ;
\mathbb{R}^{d_Z})$ be a deterministic geometric rough path. For  $(t, x)
\in [0, T] \times \mathbb{R}^{d_X}$,  denote by $(X_s^{t, x; \mathbf{Z}})_{s \in
[t, T]}$ the unique solution of the RDE$(\mu, \sigma)$,
\[ dX^{t, x; \mathbf{Z}}_s = \mu (X_s^{t, x; \mathbf{Z}}) ds + \sigma (X^{t,
   x; \mathbf{Z}}_s) d \mathbf{Z}_s ; \quad X^{t, x; \mathbf{Z}}_t = x. \]
Assume proper conditions on $\mu$ and $\sigma$  such that for $F^{\mathbf{Z}}_t (x) \coloneqq f (X^{t, x; \mathbf{Z}}_T)$ it holds
\[ (F^{\mathbf{Z}}, \dots) \in \mathscr{D}^{3
   \alpha}_{\mathbf{X}} {\Lip}^3_{x, \loc} . \]
(Classic sufficient conditions are $\sigma \in \Lip^5 (\mathbb{R}^{d_X} ;
\mathbb{R}^{d_X \times d_Z})$ and $\mu \in \Lip^3 (\mathbb{R}^{d_X} ;
\mathbb{R}^{d_X})$ see \cref{ex:RDE_flows}, but this can be relaxed to local regularity, as long as
one can rule out explosion. See  \cref{rem:no_explosion}.) 
Let  $Y_{\bullet}=\widehat{X
}^{\mathbf{Z}}_{\bullet \leftarrow s} (x) \coloneqq \hat{X}^{s, x; \mathbf{Z}}_{\bullet}$  be the  solution to the RDE$(\hat{\mu},
\hat{\sigma})$ under similar well-posedness assumptions on $\hat{\mu}, \hat{\sigma}$. By \cref{ex:RDE_flows} we see, $(Y, \hat{\sigma} (Y), ((D \hat{\sigma})
\hat{\sigma}) (Y), \hat{\mu} (Y)) \in \mathscr{D}^{3 \alpha}_{\mathbf{X}} ([0,
T] ; \mathbb{R}^{d_X})$.
By applying \cref{lemma:rAG} for the test-function $f=\operatorname{Id}$ we obtain a {\em rough forward-backward interpolation formula}:

\begin{equation}\label{e:rfbif}
\begin{aligned}
  \widehat{X }^{\mathbf{Z}}_{t \leftarrow s} (x) - X^{\mathbf{Z}}_{t
  \leftarrow s} (x) & =  \int_s^t 
  D_x X^{\mathbf{Z}}_{t \leftarrow u}
  (\widehat{X }^{\mathbf{Z}}_{u \leftarrow s} (x))  (\hat{\mu} - \mu)
  (\widehat{X }^{\mathbf{Z}}_{u \leftarrow s} (x))  \hspace{0.17em} d u\\
  &   + \int_s^t D_x X^{\mathbf{Z}}_{t \leftarrow u} (\widehat{X
  }^{\mathbf{Z}}_{u \leftarrow s} (x)) (\hat{\sigma} - \sigma) (\widehat{X
  }^{\mathbf{Z}}_{u \leftarrow s} (x)) \circ d \mathbf{Z}_u .
\end{aligned}
\end{equation}
Every geometric rough path $\mathbf{Z}$ (over $\mathbb{R}^{d_Z}$) is, by definition, the limit of some canonically lifted, smooth sequence $(Z^n)$ in rough path space. In \cite{Coutin2007},  the
sequence  $(Z^n)$  is called {\em good}, if $(\mathbf{Z} ; Z^n)$ converges to $(\mathbf{Z} ;
\mathbf{Z})$, the (canonically defined) doubled rough path over
$\mathbb{R}^{2 d_Z}$, in the $\alpha$-H\"older rough path space. Writing
$\varphi^{s, t, \mathbf{Z}}_u$ for the (controlled) integrand of the $\circ
d \mathbf{Z}$-integral, we then have, by rough path stability results for
RDE flows, their Jacobians and rough integrals,
\begin{equation}\label{e:good-approx}
 \int_s^t \varphi^{s, t, \mathbf{Z}}_u \circ d \mathbf{Z}_u = \lim_{n \to \infty}
   \int_s^t \varphi^{s, t, \mathbf{Z}}_u \dot{Z}^n_u d u. 
   \end{equation}
It was seen in \cite{Coutin2007} that for any (deterministic) sequence of partitions with
mesh-size $| \pi^n | \downarrow 0$, piecewise linear approximation to the  Brownian
motion $W$ is good (in probability) for  any $\alpha < 1 / 2$. Applying \eqref{e:good-approx}
with $\mathbf{Z} \rightsquigarrow \left( W, \int \delta W \otimes \circ
\mathrm{dW} \right) (\omega) \equiv \mathbf{W}^{\mathrm{Strato}} (\omega)$, we get that, a.s.
\[ {\left\{ \int_s^t \varphi^{\mathbf{Z}}_u (\omega) \circ d \mathbf{Z}_u
   \right\}_{\mathbf{Z} \rightsquigarrow \mathbf{W}^{\mathrm{Strato}}}}  =
   \int_s^t \bar{\varphi}^{s, t}_u \circ d W_u ,\]
   with  
\[ \bar{\varphi}^{s, t}_u = D_x X _{t \leftarrow u} (\widehat{X }_{u \leftarrow
   s} (x)) (\hat{\sigma} - \sigma) (\widehat{X }_{u \leftarrow s} (x)), \]
where we write $X,\hat{X}$ for the randomized RDEs, which satisfies classical (Stratonovich) SDEs.   
The integral on the
right-hand side is then an anticipating Stratonovich integral, by the very definition of
this integral \cite[Definition 3.1.1]{nualart_2006}. 
Assuming suitable Malliavin regularity conditions, which are
certainly satisfied when dealing with reasonable SDE solutions, one has the
Stratonovich-Skorokhod conversion formula \cite[Theorem 3.1.1]{nualart_2006}
\[ \int_s^t \bar{\varphi}^{s, t}_u \circ d W_u = \int_s^t \bar{\varphi}^{s,
   t}_u \diamond d W_u + \frac{1}{2} \int_s^t (\mathbf D_u^+ + \mathbf D_u^-)
   \bar{\varphi}^{s, t}_u d u, \]
where  $\mathbf D^{\pm}$ are left- and right-sided  Malliavin derivatives defined by   $\mathbf D_u^{\pm} F_u:=\lim_{s\to u^{\pm}}\mathbf D_u F_s$. 
Thus, the randomization of  \eqref{e:rfbif} yields
\begin{equation}\label{e:Itofbif}
\begin{aligned}
  \widehat{X }_{t \leftarrow s} (x) - X_{t
  \leftarrow s} (x) & =  \int_s^t 
  D_x X_{t \leftarrow u}
  (\widehat{X }_{u \leftarrow s} (x))  (\hat{\mu} - \mu)
  (\widehat{X }_{u \leftarrow s} (x))  \hspace{0.17em} d u\\
  & \quad  + \frac{1}{2} \int_s^t (\mathbf D_u^+ + \mathbf D_u^-)
   \bar{\varphi}^{s, t}_u d u+ \int_s^t \bar{\varphi}^{s,
   t}_u \diamond d W_u.
\end{aligned}
\end{equation}

 By the results of  \cite{DELMORAL2022197} or \cite{HHJM24} as exposed in the \cref{subsection:forward_backward_stochastic_analysis}, we should also have, noting the It\^o-Stratonovich correction terms for the SDEs of $X$ and $\widehat X$, 
\begin{equation}\label{e:Itofbif'}
\begin{aligned}
  \widehat{X }_{t \leftarrow s} (x) - X_{t
  \leftarrow s} (x) & =  \int_s^t 
  D_x X_{t \leftarrow u}
  (\widehat{X }_{u \leftarrow s} (x))  (\hat{\mu} - \mu)
  (\widehat{X }_{u \leftarrow s} (x))   d u\\
  &\quad +\frac{1}{2}  \int_s^t D_x^2 X_{t \leftarrow u} (\hat{X}_{u \leftarrow s} (x)) 
((\hat{\sigma} \hat{\sigma}^T - \sigma \sigma^T) (\hat{X}_{u \leftarrow s}
(x))) du\\
&\quad + \frac{1}{2} \int_s^t D_x X_{t \leftarrow u} (\widehat{X}_{u \leftarrow s}
   (x))  ((D_x \hat{\sigma}) \hat{\sigma} - (D_x \sigma) \sigma) (\widehat{X }_{u
   \leftarrow s} (x))   d u \\
  & \quad + \int_s^t \bar{\varphi}^{s,
   t}_u \diamond d W_u .
\end{aligned}
\end{equation}

Now, we show that \eqref{e:Itofbif}  and \eqref{e:Itofbif'}  do agree with each other.  Let $\{F(x), x\in\mathbb R^{d_X}\}$ be a random process which has  a.s. continuously differentiable paths  such that $F(x)\in \mathcal D^{1,2}$ for each $x\in \mathbb R^{d_X}$. Let $X$ be a  random variable belonging to $\mathcal D^{1,2}$. Then,   by \cite[Lemma 2.3]{OconePardoux1989} (see also \cite[Exercise 1.3.6]{nualart_2006}),  under property conditions, we have the following Wentzell-type chain rule: 
\[\mathbf D F(X) = (\mathbf D F(x))|_{x=X}+ D_x F(X) \mathbf D X = :(\mathbf DF)(X) + D_x F(X) \mathbf DX. \]
Applying the above formula and noting that $\mathbf D^{-}_u (\widehat X_{u\leftarrow s})=0, \mathbf D^{+}_u (\widehat X_{u\leftarrow s})=\hat \sigma(\widehat X_{u\leftarrow s})$ a.s., we get 
\begin{align*}
    &(\mathbf D_u^+ + \mathbf D_u^-)
   \bar{\varphi}^{s, t}_u= (\mathbf D_u^+ + \mathbf D_u^-) D_x X _{t \leftarrow u} (\widehat{X }_{u \leftarrow
   s} (x)) (\hat{\sigma} - \sigma) (\widehat{X }_{u \leftarrow s} (x))\\
   &=\left[\left( \mathbf D_u^+ D_x X _{t \leftarrow u}\right) (\widehat{X }_{u \leftarrow   s} (x))+ D^2_x X _{t \leftarrow u} (\widehat{X }_{u \leftarrow   s} (x)) \hat\sigma(\widehat{X }_{u \leftarrow   s}) \right] (\hat{\sigma} - \sigma) (\widehat{X }_{u \leftarrow s} (x))\\
   &\quad + \left[\left( \mathbf D_u^- D_x X _{t \leftarrow u}\right) (\widehat{X }_{u \leftarrow   s} (x)) \right] (\hat{\sigma} - \sigma) (\widehat{X }_{u \leftarrow s} (x))\\
&\quad +  D_x X _{t \leftarrow u} (\widehat{X }_{u \leftarrow
   s} (x)) ((D_x\hat{\sigma})\hat{\sigma} -(D_x \sigma )\hat{\sigma}) (\widehat{X }_{u \leftarrow s} (x)).
\end{align*}
In order to prove the consistence between \eqref{e:Itofbif}  and \eqref{e:Itofbif'}, it suffices to show that the above coincides with 
\[ D_x^2 X_{t \leftarrow u} (\hat{X}_{u \leftarrow s} (x)) 
((\hat{\sigma} \hat{\sigma}^T - \sigma \sigma^T) (\hat{X}_{u \leftarrow s}
(x)))+  D_x X_{t \leftarrow u} (\widehat{X}_{u \leftarrow s}
   (x))  ((D_x \hat{\sigma}) \hat{\sigma} - (D_x \sigma) \sigma) (\widehat{X }_{u
   \leftarrow s} (x)).  \]
   This reduces to prove the following identity:
\begin{align*}
    & \left(\mathbf D_u^+ D_x X _{t \leftarrow u}\right) (\widehat{X }_{u \leftarrow   s} (x))+\left(\mathbf D_u^- D_x X _{t \leftarrow u}\right) (\widehat{X }_{u \leftarrow   s} (x))\\
    &= D_x^2 X_{t\leftarrow u}(\widehat X_{u\leftarrow s}(x))\sigma(\widehat X_{u\leftarrow s}(x))+  D_xX_{t\leftarrow u}(\widehat X_{u\leftarrow s}(x))D_x\sigma(\widehat X_{u\leftarrow s}(x)),
\end{align*}
or equivalently, noting that $\mathbf D_u F=0$ for $F\in \mathfrak F_{u',T}^W:=\sigma(W_t-W_{u'}, t\in[u',T])$ with $u'>u$ and hence $\left(\mathbf D_u^+ D_x X _{t \leftarrow u}\right)(x)$  vanishes, 
\begin{equation}\label{e:linear-eqs}
     \mathbf D_u^- D_x X _{t \leftarrow u} (x)=  D^2_xX_{t\leftarrow u}(x)\sigma(x)+ D_xX_{t\leftarrow u}(x)D_x\sigma(x).
\end{equation}

The Eq. \eqref{e:linear-eqs}
 can be verified by comparing the linear SDEs satisfied by $\mathbf D_u^- D_x X _{t \leftarrow u} (x)$, $ D^2_xX_{t\leftarrow u}(x)$ and $ D_xX_{t\leftarrow u}(x)$ as follows.  Recall that $X_{t\leftarrow u}(x) =X_t^{u,x}$  satisfies 
 \[ X^{u, x}_t =x+ \int_u^t \mu (X_r^{u, x}) dr + \int_u^t \sigma (X^{u,
   x}_r) \circ d W_r . \]
Then,   $ A_t:=  D_xX^{u,x}_t (x)$ and $ B_t:= D_x^2X^{u,x}_t (x)$ satisfy the following linear SDEs respectively:
\[  A_t =1+\int_u^t  D_x\mu(X_r^{u,x}) A_r dr + \int_u^t D_x\sigma(X_r^{u,x}) A_r\circ dW_r,\]
and 
\[  B_t =\int_u^t \left[ D^2_x\mu(X_r^{u,x}) A_r^2+D_x \mu(X_r^{u,x})   B_r  \right]dr +\int_u^t  \left[D^2_x\sigma(X_r^{u,x}) A_r^2+D_x \sigma(X_r^{u,x})   B_r \right]\circ dW_r. \]
For the term $C_t:=  \mathbf D_u^- D_x X^{u,x}_t (x)=D_u^- A_t$,  noting that $\mathbf D^-_u X_\cdot^{u,x}$ satisfies the same   linear stochastic  differential equation  as $D_x X_\cdot^{u,x}=A_\cdot$, but with  a different initial condition $\mathbf D^-_u X_u^{u,x}=\sigma(x)$, and hence $\mathbf D^-_u X_r^{u,x}=\sigma(x) D_xX_r^{u,x}=\sigma(x) A_r$, we get 
\begin{align*}
C_t &=D_x\sigma(x)+  \int_u^t  \left[\sigma(x)D^2_x\mu(X_r^{u,x}) A_r^2+ D_x \mu(X_r^{u,x})  C_r\right]  dr\\
&\quad +\int_u^t\left[  \sigma(x)D^2_x\sigma(X_r^{u,x}) A_r^2+D_x \sigma(X_r^{u,x})   C_r\right] \circ dW_r.
\end{align*}
Combining the equations of $A, B$ and $C$, it is straightforward to verify $C=D_x\sigma(x) A+\sigma(x) B$, which confirms \eqref{e:linear-eqs}.  This proves the consistency between  \eqref{e:Itofbif}  and \eqref{e:Itofbif'}.

\section{Comments and comparisons with existing literature} \label{sec:comments}
\addtocontents{toc}{\protect\setcounter{tocdepth}{1}}

\subsection{Rough (stochastic) analysis (\cref{section:Introduction to controlled fields,section:On Rough Stochastic Calculus} and \cref{appendix:section:On RDE-flows for non-autonomous vector fields})}
\label{subsection:Comparison_rIW}

Lyons’ seminal work~\cite{Lyons1998} introduced rough differential equations and established their well-posedness. Gubinelli~\cite{GUBINELLI2004} later introduced the fundamental notion of controlled rough paths, which provides a flexible analytic framework and is also employed in the present work. The Itô formula for rough paths, interpreted as a composition rule for jets, appeared in (the 2014 edition) of ~\cite{friz_2020}. 

The first link between rough path theory and (Stratonovich, but not Skorokhod) anticipating stochastic calculus is due to Coutin et al.~\cite{Coutin2007}. It may help to recall
\begin{equation}
  \int_0^T W_T \circ d W_t = W_T^2, 
  \qquad 
  \int_0^T W_T \diamond d W_t  = W_T^2 - T.  
\end{equation}
Naturaly (rough and/or stochastic) integrals which are based on the convergence of sums of the form $
\sum (\dots) (W_t - W_s)$, possibly helped by higher order terms of the form
$(\dots) \mathbb{W}$ to deal with fluctuations of the integrand, will pick the 
left (Stratonovich) integral.

Many authors have investigated rough path objects with spatial dependence, a perspective necessary for rough flows, rough transport equations and rough partial differential equations in general. A non-exhaustive list of works adopting such space-time viewpoints includes Caruana~\cite{caruana2009partial} (first work on rough PDEs, including rough transport), the monograph \cite{Friz_Victoir_2010}, which contains 
(amongst many other things) a first systematic study of rough flows, Bailleul’s flow approach~\cite{bailleul2015flows}, Keller and Zhang~\cite{keller_zhang_2016}, who provide a rough It\^o-Wentzell formula under a true roughness condition. In \cite{gubinelli_controlled_2014} the authors introduce space-time expansions of fields, similar to the 3rd order expansions in our controlled fields, to study the well-posedness of nonlinear RPDEs in viscosity sense. We emphasize however that controlled fields consist of a \emph{cascade} of such expansions and this is a crucial structural property used throughout \cref{section:Introduction to controlled fields}. We further mention works by  Bailleul, Gubinelli and Riedel ~\cite{bailleul2017,bailleul2019rough}, Bellingeri et al for higher order rough transport  \cite{bellingeri2021transport}, and the rough It\^{o}-Wentzell formula of Castrequini, Catuogno and Machado~\cite{castrequini_2025}.

The appearance of the stochastic sewing lemma~\cite{Le2020} opened the door to a new field of {\em rough stochastic analysis}. Foundational developments include \cite{friz_2021}, which dealt with well-posedness of rough SDEs, and the parallel rough semimartingale development ~\cite{fzk23} rooted in harmonic analysis. Applications to pathwise stochastic control were explored in~\cite{flz24}; there and in ~\cite{friz_randomisation_2025-1} the role of measurable selection was highlighted when randomizing rough SDEs to connect them to classical problems. Further applications to McKean–Vlasov equations with common noise appear in~\cite{friz_mckean-vlasov_2025}, see also~\cite{bugini_rough_2025} and~\cite{Bugini2025Nonlinear}. 
Extensions to jump processes are investigated in~\cite{allan_rough_2025}, while~\cite{bbfp25} presents applications to financial mathematics and to nonlinear stochastic PDEs.
 
In the context of Gaussian rough paths, Stratonovich–Skorokhod integral formulas were investigated in \cite{cass2019stratonovich,song_tindel_2022}. More precisely, let $X$ be a Gaussian process admitting a geometric rough path lift $\mathbf{X} = (X, \mathbb{X})$. The relation between the Skorokhod integral $\int y \diamond dX$ and the Stratonovich rough integral $\int y d\mathbf{X}$ was established in \cite{cass2019stratonovich} when $y$ is a solution to a rough differential equation driven by $\mathbf{X}$, and in \cite{song_tindel_2022} when $y$ is an adapted rough path controlled by $X$. We emphasize, however, that in both works the integrands $y$ are adapted processes. Consequently, these results do not apply to the (in general non-adapted) setting considered in \cref{section:Applications to (Rough) Stochastic Analysis}.

\subsection{Forward-backward stochastic analysis (\cref{section:Applications to (Rough) Stochastic Analysis})}
\label{subsection:forward_backward_stochastic_analysis}
In \cite{DELMORAL2022197} the authors consider a forward semimartingale and backward random field of the form \
\begin{equation}
  \label{eq:1.4} \left\{ \begin{aligned}
    Y_{s, t} & = y + \int_s^t B_{s, u}  \hspace{0.17em} du + \int_s^t
    \Sigma_{s, u}  \hspace{0.17em} dW_u,\\
    F_{s, t} (x) & = F (x) + \int_s^t G_{u, t} (x) \hspace{0.17em} du +
    \int_s^t H_{u, t} (x) \hspace{0.17em} dW_u,
  \end{aligned} \right.
\end{equation}
and show that, under suitable conditions (including Malliavin
differentiability of $\Sigma$) the following {\em backward It{\^o}--Wentzell formula}
\begin{align}
  F_{v, t} (Y_{s, v}) - F_{u, t} (Y_{s, u}) & = \int_u^v \left( D F_{r, t}
  (Y_{s, r}) B_{s, r} + \frac{1}{2} D^2 F_{r, t}
  (Y_{s, r})(\Sigma_{s, r} , \Sigma_{s, r}) - G_{r, t} (Y_{s, r})
  \right)   dr \nonumber\\
  & \quad + \int_u^v \Big(  DF_{r, t} (Y_{s, r})\Sigma_{s, r} -
  H_{r, t} (Y_{s, r}) \Big) \diamond d W_r .  \label{equ:Del1}
\end{align}
holds where the final $\diamond d W$ integral is understood as a Skorokhod stochastic integral. As an application, the authors consider two strong  solutions $X$ and $\hat{X}$ of the SDE $ dX_t = b (X_t)  dt + \sigma
(X_t)  dW_t$, with  $\hat{X}$ defined analogously. Write $X_{t \leftarrow s} (x) \equiv$ $X^{s,
x}_t$ if started at $X_s = x$. Under suitable assumptions one has a stochastic
flow, with well-defined Jacobian and Hessian $D X_{t \leftarrow s}, D^2 X_{t
\leftarrow s}$. 
Setting $\Delta g =   \hat{g} - g, \,  g \in \{ a,b,\sigma \}$, where $a = \sigma \sigma^{\top}$, the authors of \cite{DELMORAL2022197} find the {\em forward–backward stochastic interpolation formula}
\begin{equation}
\label{equ:Del2}
\begin{aligned}
  \hat{X}^{s,x}_t - X^{s,x}_t  \equiv &  \hat{X}_{t \leftarrow s} (x) - X_{t\leftarrow s} (x) \\
   = & \int_s^t  D X_{t \leftarrow u}
  (\hat{X}_{u \leftarrow s} (x))   (\Delta \mu (\hat{X}_{u \leftarrow s} (x))) du \\
  &  +  \frac{1}{2}  \int_s^t D^2 X_{t \leftarrow u} (\hat{X}_{u \leftarrow
  s} (x))  (\Delta a (\hat{X}_{u \leftarrow s} (x)))  du \\
  & + \int_s^t D X_{t \leftarrow u} (\hat{X}_{u \leftarrow s} (x))  (\Delta
  \sigma (\hat{X}_{u \leftarrow s} (x))) \diamond dW_u. 
\end{aligned}
\end{equation}

As noted in \cite{DELMORAL2022197}, this interpolation formula can be seen as an extension of the Alekseev–Gröbner lemma \cite{alekseev_1961,grobner_1960}, as well as a generalization of the classical variation-of-constants formula and related Gronwall-type lemmas to diffusion processes (but differs from the stochastic Gronwall lemma presented in \cite{scheutzow_2013}). 
It can also be seen as an extension of Theorem~6.1 in \cite{pardoux_protter_1987}  
on two-sided stochastic integrals to diffusion flows, and also interpreted as a backward version of the generalized It\^{o}--Wentzell formula presented in \cite{OconePardoux1989}~, also Theorem~3.2.11 in \cite{nualart_2006}. 
They also point to numerous origins in the literature, including Chapters 7-10 in \cite{DelMoral2004}, and references therein. 
Similar forward-backward interpolation formulas for stochastic matrix Riccati diffusion flows arising in data assimilation theory (cf., for example, \cite[Theorem~1.3]{bishop_2020}) were studied in a series of papers by Bishop et al.; cf.\ \cite{bishop_2019,bishop_2020} and references therein.

The forward-backward perturbation methodology has also been used in \cite{Arnaudon03092019, Arnaudon_Moral_2020} in the context of nonlinear diffusions and their mean field type
interacting particle interpretations, see for instance Section~2.3 in~\cite{Arnaudon_Moral_2020}. 

The work of \cite{DELMORAL2022197} can be seen as a natural extension of the second order perturbation methodology developed in the above referenced articles to diffusion type perturbed processes when $\sigma\neq\bar\sigma$.

The first article considering the case $\sigma\neq\bar\sigma$ with
$\sigma\neq 0$ and $\bar\sigma\neq 0$ was \cite{HHJM24}.
In this article, the authors discuss an It\^{o}--Alekseev--Gr\"obner formula for abstract diffusion perturbation models given by an It\^{o} process of dynamics $dY_t = b_t dt+ \beta_t dW_t$. Essentially, $\bar X$ is replaced by $Y$ and one considers the backward field
\[ F_t (x) = f (X^{t, x}_T) \equiv f (X_{T \leftarrow t} (x)) \]
parametrized by test functions $f$, similar to It\^o's formula (and natural to study the semigroup upon taking expectations). In \cite{HHJM24} it was then seen that
\begin{equation} \label{equ:HudIAG}
\begin{aligned}
  F_t (Y_t) - F_s (Y_s)  = & \int_s^t DF_r (Y_r)  (b_r - \mu (Y_r)) dr +
  \int_s^t DF_r (Y_r)  (\beta_r - \sigma (Y_r)) \diamond dW_r \\
  &  + \frac{1}{2}  \int_s^t D^2 F (Y_r)  ((\beta_r, \beta_r) - (\sigma
  (Y_r), \sigma (Y_r))) dr .
\end{aligned}
\end{equation}
We note that \eqref{equ:HudIAG} sits between \eqref{equ:Del1} and \eqref{equ:Del2}. 
We recover this in \cref{theorem:IAG} with an elegant rough path proof, also reducing integrability assumptions on the It\^o characteristics of $Y$, as was conjectured in \cite[Remark 3.2]{HHJM24}. Our proof also yields a reasonable understanding what structural features of $F_t$ are responsible for the validity of \eqref{equ:HudIAG}, with a view towards more general backward random fields, in the spirit of \eqref{equ:Del1}, but without Malliavin conditions on the It\^{o} integrand of $Y$.

\subsection{Comments on stochastic numerics} \label{sec:csn}

It is a quite challenging issue in stochastic numerics to establish strong or weak convergence rates
for numerical approximations of stochastic evolution equations
(SDEs and SPDEs of the evolutionary type) with non-globally monotone coefficients (cf.\ \cref{eq:monotonicity} below).
The difficulty of this issue is also illustrated by the fact that
there are several \emph{counterexample SDEs} with bounded and smooth
but non-globally monotone coefficient functions so that the SDE solution
can \emph{not} be approximated by any numerical approximation based on observations
of the driving noise with a rate of strong (weak) convergence in the literature
\cite{MR3305998,MR3538358,MR3609372}.
In general, it remains a fundamental open problem to provide sufficient (and necessary) conditions
on the coefficient functions of the SDE so that
the SDE solution can be approximated by implementable time-discrete numerical approximations
with a strictly positive polynomial of strong convergence.

In the situation of an SDE with globally monotone coefficient functions
(in which case the drift coefficient function satisfies a global one-sided Lipschitz condition;
see \cref{eq:monotonicity} below)
strong convergence rates for numerical approximations can be established by applying
It\^{o}'s formula to the square distance $ \| x - y \|^2 $, $ ( x, y ) \in \R^d \times \R^d $,
as the test function, by exploiting the global monotonicity condition in the sense that there
exists $ c \in \R $ such that for all $ x, y \in \R^d $, 
it holds that
\begin{equation}
\label{eq:monotonicity}
\textstyle
  \left< x - y, \mu(x) - \mu(y) \right>
  +
  \frac{ 1 }{ 2 } \left\| \sigma( x ) - \sigma( y ) \right\|_{ HS( \R^m, \R^d ) }^2
  \leq
  c \left\| x - y \right\|^2 ,
\end{equation}
and, thereafter, by applying
\emph{Gronwall's lemma} to the quantity $ \E\bigl[ \| X_t - Y_t \|^2 \bigr] $, $ t \in [0,\infty) $,
where $ ( X_t )_{ t \geq 0 } $ is the solution process of the SDE under consideration
and where $ ( Y_t )_{ t \geq 0 } $ is a time-continuous version
of a numerical approximation of the SDE solution.

In the situation of an SDE without globally monotone coefficient functions (where \cref{eq:monotonicity} is not fulfilled),
this chain of arguments is not working as the prerequisites of Gronwall's lemma are not satisfied anymore.
Specifically, this approach is not working anymore in the situation of SDEs with non-monotone coefficient functions
as there appear additional unbounded random quantities depending on $ X_t $ and $ Y_t $
inside the expectation in this case
so that Gronwall's lemma can not be applied to $ \E\bigl[ \| X_t - Y_t \|^2 \bigr] $, $ t \in [0,\infty) $, anymore.

This is precisely the situation where the \emph{It\^{o}--Alekseev--Gr\"{o}bner formula} \cite{HHJM24} can be brought into play
(cf.\ \cite[Section~4]{Huddeetal2020v1arXiv} and \cite{Hutzenthaleretal2020arXiv}).
We also refer to \cite{DELMORAL2022197} for a closely related variant of the IAG
and we refer to \cite{MaurerZurcher2025arXiv} for an extension of the IAG to the situation of Poisson noise.

In the situation of deterministic ODEs, the Alekseev-Gr\"{o}bner formula can also be used to estimate the
errors of numerical approximations of solutions of ODEs. However, in the deterministic case the Alekseev--Gr\"{o}bner formula
is not really powerful as the above outlined standard Gronwall argument also works in the situation of a non-globally monontone
but locally Lipschitz continuous coefficient function.

A related alternative approach is to use the perbutation approach in \cite{MR4079431} to establish
strong convergence rates for numerical approximations of SDEs with non-globally monotone coefficients.
This approach is based on multiplying with a negative exponential as an integrating factor.
However, with this approach -- loosely speaking -- stronger exponential integrability properties for the numerical approximations
and the SDE solution need to be established \cite{MR3766391} than in the situation of the IAG where
such strong exponential integrability properties basically only need to be established
for the solution of the SDE (cf., for example, \cite[Lemma~4.10]{MR2259251}, \cite[Lemma~2.5 and Remark~2.6]{MR3020951},
and \cite[Section~2.2]{MR4749000})
but not for the numerical approximation \cite{MR4190823}.

It should also be pointed out that the above sketched approaches are applicable to SDEs with possibly highly
degenerate noise coefficients (for example, additive noise that vanishes in certain directions).
In the case of non-degenerate noise strong convergence rates for numerical approximations of SDEs
have been be established in \cite{MR4583671} in a very wide generality. In particular, the above mentioned
counterexample SDEs \cite{MR3305998,MR3538358,MR3609372} all fall in the regime of degenerate noise.

It should also be pointed out that -- beyond the linear Black--Scholes model -- many of the popular SDE models
in the literature 
(cf., for example, \cite[Chapter~4]{MR3364862})
do have non-globally monotone coefficient functions, such as 
stochastic Lotka-Volterra models, 
the Heston stochastic volatility model, 
stochastic oscillator models, 
the Kardar–Parisi–Zhang (KPZ) equation, 
and 
stochastic Navier--Stokes equations, 
just to name a few. So, it is highly relevant to develop stochastic analysis techniques
that allow to establish optimal strong convergence speeds also in the situation
of SDEs with non-globally monotone coefficient functions.

\appendix

\section{Higher-order Kolmogorov criterion}\label{sec:besov-spaces}
\begin{theorem}
  \label{appendix:theorem:Kolmogorov}
  
  Let $V^{1, 1}, V^{1, 2}, V^2$ be (finite-dimensional) Banach spaces. Suppose there is a
  c\`{a}dl\`{a}g two-parameter process $A : \Delta_T \times
  \Omega \rightarrow (V^{1, 1} \otimes V^{1, 2}) \otimes V^2$ such that we
  have a two-parameter processes $A^1 : \Delta_T \times \Omega
  \rightarrow V^{1, 1} \otimes V^{1, 2}$ and three paths 
  $A^2 : [0, T] \times \Omega \rightarrow V^2, A^{1, i} : [0, T] \times \Omega
  \rightarrow V^{1, i}$ all also c\`{a}dl\`{a}g, such that for any
  $s, u, t\in [0,T]$ with $s<u<t$ it holds
  \[ \delta A_{s, u, t} = A^1_{s, u} \otimes \delta A^2_{u, t} ; \quad
     \delta A^1_{s, u, t} = \delta A^{1, 1}_{s, u} \otimes \delta A^{1, 2}_{u,
     t} \]
  Assume there is $q \geq 3$ and $\beta > \nicefrac{1}{q}$ such that
  \begin{equation}
\label{appendix:Kolmogorov:multiest}
\left.
  \begin{aligned}
     \| \delta A^{1, 1}_{s, t} \|_{L^q} \vee \| \delta A^{1, 2}_{s, t} \|_{L^q} \vee \| \delta A^2_{s, t}
    \|_{L^q} & \lesssim | t - s |^{\beta }; \\
     \| A^1_{s, t} \|_{L^{\frac{q}{2}}} & \lesssim | t - s |^{2 \beta};\\
     \| A_{s, t} \|_{L^{\frac{q}{3}}} & \lesssim | t - s |^{3 \beta}.
  \end{aligned}
  \right\}
  \end{equation}
  Then for all $\alpha \in [ 0, \beta - \nicefrac{1}{q} )$ there are
  modifications of $(A, A^1, A^{1, 1}, A^{1, 2}, A^2)$ denoted by $(\tilde{A},
  \tilde{A}^1  \tilde{A}^{1, 1}, \tilde{A}^{1, 2}, \tilde{A}^2)$ and
  $\tilde{K}^{(l)}_{\alpha} \in L^{\frac{q}{l}} $for $l = 1, 2, 3$ such that
  \[ | \tilde{A}_{s, t} |  \leq \widetilde{K }^{(3)}_{\alpha} | t - s |^{3
     \alpha} ; \qquad | \tilde{A}^1_{s, t} | \leq \tilde{K}^{(2)}_{\alpha} | t
     - s |^{2 \alpha} ; \qquad | \delta \tilde{A}^2_{s, t} | \vee \max_{j = 1,
     2} | \delta \tilde{A}_{s, t}^{1, j} | \leq \tilde{K}_{\alpha}^{(1)} | t -
     s |^{\alpha} \]
  for any $(s, t) \in \Delta_T$ almost surely.
  \end{theorem}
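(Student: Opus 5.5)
The plan is to prove the statement bottom-up in the number of levels: first the one-parameter paths, then the second-level $A^1$, then $A$, each time through a dyadic chaining argument driven by the algebraic identities $\delta A^1_{s,u,t}=\delta A^{1,1}_{s,u}\otimes\delta A^{1,2}_{u,t}$ and $\delta A_{s,u,t}=A^1_{s,u}\otimes\delta A^2_{u,t}$, in the same spirit as the proof of the rough path Kolmogorov criterion \cite[Theorem 3.1]{friz_2020}. Fix $\alpha<\beta-1/q$ and work on $[0,T]=[0,1]$ with the dyadic sets $D_n=\{k2^{-n}\}$ and $D=\bigcup_n D_n$.

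\emph{Step 1 (paths).} For $Z\in\{A^{1,1},A^{1,2},A^2\}$ set $K_n\coloneqq\max_{k}|\delta Z_{k2^{-n},(k+1)2^{-n}}|$. Bounding the maximum by the sum gives $\mathbb E[K_n^q]\lesssim 2^{n}2^{-nq\beta}$. Hence $K^{(1)}\coloneqq\sum_n 2^{n\alpha}K_n$ lies in $L^q$, since $\sum_n 2^{n(\alpha-\beta+1/q)}<\infty$. Writing $s<t$ in $D$ as a concatenation of at most two dyadic intervals per level $n\ge m$, where $2^{-m}\sim t-s$, gives $|\delta Z_{s,t}|\lesssim K^{(1)}|t-s|^\alpha$ on $D$. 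The càdlàg property then lets us extend this bound to all of $\Delta_T$ (passing to modifications as usual).

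\emph{Step 2 ($A^1$).} Define $K^{(2)}_n\coloneqq\max_k|A^1_{k2^{-n},(k+1)2^{-n}}|$. Then $\mathbb E[(K^{(2)}_n)^{q/2}]\lesssim 2^n 2^{-nq\beta}$, so $\sum_n 2^{2n\alpha}K^{(2)}_n\in L^{q/2}$, because $2\alpha<2\beta-2/q$. For dyadic $s<t$ with the same decomposition $s=\tau_0<\dots<\tau_N=t$ (at most two intervals per level), Chen's relation gives
\[A^1_{s,t}=\sum_j A^1_{\tau_j,\tau_{j+1}}+\sum_j \delta A^{1,1}_{s,\tau_j}\otimes\delta A^{1,2}_{\tau_j,\tau_{j+1}}.\]
The first sum is bounded by $\sum_{n\ge m}2^{-2n\alpha}2^{2n\alpha}K^{(2)}_n\lesssim|t-s|^{2\alpha}\,\cdot$(the $L^{q/2}$ variable). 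For the cross sum we use Step 1: it is bounded by $(K^{(1)})^2|t-s|^\alpha\sum_{n\ge m}2^{-n\alpha}\lesssim (K^{(1)})^2|t-s|^{2\alpha}$, and $(K^{(1)})^2\in L^{q/2}$ by Hölder. Together these define $\tilde K^{(2)}_\alpha\in L^{q/2}$.

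\emph{Step 3 ($A$).} Proceed identically, with $K^{(3)}_n\coloneqq\max_k|A_{k2^{-n},(k+1)2^{-n}}|$ and $\mathbb E[(K^{(3)}_n)^{q/3}]\lesssim 2^n2^{-nq\beta}$. The telescoped identity now reads
\[A_{s,t}=\sum_j A_{\tau_j,\tau_{j+1}}+\sum_j A^1_{s,\tau_j}\otimes\delta A^2_{\tau_j,\tau_{j+1}}.\]
The cross terms are bounded by $\tilde K^{(2)}_\alpha K^{(1)}|t-s|^{3\alpha}$, and this product lies in $L^{q/3}$ by Hölder with exponents $\tfrac{2}{q}+\tfrac1q=\tfrac3q$. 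Finally, we extend all three estimates from dyadic times to $\Delta_T$ by right-continuity; the identities pass to the limit, so the modifications can be taken to be the processes themselves or their right limits along $D$.

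The main obstacle is the Chen decomposition in Steps 2 and 3. We have to order the dyadic intervals so that $\delta A^{1,1}_{s,\tau_j}$ and $A^1_{s,\tau_j}$ are controlled by the already established bounds at scale $|t-s|$ rather than at scale $2^{-n}$, and we have to check that the geometric sums converge with the exponent $\alpha$ (not $\beta$). Equally delicate is the passage from dyadic to arbitrary times for càdlàg processes: this requires $A$ and $A^1$ to be right-continuous in both arguments, which I would use directly.
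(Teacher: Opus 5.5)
Your proposal is correct and follows essentially the same route as the paper: dyadic chaining with level-wise maxima $K^{(l)}_n$, telescoping along a dyadic partition of $[s,t]$ via the two Chen-type relations to control $A^1$ and then $A$, Hölder's inequality to place $(K^{(1)})^2$ and $K^{(1)}\tilde K^{(2)}$ in $L^{q/2}$ and $L^{q/3}$, and extension from dyadic times by right-continuity.

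The only cosmetic point is the endpoint $\alpha=0$, where your factor $\sum_{n\ge m}2^{-n\alpha}$ diverges. Either bound $\sum_{n\ge m}K_n\le 2^{-m\alpha}\sum_{n\ge m}2^{n\alpha}K_n\le 2^{-m\alpha}K^{(1)}$ directly, as the paper does, or deduce the case $\alpha=0$ from any positive $\alpha$.
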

  \begin{proof}The proof is a straightforward generalization of the proof given in \cite[Theorem~3.1]{friz_2020}. We give it regardless for the convenience of
  the reader. W.l.o.g. take $T = 1$ and let $D_n $denote the set of integer
  multiples of $2^{- n}$ in $[0, 1)$. As in the usual criterion, it suffices
  to consider $s, t \in \bigcup_n D_n$ with the values at the remaining times
  filled in by continuity. Note that $| D_n |^{- 1} = 2^n$. Set
  \begin{eqnarray*}
    K_n^{(1)} & := & \sup_{s \in D_n } \left( | \delta A^{1, 1}_{s,
    s + 2^{- n}} | \vee | \delta A^{1, 2}_{s,
    s + 2^{- n}} | \vee | \delta A^2_{s, s + 2^{- n}} |\right); \nonumber\\
    K_n^{(2)} & := & \sup_{s \in D_n} | A^1_{s, s+ 2^{-n}} |;  \nonumber\\
    K^{(3) }_n & := & \sup_{s \in D_n} | A_{s, s+2^{-n}} |. \nonumber
  \end{eqnarray*}
  It follows from \eqref{appendix:Kolmogorov:multiest}, that
  \begin{eqnarray*}
    \| K^{(1)}_{n} \|_{L^q} \leq \left( \sum_{s \in D_n} \left \Vert  | \delta
    A_{s, s + 2^{- n}}^{1, 1}|^{q} \right \Vert_{L^1} + \left \Vert  | \delta
    A_{s, s + 2^{- n}}^{1, 2}|^{q} \right \Vert_{L^1} +\left\| \left| {\delta A^2_{s, s +
    2^{- n}}}  \right|^q \right\|_{L^1} \right)^{\frac{1}{q}} & \lesssim & |
    D_n |^{\beta  - \frac{1}{q}}; \nonumber\\
    \| K^{(2)}_n \|_{L^{\frac{q}{2}}} \leq \left( \sum_{s \in D_n} \left\| \left|
    {A^1_{s, s + 2^{- n}}}  \right|^{\frac{q}{2}} \right\|_{L^1}
    \right)^{\frac{2}{q}} & \lesssim & | D_n |^{\beta - \frac{2}{q}};
    \nonumber\\
    \| K^{(3)}_n \|_{L^{\frac{q}{3}}} \leq \left( \sum_{s \in D_n} \left\| | A_{s,
    s + 2^{- n}}  |^{\frac{q}{3}} \right\|_{L^1} \right)^{\frac{3}{q}} &
    \lesssim & | D_n |^{\beta  - \frac{3}{q}}. \nonumber
  \end{eqnarray*}
  Now fix $s, t \in \bigcup_n D_n$ with $s<t$ and choose $m$ such that $| D_{m + 1} | <
  t - s \leq | D_m |$. Now note that the intervall $[s, t)$ can be expressed
  as the finite disjoint union of the form $[u, v) \in D_n$ with $n = m + 1$
  and where no three intervals have the same length. We denote this partition
  by $ s = t_0 < \ldots . < t_N = t $.
  Then it holds
  \begin{equation}
  \label{appendix:eq:Kolmogorov_1}
  \max_{j = 1, 2} | \delta A^{1, j}_{s, t^{}} | \vee | \delta A^2_{s, t} |
     \leq \max_{0 \leq i \leq N} \max_{j = 1, 2} | \delta A^{1, j}_{s, t_i^{}}
     | \vee | \delta A^2_{s, t_i} | \leq 2 \sum_{n \geq m + 1} K_n^{(1)}. 
  \end{equation}  
  Further we have
  \begin{equation}
  \label{appendix:eq:Kolmogorov_2}
  \begin{aligned}
  | A^1_{s, t} | \leq \max_{0 \leq i \leq N} | A^1_{s, t_i} | &\leq |
     A^1_{s, t_1} | + \sum_{i = 1}^{N - 1} | A^1_{t_i, t_{i + 1}}+ \delta A^1_{s, t_i, t_{i + 1}}   | \\
     &\leq 2 \sum_{n \geq m + 1} K_n^{(2)} + \sum_{i =
     1}^{N - 1} | \delta A^1_{s, t_{i}, t_{i + 1}} |,  
     \end{aligned}
     \end{equation}
     noting that the last term may be bounded by
  \begin{equation}
  \label{appendix:eq:Kolmogorov_3}
  \begin{aligned}
  \sum_{i = 1}^{N - 2} | \delta A^1_{s, t_{i + 1}, t_{i + 2}} | &= \sum_{i =
     1}^{N - 2} \left| \delta A^{1, 1}_{s, t_i} \otimes \delta A^{1, 2}_{t_{i,
     t_{i + 1}}} \right| \leq \sum_{i = 1}^{N - 2} \left(  \sum_{k = 0}^{i - 1} | \delta A^{1,
     1}_{t_k, t_{k + 1} } | \right) \left| \delta A^{1,
     2}_{t_{i, t_{i + 1}}} \right| \\
     &\leq \left( \sum_{i=0}^{N-2} |\delta A_{t_{k}, t_{k+1}}^{1,1}| \right) \left(\sum_{i = 1}^{N - 2} \left| \delta A^{1,
     2}_{t_{i, t_{i + 1}}} \right| \right) \leq 4 \left( \sum_{n \geq m + 1}
     K_n^{(1)} \right)^2.  
  \end{aligned}
  \end{equation} 
  Analogously we have by combining \eqref{appendix:eq:Kolmogorov_1}, \eqref{appendix:eq:Kolmogorov_2} and \eqref{appendix:eq:Kolmogorov_3}
  \begin{equation} 
  \label{appendix:eq:Kolmogorov_4}
  \begin{aligned}
    | A_{s, t} | & =  \left| \sum_{i = 1}^{N - 1} A_{t_i, t_{i + 1}} + A^1_{s
    , t_i} \otimes \delta A^2_{t_i, t_{i + 1}} \right| \\
    & \leq  \sum_{i = 1}^{N - 1} | A_{t_i, t_{i + 1}} | + \max_{j=1, \dots,  N-1} | A^1_{s,
    t_j} | \sum_{i=0}^{N-1} | \delta A^2_{t_i, t_{i + 1}} | \\
    & \leq  2 \sum_{n \geq m + 1} K^{(3)}_n +  \left( 2 \sum_{n \geq m + 1} K_n^{(2)} + 4 \left(
    \sum_{n \geq m + 1} K_n^{(1)} \right)^2 \right) \left( 2 \sum_{n \geq m + 1}
    K_n^{(1)} \right) \\
    & =  2 \sum_{n \geq m + 1} K^{(3)}_n + 4 \left(  \sum_{n \geq m + 1}
    K_n^{(1)} \right) \left( \sum_{n \geq m + 1} K_n^{(2)} \right) + 8 \left(
     \sum_{n \geq m + 1} K_n^{(1)} \right)^3.  
  \end{aligned}
  \end{equation}
  Now note that for any $\alpha \in [ 0, \beta - \nicefrac{1}{q} )$ it
  holds by \eqref{appendix:eq:Kolmogorov_1}:
  \[ \frac{| \delta A^{1, 1}_{s, t}| \vee | \delta A^{1, 2}_{s, t}| \vee | \delta A^2_{s,
     t} |}{| t - s |^{\alpha}} \leq 2 \sum_{n \geq m + 1} \frac{K_n^{(1)}}{|
     D_{m + 1} |^{\alpha}} \leq 2 \sum_{n \geq m + 1} \frac{K_n^{(1)}}{| D_n
     |^{\alpha}} \leq  2 \sum_{n \geq 0} \frac{K_n^{(1)}}{| D_n
     |^{\alpha}}\eqqcolon K^{(1)}_{\alpha} \]
  with $K^{(1)}_{\alpha} \in L^q$ as
  \[ \| K^{(1) }_{\alpha} \|_{L^q} \leq 2 \sum_{n \geq 0} | D_n |^{\beta -
     \frac{1}{q} - \alpha} < \infty. \]
  Analogously by \eqref{appendix:eq:Kolmogorov_2} we have
  \[ \frac{| A^1_{s, t} |}{| t - s |^{2 \alpha}} \leq 4 \left( \sum_{n \geq m
     + 1} \frac{K_n^{(1)}}{| D_n |^{\alpha}} \right)^2 + 2 \sum_{n \geq m + 1}
     \frac{K_n^{(2)}}{| D_n |^{2 \alpha}} \leq 4 \left( \sum_{n \geq 0} \frac{K_n^{(1)}}{| D_n |^{\alpha}} \right)^2 + 2 \sum_{n \geq 0}
     \frac{K_n^{(2)}}{| D_n |^{2 \alpha}} \eqqcolon K^{(2) }_{\alpha}. \]
with $K^{(2)}\in L^{\frac{q}{2}}$ as 
\[ \| K^{(2)}_{\alpha} \|_{L^{\frac{q}{2}}} \lesssim \left( \left\| \sum_{n
     \geq 0} \frac{K_n^{(1)}}{| D_n |^{\alpha}} \right\|_{L^q} \right)^2 +
     | D_n |^{2 \beta - \frac{2}{q} - 2 \alpha} \lesssim \sum_{n \geq 0}  |
     D_n |^{2 \beta - \frac{2}{q} - 2 \alpha} < \infty. \]
  At last by \eqref{appendix:eq:Kolmogorov_4} we obtain 
  \begin{equation*} 
  \begin{aligned}
  \frac{| A_{s, t} |}{| t - s |^{3 \alpha}} &\leq 2 \sum_{n \geq m + 1}
     \frac{K^{(3)}_n}{| D_n |^{3 \alpha}} + \left( 2 \sum_{n \geq m + 1}
     \frac{K_n^{(1)}}{| D_n |^{\alpha}} \right) \left( 2 \sum_{n \geq m + 1}
     \frac{K_n^{(2)}}{| D_n |^{2 \alpha}} \right) + \left( 2 \sum_{n \geq m +
     1} \frac{K_n^{(1)}}{| D_n |^{\alpha}} \right)^3 \\
     &\leq 2 \sum_{n \geq 0}
     \frac{K^{(3)}_n}{| D_n |^{3 \alpha}} + \left( 2 \sum_{n \geq 0}
     \frac{K_n^{(1)}}{| D_n |^{\alpha}} \right) \left( 2 \sum_{n \geq 0}
     \frac{K_n^{(2)}}{| D_n |^{2 \alpha}} \right) + \left( 2 \sum_{n \geq 0} \frac{K_n^{(1)}}{| D_n |^{\alpha}} \right)^3 
     =: K^{(3)}_{\alpha}.  
   \end{aligned}
   \end{equation*}
  with $K^{(3)}_{\alpha} \in L^{\frac{q}{3}}$ as by applying the Hölder-Inequality we get
  \begin{equation*}  
         \| K^{(3)}_{\alpha} \|_{L^{\frac{q}{3}}} \lesssim  \|
       K^{(1)}_{\alpha} \|_{L^q}^3 + \| K^{(1)}_{\alpha} \|_{L^q} \|
       K^{(2)}_{\alpha} \|_{L^{\frac{q}{2}}} + \left\| \sum_{n \geq 0}
       \frac{K^{(3)}_n}{| D_n |^{3 \alpha}} \right\|_{L \frac{q}{3}} \lesssim
       \sum_{n \geq 0} | D_n |^{3 \beta - \frac{3}{q} - 3 \alpha} < \infty.
  \end{equation*}   
  concluding the proof.
  \end{proof}
  \begin{remark}
  \label{appendix:Kolmogorov_transpose_remark}
    Note that an analogous claim for the case
    \[ \delta B_{s, u, t} = \delta B^1_{s, u} \otimes B^2_{u, t} ; \quad B^2_{s, u, t} = \delta B^{2, 1}_{s, u} \otimes \delta
       B^{2, 2}_{u, t} \]
    with $B^1 : [0, T] \times \Omega \rightarrow V^1$ and $B^2 :
    \Delta_T \times \Omega \rightarrow V^{2, 1} \otimes V^{2, 2}$ holds
    as well, by considering the transpose $A_{s,t}\coloneqq B_{s,t}^{\top}$ in \cref{appendix:theorem:Kolmogorov}. 
\end{remark}
The higher-order Kolmogorov criterion is used in the proof of \cref{prop:eq:SCRPtoRIP} to obtain suitable path-regularity of certain iterated integrals, which we also consider in the following example.
\begin{example} \label{Iterated_Martingale_Inequality} Suppose $\mathbf{X} = (X,
  \mathbb{X}) \in \mathscr{C}^{\alpha}([0,T]; \mathbb{R}^{d_{X}})$ with $\beta \in \left( \nicefrac{1}{3},
  \nicefrac{1}{2} \right)$ and for some $q \geq 3$ such that $3
  \beta - \nicefrac{1}{q} > 1$ consider $(Y, Y') \in \mathscr{D}^{2 \alpha}_X\left([0,T]; L^q(\mathcal{L}(\mathbb{R}^{d_{X}}; \mathbb{R}^{d_{Y}})\right)$ i.e.
  \[ \sup_{s < t} \frac{\| \delta Y_{s, t} - Y_s' \delta X_{s, t} \|_{L^q}}{|
     t - s |^{2 \alpha}} < \infty ; \hspace{3em} \sup_{s < t} \frac{\| \delta 
     Y_{s, t} \|_{L^{q}} \vee \| \delta Y'_{s, t} \|_{L^{q}}}{| t - s |^{\alpha}} < \infty \]
  Now let $M \in \mathcal{M}^{q, c, 1}$ be a
  continuous, $q$-integrable martingale with Lipschitz-continuous bracket
  process. Thus the Itô integral
  $ \Pi (Y ; M)_{s, t} = \int_s^t \delta Y_{s, r} dM_{r}$
  is well-defined and in $L^{q}$.  We further define for $R^Y_{s, t} :=
  \delta Y_{s, t} - Y_s' \delta X_{s, t}$, the iterated integral
  \[ \Pi (R ; M)_{s, t} := \int_s^t R_{s, r}^Y
    \otimes  dM_r,  \]
  which is defined in the sense of the IBP-integrals from \cref{lemma:ibp} by noting that $[s,T]\ni t\mapsto R_{s,t}^{Y}$ is an $(\mathfrak{F}_{t})_{t\in [s,T]}$- local martingale.    
  Immediately one can verifiy the following algebraic relations for any $s,u, t\in [0,T]$ with $s< u<t$ almost surely
  \[ \delta \Pi (R ; M)_{s, u, t} = - (\delta Y_{s, u}) \otimes \Pi (X ;
     M)_{u, t} + R^Y_{s, u} \otimes \delta M_{u, t} \]
  Hence by applying \cref{appendix:theorem:Kolmogorov} and \cref{appendix:Kolmogorov_transpose_remark}, we obtain
  almost surely
  \[ \left| \int_s^t Y_r \ensuremath{\operatorname{dM}}_r - Y_s' \Pi (X ;
     M)_{s, t} \right| = \left| \int_s^t (\delta Y_{s, r} - Y_s' \delta X_{s,
     r}) dM_r \right|=\big| \Pi(R; M)_{s,t} \big| \approx O (| t - s |^{3
     \alpha}) \]
  for any $\alpha \in (0, 3 \beta - \nicefrac{1}{q})$.
\end{example}
\section{On solution flows for RDEs along non-autonomous vector fields}
\label{appendix:section:On RDE-flows for non-autonomous vector fields}
In this section, we investigate the well-posedness and smoothness of RDEs along non-autonomous, controlled vector fields. Let $V, W, U$ denote some finite-dimensional Banach spaces. Here,  $\alpha \in (\nicefrac{1}{3}, \nicefrac{1}{2}]$ and $\mathbf{X}=(X, \mathbb{X})\in \mathscr{C}^{\alpha}([0,T]; V)$. Recall (see, e.g., \cite[Chapter~4]{friz_2020}) that $(\mathscr{D}^{2\alpha}_{X}([0,T]; W), \Vert \cdot \Vert_{\mathscr{D}^{2\alpha}([0,T]; W)})$ is a Banach space with
\begin{equation*}
    \Vert Y; Y' \Vert_{\mathscr{D}^{2\alpha}([0,T]; W)}\coloneqq |Y_{0}|+ |Y'_{0}| + [Y, Y']_{X; 2},
\end{equation*}
where $[Y,Y']_{X;2}$ is given in \eqref{e:semi-norm}. 
\begin{definition}
  Let $\mathfrak{K}\subset W$ be closed. We call the $3$-tuple $\mathcal{F}=(f, f', \partial f)$ of functions
  \begin{equation*} 
  \begin{aligned}
      \mathcal{F}: [0,T] \times \mathfrak{K} &\to U \times \mathcal{L} (V ; U)
     \times \mathcal{L} (W ; U)\\
     (t,x)  &\mapsto (f_{t}(x) , f'_{t}(x), \partial f_{t}(x))  
  \end{aligned}
  \end{equation*}
  an $X$-{\em controlled} 
  $\Lip^2$ 
  {\em field}, on $[0,T]\times \mathfrak{K}$, if it holds on $[0,T]\times \mathfrak{K}$,
  \begin{eqnarray*}
  f_{t_{1}}(x_{1})&\stackrel{2}{=}& f_{t_{0}}(x_{0}) + f'_{t_{0}}(x_{0}) \delta X_{t_{0}, t_{1}}+  \partial f_{t_{0}}(x_{0}) (x_{1}-x_{0})\\
  G_{t_{1}}(x_{1}) &\stackrel{1}{=}& G_{t_{0}}(x_{0}) \quad G\in  \{f', \partial f \},
  \end{eqnarray*}
  where ``$\stackrel{k}{=}$'' is defined in~\eqref{e:k=}.    We denote the space of such $X$-controlled fields  by $\mathscr{D}^{2 \alpha}_{X}\Lip^{2}_{x}(\mathfrak{K}; U)$. If $\mathcal{F}\in \mathscr{D}^{2\alpha}_{X}\Lip^{2}_{x}(\mathfrak{K}; U)$ for any compact subset $\mathfrak{K}\subset W$, we say $\mathcal F\in \mathscr{D}^{2 \alpha}_{X}\Lip^{2}_{x, \loc}(W; U)$.
\end{definition}
Naturally, this can be extended to higher orders of spatial generality. 
\begin{definition}
  Let $\mathfrak{K}\subset W$ be closed. We call the $(2k-1)$-tuple \\
  $\mathcal{F}=(f, f', \partial f, \partial f', \dots, \partial^{k-2} f, \partial^{k-2} f', \partial^{k-1} f)$ of functions
  \begin{equation*} 
  \begin{aligned}
      (\partial^{j} f , \partial^{j} f', \partial^{j+1} f): [0,T] \times \mathfrak{K} &\to \mathcal{L} (W^{\otimes j} ; U)
     \times \mathcal{L} (W^{\otimes j}\otimes V ; U)\times \mathcal{L}(W^{\otimes j+1}; U)\\
     (t,x)  &\mapsto (\partial^{j} f_{t}(x) , \partial^{j} f'_{t}(x), \partial^{j+1} \partial f_{t}(x))  
  \end{aligned}
  \end{equation*}
  for any $j=0, \dots, k-2$,  an $X$-{\em controlled} 
  $\Lip^k$ 
  {\em field}, on $[0,T]\times \mathfrak{K}$ if 
  \begin{enumerate}
  \item It holds
  \begin{equation*}
      \sup_{t \in [0,T]}[f_{t}, \partial f_{t},\dots, \partial^{k-1} f_{t}]_{\Lip^{k}}< \infty; \quad \sup_{t\in [0,T]} [f'_{t}, \partial f'_{t}, \dots, \partial^{k-2} f'_{t}]_{\Lip^{k-1}} < 
  \infty
  \end{equation*}
      \item $(\partial^{j}f , (\partial^{j} f')^{\top}, \partial^{j+1} f)\in \mathscr{D}^{2\alpha}\Lip^{2}_{x}(\mathfrak{K}; \mathcal{L}(W^{\otimes j}; U))$ for any $j=0, \dots, k-2$.
  \end{enumerate}
   We denote the space of such $X$-controlled fields  by $\mathscr{D}^{2 \alpha}_{X}\Lip^{k}_{x}(\mathfrak{K}; U)$. If $\mathcal{F}\in \mathscr{D}^{2\alpha}_{X}\Lip^{k}_{x}(\mathfrak{K}; U)$ for any compact subset $\mathfrak{K}\subset W$, we say $\mathcal F\in \mathscr{D}^{2 \alpha}_{X}\Lip^{k}_{x, \loc}(W; U)$.
\end{definition}
Analogously to \cref{cor:RIW_formula}, such controlled fields satisfy a natural composition rule with controlled rough paths. The smoothness of this composition operator can be further quantified.
\begin{lemma}
\label{appendix:Frechet_lemma}
    Let $\mathcal{F}=(f, f', \partial f, \partial f', \dots, \partial^{k+1} f)\in \mathscr{D}^{2\alpha}_{X}\Lip^{k+2}_{x}(W; U)$. Then 
    \begin{equation*}
    \begin{aligned}
        A^{\mathcal{F}}: \mathscr{D}^{2\alpha}_{X}([0,T]; W)&\to \mathscr{D}^{2\alpha}_{X}([0,T]; U)\\
        (Y, Y') &\mapsto (f(Y), \partial f(Y) Y'+ f'(Y))
    \end{aligned}
    \end{equation*}
    is a well-defined operator, which is $k$-times Fréchet differentiable with derivatives given by, for $(Z^{i}, (Z^{i})')\in \mathscr{D}^{2\alpha}_{X}([0,T]; W)$ with $i=1,\dots, j$, 
    \begin{equation*}
    \begin{aligned}
        &D^{j}A^{\mathcal{F}}(Y, Y')[(Z^{1}, (Z^{1})')\dots, (Z^{j}, (Z^{j})')]
        \\
        &= \Big(\partial^{j} f(Y) (Z^{1},\dots ,Z^{j}), ~ \partial^{j+1} f(Y) (Y', Z^1, \dots, Z^{j}) + \left(\partial^{j} f'(Y) (Z^1, \dots, Z^{j})\right)^{\top} \\
        &\qquad + \partial^{j} f(Y) \left( Z^{1} , \dots, Z^{j}\right)'\Big),
    \end{aligned}
    \end{equation*}
    for each $j=1, \dots, k$, where we used the short-hand notation
    \begin{equation*}
        \left( Z^{1} , \dots, Z^{j}\right)'\coloneqq \sum_{i=1}^{j}\left(Z^1,   \dots, Z^{i-1}, (Z^i)', Z^{i+1}, \dots, Z^j\right). 
    \end{equation*}
\end{lemma}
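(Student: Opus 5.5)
We argue by induction on $j$, after first checking that $A^{\mathcal{F}}$ is well defined. Fix $(Y,Y')\in\mathscr{D}^{2\alpha}_X([0,T];W)$. Set $g_t\coloneqq f_t(Y_t)$ and $g'_t\coloneqq \partial f_t(Y_t)Y'_t+f'_t(Y_t)$. Expand $f_{t}(Y_t)-f_s(Y_s)$ with the controlled-field relation
\[
f_{t}(x_1)\stackrel{2}{=}f_s(x_0)+f'_s(x_0)\delta X_{s,t}+\partial f_s(x_0)(x_1-x_0),
\]
taking $x_0=Y_s$ and $x_1=Y_t$. Then substitute $\delta Y_{s,t}=Y'_s\delta X_{s,t}+R^{Y}_{s,t}$. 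Since $|Y_t-Y_s|\lesssim|t-s|^\alpha$, the anisotropic remainder $|t-s;Y_t-Y_s|_{\mathfrak s}^2$ is $O(|t-s|^{2\alpha})$, so $|R^{g,g'}|_2<\infty$. The $\alpha$-Hölder regularity of $g'$ follows from the first-order relations for $f'$ and $\partial f$, via the same triangle-inequality argument as in the proof of \cref{cor:composition_rule}. Throughout we keep track of constants depending polynomially on $\|Y;Y'\|_{\mathscr{D}^{2\alpha}}$, because they enter the Fréchet estimates later.

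For differentiability, the plan is to show that the bracket defining $D^jA^{\mathcal F}(Y,Y')$ is itself of composition type. Consider the $j$-linear map $(Z^1,\dots,Z^j)\mapsto \partial^j f(Y)(Z^1,\dots,Z^j)$. It is a product of the controlled path $\partial^j f(Y)$ with $Z^1,\dots,Z^j$. Here $\partial^j f(Y)$ is well defined by the previous paragraph, applied to the field $(\partial^j f,(\partial^j f')^\top,\partial^{j+1}f)$, which is $\mathscr{D}^{2\alpha}_X\Lip^2_x$ by assumption. The Leibniz rule for Gubinelli derivatives then gives exactly the stated derivative component
\[
\partial^{j+1}f(Y)(Y',Z^1,\dots,Z^j)+\bigl(\partial^jf'(Y)(Z^1,\dots,Z^j)\bigr)^{\top}+\partial^jf(Y)(Z^1,\dots,Z^j)'.
\]
It remains to prove that these maps are indeed the Fréchet derivatives. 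By induction it suffices to show that
\[
D^{j-1}A^{\mathcal F}(Y+Z)-D^{j-1}A^{\mathcal F}(Y)-D^{j}A^{\mathcal F}(Y)[Z,\cdot]
\]
is $o(\|Z\|)$ in the $\mathscr{D}^{2\alpha}$ operator norm, and in addition that $D^kA^{\mathcal F}$ is continuous.

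We write the difference with the Taylor integral remainder
\[
\int_0^1\bigl(\partial^{j}f(Y+\theta Z)-\partial^j f(Y)\bigr)(Z,\cdot)\,d\theta,
\]
together with the analogous expression for the Gubinelli derivative. The key intermediate estimate is local Lipschitz continuity of the composition map $(Y,Y')\mapsto \partial^j g(Y)$ in $\mathscr{D}^{2\alpha}$. This holds whenever the field $\partial^j g$ has one more spatial derivative, namely $\mathscr{D}^{2\alpha}_X\Lip^{3}_x$-type regularity for $\partial^j f$, which the $\Lip^{k+2}$ assumption provides for $j\le k$. That gives a remainder of order $\|Z\|^2$. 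Note that it is this Lipschitz estimate, and not mere continuity, that consumes the two extra orders of spatial regularity.

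The main obstacle is this stability estimate for the composition operator with controlled fields. Concretely, one must bound the $2\alpha$-remainder of $h(Y^1)-h(Y^2)$ by $\|Y^1-Y^2\|_{\mathscr{D}^{2\alpha}}$, where $h$ is a time-dependent controlled field. This requires a two-point version of the expansion: write $h_t(Y^1_t)-h_t(Y^2_t)=\int_0^1\partial h_t(Y^2_t+\theta\Delta_t)\,\Delta_t\,d\theta$ with $\Delta\coloneqq Y^1-Y^2$. Then apply the product rule together with the already proven well-posedness of compositions to the controlled integrand $\partial h(Y^2+\theta\Delta)$, uniformly in $\theta$. The same argument, applied once more, gives continuity of $D^kA^{\mathcal F}$ in $(Y,Y')$.
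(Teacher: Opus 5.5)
Your route is the paper's route: well-definedness via the controlled-field expansion, the Leibniz rule to identify the candidate $D^jA^{\mathcal F}$, and a bound of order $\|Z\|^2$ in $\mathscr{D}^{2\alpha}_X$ on the remainder $D^{j-1}A^{\mathcal F}(Y+Z)-D^{j-1}A^{\mathcal F}(Y)-D^jA^{\mathcal F}(Y)[\,\cdot\,,Z]$. The paper simply asserts this bound. The gap is in your regularity count at the top order $j=k$.

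For $\mathcal F\in\mathscr{D}^{2\alpha}_X\Lip^{k+2}_x$ the data stop at $\partial^{k+1}f$ and $\partial^k f'$, which are merely Lipschitz in space. Condition (2) of the definition gives the controlled $\Lip^2$ fields $(\partial^i f,(\partial^i f')^{\top},\partial^{i+1}f)$ only for $i\le k$. Your stability estimate for $h=\partial^j f$ rests on $h(Y^1)-h(Y^2)=\int_0^1\partial h(Y^2+\theta\Delta)\Delta\,d\theta$. It therefore needs $\partial^{j+1}f(Y^2+\theta\Delta)$ to be a controlled path, i.e.\ it needs the field $(\partial^{j+1}f,(\partial^{j+1}f')^{\top},\partial^{j+2}f)$. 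So $\partial^j f$ has the controlled $\Lip^3$-type regularity you invoke only for $j\le k-1$, not for $j\le k$. A quadratic remainder at order $j$ costs $\Lip^{j+3}$, which is three orders beyond $j$, not two. The same objection applies to your continuity claim for $D^kA^{\mathcal F}$: its Gubinelli-derivative slot contains $\partial^{k+1}f(Y)$ and $\partial^kf'(Y)$, and $Y\mapsto\partial^{k+1}f(Y)$ is not continuous into $\alpha$-H\"older paths when $\partial^{k+1}f$ is only Lipschitz.

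This is a genuine failure at $j=k$, not a missing detail, and the $\theta$-average does not rescue it. Here is a counterexample.
\begin{itemize}
\item Take $k=1$, $V=W=U=\mathbb{R}$, and a time-independent $f\in\Lip^3$ with Gubinelli part $f'\equiv0$ and $D^2f(x)=|x|$ for $|x|\le1$. Then $\mathcal F=(f,0,Df,0,D^2f)\in\mathscr{D}^{2\alpha}_X\Lip^3_x$.
\item Take $X_t=\operatorname{sgn}(t-t_0)|t-t_0|^\alpha$ with $\mathbb{X}=\frac12(\delta X)^2$, $(Y,Y')=(X,1)$ and $(Z,Z')=(\varepsilon,0)$.
\item The Gubinelli-derivative part of the remainder is $r_\varepsilon(X_t)$, where $r_\varepsilon(x)=\int_0^\varepsilon\bigl(D^2f(x+u)-D^2f(x)\bigr)\,du$.
\item One computes $r_\varepsilon(0)-r_\varepsilon(-\varepsilon)=\varepsilon^2$. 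This increment is realised between the times $t_0-\varepsilon^{1/\alpha}$ and $t_0$, so $[r_\varepsilon(X)]_1\ge\varepsilon=\|Z;Z'\|_{\mathscr{D}^{2\alpha}}$.
\item Any Fréchet derivative must coincide with the pointwise limit, which is the stated formula. Hence $A^{\mathcal F}$ is not Fréchet differentiable at $(X,1)$.
\end{itemize}
The paper's one-line estimate $\lesssim\|Z^j;(Z^j)'\|^2_{\mathscr{D}^{2\alpha}}$ has the same defect at $j=k$. There are two ways to repair the statement:
\begin{itemize}
\item Assume $\mathcal F\in\mathscr{D}^{2\alpha}_X\Lip^{k+3}_x$. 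Your argument then goes through verbatim for all $j\le k$, including continuity of $D^kA^{\mathcal F}$.
\item Keep $\Lip^{k+2}$ and conclude only $(k-1)$-fold continuous Fréchet differentiability.
\end{itemize}
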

\begin{proof}
Let $j=1, \dots, k$ and for any $i=1, \dots, j$, $(Z^{i}, (Z^{i})')\in \mathscr{D}^{2\alpha}_{X}([0,T]; W)$.  Then it holds that 
\begin{equation*}
    \begin{aligned}
         &\Big[ \partial^{j} f(Y+Z^{j})(Y'+ (Z^{j})', Z^{1}, \dots, Z^{j-1})- \partial^{j} f(Y)(Y', Z^{1}, \dots, Z^{j-1}) \\
         & \quad +(\partial^{j-1} f'(Y+Z^{j})(Z^{1}, \dots, Z^{j-1}))^{\top}- (\partial^{j-1} f'(Y)(Z^{1}, \dots, Z^{j-1}))^{\top} \\
         &\quad  + \partial^{j-1} f(Y+ Z^{j})(Z^{1}, \dots, Z^{j-1})' - \partial^{j-1} f(Y)(Z^{1}, \dots, Z^{j-1})'  \\
         &\quad  - \left(\partial^{j+1} f(Y) (Y', Z^1, \dots, Z^{j}) + \left(\partial^{j} f'(Y) (Z^1, \dots, Z^{j})\right)^{\top} \right.  \\
        &\quad\qquad +  \left.  \partial^{j} f(Y) \left( Z^{1} , \dots, Z^{j}\right)' \right)\Big]_{1} \lesssim [Z^{j}]_{1}^{2} \lesssim \Vert Z^{j}; (Z^{j})'\Vert_{\mathscr{D}^{2\alpha}}^{2}.
         \end{aligned}
\end{equation*}
The $2\alpha$-estimates for the remainder terms follow analogously. Recall that $\partial^{j} f(y)$ is $j$-symmetric (see \cref{def:Stein_Lip_def}) and therefore we have shown 
\begin{equation*}
\begin{aligned}
    &\Big \Vert D^{j-1} A^{\mathcal{F}}\left( (Y, Y')+ (Z^{j}, (Z^{j})')\right)[(Z^{1}, (Z^{1})'), \dots, (Z^{j-1}, (Z^{j-1})')]\\
    &\quad - D^{j-1} A^{\mathcal{F}}(Y, Y')[(Z^{1}, (Z^{1})'), \dots, (Z^{j-1}, (Z^{j-1})')] \\
    &\quad -  D^{j}A^{\mathcal{F}}(Y, Y')[(Z^{1}, (Z^{1})'), \dots, (Z^{j}, (Z^{j})')]\Big \Vert_{\mathscr{D}^{2\alpha}} \lesssim \Vert Z^{j}; (Z^{j})' \Vert_{\mathscr{D}^{2\alpha}}^{2}.
\end{aligned}
\end{equation*}
Hence, $D^{j}A^{\mathcal{F}}$ as described above indeed are the claimed Fréchet derivatives.
 At last, note that
\begin{equation*}
   D^{j}A^{\mathcal{F}}(Y, Y')[(Z^{1}, (Z^{1})'),\dots, (Z^{j}, (Z^{j})')] \in \mathscr{D}^{2\alpha}_{X}([0,T]; U)
\end{equation*}
    follows directly from the classical composition rules for controlled rough paths; see \cite[Chapter 7]{friz_2020}.
\end{proof}
For $\alpha \in (\nicefrac{1}{3}, \nicefrac{1}{2}]$, we define the mapping
\begin{equation*}
\begin{aligned}
    \mathcal{I}: \mathscr{D}^{2\alpha}_{X}([0,T]; \mathcal{L}(V; W)) &\to \mathscr{D}_X^{2\alpha}([0,T]; W)\\
    \quad (Y, Y')&\mapsto \left (\int_{0}^{\cdot} (Y, Y')_{s} d\mathbf{X}_{s}, Y \right). 
    \end{aligned}
\end{equation*}
By, e.g., \cite[Theorem 4.10]{friz_2020}, $\mathcal{I}$ is a bounded linear operator. 
Equipped with this result, we are now in a position to establish the following claim.
\begin{theorem}
\label{thm:appendix_smoothness_flow}
    Let $\alpha\in (\nicefrac{1}{3}, \nicefrac{1}{2}), k \in \mathbb{N}, \mathbf{X} \in \mathscr{C}^{\alpha}([0,T]; V)$ and $\mathcal{F}=(f, f', \partial f, \partial f', \dots, \partial^{k+1} f)\in \mathscr{D}_{X}^{2\alpha}\Lip^{k+2}_{x}(W; \mathcal{L}(V; W))$. Then for any $\xi\in W$ there exists $0< T_{0}\leq T$ and a unique $(Y^{\xi}, (Y^{\xi})')\in \mathscr{D}^{2\alpha}_{X}([0,T_{0}]; W)$ with $(Y^{\xi})'= f(Y^{\xi})$ and 
    \begin{equation*}
        Y_{t}^{\xi}= \xi+ \int_{0}^{t} \left(f_{s}(Y_{s}^{\xi}), \partial f_{s}(Y_{s}^{\xi}) f(Y_{s}^{\xi}) + f'_{s}(Y_{s}^{\xi})\right) d\mathbf{X}_{s} =\xi+ \int_{0}^{t} \left(A^{\mathcal F}(Y^\xi, (Y^\xi)')\right)_s d\mathbf{X}_{s},
    \end{equation*}
   such that for the solution flow 
\begin{equation}\label{e:solution-flow}
    \phi_{t}(\xi)\coloneqq (Y^{\xi}_{t}, f_{t}(Y_{t}^{\xi})),
\end{equation}
    it holds $\phi \in C^{k}(W; \mathscr{D}_X^{2\alpha}([0,T_{0}]; W))$. In particular  $D\phi_{t}(\xi)= (D Y^{\xi}_{t}, D(f(Y^{\xi}_{t})))$ is the unique solution to 
   \begin{equation*}
   D \phi_t(\xi) = \left( \operatorname{Id}  + \int_{0}^{t} D\left(A^{\mathcal{F}}(\phi(\xi))\right)_{s}d\mathbf{X}_{s}, Df(Y_t^{\xi}) DY_t^{\xi}\right).
   \end{equation*}
   
   If further one of the following conditions holds:
   \begin{enumerate}
       \item for any $t\in [0,T]$, $f_{t}\in \mathcal{C}_{b}(W; \mathcal{L}(V; W))$ and $f'_{t} \in \mathcal{C}_{b}(W; \mathcal{L}(V^{\otimes 2}; W))$;
       \item for any $t\in [0,T]$, $(f_{t}, f'_{t})\in \mathcal{L}(W; \mathcal{L}(V; W)\times \mathcal{L}(V^{\otimes 2}; W))$ (linear rough vector fields);
   \end{enumerate}
   then one can take $T_{0}=T$ in the above.
\end{theorem}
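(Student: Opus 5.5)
The plan is to obtain local existence and uniqueness by a Banach fixed-point argument on $\mathscr{D}^{2\alpha}_X$, and then to derive smoothness of the flow from the implicit function theorem. This uses the Fréchet differentiability of the composition operator from \cref{appendix:Frechet_lemma} together with the boundedness of the linear integration map $\mathcal{I}$.

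First, consider the map
\begin{equation*}
\Psi_\xi(Y,Y') \coloneqq (\xi,0) + \mathcal{I}\big(A^{\mathcal F}(Y,Y')\big)
\end{equation*}
on the closed affine subset of $\mathscr{D}^{2\alpha}_X([0,T_0];W)$ with $Y_0=\xi$ and $Y'_0=f_0(\xi)$. \cref{appendix:Frechet_lemma}, applied with $k\ge1$, shows that $A^{\mathcal F}$ is $C^1$. Its derivative is bounded on bounded sets, with bounds depending on the $\Lip^{k+2}$ norms of $\mathcal F$. The standard estimate for rough integrals, \cite[Theorem 4.10]{friz_2020}, gives a gain of a factor $T_0^{\alpha'-\alpha}$ when $\mathcal I$ is estimated in a slightly weaker Hölder scale $\alpha'<\alpha$. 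Alternatively, one can work with the $\alpha$-norm on short intervals, where the contribution of $[Y']_1$ to the seminorm carries a factor $T_0^{\alpha}$. In either version $\Psi_\xi$ maps a ball into itself and is a contraction for $T_0$ small. The argument follows the usual proof of \cite[Theorem 8.3]{friz_2020}, with $f(Y)$ replaced by $A^{\mathcal F}(Y,Y')$; the only extra terms are $f'$ and the time-dependence of $f$, and these are controlled by the $\mathscr{D}^{2\alpha}\Lip^2$ structure in the same way as in \cref{lemma:jet_criterion}. This produces the unique fixed point $(Y^\xi,f(Y^\xi))$ on $[0,T_0]$. Uniqueness in the whole space, and not only in the ball, then follows by the usual short-interval concatenation.

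Second, for the regularity in $\xi$, define
\begin{equation*}
\mathcal{G}(\xi,(Y,Y')) \coloneqq (Y,Y') - (\xi,0) - \mathcal{I}\big(A^{\mathcal F}(Y,Y')\big).
\end{equation*}
This is a map from $W\times\mathscr{D}^{2\alpha}_X$ to $\mathscr{D}^{2\alpha}_X$. It is $C^k$, since $\mathcal I$ is bounded linear and $A^{\mathcal F}\in C^k$ by \cref{appendix:Frechet_lemma}. Its partial derivative in $(Y,Y')$ is $\operatorname{Id}-\mathcal{I}\circ DA^{\mathcal F}(Y,Y')$. On $[0,T_0]$ the small-time contraction estimate above makes $\mathcal{I}\circ DA^{\mathcal F}$ of norm less than $1$, so this partial derivative is invertible by a Neumann series. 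The implicit function theorem then gives $\xi\mapsto\phi(\xi)\in C^k(W;\mathscr{D}^{2\alpha}_X)$. Differentiating the identity $\mathcal G(\xi,\phi(\xi))=0$ yields the stated linear equation for $D\phi_t(\xi)$, and its uniqueness follows from the same invertibility.

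One subtlety is that $T_0$ must be chosen uniformly in $\xi$, or at least locally uniformly, so that the flow is defined on a neighbourhood of each $\xi$. Uniform bounds on the norms of $\mathcal F$ ensure this. The only point where the smallness of $T_0$ depends on $\xi$ is through the size of the ball, and under the global $\Lip$ bounds this size can be fixed independently of $\xi$.

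Third, for global existence, it suffices to show that the step size $T_0$ can be chosen independently of the starting point and of the starting time; one then iterates finitely many times. Under condition (1), boundedness of $f$ and $f'$, together with the $\Lip$ bounds, gives an a priori estimate on $\|Y\|_{\alpha}$ and on the remainder that does not depend on the initial datum, as in \cite[Theorem 8.4]{friz_2020}. The contraction step size then depends only on $\mathcal F$ and $\mathbf X$. Under condition (2), the equation is linear, so $\Psi_\xi$ is affine and its Lipschitz constant does not depend on the size of the ball; the contraction interval again depends only on $\mathcal F$ and $\mathbf X$, as in \cite[Section 8.7]{friz_2020}. Flow regularity on $[0,T]$ then follows by composing the $C^k$ local flows using the flow property.

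I expect the main obstacle to be this last step in case (1). The time-dependent, controlled nature of $f$ (the terms $f'$ and $\delta f_{s,t}$) must be tracked carefully in the a priori bounds so that they produce no initial-datum dependence. I would first try to adapt the proof of \cite[Theorem 8.4]{friz_2020} line by line, with $f(Y_s)$ replaced by $f_s(Y_s)$.
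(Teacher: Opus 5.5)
Your route is the paper's. You get local well-posedness by a contraction as in \cite[Theorem 8.3]{friz_2020}, then apply the implicit function theorem to $\mathcal G(\xi,(Y,Y'))=(Y,Y')-(\xi,0)-\mathcal I(A^{\mathcal F}(Y,Y'))$, which is the paper's $\Psi$, with $C^k$-regularity from \cref{appendix:Frechet_lemma} and boundedness of $\mathcal I$.

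The step that fails as written is inverting $\operatorname{Id}-L$, $L:=\mathcal I\circ DA^{\mathcal F}(Y,Y')$, by a Neumann series on all of $\mathscr D^{2\alpha}_X([0,T_0];W)$. With the norm $|H_0|+|H'_0|+[H,H']_{X;2}$, $L$ is not small for any $T_0$. Indeed $L(H,H')=(\int_0^\cdot\partial f(Y)H\,d\mathbf X,\ \partial f(Y)H)$ has Gubinelli derivative $\partial f_0(Y_0)H_0$ at time $0$. Its seminorm also contains terms like $\|X\|_\alpha|H'_0|$, $\|\mathbb X\|_{2\alpha}|H'_0|$ and multiples of $|H_0|$, with no power of $T_0$. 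Concretely, for $V=W=\mathbb R$, $f_t(y)=cy$, $f'\equiv 0$ and $(H,H')=(1,0)$, one gets $L(H,H')=(c\,\delta X_{0,\cdot},c)$, of norm $|c|$. Your small-time estimate from the first step only controls differences with vanishing initial data. In other words, it controls $L$ restricted to the invariant subspace $\mathscr D_0:=\{H_0=0,\ H'_0=0\}$, not $L$ on the whole space.

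The repair is what the paper does. Any solution of $(H,H')-L(H,H')=(K,K')$ must have $H_0=K_0$ and $H'_0=K'_0+\partial f_0(Y_0)K_0$, so one solves on that affine subspace. Equivalent fixes: subtract a fixed path with these initial values and invert $\operatorname{Id}-L$ on $\mathscr D_0$; or write $(Y,Y')=(\xi+f_0(\xi)\delta X_{0,\cdot},f_0(\xi))+(\tilde Y,\tilde Y')$ and run the implicit function theorem on $W\times\mathscr D_0$.

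The paper also exploits linearity. The contraction step $\tilde T_0$ for the linearized equation does not depend on the initial values, so iterating over $[n\tilde T_0,(n+1)\tilde T_0]$ gives invertibility on the whole existence interval, not just the nonlinear contraction interval. Under (1) or (2) this gives the $C^k$ statement directly on $[0,T]$. Your gluing of local flows also works, but needs the (easy) fact that concatenation is bounded in $\mathscr D^{2\alpha}_X$.

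Finally, without (1) or (2), $f$ may be unbounded, since only its derivatives are controlled. Your ball must contain $Y'_0=f_0(\xi)$, so $T_0$ is in general only locally uniform in $\xi$, not independent of it.
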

\begin{proof}  The existence and uniqueness of the solution $(Y^{\xi}, (Y^{\xi})')$ on $[0,T_{0}]$ with $T_{0}$ as described above is an easy generalization of the proof of \cite[Theorem 8.3]{friz_2020}, which we leave to the reader. From now let $k \geq 2$. To see the smoothness of the flow, we define the map
    \begin{equation*}
    \begin{aligned}
    \Psi: \mathscr{D}^{2\alpha}_{X}([0,T_{0}]; W) \times W &\to \mathscr{D}_X^{2\alpha}([0,T_{0}], W)\\ 
    \left((Z, Z'), \xi\right) &\mapsto \left( Z- \xi- \int_{0}^{\cdot} (A^{\mathcal{F}}(Z, Z'))_{s} d\mathbf{X}_{s}, Z'- f(Z)\right)\\
    &\qquad = (Z-\xi, Z')- (\mathcal{I} \circ A^{\mathcal{F}}) (Z, Z').
    \end{aligned}
    \end{equation*}
    By the uniqueness of the solution, it holds that $\Psi((Z, Z'), \xi)=0$ iff $(Z, Z')=(Y^{\xi}, f(Y^{\xi}))$. Now by 
    \cref{appendix:Frechet_lemma}, we know that the operator $\mathcal{I}\circ A^{\mathcal{F}}$ is $k$-times Fréchet differentiable with derivatives $D^{j} (\mathcal{I}\circ A^{\mathcal{F}})[H, H']=\mathcal{I}(D^{j}A^{\mathcal{F}}[H, H'])$ for any $j=1, \dots, k$ and $(H, H')\in \mathscr{D}^{2\alpha}_{X}([0,T_{0}]; W)$. Denoting by $\mathbf{D}$  the Fréchet derivative of $\Psi$ restricted on $\mathscr{D}^{2\alpha}_{X}$, we see 
    \begin{equation*}
        \mathbf{D}\Psi((Z, Z'), \xi)[H, H']= (H, H')- \mathcal{I}\left(DA^{\mathcal{F}}(Z, Z')[H, H']\right).
    \end{equation*}
    Note that for each fixed $(Z,Z')$, the operator $\mathbf D \Psi$ is invertible iff for any $(K, K')\in \mathscr{D}^{2\alpha}_{X}([0,T_{0}]; W)$, there is a unique $(H, H')\in \mathscr{D}^{2\alpha}_{X}([0,T_{0}]; W)$ such that 
    \begin{equation}\label{e:S}
        \mathcal{S}(H, H')\coloneqq (K,K') + \mathcal{I}\left(DA^{\mathcal{F}}(Z, Z')[H, H']\right)=(H, H').
    \end{equation}
 Clearly, every fixed point of \eqref{e:S} is  an element of the affine subspace
    \begin{equation*}
    \mathscr{D}^{2\alpha}_{X}([0,T_{0}]; (K_{0}, K'_{0}))\coloneqq \left \{(Y, Y')\in \mathscr{D}^{2\alpha}_{X}([0,T_{0}])\big| Y_{0}=K_{0}; \;  Y'_{0}=K'_{0}+ \partial f_0(Z_0)K_{0} \right \}, 
    \end{equation*}
which is a complete metric space invariant under $\mathcal{S}$.  Given $\theta\in(0,1)$, \cite[Theorem 4.10]{friz_2020} ensures the existence of  a sufficiently small $\tilde T_0>0$,  independent of the initial condition $(K_{0}, K'_{0})$, such that  for any $(H_{1}, H'_{1}), (H_{2}, H'_{2})\in \mathscr{D}^{2\alpha}_{X}([0,T_{0}]; (K_{0}, K'_{0}))$,
    \begin{equation*}
    \begin{aligned}
       &\left \Vert \mathcal{S}(H_{1}, H_{1}')- \mathcal{S}(H_{2}, H_{2}') \right \Vert_{\mathscr{D}^{2}_{X}([0,\tilde{T_{0}}])}= \left \Vert \mathcal{S}(H_{1}, H_{1}')- \mathcal{S}(H_{2}, H_{2}') \right \Vert_{2; X; [0,\tilde{T}_{0}]} \\
       &=\left \Vert \mathcal{I}(DA^{\mathcal{F}}(Z, Z')[H_{1}-H_{2}, H'_{1}- H'_{2}]); \partial f(Z)(H_{1}-H_{2}) \right \Vert_{2; X; [0,\tilde{T}_{0}]}\\
       &< \theta \left \Vert H_{1}- H_{2}; H_{1}'- H_{2}'\right\Vert_{2; X; [0,\tilde{T}_{0}]} = \left \Vert H_{1}- H_{2}; H_{1}'- H_{2}'\right\Vert_{\mathscr{D}^{2\alpha}_{X}([0,\tilde{T_{0}}])},
    \end{aligned}
    \end{equation*}
    where $[\cdot, \cdot]_{2;X;[0,\tilde T_0]}$ is as defined in \eqref{e:semi-norm} but on the sub-intervall $[0,\tilde T_{0}]$.
    Therefore, $\mathcal{S}$ is a contraction  on  $\mathscr{D}^{2\alpha}_{X}([0,\tilde{T}_{0}]; (K_{0}, K'_{0}))$, yielding the existence and uniqueness of the fixed point of \eqref{e:S}.  By iterating this argument over $[(n \tilde{T}_{0})\wedge T_{0}, ((n+1)\tilde{T}_{0})\wedge T_{0}]$ for $n\in \mathbb{N}$, we obtain the existence of a unique fixed point $(H, H')\in \mathscr{D}^{2\alpha}_{X}([0,T_{0}]; W)$ such that $\mathcal{S}(H, H')=(H, H')$. Thus, $\mathbf{D}\Psi((Z, Z'), \xi)$ is invertible,  and then  by the \textit{implicit function theorem} (see \cite[Theorem 19.28]{Driver2003}) we see that $\mathbb{R}^{d_{Y}}\ni \xi \mapsto \phi(\xi)=(Y^\xi, (Y^\xi)')\in \mathscr{D}^{2\alpha}_{X}([0,T_{0}]; W)$ is $k$-times 
    continuously Fréchet differentiable with 
    \begin{equation*}
        D\phi(\xi)= \left(\operatorname{Id} + \mathcal{I}(DA^{\mathcal{F}}(\phi(\xi))[D\phi(\xi)]), \partial f(Y^{\xi})DY^{\xi}\right).
    \end{equation*}
\end{proof}
We now establish the homeomorphism property for the solution flow.  The injectivity follows directly from the following elementary property of strongly controlled rough paths.
\begin{lemma}
\label{lemma:appendix:backwards_davie}
    Let $\alpha \in (\nicefrac{1}{3}, \nicefrac{1}{2}]$ and $\mathbf{X}\in \mathscr{C}^{\alpha}([0,T]; V)$. Let $(Y, Y', Y'', 
    \dot{Y}), (\tilde{Y}, \tilde{Y}', \tilde{Y}'', \dot{\tilde{Y}})\in \mathscr{D}^{3\alpha}_{\mathbf{X}}([0,T]; W)$ be such that there is a constant $L>0$ satisfying,  for all $t \in [0,T]$,
    \begin{equation*}
        |Y'_t- \tilde{Y}'_{t}|+ |Y''_{t}- \tilde{Y}''_{t}|+ |\dot{Y}_{t}- \dot{\tilde{Y}}_{t}|\leq L |Y_{t}- \tilde{Y}_{t}| . 
    \end{equation*}
    Assume moreover that  there exists $t_{0}\in [0,T]$ such that $Y_{t_{0}}= \tilde{Y}_{t_{0}}$. Then $(Y, Y', Y'', 
    \dot{Y})=(\tilde{Y}, \tilde{Y}', \tilde{Y}'', \dot{\tilde{Y}})$ on $[0,T]$.
\end{lemma}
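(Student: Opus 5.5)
The plan is to prove this as a backward Davie-type Gronwall argument for the difference $\Delta \coloneqq Y-\tilde Y$, showing that the set $\{t: Y_t=\tilde Y_t\}$ is both open and closed in $[0,T]$. Closedness is immediate from continuity. By the Lipschitz hypothesis, equality $Y_t=\tilde Y_t$ at a point forces $Y'_t=\tilde Y'_t$, $Y''_t=\tilde Y''_t$ and $\dot Y_t=\dot{\tilde Y}_t$ there as well, so the full jets coincide at every point of this set.

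First I would set up the difference expansion. Subtracting the two cascades of \cref{def:HOcrp} gives, for $s<t$,
\begin{equation*}
\delta\Delta_{s,t} = \Delta'_s\,\delta X_{s,t} + \Delta''_s\,\mathbb{X}_{s,t} + \dot\Delta_s (t-s) + R_{s,t},
\end{equation*}
with the analogous expansion $\Delta'_t=\Delta'_s+\Delta''_s\delta X_{s,t}+R'_{s,t}$. Here $|R_{s,t}|\lesssim |t-s|^{3\alpha}$ and $|R'_{s,t}|\lesssim|t-s|^{2\alpha}$. The key quantity is $N_h(t_0)\coloneqq\sup_{|t-t_0|\le h}|\Delta_t|$. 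The argument must also work backward in time, since $t_0$ may lie in the interior; there I would either use the right-point expansion of \cref{rem:rightpoint_expansion} or simply expand from $s$ and evaluate at $s=t$, $t=t_0$.

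The crucial step is to show that the remainders are themselves small relative to $|\Delta|$, not just $O(|t-s|^{3\alpha})$. I would do this with the sewing-type (Davie) argument. The two-parameter quantity $R$ satisfies
\begin{equation*}
\delta R_{s,u,t} = R'_{s,u}\delta X_{u,t} + \delta\Delta''_{s,u}\mathbb{X}_{u,t} + \delta\dot\Delta_{s,u}(t-u).
\end{equation*}
The increments $\delta\Delta''$, $\delta\dot\Delta$ and $R'$ need to be bounded by a multiple of $N_h$ times a positive power of $h$. Bounding $\delta\Delta''_{s,u}$ by $2L N_h$ works directly. For $R'$ one needs $|R'_{s,u}|\lesssim N_h|u-s|^{\alpha}$-type control, which I would obtain by a secondary sewing argument on $\Delta'$. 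Specifically, $\delta R'_{s,u,t}=\delta\Delta''_{s,u}\delta X_{u,t}$ is bounded by $2L N_h |t-u|^\alpha$, and since $3\alpha>1$ the sewing lemma gives
\begin{equation*}
|R'_{s,t}|\lesssim L N_h\, h^{\alpha}\,|t-s|^{\alpha}\ \text{(roughly)}, \qquad |R_{s,t}|\lesssim L N_h\,|t-s|^{3\alpha}\, h^{-?}.
\end{equation*}
The exponents should be arranged so that every term carries a factor $h^{\varepsilon}N_h$ with $\varepsilon>0$. This uses that the sewing constant depends only on $|\delta R|$, and that $R$ vanishes on the diagonal and satisfies $R_{s,t}=o(|t-s|)$ (since $3\alpha>1$), so sewing determines $R$ uniquely.

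Then, evaluating the expansion from $t_0$, where $\Delta_{t_0}=\Delta'_{t_0}=\Delta''_{t_0}=\dot\Delta_{t_0}=0$, gives $|\Delta_t|\le |R_{t_0,t}|\le C L h^{\varepsilon}N_h$ for $|t-t_0|\le h$. Hence $N_h\le CLh^\varepsilon N_h$, and for $h$ small enough $N_h=0$. Crucially, $h$ is uniform in $t_0$, so a finite iteration covers $[0,T]$ in both directions, and the Lipschitz hypothesis then yields equality of all components.

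The main obstacle is the middle step. The remainder bounds must be made proportional to $N_h$ rather than to the jet seminorms, which requires carefully running the (backward and forward) sewing estimates on the local interval with the Lipschitz hypothesis feeding into the first-order terms.
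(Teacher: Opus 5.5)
Your proposal has a genuine gap, and it sits exactly at the step you flag as the main obstacle. The proposed secondary sewing on $\Delta'$ is not available. The sewing lemma needs $|\delta R'_{s,u,t}|\lesssim|t-s|^{\theta}$ with $\theta>1$. But $\delta R'_{s,u,t}=\delta\Delta''_{s,u}\,\delta X_{u,t}$ has homogeneity $\alpha$ under your $N_h$-bound, and at most $2\alpha\le 1$ under the jet seminorms, so $3\alpha>1$ plays no role at this level. In fact $R'$ is not determined by its coboundary: if $\Delta''\equiv0$ then $\delta R'\equiv0$, while $R'=\delta\Delta'$ may be the increment of any $2\alpha$-H\"older path.

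The same obstruction recurs for $R$. Bounds on $\delta R$ proportional to $N_h$ (from $|R'_{s,u}|\lesssim LN_h$ and $|\delta\Delta''|,|\delta\dot\Delta|\le 2LN_h$) have homogeneity only $\alpha$. The homogeneity $3\alpha>1$ that sewing needs comes only with the remainder seminorms of $Y$ and $\tilde Y$, and these do not scale with $N_h$. Interpolating the two bounds in the dyadic sewing sum gives at best $|R_{t_0,t}|\lesssim (LN_h)^{\theta}h$ with $\theta=\frac{3\alpha-1}{2\alpha}<1$. That yields $N_h\lesssim h^{2\alpha/(1-\alpha)}$, but never $N_h\le CLh^{\varepsilon}N_h$.

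That exponent is sharp, and I believe the statement is false under the pointwise Lipschitz hypothesis alone; the following is a sketch. Take $V=W=\mathbb R$, $T=1$, $t_0=0$, $\tilde Y\equiv0$, $\mathbb X=\frac12(\delta X)^2$ and $Y''=\dot Y=0$.

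On each block $[2^{-k-1},2^{-k}]$, let $X$ run through periods of length $\tau_k$. Each period is a linear rise by $(\tau_k/4)^{\alpha}$, a plateau, a linear descent and a plateau, so $[X]_\alpha\le 2$. Let $\psi=\pm1$ on rises and descents, interpolated linearly on the plateaux. Let $Y$ solve $\dot Y=L\psi Y\dot X=LY|\dot X|$ on $(0,1]$ with $Y_1>0$, and put $Y'=L\psi Y$.

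Then $|Y'|\le L|Y|$. Moreover $Y'$ is uniformly $2\alpha$-H\"older as long as $L\,Y_{2^{-k}}\lesssim\tau_k^{2\alpha}$ and $\tau_k$ is small. Since $Y=\int Y'\,dX$ in the Young sense, the Young--Lo\`eve estimate gives the $3\alpha$-remainder, so $(Y,Y',0,0)\in\mathscr D^{3\alpha}_{\mathbf X}$.

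Now take $\tau_k\asymp (Y_{2^{-k}})^{1/(2\alpha)}$. The variation of $X$ on the $k$-th block is then $\asymp 2^{-k}\tau_k^{\alpha-1}$, so effectively $\dot Y\approx Y^{1-\beta}$ with $\beta=\frac{1-\alpha}{2\alpha}<1$, which is a non-Osgood equation. For $Y_1$ small one checks inductively that $Y_{2^{-k}}\le Y_1 2^{-k\gamma}$ with $\gamma=\frac{2\alpha}{1-\alpha}$, so $Y_t\to0$ as $t\downarrow0$. Hence $Y_0=\tilde Y_0$ and the hypothesis holds, yet $Y\ne\tilde Y$. Time reversal gives the same with $t_0=1$.

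The paper's own proof fails at the same point. In its ``iteratively'' step, each dyadic step amplifies $|Y-\tilde Y|$ by a factor $1+O(L2^{-\alpha n})$. So the claimed $O(2^{-3\alpha n})$ bounds are not uniform over $2^n$ steps; the constants grow like $\exp(cL2^{(1-\alpha)n})$.

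What the injectivity argument in \cref{thm:appendix_sectionB_main} actually needs is more structure. For example, it suffices that $\Delta=Y-\tilde Y$ solves a linear RDE $d\Delta=(A\Delta)\,d\mathbf X$ with a controlled coefficient $A$, as is the case for two solutions of the same RDE. For such equations uniqueness in both time directions is standard.
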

\begin{proof}
    Let $T>1=t_{0}$ w.l.o.g.\ and denote by $\mathcal{P}^{n}=(t_{i}^{n})_{i=0}^{2^{n}}$ the $n$th-level dyadic partition of $[0,1]$. Then it holds (see \cite[Proposition 5.12]{friz_2020}) that for $(s,t)\in \Delta_{T}$
    \begin{equation*}
        \delta (Y-\tilde{Y})_{s,t}\stackrel{3}{=} (Y'-\tilde{Y}')_{t} \delta X_{s,t}+ (Y''-\tilde{Y}'')_{t}(\mathbb{X}_{s,t}- \delta X_{s,t}\otimes \delta X_{s,t})+ (\dot{Y}-\dot{\tilde{Y}})_{t} (t-s)
    \end{equation*}
    Therefore for any $s \in [t^{n}_{2^{n}-1}, 1]$ it holds
    \begin{equation*}
        |Y'_s- \tilde{Y}'_{s}|+ |Y''_{s}- \tilde{Y}''_{s}|+ |\dot{Y}_{s}- \dot{\tilde{Y}}_{s}|\lesssim |Y_{s}- \tilde{Y}_{s}|=\mathcal{O}(2^{-3\alpha n}).
    \end{equation*}
    Iteratively we see that for any $i=0, \dots, 2^{n}-1$ it holds
    \begin{equation*}
        |\delta(Y- \tilde{Y})_{t_{i}^{n}, t_{i+1}^{n}}|+ |\delta(Y'- \tilde{Y}')_{t_{i}^{n}, t_{i+1}^{n}}|+ |\delta(Y''- \tilde{Y}'')_{t_{i}^{n}, t_{i+1}^{n}}|+ |\delta(\dot{Y}- \dot{\tilde{Y}})_{t_{i}^{n}, t_{i+1}^{n}}|= \mathcal{O}(2^{-3\alpha n}).
    \end{equation*}
    For any $s\in [0,1]$ it then follows
    \begin{equation*}
       |Y'_s- \tilde{Y}'_{s}|+ |Y''_{s}- \tilde{Y}''_{s}|+ |\dot{Y}_{s}- \dot{\tilde{Y}}_{s}|\lesssim  |Y_{s}- \tilde{Y}_{s}|\leq \sum_{i=0}^{2^{n}} |\delta (Y-\tilde{Y})_{s\vee t_{i}^{n}, s \vee t_{i+1}^{n}}|\leq \mathcal{O}(2^{(1-3\alpha)n})\stackrel{n \to \infty}{\longrightarrow}0.
    \end{equation*}
    The argument on $(1, T]$ follows analogously. 
\end{proof}
\begin{theorem}
\label{thm:appendix_sectionB_main}
    Let $\alpha\in (\nicefrac{1}{3}, \nicefrac{1}{2}), k \in \mathbb{N}, \mathbf{X} \in \mathscr{C}^{\alpha}([0,T]; \mathbb{R}^{d_{X}})$ and 
    $$\mathcal{F}=(f, f', \partial f, \partial f', \dots, \partial^{k+1} f)\in \mathscr{D}_{X}^{2\alpha}\mathrm{Lip}^{k+2}_{x}(\mathbb{R}^{d_{Y}}; \mathcal{L}(\mathbb{R}^{d_{X}}; \mathbb{R}^{d_{Y}})).$$ 
    Assume further that one of the following conditions holds: 
    \begin{enumerate}
       \item \label{thm:appendix_sectionB_main:item1}  for any $t\in [0,T]$, $f_{t}\in \mathcal{C}_{b}(W; \mathcal{L}(V; W))$ and $f'_{t} \in \mathcal{C}_{b}W; \mathcal{L}(V^{\otimes 2}; W))$;
       \item \label{thm:appendix_sectionB_main:item2} for any $t \in [0,T]$, $(f_{t}, f'_{t})\in \mathcal{L}(\mathbb{R}^{d_{Y}}; \mathcal{L}(\mathbb{R}^{d_{X}}; \mathbb{R}^{d_{Y}}))\times \mathcal{L}((\mathbb{R}^{d_{X}})^{\otimes 2}; \mathbb{R}^{d_{Y}})$ (linear rough vector fields).
   \end{enumerate}
   Then, for any $\xi\in \mathbb{R}^{d_{Y}}$ there exists a unique $(Y^{s,\xi}, (Y^{s,\xi})')\in \mathscr{D}^{2\alpha}_{X}([0,T]; \mathbb{R}^{d_{Y}})$ such that $(Y^{s,\xi})'= f(Y^{s, \xi})$ and 
    \begin{equation}
    \label{eq:nonautonomous_RDE}
        Y_{t}^{s,\xi}= \xi+ \int_{s}^{t} \left(f_{r}(Y_{r}^{s, \xi}), \partial f_{r}(Y_{r}^{s, \xi}) f(Y_{r}^{s, \xi}) + f'_{r}(Y_{r}^{s,\xi})\right) d\mathbf{X}_{r}. 
    \end{equation}
    Furthermore, for any $s \in [0,T]$, 
    \begin{equation*}
        \mathbb{R}^{d_{Y}}\times [s, T] \ni (\xi, t) \mapsto \phi(s, t; \xi) \coloneqq Y^{s, \xi}_{t}\in \mathbb{R}^{d_{Y}}
    \end{equation*}
    is a flow of  $C^{k}$-diffeomorphisms. 
    \end{theorem}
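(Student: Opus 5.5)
The plan is to first get global well-posedness on $[s,T]$ from \cref{thm:appendix_smoothness_flow}, then obtain the flow property from uniqueness, and finally prove bijectivity by building an inverse flow whose smoothness follows from the same theorem.

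\emph{Existence, uniqueness and smoothness.} Under \eqref{thm:appendix_sectionB_main:item1} or \eqref{thm:appendix_sectionB_main:item2}, \cref{thm:appendix_smoothness_flow} allows $T_0=T$. We apply it on $[s,T]$ with the restricted rough path $\mathbf{X}|_{[s,T]}$ and field $\mathcal{F}|_{[s,T]}$. This gives a unique $(Y^{s,\xi},f(Y^{s,\xi}))\in\mathscr{D}^{2\alpha}_X([s,T])$ solving \eqref{eq:nonautonomous_RDE}. It also gives $\xi\mapsto (Y^{s,\xi},f(Y^{s,\xi}))\in C^k(\mathbb{R}^{d_Y};\mathscr{D}^{2\alpha}_X([s,T]))$. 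Since point evaluation at time $t$ is a bounded linear map on $\mathscr{D}^{2\alpha}_X$, each $\phi(s,t;\cdot)$ is $C^k$. Joint continuity in $(\xi,t)$ follows because the solution is continuous in $\xi$ uniformly in $t$.

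\emph{Flow property.} For $s\le u\le t$, the path $r\mapsto Y^{s,\xi}_r$ restricted to $[u,T]$ solves \eqref{eq:nonautonomous_RDE} started at time $u$ from $Y^{s,\xi}_u$. This uses additivity of rough integrals over intervals. Uniqueness then gives $\phi(s,t;\xi)=\phi(u,t;\phi(s,u;\xi))$.

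\emph{Injectivity.} Fix $t$ and suppose $\phi(s,t;\xi)=\phi(s,t;\tilde\xi)$. We view each solution as a strongly controlled path
\[
\bigl(Y,\ f(Y),\ \partial f(Y)f(Y)+f'(Y),\ 0\bigr)\in\mathscr{D}^{3\alpha}_{\mathbf X}.
\]
This membership needs a composition argument in the spirit of \cref{cor:RIW_formula}, using $\mathcal{F}\in\mathscr{D}^{2\alpha}_X\Lip^{k+2}_x$ with $k+2\ge3$. The maps $y\mapsto f_r(y)$ and $y\mapsto\partial f_r(y)f_r(y)+f'_r(y)$ are Lipschitz uniformly in $r$, so the hypothesis of \cref{lemma:appendix:backwards_davie} holds with some constant $L$. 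In the linear case, boundedness of $f$ may fail but the Lipschitz bound still holds. The two paths coincide at $t_0=t$, so \cref{lemma:appendix:backwards_davie} gives $Y^{s,\xi}\equiv Y^{s,\tilde\xi}$ on $[s,T]$. In particular $\xi=\tilde\xi$.

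\emph{Surjectivity and $C^k$ inverse.} This is the main obstacle, since \cref{lemma:appendix:backwards_davie} gives only injectivity. We would construct the backward equation by time reversal. Set $\overleftarrow{\mathbf X}_r:=\mathbf{X}$ reversed on $[s,t]$, that is, $X_{t-r}$ with $\overleftarrow{\mathbb X}_{a,b}=\mathbb{X}_{t-b,t-a}^{\top}$, which lies in $\mathscr{C}^\alpha$. Set $\overleftarrow f_r:=-f_{t-r}$ and choose $\overleftarrow f'$ by the same sign and transpose conventions as in \cref{rem:rightpoint_expansion}. One must check that the reversed field is again in $\mathscr{D}^{2\alpha}_X\Lip^{k+2}_x$ and that it satisfies \eqref{thm:appendix_sectionB_main:item1} or \eqref{thm:appendix_sectionB_main:item2}. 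The earlier steps then give a $C^k$ map $\psi$ from the backward equation. We show $\phi(s,t;\psi(\eta))=\eta$ by checking that the time-reversed backward solution solves the forward equation with endpoint $\eta$. This check is done on the level of local expansions, again via \cref{rem:rightpoint_expansion}, without assuming geometricity. The delicate part is getting the second-order term right, where $\mathbb{X}$ versus $\mathbb{X}^\top$ and the bracket $[\mathbf X]$ enter. If this proves messy, the fallback is to get surjectivity from the inverse function theorem plus properness. $D\phi$ is invertible because it solves a linear RDE, and case \eqref{thm:appendix_sectionB_main:item2} of \cref{thm:appendix_smoothness_flow} provides global solutions of linear RDEs. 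Properness, $|\phi(s,t;\xi)|\to\infty$ as $|\xi|\to\infty$, follows from a priori bounds on $|\phi-\xi|$, or from linear growth in case \eqref{thm:appendix_sectionB_main:item2}. Hadamard's global inverse function theorem then gives a $C^k$-diffeomorphism.
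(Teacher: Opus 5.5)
Your existence/uniqueness, $C^k$ and injectivity steps match the paper's: evaluate \cref{thm:appendix_smoothness_flow} with $T_0=T$, and apply \cref{lemma:appendix:backwards_davie} to $(Y,f(Y),\partial f(Y)f(Y)+f'(Y),0)$. Your \emph{fallback} for surjectivity is a correct variant of the paper's argument.

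The paper uses the same two analytic inputs: the uniform bound $\sup_\xi|\phi(s,t;\xi)-\xi|<\infty$ in case \eqref{thm:appendix_sectionB_main:item1}, and invertibility of $D\phi(s,t;\xi)$ as a linear RDE flow. It gets bijectivity topologically. Injectivity plus properness let $\phi(s,t;\cdot)$ extend to an injective continuous self-map of $\mathbb{R}^{d_Y}\cup\{\infty\}\cong\mathbb{S}^{d_Y}$, which is onto by invariance of domain. The $C^k$ inverse then comes from the local inverse function theorem. Hadamard's global inverse function theorem merges these steps and makes the separate injectivity argument for $\phi$ superfluous.

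Two small corrections to the fallback:
\begin{enumerate}
\item Invertibility of $D\phi$ does not follow from global existence of linear RDEs. It follows from injectivity of the linear flow, i.e.\ your injectivity step applied to the linear equation for $D\phi$, which falls under case \eqref{thm:appendix_sectionB_main:item2}.
\item In case \eqref{thm:appendix_sectionB_main:item2} no properness is needed, since a linear injective map of $\mathbb{R}^{d_Y}$ is bijective. ``Linear growth'' is an upper bound and would not give properness anyway.
\end{enumerate}

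Your primary time-reversal route is genuinely different and would buy more. The inverse would itself be an RDE flow, so its $C^k$ regularity comes directly from \cref{thm:appendix_smoothness_flow}, with no a priori bound, no topology and no inverse function theorem. As written, however, it fails for non-geometric $\mathbf X$.

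Expand the forward germ at the right endpoint, using $f_u(Y_u)=f_v(Y_v)-(\partial f f+f')_v(Y_v)\,\delta X_{u,v}+O(|v-u|^{2\alpha})$. For $u<v$ this gives
\[
Y_u-Y_v\stackrel{3\alpha}{=}f_v(Y_v)(X_u-X_v)+(\partial f f+f')_v(Y_v)\bigl((\delta X_{u,v})^{\otimes 2}-\mathbb X_{u,v}\bigr).
\]
So the reversed second level must be
\[
\overleftarrow{\mathbb X}_{a,b}:=(\delta X_{t-b,t-a})^{\otimes2}-\mathbb X_{t-b,t-a}=\mathbb X^{\top}_{t-b,t-a}+[\mathbf X]_{t-b,t-a}.
\]
It satisfies Chen's relation, and its bracket is minus the reversed $[\mathbf X]$. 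Your choice $\mathbb X^\top$ is a legitimate rough path but the wrong one: it leaves a discrepancy $(\partial ff+f')(Y)[\mathbf X]_{t-b,t-a}$ of order $|b-a|^{2\alpha}$. \Cref{rem:rightpoint_expansion} cannot repair this, since it is stated only for (weakly) geometric $\mathbf X$.

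There is also a sign slip. With $\overleftarrow X_r=X_{t-r}$, the reversed field is $\overleftarrow f_r=+f_{t-r}$, with $\overleftarrow{f}'_r=f'_{t-r}$. The sign flip in \cref{rem:rightpoint_expansion} compensates for keeping forward increments, so combining it with path reversal double counts.

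With these corrections the route works. The reversed data again satisfy the hypotheses, including \eqref{thm:appendix_sectionB_main:item1} or \eqref{thm:appendix_sectionB_main:item2}. The time reversal of the backward solution has the forward $3\alpha$-germ, so it equals $Y^{s,\psi(\eta)}$ by forward uniqueness. Symmetrically, $\psi\circ\phi(s,t;\cdot)=\mathrm{id}$.
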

\begin{proof}
By \cref{thm:appendix_smoothness_flow} the $C^{k}$ property of $\phi(s,t, \cdot)$ follows by evaluation. We are left to show the homeomorphism property of the flow $\mathbb{R}^{d_{Y}}\ni \xi \mapsto \phi(s,t; \xi)\in \mathbb{R}^{d_{Y}}$ and the smoothness of the inverse flow $\phi(s, t; \cdot)^{-1}$. The injectivity follows by considering for fixed $s \in [0,T]$ the $\mathbf{X}$-controlled rough paths $\mathcal{Y}^{i}\coloneqq (Y^{s, \xi_{i}}, f(Y^{s, \xi_{i}}), (Df f)(Y^{s, \xi_{i}})+ f'(Y^{s, \xi_{i}}), 0)$ starting at initial values $\xi_{1}, \xi_{2} \in \mathbb{R}^{d_{Y}}$. Further denote by $\phi(s,t; \xi^{i})$ their respective flows. Then, if for any $t \in [s, T]$ and any $\xi_{1}, \xi_{2}$ it holds $\phi(s,t; \xi_{1})=\phi(s, t; \xi_{2})$, \cref{lemma:appendix:backwards_davie} implies $\xi_{1}=\xi_{2}$.   When condition \eqref{thm:appendix_sectionB_main:item2} is satisfied, i.e., in the case of linear RDEs, the surjectivity follows automatically from the fact that $\xi \mapsto \phi(s,t; \xi)$ is linear and injective. Under \eqref{thm:appendix_sectionB_main:item1} the surjectivity follows by an  argument analogous to that of  \cite[Theorem 4.5.1]{kunita_1997}: Note that, by extending classical \emph{a priori} estimates for RDEs \cite[Proposition 8.2]{friz_2020} to the present situation with non-autonomous vector fields one obtains 
    \begin{equation*}
      \frac{1}{|\phi(s, t; \xi)- \xi|}=\frac{1}{|\delta (\phi(s, \cdot; \xi))_{s,t}|}\geq C((f, f'), \mathbf{X})>0  
    \end{equation*}
    where $C((f, f'), \mathbf{X})>0$ is independent of $\xi$. Letting $\xi \to \infty$ yields that $\phi(s, t; \xi) \to \infty$. Denoting the one-point compactification $\overline{\mathbb{R}^{d_{Y}}}\coloneqq \mathbb{R}^{d_{Y}}\cup \{\infty\}$, we can continuously extend $\phi(s,t; \cdot)$ onto $\overline{\mathbb{R}^{d_{Y}}}$. We denote this extension by $\overline{\phi}(s,t; \cdot) \colon \overline{\mathbb{R}^{d_{Y}}} \to \overline{\mathbb{R}^{d_{Y}}}$. Further note that $\overline{\mathbb{R}^{d_{Y}}}$ is homeomorphic to the $d_{Y}$-sphere $\mathbb{S}^{d_{Y}}\subset \mathbb{R}^{d_{Y}+1}$. Now by the \textit{invariance of domain theorem} from topology we know that every continuous, injective map $ \varphi \colon \mathbb{S}^{d_{Y}}\to \mathbb{S}^{d_{Y}}$ is also surjective and thus is $\overline{\phi}(s,t; \cdot)$ on $\overline{\mathbb{R}^{d_{Y}}}$. Since $\bar \phi(s, t; \infty)=\infty$, the restriction $\phi(s,t; \cdot)$ is surjective on $\mathbb{R}^{d_{Y}}$. Therefore, the flow is bijective. Naturally  for any $\xi \in \mathbb{R}^{d_{Y}}$ the linear operator $D\phi(s,t; \xi)\in \mathcal{L}(\mathbb{R}^{d_{Y}}; \mathbb{R}^{d_{Y}})$ is also invertible, as it is the solution flow to a linear RDE. The smoothness of the inverse flow $\phi(s,t; \cdot)^{-1}$ then follows in combination with \cref{thm:appendix_smoothness_flow} by the inverse function theorem (see \cite[Theorem 19.30]{Driver2003}). 
\end{proof}
\begin{corollary}
\label{appendix:colllection_controlled_fields}
    Let $\alpha\in (\nicefrac{1}{3}, \nicefrac{1}{2}) $, $ k \in \mathbb{N} $, $ \mathbf{X} \in \mathscr{C}^{\alpha, 1+\alpha}([0,T]; \mathbb{R}^{d_{X}})$ and 
    $$\mathcal{F}=(f, f', \partial f, \partial f', \dots, \partial^{k+1} f)\in \mathscr{D}_{X}^{2\alpha}\operatorname{Lip}^{k+2}_{x}(\mathbb{R}^{d_{Y}}; \mathcal{L}(\mathbb{R}^{d_{X}}; \mathbb{R}^{d_{Y}})),$$ such that the assumptions of \cref{thm:appendix_sectionB_main} are satisfied. Then denoting by $\phi(s,t; \xi)$ the solution to \eqref{eq:nonautonomous_RDE} and $\phi_{t}(\xi)\coloneqq \phi(0, t; \xi), \overleftarrow{\phi}_{t}(\xi)\coloneqq \phi(t, T; \xi)$ and $\phi_{t}^{-1}(\xi)\coloneqq \phi^{-1}(0, t; \xi)$ we define
    \begin{equation*}
        \begin{aligned}
         \Phi&\coloneqq (\phi , f \circ \phi, D\phi, (\Gamma f+ f') \circ \phi, D(f\circ \phi), D^2\phi , 0), \\
         \overleftarrow{\Phi}&\coloneqq \left(\overleftarrow{\phi},  - \Gamma \overleftarrow{\phi}, D\overleftarrow{\phi}, (\Gamma^{2} 
        \overleftarrow{\phi}+ \Gamma' \overleftarrow{\phi})^{\top}, -D(\Gamma \overleftarrow{\phi}), D^{2} \overleftarrow{\phi}, -\frac{1}{2} (D^{2} \overleftarrow{\phi})(f, f)\dot{[\mathbf X]}\right),\\
        \Phi^{-1}&\coloneqq \left(\phi^{-1},  - \Gamma \phi^{-1}, D\phi^{-1}, (\Gamma^{2} 
        \phi^{-1}+ \Gamma' \phi^{-1})^{\top}, -D(\Gamma \phi^{-1}), D^{2} \phi^{-1}, -\frac{1}{2} (D^{2} \phi^{-1})(f, f)\dot{[\mathbf X]}\right),
        \end{aligned}
    \end{equation*}
    where $\Gamma (\cdot) \coloneqq D(\cdot) f$, $\Gamma'(\cdot) \coloneqq D(\cdot) f'$. Then it holds $\Phi, \overleftarrow{\Phi}, \Phi^{-1} \in \mathscr{D}^{3 \alpha}_{\mathbf{X}} \operatorname{Lip}^{3}_{x}(\mathbb{R}^{d_{Y}}; \mathbb{R}^{d_{Y}})$.
\end{corollary}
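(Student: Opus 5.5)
The strategy is to verify the criterion of \cref{lemma:jet_criterion}: for each of $\Phi$, $\overleftarrow{\Phi}$, $\Phi^{-1}$ we bound the spatial seminorm $[\,\cdot\,]_{x}$ and the temporal seminorm $[\,\cdot\,]_{t}$ uniformly.

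\textbf{The forward flow $\Phi$.}
\begin{itemize}
\item \emph{Spatial regularity.} By \cref{thm:appendix_smoothness_flow} and \cref{thm:appendix_sectionB_main}, $\xi\mapsto\phi(\xi)\in\mathscr{D}^{2\alpha}_X([0,T];\mathbb{R}^{d_Y})$ is $C^k$ with $k\ge 3$. Since the $\mathscr{D}^{2\alpha}$-norm dominates the supremum in time, $\phi_t, D\phi_t, D^2\phi_t$ and $D^3\phi_t$ are continuous and locally bounded uniformly in $t$. This gives $[\Phi]_{x;\mathfrak{K}}<\infty$ on compacts. For global bounds (needed for $\Lip^3_x$ rather than $\Lip^3_{x,\loc}$) we use the linear RDEs solved by $D\phi$ and $D^2\phi$. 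Their coefficients are bounded via the $\Lip^{k+2}$ assumption on $\mathcal F$, so the a priori estimates of \cite[Proposition 8.2]{friz_2020}, adapted as in the proof of \cref{thm:appendix_sectionB_main}, bound them independently of $\xi$.
\item \emph{Temporal regularity.} For fixed $\xi$, $\phi(\xi)$ solves \eqref{eq:nonautonomous_RDE}. Hence it is strongly controlled with Gubinelli derivatives $f(\phi)$ and $\Gamma f(\phi)+f'(\phi)$ and zero drift; this is the standard third-order expansion of an RDE solution with a controlled vector field. Applying \cref{appendix:Frechet_lemma} to the derivative equation, $D\phi(\xi)$ is $X$-controlled with derivative $D(f\circ\phi)$, and $D^2\phi$ is $\alpha$-Hölder. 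This gives $[\Phi]_t<\infty$, with all constants uniform in $\xi$ by the same a priori estimates.
\end{itemize}

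\textbf{The backward flow $\overleftarrow\Phi$.} Here we use the flow identity $\overleftarrow\phi_s(x)=\overleftarrow\phi_t(\phi(s,t;x))$ and expand exactly as in \cref{ex:RDE_flows}: insert the third-order expansion of $\phi(s,\cdot;x)$ on $[s,t]$ into a second-order spatial Taylor expansion of $\overleftarrow\phi_t$. This yields a right-point expansion with coefficients $\Gamma\overleftarrow\phi$ and $\Gamma^2\overleftarrow\phi+\Gamma'\overleftarrow\phi$. The time-dependent vector field contributes the extra $\Gamma'$ term, and the non-geometric bracket produces the drift $-\tfrac12 D^2\overleftarrow\phi(f,f)\dot{[\mathbf X]}$, via $\mathbb{X}_{t,s}$ against $\mathbb{X}_{s,t}^{\top}$ with the bracket correction. \cref{rem:rightpoint_expansion}, extended by the bracket term, then converts this into the forward form. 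The remaining first- and second-order cascade estimates follow by differentiating the flow identity in $x$.

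\textbf{The inverse flow $\Phi^{-1}$.} We use $\phi_t^{-1}$ with $\phi_s^{-1}(y)=\phi_t^{-1}(\phi(s,t;y))$ in place of the flow identity, and otherwise repeat the $\overleftarrow\Phi$ argument verbatim. Uniform bounds on $D\phi^{-1}$ and $D^2\phi^{-1}$ follow from $D\phi^{-1}=(D\phi)^{-1}\circ\phi^{-1}$, since $(D\phi)^{-1}$ solves the adjoint linear RDE.

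\textbf{Main obstacle.} The expected difficulty is the third-order remainder in the backward expansion. It must be $O(|t-s;x_1-x_0|^{3}_{\mathfrak s})$ \emph{uniformly} in $x$. This requires controlling the cross term $D^2\overleftarrow\phi_t(\delta\phi,\delta\phi)$ together with the $\alpha$-Hölder continuity in $t$ of $D^2\overleftarrow\phi_t$. The latter is itself obtained from the same flow identity, so the plan is to bootstrap: first prove first-order Hölder regularity of $D^2\overleftarrow\phi$ using the global bounds, then feed it into the third-order expansion.
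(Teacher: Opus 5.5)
\textbf{Verdict.} Your route matches the paper's, but the step you assert without computing is false. The $\overleftarrow{\Phi}$ and $\Phi^{-1}$ parts of the statement cannot be proved as written, and the expansion you describe actually proves different coefficients.

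\textbf{Where you agree with the paper.} You use \cref{lemma:jet_criterion} for $\Phi$. For $\overleftarrow{\Phi}$ you use the flow identity $\overleftarrow{\phi}_s=\overleftarrow{\phi}_t\circ\phi(s,t;\cdot)$ with the right-point expansion of \cref{ex:RDE_flows}. For $\Phi^{-1}$ you use $\phi^{-1}_s=\phi^{-1}_t\circ\phi(s,t;\cdot)$, where the paper uses $\phi^{-1}_t=\phi^{-1}_T\circ\overleftarrow{\phi}_t$ and \cref{cor:composition_rule}; the two are equivalent. The $\Phi$ part is fine, although your global bounds must also cover $D^3\phi$, so $k\ge 3$.

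\textbf{What the expansion actually gives.} Fix $x$. Re-basing $f_s(x)$ to $f_t(x)$ in $D\overleftarrow{\phi}_t(x)f_s(x)\delta X_{s,t}$ costs $-\Gamma'\overleftarrow{\phi}_t(\delta X_{s,t})^{\otimes 2}$. Combined with the $+\Gamma'\overleftarrow{\phi}_t\mathbb{X}_{s,t}$ coming from $f'$ in the RDE, this gives $-(\Gamma'\overleftarrow{\phi}_t)^{\top}\mathbb{X}_{s,t}-\Gamma'\overleftarrow{\phi}_t[\mathbf X]_{s,t}$. Hence
\[
\overleftarrow{\phi}_s\stackrel{3}{=}\overleftarrow{\phi}_t+\Gamma\overleftarrow{\phi}_t\,\delta X_{s,t}+\bigl(\Gamma^2\overleftarrow{\phi}_t-(\Gamma'\overleftarrow{\phi}_t)^{\top}\bigr)\mathbb{X}_{s,t}+\bigl(\tfrac12 D^2\overleftarrow{\phi}_t(f_t,f_t)-\Gamma'\overleftarrow{\phi}_t\bigr)[\mathbf X]_{s,t}.
\]
When $\mathbf X$ is not geometric, the conversion of \cref{rem:rightpoint_expansion} also picks up an extra $(Y'')^{\top}[\mathbf X]_{s,t}$, because $(\delta X)^{\otimes 2}=\mathbb{X}+\mathbb{X}^{\top}+[\mathbf X]$. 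The forward jet therefore has:
\begin{itemize}
\item second-order component $(\Gamma^2\overleftarrow{\phi})^{\top}-\Gamma'\overleftarrow{\phi}$, not $(\Gamma^2\overleftarrow{\phi}+\Gamma'\overleftarrow{\phi})^{\top}$;
\item drift $\bigl(\Gamma^2\overleftarrow{\phi}-\tfrac12 D^2\overleftarrow{\phi}(f,f)\bigr)\dot{[\mathbf X]}$, not $-\tfrac12 D^2\overleftarrow{\phi}(f,f)\dot{[\mathbf X]}$.
\end{itemize}
This is consistent with \cref{cor:composition_rule}: $t\mapsto\overleftarrow{\phi}_t(\phi_t(x))=\phi_T(x)$ is constant, and the corrected jet is exactly what makes every component of $\overleftarrow{\Phi}\circ\Phi$ vanish. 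The stated jet instead leaves $\bigl(\Gamma'\overleftarrow{\phi}+(\Gamma'\overleftarrow{\phi})^{\top}\bigr)\circ\phi$ and $-(\Gamma^2\overleftarrow{\phi}\circ\phi)\dot{[\mathbf X]}$.

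\textbf{Counterexamples with $d_X=d_Y=1$.}
\begin{itemize}
\item Take $f_t(x)=X_t$, $f'\equiv 1$, which is bounded and satisfies condition (1) of \cref{thm:appendix_sectionB_main}. Then $\phi(s,t;x)=x+X_s\delta X_{s,t}+\mathbb{X}_{s,t}$, and Chen's relation gives exactly $\delta\overleftarrow{\phi}_{s,t}=-X_s\delta X_{s,t}-\mathbb{X}_{s,t}$. The $\mathbb{X}$-coefficient is $-1$, whereas $(\Gamma^2\overleftarrow{\phi}+\Gamma'\overleftarrow{\phi})^{\top}=+1$. The mismatch $2\mathbb{X}_{s,t}$ is not $O(|t-s|^{3\alpha})$ for genuinely rough $\mathbf X$, even geometric.
\item Take $f(x)=x$, $f'=0$ (condition (2)) and $\mathbb{X}_{s,t}=\tfrac12(\delta X_{s,t})^2-\tfrac12(t-s)$, so $[\mathbf X]_t=t$. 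Then $\overleftarrow{\phi}_t(x)=x\exp\bigl(\delta X_{t,T}-\tfrac12(T-t)\bigr)$ and $\delta\overleftarrow{\phi}_{s,t}\stackrel{3}{=}\overleftarrow{\phi}_s\bigl(-\delta X_{s,t}+\mathbb{X}_{s,t}+(t-s)\bigr)$. The drift is $\overleftarrow{\phi}\neq 0=-\tfrac12 D^2\overleftarrow{\phi}(f,f)\dot{[\mathbf X]}$, and since $3\alpha>1$ the term $\overleftarrow{\phi}_s(t-s)$ cannot be absorbed.
\end{itemize}
The same identity structure gives the same failure for $\Phi^{-1}$.

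\textbf{Consequence.} The paper's ``analogous to \cref{ex:RDE_flows}'' has the same defect: that example is autonomous and geometric, so both corrections vanish there. Carried out explicitly, your plan proves the claim for $\Phi$ and the corrected jets for $\overleftarrow{\Phi}$ and $\Phi^{-1}$.
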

\begin{proof}
    The claim for $\Phi$ follows from \cref{thm:appendix_smoothness_flow} and \cref{lemma:jet_criterion}. The corresponding result  for $\overleftarrow{\Phi}$ is obtained by an argument analogous to the one used in \cref{ex:RDE_flows}. Finally, the claim for $\Phi^{-1}$ follows by noting that  $\phi^{-1}_{t}= \phi^{-1}(0, T; \cdot)\circ \overleftarrow{\phi}_{t}$ and by applying \cref{thm:appendix_sectionB_main} together with \cref{cor:composition_rule}.
\end{proof}
\printbibliography
\end{document}